\numberwithin{equation}{section}
\newlength{\wdth}
\newtheorem{lemma}{Lemma}[section]
\newtheorem{corollary}[lemma]{Corollary}
\newtheorem{proposition}[lemma]{Proposition}
\newtheorem{remark}[lemma]{Remark}
\newtheorem{example}[lemma]{Example}
\newtheorem{theorem}[lemma]{Theorem}
\newtheorem{definition}[lemma]{Definition}
\newtheorem{conjecture}[lemma]{Conjecture}
\newcommand{\e}{\varepsilon}
\begin{document}

\title{Beyond the classical Cauchy-Born rule}


\author{Andrea Braides\\ \small SISSA, via Bonomea 265, Trieste, Italy\\  \\  Andrea Causin and
Margherita Solci \\ \small
DADU, Universit\`a di Sassari\\ \small
 piazza Duomo 6, 07041 Alghero (SS), Italy \\ \\
Lev Truskinovsky\\ \small PMMH, CNRS - UMR 7636 PSL-ESPCI,\\ \small 10 Rue Vauquelin, 75005 Paris, France}
\date{}                                           

\maketitle

\begin{abstract}
Physically motivated variational problems involving non-convex energies are often formulated in a discrete setting  and contain boundary conditions.  The  long-range  interactions in such problems,  combined with   constraints imposed by  lattice discreteness,   can give rise to the phenomenon of   geometric frustration even in a one-dimensional setting.
While  non-convexity  entails the formation of microstructures,  incompatibility between interactions operating at different  scales  can produce nontrivial mixing effects which are exacerbated  in the case of incommensuration between the optimal microstructures and the scale of the underlying lattice. Unraveling the intricacies of the underlying interplay between non-convexity, non-locality and discreteness, represents the main goal of this study.  While in general one cannot expect that  ground states in such problems possess global properties,  such as periodicity, in some cases  the appropriately defined `global' solutions  exist,  and are sufficient to describe the corresponding   continuum (homogenized) limits.  We interpret those cases as complying with a Generalized Cauchy-Born (GCB) rule, and present  a new class of problems with geometrical frustration which comply  with  GCB rule in one  range of (loading) parameters while being strictly outside this class  in a complimentary range. A general approach to problems with such `mixed' behavior is developed.

\end{abstract}


\section{Introduction}


Variational problems emerging from applications are often both discrete and non-convex. Important  examples  include  one-dimensional  boundary-value problems with   trans\-la\-tion-invariant energy densities describing  pairwise interactions.  Such problems  constitute the main subject of this paper. 

The representative  energies for this class of problems can be written in the following generic form 
\begin{equation}\label{eq1} 
F(w;k)=\min\biggl\{\sum_{i,j=0}^k f_{i-j} (u_i-u_j): u_0=0, u_k=w\biggr\},
\end{equation}
where  for every $n$  natural number $f_n$ is a potentially nonconvex energy  governing interactions between the lattice points  at distance $n$, and
 the minimum is searched among $k+1$-arrays $(u_0,\ldots, u_k)$. We may assume that $f_0=0$. As the parameter $k$ increases and more interactions are taken into account, a question arises  about the behavior of minimal arrays $(u^k_0,\ldots, u^k_k)$ and of the corresponding minimal energy. One of the most  important issues concerns  the existence of a continuum limit of the type 
$
F_{\rm hom}(u)=\int_I f _{\rm hom}(u^\prime)\, dt,
$
with $I$ an interval in which the nodes $i$ in \eqref{eq1} are identified as a discrete subset (e.g., $I=[0,1]$ where the discrete subset is ${1\over k}\mathbb Z\cap [0,1]$). 
The single function $f _{\rm hom}$  is expected to carry, in a condensed way, all the  relevant information about the   infinite set of   functions  $f_n$ from \eqref{eq1}. 

To track the asymptotic behavior of the minimum values in \eqref{eq1}  we can  use the average derivative $z=w/k$ as a parameter, and scale the energy  by $k$. Then,  under assumptions on a suitably fast decay of $f_n$ with respect to $n$,  it can be   shown that the limiting energy density $f_{\rm hom}$ exists and can be expressed by the formula
\begin{equation}\label{eq2} 
f_{\rm hom}(z)=\lim_{k\to+\infty} {1\over k} \min\biggl\{\sum_{i,j=0}^k f_{i-j} (u_i-u_j): u_0=0, u_k=kz\biggr\}.
\end{equation}
Moreover, it can be shown that the  function $f_{\rm hom}$ is  convex in the parameter $z$. This  result represents a particular case of a more general variational theory for limits of lattice energies (see e.g.~\cite{AC}); it   can be also seen   as a zero-temperature limit  of the analogous result in Statistical Physics (\cite{Ruelle,RLL}). However, formula \eqref{eq2} is only a formal homogenization result in a discrete-to-continuum setting which is usually  non-constructive. In this paper we are raising the issue  of the actual \emph{computability} of $f_{\rm hom}(z)$.

 Explicit formulas for  $f_{\rm hom}(z)$ in terms of $f_n$ are known only in   few cases, most of which are mentioned below.
In general, it is known that the behavior of minimizing arrays $(u^k_0,\ldots, u^k_k)$ at fixed $z$, may be complex, including equi-distribution (`crystallization'; see e.g.~\cite{LB}), periodic oscillations \cite{BGelli,Giuliani}, development of discontinuities (fracture in lattice models \cite{Trusk,BLO}) or defects (internal boundary layers \cite{BraidesCicalese}). 

A robust  approach to the computation of  $f_{\rm hom}(z)$  is known under the name of  {\em Cauchy-Born} (CB) {\em rule} and  is applicable under some restrictive conditions (\cite{Ericksen,BLBL}).  It is based on the assumption   that  the homogenized energy can be  computed using  the  affine interpolations $u_j=zj$ and relying \emph{exclusively} on problems with finite  $k$. 
Various sufficient conditions for the validity of the Cauchy-Born rule have been obtained  by a number of authors mostly in the context of local minimizers  \cite{EM,HO,MakS,Poz,WCZH,CDKM,Z}. While those results are usually valid only for  subsets of loading parameters, they are  often  applicable for dimensions higher than one. They are of considerable interest, first of all,  for the development of numerical methods because the applicability of the classical CB rule makes such methods extremely efficient, even if for a  limited set of boundary conditions. The difference of our approach to   \eqref{eq2}
is that we  are interested in \emph{global} minimization (viewed as a zero temperature limit of a statistically equilibrium response) and consider the possibility that the conventional  CB rule is operative only in a  \emph{subset} of the loading parameters  while in the complementary  subset   the CB strategy  should be  appropriately \emph{generalized} or  even completely  ruled out.


The main  reason for the failure of the classical Cauchy-Born rule is the geometrical frustration  caused   by   incompatible  optimality demands imposed  by (generically non-convex and long range) potentials $f_n$  with positive integer $n$ and the discreteness of the lattice.  More specifically,  while  non-convexity  entails the formation of microstructures,  incompatibility between interactions operating at different  scales  can produce nontrivial mixing effects which are exacerbated  in the case of incommensuration between the optimal microstructures and the scale of the underlying lattice. Unraveling the intricacies of the underlying interplay between non-convexity, non-locality and discreteness, represents the main goal of this study.


If  the classical Cauchy-Born rule fails, the natural task is  to search for a nontrivial generalization of the Cauchy-Born rule. In this perspective, we pose the problem of finding the conditions for which the minimal arrays in \eqref{eq1} have `global' features in the sense that  solving a  `local' problem on a \emph{finite} domain   opens the way towards  describing the limit in \eqref{eq2}. More specifically,  the question is   whether the limiting energy $f_{\rm hom}(z)$ can be approximately computed by solving a finite set of `cell' problems modeled on \eqref{eq1} and   potentially   producing  \emph{non-affine} optimal configurations.  The validity of the so-interpreted  generalized Cauchy-Born (GCB)  rule would then require that even if the implied  `local' problems could  be   solved only  on  some subsets of   parameters,  the knowledge  of the corresponding  solutions would ensure the recovery   of the  macroscopic (homogenized) energy in the \emph{whole} range of loading parameters.

 Note that in local problems like \eqref{eq1}  the presence of   interactions  $f_n$ with $n\in\{1,\ldots,k\}$  requires $k$ boundary conditions on each side.  By fixing parametrically only the average strain $z$ in \eqref{eq2}  we effectively assume that the remaining boundary conditions are natural. This simplifying assumption may stay on the way  of acquiring, for the given  `local'   problem, the corresponding `global' features. That is why we will  understand the  `local'   GCB problem as having the right boundary conditions to ensure the  recovery of the macroscopic   energy $f_{\rm hom}(z)$. The simplest case is when the value of $f_{\rm hom}(z)$ can be achieved on arrays such that $i\mapsto u_i-zi$ is periodic with a given period, but in general   one  should  be allowed to adjust boundary conditions accordingly while keeping in mind that  these changes should not affect  the minimizers in an asymptotic sense.

 \smallskip

We now   illustrate the main  difficulties  on the way of generalizing the classical CB rule with some  known cases.  We start with the simplest  example where the conventional CB rule works trivially. It  is the case of convex nearest-neighbor  (NN) interactions; i.e., when $f_n=0$ for all $n\ge2$, and $f_1=f$ is a strictly convex function. In this case, the unique minimizer of the problem in \eqref{eq2} is the affine interpolation $u^k_j=zj$. It is independent of $k$ and hence `global': in this case the classical Cauchy-Born rule is applicable  in its simplest form, and $f_{\rm hom}(z)=f(z)$.

If we make the above example only a little more complex considering also convex next-to-nearest-neighbour (NNN) interactions; i.e., $f_n=0$ for all $n\ge3$, with $f_1$ and $f_2$ convex functions, we loose this exact characterization of the minimal arrays. However, the discrepancy between $u^k_j$ and  $zj$ decays fast away from the endpoints $j=0$ and $j=k$ of the array. A slight adjustment of the boundary-value problems, say by imposing  additional  boundary conditions $u_1=z$ and $u_{k-1}= z(k-1)$ (which do not influence the asymptotic value of the minima in \eqref{eq2})   reestablishes the affine interpolations $u^k_j=zj$ as minimizers, so that $f_{\rm hom}(z)=2(f_1(z)+f_2(2z))$. In this case the classical Cauchy-Born rule is applicable,  given that we  modify  boundary conditions in the `cell' problem. 
Note that this analysis extends to any sufficiently fast decaying set of convex potentials $ f_n$, giving $f_{\rm hom}(z)=2\sum_{n=1}^\infty f_n(nz)$.
 
Even if  we abandon the convex setting, we may still easily describe the behavior of minimum problems in \eqref{eq2} in the case of nearest-neighbor  interaction, with $f_1=f$. 
It   can be shown that   $f_{\rm hom}$  in \eqref{eq2}  is given by  the convexification   $f^{**}$ of the NN potential \cite{Braides-Gelli}.  However   the classical Cauchy-Born rule in this case has to be  properly generalized. 
Suppose,  for instance,   that  the  potential $f$  has a   double-well form. In this case the relaxation points towards  configurations containing mixtures of  the   two  energy wells. Since in this setting there are  no obstacles to  simple mixing,   the  relaxation strategy  providing $f_{\rm hom}$ is  straightforward.  
Indeed,  for each $z$ there exist $z_1$, $z_2$, $\theta\in [0,1]$ such that $f^{**}(z)= \theta f(z_1)+(1-\theta)f(z_2)$.  Hence, we can construct a function $u^z:\mathbb Z\to \mathbb R$ with $u^z_i-u^z_{i-1}\in\{z_1,z_2\}$, $u^z_0=0$ and $|u^z_i-iz|\le C$.  
 Such $u^z$ may be chosen periodic, if $\theta$ is rational, or quasiperiodic (loosely speaking, as the trace on $\mathbb Z$ of a periodic function with an irrational period) otherwise. In both cases  we obtain  `local'  minimizers with  `global' properties which allows one to talk about the  applicability of  the GCB rule.

The situation is more complex in the case when  non-convexity is combined with   frustrated (incompatible)  interactions. To show this effect in the simplest setting  it is sufficient to account for  nearest-neighbor and next-to-nearest-neighbor interactions only and we make the simplest nontrivial choice by assuming that  $f_1$ is a `double-well' potential and that $f_2$  is a   convex potential. 
In this case the homogenized potential $f_{\rm hom}$  is also known explicitly   \cite{BGelli,PP}. Its domain can be subdivided in  three zones: two zones of `convexity' where  minimizers are trivial (as for convex potentials) and a zone where (approximate) minimizers in \eqref{eq2} are two-periodic functions with $u^z_i-u^z_{i-1}\in\{z_1,z_2\}$ and $z_1+z_2=z$ (in a sense, a constrained non-convex case as above). Hence, in these three zones we have minimizers with a `global' form because  the macroscopic  energy can be obtained by solving  elementary `cell' problems.

One can say that in the  two zones of `convexity'  the classical CB rule is applicable. In the `two-periodic' third zone we  see  that  the homogeneity of the minimizers is lost but   an appropriately augmented   GCB  rule still holds. 
For the remaining values of $z$  no  `local' GCB rule is  applicable since in those cases the unique (up to reflections) minimizer  is a `two-phase' configuration  with  affine and two-periodic minimizers coexisting while being separated by a single `interface'  \cite{BraidesCicalese}. The frustration (incompatibility) manifests itself in this case  through the impossibility of the penalty-free accommodation of  next-to-nearest interactions  across such an   internal boundary layer. As a consequence, as $k$ diverges, such minimizers tend to an affine interpolation between the `convex' and `oscillating' zones  which delivers the correct  value of $f_{\rm hom}(z)$  without being a solution of any  finite `cell' problem. 
Effectively,  the `representative cell'   in this case has an infinite size and therefore no  GCB-type  `local' description of the macroscopic state is available.  A somewhat similar situation is encountered in continuum homogenization of both random \cite{Kozlov} and strongly nonlinear \cite{BDF,Muller} elastic composites. 

%

\smallskip

In what follows, we interpret the loss of `locality' in homogenization problems, which was illustrated above on the simplest example,   as a failure of the GCB rule. To shed some light on the mechanism of this phenomenon,  we consider below  a class of analytically transparent discrete problems combining nonconvexity with geometrical frustration. 

More specifically, given  the complexity of a general  asymptotic analysis for  even    one-dimensional problems of this type,  we limit our attention  to  a   class of discrete functionals of type \eqref{eq1}  with $f_1(z)= {1\over 2}f(z)+ m_1z^2$, where the function $f(z)$ is non-convex,   and quadratic $f_n(z)= f_{-n}(z)=m_nz^2$ for $n\ge 2$. The coefficients $m_n$ which introduce nonlocality and frustration, are assumed to be non negative and sufficiently integrable.  In other words,  we suppose that the non-convexity is `localized' in the nearest-neighbor interactions, while all other interactions are quadratic. The positivity of the infinite sequence ${\bf m}=\{m_n: n\ge 1\}$ is chosen to ensure that  the  implied quadratic `penalty'  is  a  measure of  the distance of the configuration $u_i$ from the affine   configuration  $L_z(i)=zi$ 
%
and can be then seen as a \emph{non-local version} of the gradient of $u-L_z$.  
 One can also say that such    penalization  brings anti-ferromagnetic interactions;   an alternative, ferromagnetic-type quadratic penalty,    was considered, for instance,   in \cite{rogers-truskinovsky}.  

The advantage of this  choice  of $f_n$  is that the  ensuing problem   can exhibit both `local' (GCB) and `global'  behavior depending on the structure of  the  sequence of scalar parameters $m_n$.  Therefore  our  goal will be  to use the chosen class of functionals to characterize the difference between CB, GCB and  non-GCB  problems  in terms of such sequences. We show that in this  naturally limited but still sufficiently rich  framework one can \emph{precisely}  specify the  factors preventing the  GCB-type  description of the macroscopic energy and pointing instead towards   the  non-GCB nature of the minimizers.  Moreover, the considered example allow us to  \emph{abstract} some  general technical tools which can  facilitate the detection and the characterization of the non-GCB asymptotic behavior in more general minimization problems.


We reiterate that even in the absence of an adequate  `cell' problem, the ensuing value of $f_{\rm hom}(z)$ is  fully determined by the homogenization formula which in our case takes the form
$f_{\rm hom}(z)=\widehat Q_{\bf m} f(z)$ where
\begin{equation}\label{eq3} 
\widehat Q_{\bf m} f(z)=\lim_{k\to+\infty} {1\over k} \min\Bigl\{\sum_{i=1}^k f(u_i-u_{i-1})+\sum_{i,j=0}^k m_{i-j} (u_i-u_j)^2: u_0=0, u_k=kz\Bigr\}\,.
\end{equation}
%
%
  The nontrivial part of the mapping  $\widehat Q_{\bf m} f$, accentuating the nonlinearity of the problem, 
 is carried by  the operator 
$
Q_{\bf m} f(z)=\widehat Q_{\bf m} f(z)-  2\sum_{n\ge 1}m_nn^2z^2.
$
Thus, if $f$  is convex, this mapping,  to which we refer as the  {\em${\bf m}$-transform} of $f$, is the identity;  actually,  the same remains true  even  if $f$ is $2m_1$-convex, in the sense that the function  $z\mapsto f(z)+ 2m_1z^2$ is  convex.  If, however, the function $f$ is not $2m_1$-convex, the   ${\bf m}$-transform of $f$  is nontrivial.  Thus, the function $Q_{\bf m} f(z)$ is  in general non-convex and $Q_{\bf m}f(z)>f^{**}(z)$  for some $z$; 
the  non-convexity of $Q_{\bf m} f(z)$ depends sensitively  and `nonlocally' on the  penalizing sequence  $\bf m$.  

  Indeed, recall that  $\widehat Q_{\bf m} f$  can be viewed as an \emph{operator} acting on the non-convex function $f$ and producing an ${\bf m}$-dependent  function which effectively represents   a  constrained  relaxation of  $f$.  In the same vein, the function $Q_{\bf m} f$ represents a nonlocally \emph{constrained convexification} of $f$.
Interpreted in such a way, the construction of $Q_{\bf m} f$  is reminiscent of energy quasiconvexification in
continuum  elasticity. The latter   deals with  minimization of  the functionals  
$
\int f(  {\bf F}) d \bf x,
$
 where    ${\bf F}$ is a    matrix   field. The role of  nonlocal constraint  in such  problems is played by the condition
    ${\rm curl \,\bf{F}}=0$,
      which is  highly nontrivial in a multidimensional setting \cite{Kristensen}.  In a  one-dimensional  setting  this whole construction can be  imitated  through the introduction of a penalizing kernel $\bf m$  mimicking the  Green's function of the constraint. As in the case of continuum elasticity, such a penalization  can introduce  incompatibility, which in a discrete setting can lead to  geometrical frustration. 
    
One of the  goals of this paper  will be to link the degree  of the non-convexity of the function $Q_{\bf m} f$ with the  breakdown of the GCB  rule. For instance, in the  parametric domain where  periodic microstructures are optimal, one can also expect  the convexity of the function $Q_{\bf m} f$. Topologically different  periodic microstructures  will  exist  in finite intervals of  $z$  where  they  can be `stretched' to secure  the commensurability with the lattice.  In such  intervals  the corresponding minimizers   posses `global' properties and the GCB  rule is  respected. However, in general,  when $z$ is  varied continuously, the  optimal   microstructure   will  change discontinuously and the domain of applicability of the GCB rule can \emph{coexist} with the domains where it breaks down.  The challenge  is to identify the conditions on $\bf m$, when, for instance,  the  knowledge of the   intervals where GCB rule is applicable,  allows one to re-construct  the   ${\bf m}$-transform of a given non-convex function $f$ also for  $z$ where the GCB rule is non-applicable.

In this paper we are not attempting to solve the  problem posed above  in its full generality and instead focus on a  physically interesting    sub-class of non-convex functions $f$ allowing one to construct  \emph{explicit}  solutions of the  minimization problem for several important   classes of penalizing kernels $\bf m$. 

Specifically, we  aim   at the development of  a comprehensive theory for \emph{bi-convex} functions $f$.  More precisely,  we assume that there is a value $z=z^*$ such that the restrictions of $f$ to $(-\infty, z^*]$ and $[z^*,+\infty)$ are both convex;  well-known  examples of bi-convex functions are  the  quadratic double-well potential ($f(z)=(|z|-1)^2$ with $z^*=0$), used for the description of phase transitions, and the  truncated quadratic potential  ($f(z)=z^2$ if $z\le 1$ and $f(z)= 1$ if $z\ge 1$), which is used in Fracture Mechanics.  In what follows we  often refer to the two convex branches of $f$ as  \emph{microscopic phases}. 

An important property of the   bi-convex  functions $f$ is that, independently of the choice of  the kernels $\bf m$,  the mapping  $\widehat Q_{\bf m} f$  is largely characterized by a phase function $\theta=\theta(z)$ which represents  the asymptotic  volume fraction  of one of the `phases' in  the limiting  minimizer,  say the limit of the percentage of indices $i$ for which $u^k_i-u^k_{i-1}\ge z^*$. When $f$ is convex, then  $\theta=0$ or $\theta=1$ and when its is bi-convex, the  central question will be to describe for a given  $\bf m$ the  form of $\theta(z)$. As we show, the applicability of GCB can be related to the emergence of  the  ${\bf m}$-dependent  `steps' on the graph of the function $\theta$ represented by the  values $\overline\theta$ for which $\{z: \theta(z)=\overline\theta\}$ is a non-degenerate interval. In what follows we refer to such intervals  as  \emph{locking} states and to the corresponding GCB-type microstructures  as  \emph{mesoscopic phases}.  This characterization is justified by the fact that  in the locking states the form of minimizers is   \emph{stable} in the sense that the set of indices $i$ at  finite $k$ such that that $u^k_i-u^k_{i-1}\ge z^*$  is  independent of $z$, up to an asymptotically negligible fraction.  Therefore, the implied  `staircase' structure of the function $\theta$   is not a feature of the discrete problem only as it survives    in the continuum limit. As we show,  the  locking states have the desired global  properties, and for such states an appropriate finite  `cell' problem can be formulated and solved. In other words, in such  states   the GCB rule  is operative and the computation of the macroscopic energy energy can be made  explicit.  

In this paper we have chosen to illustrate  all these effects by considering penalization kernels  $\bf m$ amenable to  fully \emph{explicit}   study.  Our analysis shows that a rather comprehensive picture can be obtained based on the analysis of just  two archetypal \emph{classes} of kernels.

 The first class of analytically transparent kernels contains 
 `concentrated'  (compact, localized, narrow banded, etc.) parametric sequences  $\bf m$  defined by the condition that  there exists $M\ge 2$ such that $m_n=0$ if $n\ge 2$ and $n\neq M$; here $M$ plays the role of a parameter.  We prove that for such   kernels (and independently of $f$, as long as it is non-convex)  locking states do exist and correspond to  $\theta_n={n\over M}$ with $n\in\{0,\ldots, M\}$.  Minimizers in this case, representing mesoscopic phases,  are  $M$-periodic. Moreover, we prove that   the associated phase function $\theta$ is piecewise affine, interpolating locally between the locking states $\theta_{n-1}$ and $\theta_n$. Thus, while for $\theta$ that is not a locking state we do not have GCB-type minimizers (with `global'  properties), the whole mapping $\widehat Q_{\bf m}f$ can be recovered from the knowledge of its value at those $z$ corresponding to locking states where the GCB rule is operative.
 
The second  class of analytically transparent  kernels   contains   exponentially decaying sequences $\bf m$  which we write in the parametric form $m_n=e^{-\sigma n}$ with  $\sigma>0$ playing the role of a parameter analogous to $M$ in the first class. Here again  we can give a complete description of the relaxed problem, for instance,  when $f$ is a truncated convex potential ($f$ is constant in $[z^*,+\infty)$). Given this particular structure  of non-convex potentials (describing, for instance,  lattice fracture), locking states  are either    $\theta=0$ or   $\theta\in\{{1\over k}: k\in\mathbb N\}$. In the latter case,  minimizers are  $k$-periodic  and therefore of GCB-type,  which means that they posses `global' properties.  Interestingly, we show that in each  period such minimizers have a single difference $u^k_i-u^k_{i-1}$ exceeding the threshold $z^*$ (single `crack'). Again, we prove that the set of mesoscopic phases is sufficiently rich  to provide the  `building blocks'  whose  simple mixtures  allow one to construct  the whole mapping  $\widehat Q_{\bf m}f$.
 An important difference with the case of  `concentrated' kernels  is that now the  optimal  `simple' mixtures of `global' (or  GCB) states are not unique optimal microstructures.  More precisely, we show that  even for \emph{non-locking} values of $z$ one  can build  optimal    minimizers which are  of GCB-type. For  all values of $z$ such  minimizers are \emph{quasiperiodic}  and therefore posses the desired  `global' properties, thus broadening the spectrum of possible  GCB-type microstructures.

All these explicit results,   which  also include an analytical  study of the intricate role of the  parameters $\sigma$ and $M$,  can be obtained because  for  these two   classes  of  kernels (concentrated and exponential)  one can reformulate  the original non-additive (non-local) minimum problem with presumably complex mixing properties as  an additive (local) problem with no mixing effects at all.  For concentrated kernels this is achieved by  rewriting the  non-additive problem   as a superposition of additive problems.  For exponential kernels the   reduction of complexity is due to the mapping of a  scalar problem with long-range  interactions  on  a  vectorial problem with only nearest-neighbor interactions.



Variational problems with energies  like \eqref{eq3} have been studied extensively in  the physical literature  where they emerged independently in different settings  ranging from conventional  magnetic and  mechanical systems \cite{bak,JJ} to  discotic liquid crystals \cite{degennes,Godreche and L. de Seze,HC}. In such  problems  the optimal periodicity of  a microstructure   representing the ground state (global minimum of the energy)  competes with the periodicity of the  lattice, and the geometrical frustration emerges when the two periodicities  are  incompatible (for instance, incommensurate).  Since the  interactions in  actual physical systems are   very complex, the main focus was on the study of  simplified discrete models  such as Frenkel-Kontorova model \cite{BrKi} or ANNNI model \cite{selke}. A   prototypical Ising model with antiferromagnetic  long-range interactions, which is  the simplest problem of this same type was considered in \cite{BaBru}. Two  explicit solutions for  the  class of problems with exponential kernels studied  in the present  paper, were  found  in  \cite{NT2016,NT2017}.  

In the mathematical literature discrete and continuous variational models with antiferromagnetic interactions were considered  in \cite{Braides-Gelli,BGelli,PP,
RT,Cho,GMS}.  An important  link was established by S.~Aubry and J.~Mather between variational problems of type \eqref{eq3} and the quasiperiodic trajectories  of discrete dynamical systems. 
 Recent mathematical results extending Aubry-Mather theory can be found in  \cite{Bangert,Gomes,Fathi,Garibaldi}.

 In the present   paper we  reformulate   the problems  studied previously in the framework of the theory of dynamical systems,  as problems of the  calculus of variations. This change of   perspective  allows one  to apply powerful homogenization results  providing direct access to  the corresponding  continuum limits. 
The goal is to demonstrate how, already in one-dimensional problems, the the interplay between discreteness and non-convexity compromises the classical Cauchy-Born rule and precludes the use of   conventional `cell' problems for    computation of the relaxed energies. 

In the context of discrete-to-continuum transitions, the obtained results bring new understanding of the role of  the  frustrated non-local interactions in the determination of homogenized energies. While the case of ferromagnetic interactions has been extensively studied  before, here we show that the introduction  of anti-ferromagnetic interactions  brings fundamentally new effects, most importantly  the emergence  of mesoscopic phases resulting in the locking of the minimizers  on lattice-commensurate  microstructures.
While these effects, which are clearly  lattice-induced,  appear to be  `strongly discrete', they affect the structure of the continuum energy and, in this sense, do  not disappear in the course of discrete-to-continuum transition.

Instead of  the focus on  Euler-Lagrange equations,  characteristic of the theory of dynamical systems,  our main tools are  the direct methods of the calculus of variations. In particular, we obtained  our main   results    through the   use of the novel  bounds resulting  either from the judicial choice  of periodic test functions or from  cluster minimization.   In this sense our results   complement  and broaden   the findings made in the dynamical systems framework.

One  result of this type  is the characterization of the continuum limit when non-local  interactions are concentrated on $M$-neighbors.
 The analysis of this case highlights the increasing difficulty of dealing with geometrical frustration and non-commensurability effects as  progressively more distant   interactions are incorporated, and suggests the possibility of  scale-free patterns even  in  the case of finite-range interaction kernels.  It complements  the results of Aubry \cite{aubry80}, who showed that long-range interactions  favor  hyper-uniform solutions. Another result,   allowing one to relate  the  regularity of the relaxed energies  in $\theta$ with  the existence of periodic solutions,  can be viewed as  an extension of  the link between regularity  and  the rotation number established by  Mather in the framework   the dynamical systems approach  \cite{Mather}.

In addition to explicit computations of global minimizers
  we also posed the problem of finding  the   $\Gamma$-equivalent continuum \emph{approximations} of the corresponding  lattice problems \cite{BT}.  Here we imply the construction of the asymptotic  continuum  theories accounting for the lattice scale. We succeed  in constructing  such an approximation  in  the case of an exponential kernel while also showing that the conventional formal asymptotic limit,   which neglects the underlying geometric frustration, underestimates the intricacies of the interplay between non-convexity, non-locality and discreteness and produces only a lower bound for $Q_{\bf m}f$.   This explicit example   serves as a cautionary tale demonstrating  in which form the  finite scale  lattice  effects  can survive homogenization and  affect the  macroscopic variational problem. 

\smallskip

\section{Nonlocal discrete problems and their relaxation}\label{mq:sec}
In this paper we study the asymptotic behaviour 
of particular nonlocal discrete problems parameterized by the number of nodes involved.  
This can be viewed as a discrete-to-continuum homogenization process by introducing a small parameter $\e$ and suitable scalings of the energies. 
However, with an abuse of terminology, we choose to label this process as the computation of a relaxed functional. 

Following the usual terminology, a functional $\overline \Phi$ is the  relaxation of an original functional $\Phi$ if, loosely speaking, infimum problems involving $\Phi$ have the same value as infimum problems involving $\overline \Phi$, and the latter admit solution (given that  the corresponding problem is coercive), see e.g.~\cite{DM,GCB}. 
In the context of the Calculus of Variations, the relaxed functional is usually obtained by a lower-semicontinuous envelope with respect to some topology, it is stable under continuous perturbations, and often (but not always) is stable with respect to closed constraints, such as fixed boundary values or imposed integral constraints. Moreover, if the original functional depends on some energy density, often (but not always) the relaxed functional can be characterized by a new energy density obtained as a transformation (convexification, quasiconvexification, sub-additive or $BV$-elliptic envelope, etc.) of the original energy density, so that relaxation of an energy can be viewed as an operation on  an energy density.
In our case we deal with a sequence of minimum problems, so it would be correct to talk about homogenization or $\Gamma$-convergence rather than relaxation.  
Nevertheless, we would like to highlight properties of the 
homogenized continuum energy in the same spirit of a lower-semicontinuous envelope, and hence we choose the terminology of relaxation. 

We focus on the relaxation of nonlocal discrete functionals of type \eqref{eq3}. 
They involve a non-convex function $f$ and contain a `penalization kernel' $\mathbf m$. 
The idea is to single out the  local (nearest-neighbour) interaction  in the general discrete-to-continuum problem, and consider the corresponding potential $f$ as the function that needs to be `relaxed'.
The  nonlocal (beyond nearest-neighbour) interactions are assumed to be  linear.  The corresponding quadratic term in the energy  brings the simplest penalization  into the relaxation process. We show that even such a simple penalization may still carry incompatibility and may even lead to geometrical frustration. In what follows, with a slight  abuse of terminology, we will be referring to \eqref{eq3} as a
$\mathbf m$-dependent relaxation of a non-convex energy density $f$.  
Before giving the formal definitions, we make some preliminary comments distinguishing  \emph{penalized}  relaxation from \emph{non-penalized} relaxation.  

\subsection{Nearest-neighbour interaction and quadratic penalization}  
As it is well known, the convexification of a function $f$ can be seen as the result of a discrete-to-continuum relaxation process in a local setting involving nearest-neighbour interactions only. To be more specific, 
for any $k\in\mathbb N$ and $z\in \mathbb R$ we introduce the set
\begin{equation}\label{def-Akz}
\mathcal A(k;z)=\{u\colon[0,k]\cap\mathbb N\to\mathbb R \ 
\hbox{ such that } u(0)=0, u(k)=kz\}
\end{equation}
 of admissible test functions satisfying boundary conditions.  
Here the parameter $z$ represents   the affine boundary conditions $u(i)=L_z(i)$, where  $L_z(i)=iz$. 
\begin{proposition}[a characterization of the convex envelope]\label{convex}
Let $f\colon\mathbb R\to\mathbb R$. 
Then, the convex envelope of $f$ is 
\begin{eqnarray*}
&&f^{\ast\ast}(z)=\displaystyle\lim_{k\to+\infty}\frac{1}{k}\inf\bigg\{\sum_{i=1}^k f(u(i)-u(i-1)): \ 
u \in \mathcal A(k; z)
\bigg\}.
\end{eqnarray*}
\end{proposition}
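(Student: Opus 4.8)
The plan is to prove the two inequalities between $f^{**}(z)$ and the limit
$g(z):=\lim_{k\to\infty}\frac1k\inf\{\sum_{i=1}^k f(u(i)-u(i-1)):u\in\mathcal A(k;z)\}$ separately, after first checking that the limit exists.

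For the inequality $g(z)\le f^{**}(z)$: I would use the classical characterization $f^{**}(z)=\inf\{\theta f(z_1)+(1-\theta)f(z_2):\theta z_1+(1-\theta)z_2=z,\ \theta\in[0,1]\}$ (Carath\'eodory in dimension one). Given such a representation, for each $k$ I construct a competitor $u\in\mathcal A(k;z)$ whose increments take the value $z_1$ on roughly $\theta k$ indices and $z_2$ on the remaining ones, choosing the integer $\lfloor\theta k\rfloor$ appropriately so that the boundary condition $u(k)=kz$ is met exactly (adjusting at most one increment by an $O(1/k)$ amount, or—if $f$ is not assumed continuous—handling the case $z_1=z_2$ trivially and otherwise accepting an error that vanishes; in fact one can also first prove the inequality for rational $\theta$ and pass to the limit). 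Summing gives $\frac1k\sum_i f(u(i)-u(i-1))\le \theta f(z_1)+(1-\theta)f(z_2)+o(1)$, and taking $k\to\infty$ then the infimum over representations yields $g(z)\le f^{**}(z)$. This direction also shows the limit is finite.

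For the reverse inequality $g(z)\ge f^{**}(z)$: the key observation is Jensen's inequality applied to the convex function $f^{**}$. For any $u\in\mathcal A(k;z)$,
\[
\frac1k\sum_{i=1}^k f(u(i)-u(i-1))\ \ge\ \frac1k\sum_{i=1}^k f^{**}(u(i)-u(i-1))\ \ge\ f^{**}\!\Big(\frac1k\sum_{i=1}^k (u(i)-u(i-1))\Big)\ =\ f^{**}(z),
\]
using $f\ge f^{**}$, convexity of $f^{**}$, and the telescoping sum $\sum_i(u(i)-u(i-1))=u(k)-u(0)=kz$. Taking the infimum over $u$ and then $k\to\infty$ gives $g(z)\ge f^{**}(z)$. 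Existence of the limit then follows since we have sandwiched $\liminf\ge f^{**}(z)$ and $\limsup\le f^{**}(z)$.

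The main obstacle is a regularity/coercivity technicality rather than a conceptual one: without any growth or continuity hypothesis on $f$, the infimum over $\mathcal A(k;z)$ could a priori be $-\infty$ (e.g. if $f$ is unbounded below, then $f^{**}\equiv-\infty$ and the statement holds trivially once interpreted correctly), and matching the boundary condition exactly in the upper-bound construction requires care when $f$ is merely a function $\mathbb R\to\mathbb R$ with no continuity. I would address this by noting that if $f^{**}(z)=-\infty$ both sides agree, and otherwise $f^{**}$ is real-valued and finite near $z$, so $f$ is bounded below by an affine function there; the construction in the upper bound can then be carried out with increments in a fixed compact set, and the exact boundary condition is achieved by the $\lfloor\theta k\rfloor$-type adjustment whose contribution is controlled. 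The rest is routine bookkeeping.
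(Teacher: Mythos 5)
Your proof is correct and takes a genuinely different route from the paper's. The paper does not argue directly: immediately after the statement it interprets the formula as an instance of the convergence of minimum problems arising from the discrete-to-continuum $\Gamma$-convergence of the nearest-neighbour functionals $F^0_\e$ to $\int_I f^{**}(u')\,dt$, invoking a general theorem which it then reuses in Remark~\ref{queffgamma} for the nonlocal case. Your argument is elementary and self-contained: Jensen's inequality applied to the convex minorant $f^{**}$, together with the telescoping sum $\sum_i(u(i)-u(i-1))=kz$, gives the lower bound $\ge f^{**}(z)$ for every $k$, while a piecewise-affine competitor built from a near-optimal Carath\'eodory two-point representation of $f^{**}(z)$ gives the matching upper bound. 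The $\Gamma$-convergence route scales immediately to the nonlocal energies considered later; yours is transparent and free of machinery. On the boundary-matching point you flag, the cleanest implementation is to use $\lfloor\theta k\rfloor$ increments equal to $z_1$, $k-\lfloor\theta k\rfloor-1$ increments equal to $z_2$, and a single correcting increment $c_k=kz-\lfloor\theta k\rfloor z_1-(k-\lfloor\theta k\rfloor-1)z_2$, which lies in a fixed bounded set independent of $k$; its cost $f(c_k)/k$ then vanishes as soon as $f$ is locally bounded above, as it is under the paper's standing assumption \eqref{crescita} (the ``rational $\theta$'' variant you mention also works but needs a small extra step to approximate the Carath\'eodory representation with rational weights). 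Note finally that the subadditivity encoded in Remark~\ref{additivity} already yields existence of the limit via Fekete's lemma, independently of your sandwich argument.
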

\noindent It is useful in this context to interpret Proposition \ref{convex} as a consequence of 
discrete-to-continuum $\Gamma$-convergence (see e.g.~\cite[Ch.~4.2]{GCB}).
Indeed, define for a given bounded interval $I$ and for any $\e>0$   the set of indices 
$\mathcal I_\e(I)$ 
and the set of discrete functions $\mathcal A_\e(I)$ given by 
\begin{equation}\label{def-ind}
\mathcal I_\e(I)=\{i\in\mathbb Z: \e i\in I\}, \quad \mathcal A_\e(I)=\{u\colon \e \mathcal I_\e(I)\to\mathbb R\},
\end{equation} 
respectively. 
Here and in the sequel, $u_i$ denotes the value $u(\e i)$, and  we identify $u\in\mathcal A_\e(I)$ with its piecewise-constant extension in $I$. 
Having defined 
\begin{equation}
F^0_\e(u;I)=\e\sum_{i,i-1\in \mathcal I_\e(I)}f\Big(\frac{u_i-u_{i-1}}{\e}\Big)
\end{equation}  
for $u\in\mathcal A_\e(I)$,
the $\Gamma$-limit with respect to the $L^2$-convergence of $F^0_\e$ is the functional 
 $F^0(u,I)=\int_I f^{**}(u^\prime)\, dt$ 
for $u\in H^1(I)$.
Then, choosing $\e_k=\frac{1}{k}$, by the convergence of minimum problems we get 
\begin{eqnarray*}
f^{\ast\ast}(z)&=&\min\{F^0(u;(0,1)): u(0)=0, u(1)=z\}\\
&=&\lim_{k\to+\infty}
\min\{F^0_{\e_k}(u;(0,1)): u(0)=0, u(1)=z\}, 
\end{eqnarray*}
which is the desired formula up to a change of variable. 

\begin{remark}[additivity]\label{additivity}\rm Note that the problems defining $f^{\ast\ast}$ are {\em additive}, in the sense that, 
setting 
$$\mu(k,z)=\inf\bigg\{\sum_{i=1}^k\! f(u_i-u_{i-1}): \ 
u \in \mathcal A(k; z)
\bigg\},$$ 
we have 
$\mu(k,z)=\min\big\{\mu(k_1,z_1)+\mu(k_2,z_2): k_1+k_2=k, \ k_1z_1+k_2z_2=kz\big\}$.
\end{remark}

We now add to the nearest-neighbour term, described by a non-convex function $f$,  a quadratic long-range term  which brings the simplest  penalization of global inhomogeneity while promoting uniformity in the sense of averages.

To this end  we introduce a sequence 
$\mathbf m=\{m_n\}_{n\in \mathbb N}$ such that 
\begin{equation}\label{propm}
m_n\geq 0\ \ \hbox{\rm for any } n \ \ \hbox{\rm and } \ \ m_n=o(n^{-\beta})_{n\to+\infty} 
\ \ \hbox{\rm for some }\ \ \beta>3. 
\end{equation}
Such penalization has an `antiferromagnetic' character, in that it in fact favors local oscillations induced by the non-convexity of $f$. 

\begin{figure}[h!]
\centerline{\includegraphics[width=0.7\textwidth]{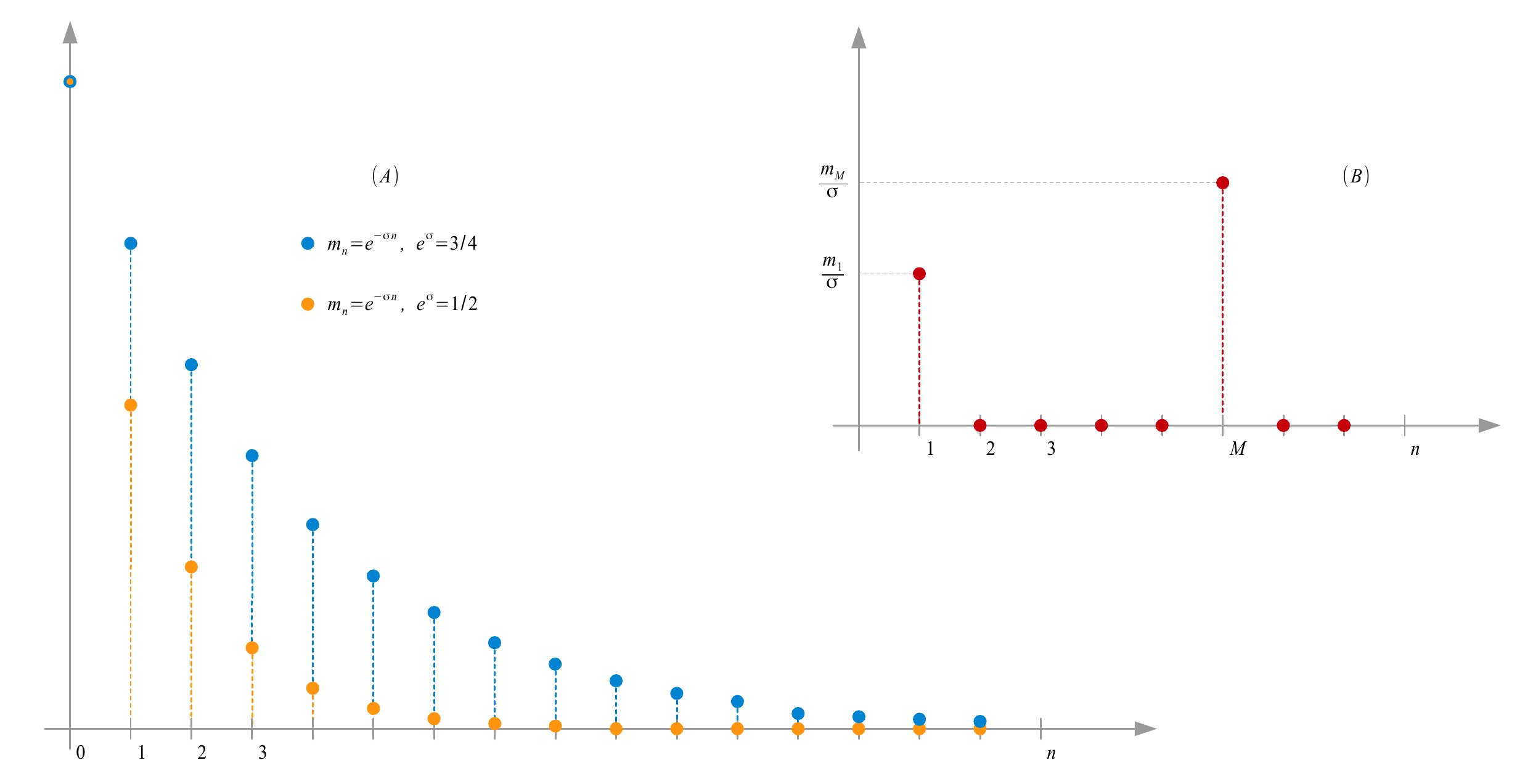}}
\caption{representation of exponential and concentrated kernels.
}
\label{istogram}
\end{figure} 
In the sequel, an important role will be played by the two special families of kernels: exponential,   $m_n=e^{-\sigma n}$, and concentrated at some $M$, $m_n=0$ for all $n$ except $n=1$ and $n=M$ with $M\geq 2$; in the latter example  one can  similarly  account for  a  parameter $\sigma$ by using the new definitions,  $m_1^\sigma= (1/\sigma)m_1$ and $m_M^\sigma= (1/\sigma)m_M$, see Fig.~\ref{istogram}.
\smallskip

Before formally defining the penalized energy, we need to make some assumptions on $f$. These assumptions will be used to obtain the existence of the limit of minimum problems. Note that the hypotheses can be relaxed, but they are stated as follows 
in order to avoid unnecessary technicalities.  Our first simplifying assumption is
that the (non-convex) potential  $f\colon \mathbb R\to [0,+\infty)$  is  non-negative 
and that it satisfy a quadratic growth hypothesis; namely,  
\begin{equation}\label{crescita}
0\le f(z) \leq c(z^2+1) \ \ \hbox{ for some } c>0.  
\end{equation}
In addition to \eqref{crescita}, we will also assume  that the function $f$  satisfies 
\begin{equation}\label{crescitasotto}
{1\over c}z^2\le f(z)+ m_1z^2.
\end{equation}
Note that hypothesis \eqref{crescitasotto} is automatically satisfied if $m_1>0$. We  will  point out specifically in which  of the   cases  assumption  \eqref{crescitasotto} is not  necessary.

\begin{definition}[relaxation with kernel $\bf m$]\label{defQcap}
For all $z\in\mathbb R$ we set 
$$
\widehat Q_{\bf m}f(z) 
=\lim_{k\to+\infty}\frac{1}{k}\inf\bigg\{\sum_{i=1}^k f(u_i-u_{i-1})+
\sum_{i,j=0}^k m_{|i-j|}(u_i-u_j)^2: 
\ u \in \mathcal A(k; z)\bigg\}.$$
\end{definition} 

The function $\widehat Q_{\mathbf m}f$ is well defined since the limit exists by known discrete-to-continuum results (see formula \eqref{limiteQ} below). For this existence the growth condition is essential; however, in some cases we will use this formula also for some degenerate $f$ for which the limit exists.
Note that, except for the case when only nearest-neighbours are involved, 
the minimum problems defining $\widehat Q_{\mathbf m}f$ 
are not additive in the sense of Remark \ref{additivity}.

\subsection{General properties of  $\widehat Q_{\bf m}f(z)$}  
In this section, we list some properties of the relaxation with kernel $\bf m$ derived from its variational nature. 
\begin{remark}[nearest-neighbour interactions]\label{corNN}\rm 
By Proposition \ref{convex}, the convex envelope of $f$ can be viewed as $\widehat Q_{\bf 0}f$, 
where $\bf m=\bf 0$ is the trivial kernel $m_n=0$ for any $n\ge 1$; that is, 
\begin{equation}\label{q0f} 
\widehat Q_{\bf 0}f(z)=f^{\ast\ast}(z). 
\end{equation} 
More in general, again by Proposition \ref{convex}, we obtain 
that $\widehat Q_{\bf m}f(z)=(f(z)+2m_1z^2)^{**}$ 
if $m_n=0$ for any $n\ge 2$. 
Note that in these cases we have no non-additivity effects. 
\end{remark} 

\begin{remark}[$\widehat Q_{\bf m}f$ as a $\Gamma$-limit]\label{queffgamma}\rm
The fact that $\widehat Q_{\bf m}f$ is well defined and some of its key properties follow by the fact that 
the functional $F$ defined by  
 $F(u)=\int_{I} \widehat Q_{\bf m}f(u^\prime)\, dt$ 
for $I$ bounded interval and  $u\in H^1(I)$ 
can be interpreted as the $\Gamma$-limit of a suitable sequence of discrete functionals $F_\e$.    
Indeed,  consider the functionals 
\begin{equation}\label{def-fe-prima}
F_\e(u;I)=\e\sum_{i,i-1\in \mathcal I_\e(I)}f\Big(\frac{u_i-u_{i-1}}{\e}\Big)+
\e \sum_{i,j\in \mathcal I_\e(I)} 
m_{|i-j|}\Big(\frac{u_i-u_j}{\e}\Big)^2  
\end{equation} 
defined in $\mathcal A_\e(I)$, with $\mathcal I_\e(I)$ and $\mathcal A_\e(I)$ as in \eqref{def-ind}.
Such functionals can be rewritten as 
$$F_\e(u;I)=\sum_{h\ge 1}\ \sum_{j,j+h\in \mathcal I_\e(I)}\e\, f^h\Big(\frac{u_{j+h}-u_j}{\e h}\Big)$$
where
 $f^1(z)=f(z)+2 m_1z^2$ and $f^h(z)=2z^2h^2m_h$ if $h>1$. 
With this notation, functionals $F_\e$ satisfy the hypotheses of \cite[Theorem 6.3]{AC}; that is,   
$f^1(z)\geq c_1 z^2$ with $c_1>0$, and $f^h(z)\leq c_hz^2$ with $\sum_{h}c_h<+\infty$. The 
lower bound follows by the growth hypothesis \eqref{crescitasotto}, and 
the upper bound by \eqref{crescita} and by hypothesis \eqref{propm} on $\bf m$.  
Hence, the $\Gamma$-limit of $F_\e$ with respect to the $L^2$-convergence is represented by the functional 
 $F(u,I)=\int_I f_{\rm hom}(u^\prime)\, dt$,  
where $f_{\rm hom}$ satisfies the homogenization formula 
\begin{equation}\label{limiteQ}
f_{\rm hom}(z)=\lim_{k\to+\infty}\frac{1}{k}\inf\bigg\{\sum_{h=1}^{k}\sum_{j=0}^{k-h-1}f^h\big(\frac{u_{j+h}-u_j}{h}\big): u\in \mathcal A(k;z)\bigg\}.
\end{equation}
Rewriting this formula, we get that the function $f_{\rm hom}$ coincides with  
the function $\widehat Q_{\bf m}f$ introduced in Definition \ref{defQcap}, 
which  proves that it 
is well-defined as a limit. 
\end{remark}

\begin{remark}\rm  
Note that, while condition \eqref{crescitasotto} can be relaxed by requiring that $f$ has a superlinear growth (not necessarily quadratic), it cannot be dropped altogether. Indeed, if $f=0$, $m_2\neq 0$ and $m_n=0$ otherwise, then the limit in Definition \ref{defQcap} does not exist. 
\end{remark}

The following proposition states 
the convexity of $\widehat Q_{\bf m}f$, 
which is ensured by the  
lower semicontinuity of the $\Gamma$-limit. 
\begin{proposition}[convexity of 
$\widehat Q_{\bf m}f$] 
\label{propT} 
Let $\bf m$ be as in \eqref{propm} and let $f\colon \mathbb R\to [0,+\infty)$ be a non-negative 
function satisfying \eqref{crescita} and \eqref{crescitasotto}. Then the function $\widehat Q_{\bf m}f$ is convex.
\end{proposition}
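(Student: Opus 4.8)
The plan is to derive the convexity of $\widehat Q_{\bf m}f$ directly from its identification, established in Remark~\ref{queffgamma}, as the homogenized energy density of the $\Gamma$-limit of the discrete functionals $F_\e$. Recall that the $\Gamma$-limit of $F_\e$ with respect to $L^2$-convergence is the functional $F(u;I)=\int_I \widehat Q_{\bf m}f(u')\,dt$ defined on $H^1(I)$. Since $\Gamma$-limits are lower semicontinuous with respect to the topology used, $F$ is $L^2$-lower semicontinuous on $H^1(I)$. The key classical fact I would invoke is that an integral functional $u\mapsto\int_I g(u')\,dt$ on $H^1(I)$ which is lower semicontinuous with respect to weak-$H^1$ convergence (equivalently, in this one-dimensional setting with the quadratic coercivity \eqref{crescitasotto}, with respect to $L^2$-convergence along bounded sequences) forces the integrand $g$ to be convex. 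So the core of the argument is to turn ``$F$ is $L^2$-l.s.c.'' into ``$\widehat Q_{\bf m}f$ is convex.''

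Concretely, I would argue as follows. Fix $z_1,z_2\in\mathbb R$ and $\theta\in(0,1)$, and set $z=\theta z_1+(1-\theta)z_2$. On $I=(0,1)$ construct the standard oscillating sequence $u_j\in H^1(I)$ whose derivative takes value $z_1$ on a set of measure $\theta$ and value $z_2$ on a set of measure $1-\theta$, arranged in finer and finer layers, and normalized so that $u_j(0)=0$; then $u_j\to u$ in $L^2(I)$ (indeed uniformly), where $u(t)=zt$ is affine, while $\int_I\widehat Q_{\bf m}f(u_j')\,dt=\theta\,\widehat Q_{\bf m}f(z_1)+(1-\theta)\,\widehat Q_{\bf m}f(z_2)$ for every $j$. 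Lower semicontinuity of the $\Gamma$-limit $F$ along this sequence yields
\[
\widehat Q_{\bf m}f(z)=\int_I\widehat Q_{\bf m}f(u')\,dt=F(u;I)\le\liminf_{j}F(u_j;I)=\theta\,\widehat Q_{\bf m}f(z_1)+(1-\theta)\,\widehat Q_{\bf m}f(z_2),
\]
which is exactly convexity. One should make sure the $u_j$ are genuinely admissible (piecewise affine, hence in $H^1$) and that $\widehat Q_{\bf m}f$ has at most quadratic growth so that $\int_I\widehat Q_{\bf m}f(u_j')\,dt$ is finite and the computation is legitimate; this growth bound follows from \eqref{crescita} and \eqref{propm} (or simply from the explicit formula \eqref{limiteQ}, noting $\widehat Q_{\bf m}f\le$ the energy of the affine competitor).

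An alternative, and perhaps cleaner, route avoids $\Gamma$-convergence machinery entirely and works from the homogenization formula \eqref{limiteQ} (equivalently Definition~\ref{defQcap}) by a near-optimal-competitor gluing argument: given $z_1,z_2$ and rational $\theta=p/q$, take near-minimizers for $F(\cdot;k_1)$ at slope $z_1$ and for $F(\cdot;k_2)$ at slope $z_2$ with $k_1/(k_1+k_2)\to\theta$, translate the second so its left endpoint matches the right endpoint of the first, and estimate the cross-interaction energy between the two blocks. The point is that the cross terms $m_{|i-j|}(u_i-u_j)^2$ between a node in the first block and a node in the second involve $|i-j|$ large for most pairs, and the summability $\sum_n n^2 m_n<\infty$ coming from \eqref{propm} (with $\beta>3$) makes the total cross-energy $o(k)$ — this is precisely the estimate already used implicitly to prove existence of the limit via \cite[Theorem 6.3]{AC}. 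This gives $\widehat Q_{\bf m}f(z)\le\theta\,\widehat Q_{\bf m}f(z_1)+(1-\theta)\,\widehat Q_{\bf m}f(z_2)$ for rational $\theta$, and continuity (which again follows from convexity-in-the-making plus local boundedness, or can be bootstrapped) extends it to all $\theta\in[0,1]$.

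I expect the main obstacle to be a matter of bookkeeping rather than of ideas: in the $\Gamma$-convergence route, the only real point to check is that the oscillating recovery-type sequence genuinely realizes the value $\theta\,\widehat Q_{\bf m}f(z_1)+(1-\theta)\,\widehat Q_{\bf m}f(z_2)$ as its energy (trivial, since the integrand is evaluated at the piecewise-constant derivative) and that $F$ being the $\Gamma$-limit entails it is $L^2$-l.s.c. on $H^1$ (standard). In the direct route, the delicate step is the quantitative control of the long-range cross-interactions across the interface between the two glued blocks — showing it is $o(k)$ uniformly — but since the hypothesis \eqref{propm} with $\beta>3$ was tailored exactly so that $\sum_n n^2 m_n<\infty$ and this is the same estimate underpinning \eqref{limiteQ}, I would simply cite the relevant bound from \cite{AC} rather than redo it. Given that Remark~\ref{queffgamma} already hands us the $\Gamma$-convergence statement, the first route is the shortest and is the one I would write up.
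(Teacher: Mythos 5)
Your primary route is exactly the paper's (one-sentence) argument: the paper simply states that convexity ``is ensured by the lower semicontinuity of the $\Gamma$-limit'' established in Remark~\ref{queffgamma}, and your explicit oscillating-sequence construction is just the standard unpacking of why lower semicontinuity of $u\mapsto\int_I g(u')\,dt$ forces convexity of $g$. Your alternative gluing route is not the paper's but is also viable given the decay \eqref{propm}.
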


In the following remark we highlight that the boundary conditions can be transformed in conditions on 
a boundary layer, which are more convenient for computations. 
\begin{remark}[alternative statements of boundary conditions]\label{boundary-affine}\rm 
The boundary conditions $u_0=0$ and $u_k=kz$ can be replaced by conditions on a boundary layer.
We state two different equivalent possibilities, that will both be used in the proofs.
In the first one the boundary layer is a small portion of the whole domain, parameterized by a small $\delta$, which then we let tend to $0$, as follows
\begin{eqnarray}\label{limdelta}\nonumber
\hskip-6cm\widehat Q_{\bf m}f(z)\!
=\!\lim_{\delta\to 0}\liminf_{k\to+\infty}\frac{1}{k}\inf\bigg\{\sum_{i=1}^k f(u_i-u_{i-1})+
\!\sum_{i,j=0}^k m_{|i-j|}(u_i-u_j)^2\!: 
 u \in \mathcal A_\delta(k; z)\bigg\}\hskip-1cm\ \\
=\!\lim_{\delta\to 0}\limsup_{k\to+\infty}\frac{1}{k}\inf\bigg\{\sum_{i=1}^k f(u_i-u_{i-1})+
\!\sum_{i,j=0}^k m_{|i-j|}(u_i-u_j)^2\!: 
 u \in \mathcal A_\delta(k; z)\bigg\},
\end{eqnarray}  where
$$\mathcal A_\delta(k;z)=\{u\in\mathcal A(k;z): u_i=iz
\ \hbox{\rm if }\ i\leq \delta k \ \hbox{\rm and }\ i\geq (1-\delta)k\}.$$
In the second one the double limit is replaced by a $k$-depending boundary layer at a mesoscopic scale, as follows
\begin{eqnarray}\label{limscala}
\widehat Q_{\bf m}f(z)
=\lim_{k\to+\infty}\frac{1}{k}\inf\bigg\{\sum_{i=1}^k f(u_i-u_{i-1})+
\sum_{i,j=0}^k m_{|i-j|}(u_i-u_j)^2: 
\ u \in \mathcal A_{k^{\alpha}}(k; z)\bigg\},
\end{eqnarray} with $\alpha\in (-1,0)$. 

These formulas can be proved by an argument which is customary to variational treatments of homogenization problems (see e.g.~\cite{AC}). 
In proving formulas \eqref{limdelta} and \eqref{limscala}, it is necessary to use the growth hypothesis \eqref{crescitasotto}. 
In case it does not hold, the limits in formulas 
\eqref{limdelta} and \eqref{limscala} may be different from the limit in the definition 
of $\widehat Q_{\bf m}f$. 
\end{remark}

We now give some general estimates on $\widehat Q_{\bf m}f$. 

\begin{remark}[estimates by decomposition for $\widehat Q_{\bf m}f$]\label{lowerrem}
\rm If $\bf m=\bf m^\prime+\bf m^{\prime\prime}$; that is, $m_n=m^{\prime}_n+ m^{\prime\prime}_n$ for all $n$, and $f=g+h$,  then  we have  
$$\widehat Q_{\bf m}f(z)\geq \widehat Q_{\bf m^\prime}g(z)+
\widehat Q_{\bf m^{\prime\prime}}h(z).$$ 
\end{remark}

In Remark \ref{corNN} we have examined the case when ${\bf m}={\bf 0}$. It may be of interest to consider the case when conversely $f=0$ as in the following lemma. If $\bf m$ is as in \eqref{propm}, then we set 
\begin{equation}\label{defam}
a_{\mathbf m}=2\sum_{n=1}^{+\infty}m_n n^2. 
\end{equation}

\begin{lemma}[minimization of the quadratic part]\label{ubquic} 
Let $m_1>0$, so that \eqref{crescitasotto} is satisfied with $f=0$. 
Then we have $\widehat Q_{\bf m}0(z)= a_{\bf m}z^2$.
\end{lemma}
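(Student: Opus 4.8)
\textbf{Proof proposal for Lemma \ref{ubquic}.}

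The plan is to compute directly the quantity $\widehat Q_{\bf m}0(z)$ from the homogenization formula in Definition \ref{defQcap} with $f=0$, showing that the unique asymptotically optimal test functions are the affine ones $u_i=iz$. Since $f=0$, the energy to be minimized over $u\in\mathcal A(k;z)$ is purely the quadratic penalization $\sum_{i,j=0}^k m_{|i-j|}(u_i-u_j)^2$. First I would obtain the upper bound: plugging in the affine competitor $u_i=iz$ gives $\sum_{i,j=0}^k m_{|i-j|}(i-j)^2z^2 = z^2\sum_{n=-k}^{k} m_{|n|} n^2 (k+1-|n|)$ (counting, for each $n$, the number of pairs $(i,j)$ with $i-j=n$), and dividing by $k$ and letting $k\to+\infty$ yields $2z^2\sum_{n\ge1} m_n n^2 = a_{\bf m}z^2$, using the summability $\sum_n m_n n^2<+\infty$ guaranteed by \eqref{propm} (since $\beta>3$) to pass to the limit by dominated convergence. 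So $\widehat Q_{\bf m}0(z)\le a_{\bf m}z^2$.

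For the lower bound, the key observation is that since $m_1>0$, the term with $n=1$ already controls $\sum_i (u_i-u_{i-1})^2$ from below, but more usefully, one can simply drop all penalization terms except those coming from a single fixed shift. The cleanest route: for each fixed $h\ge1$ with $m_h>0$, retain only the pairs $(i,j)$ with $|i-j|=h$, so the energy is bounded below by $m_h\sum_{j}(u_{j+h}-u_j)^2$; by the same additivity/convexity reasoning behind Proposition \ref{convex} applied to the convex function $t\mapsto m_h h^2 t^2$ (i.e. rewriting $u_{j+h}-u_j$ as a telescoping sum and using Jensen, as in Remark \ref{queffgamma} with $f^h(z)=2z^2h^2m_h$), the $\liminf$ of $\frac1k$ times this is at least $2 m_h h^2 z^2$ — and summing such lower bounds over all $h$ (which is legitimate because the pair-classes for distinct $h$ are disjoint, cf. Remark \ref{lowerrem}) recovers $a_{\bf m}z^2$. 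Alternatively, and perhaps more transparently, one invokes Proposition \ref{propT}: $\widehat Q_{\bf m}0$ is convex, and combined with the homogeneity $\widehat Q_{\bf m}0(\lambda z)=\lambda^2\widehat Q_{\bf m}0(z)$ (immediate from scaling $u\mapsto\lambda u$ in the definition) a convex function that is $2$-homogeneous and finite must be of the form $c z^2$ for a constant $c\ge0$; the upper-bound computation then pins down $c=a_{\bf m}$, provided one also checks $c>0$, which follows since dropping all but the $n=1$ term gives $\widehat Q_{\bf m}0(z)\ge 2m_1 z^2>0$ for $z\ne0$.

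The step I expect to require the most care is the passage to the limit in the lower bound — specifically, justifying that no clever oscillating test function can beat the affine one. The subtlety is that, a priori, a minimizer could lower the $n=1$ contribution at the cost of raising contributions at larger $n$, or vice versa; the disjointness of the shift-classes and the fact that \emph{every} $f^h$ is convex (so each partial sum is separately minimized, up to boundary-layer corrections, by the affine profile with the correct average slope) is what rules this out, and this is exactly the content of the additive structure noted in Remark \ref{additivity} together with the $\Gamma$-convergence result \cite[Theorem 6.3]{AC} quoted in Remark \ref{queffgamma}. Since that machinery is already available in the excerpt, the lemma follows by simply specializing it to $f=0$ and reading off $f_{\rm hom}(z)=\sum_{h\ge1}2h^2 m_h z^2=a_{\bf m}z^2$; the boundary-layer terms are asymptotically negligible after division by $k$, which is the only genuinely technical point and is handled exactly as in Remark \ref{boundary-affine}.
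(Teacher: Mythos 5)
Your first route for the lower bound is essentially the paper's proof: for each distance $h$ you group the interactions by residue class mod $h$, apply Jensen's inequality to the quadratic shift, telescope, and use the near-boundary affine constraint; summing over $h\le N$ and letting $N\to\infty$ (with summability $\sum_h m_h h^2<\infty$) gives $\widehat Q_{\bf m}0(z)\ge a_{\bf m}z^2$. Together with the affine upper-bound computation, which you also carry out exactly as in the paper, this completes the argument. So the core of the proposal is sound and follows the same path.

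Your ``alternatively, and perhaps more transparently'' route does \emph{not} close the proof, however, and I would flag it as a genuine gap if taken as the main argument. Convexity (Proposition \ref{propT}) plus the scaling identity $\widehat Q_{\bf m}0(\lambda z)=\lambda^2\widehat Q_{\bf m}0(z)$ does indeed force $\widehat Q_{\bf m}0(z)=cz^2$ for some constant $c$, which is a nice observation not made explicitly in the paper. But the affine test function only gives the \emph{inequality} $c\le a_{\bf m}$, not $c=a_{\bf m}$; and the $n=1$ term you invoke gives $c\ge 2m_1$, which in general is strictly smaller than $a_{\bf m}=2\sum_n m_n n^2$. So nothing in this route ``pins down'' $c$; to identify $c$ with $a_{\bf m}$ you still need the per-distance Jensen argument of your first route. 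In other words, the homogeneity reduction replaces ``compute a function'' with ``compute a number'', but the computation of that number is exactly the lower bound, so the work is not avoided. Similarly, the statement that one can ``read off'' $f_{\rm hom}(z)=\sum_h 2h^2m_h z^2$ from \cite[Theorem 6.3]{AC} is a bit too optimistic: that theorem certifies existence of the $\Gamma$-limit via the abstract cell formula \eqref{limiteQ}, but the explicit evaluation of that formula at $f=0$ is precisely what this lemma has to prove, and it is the per-distance convexity estimate (your Route 1, the paper's proof) that accomplishes it.
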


\begin{proof}
By using $u_i=iz$ as a test function in the definition of $\widehat Q_{\bf m}0(z)$  
we get the inequality $\widehat Q_{\bf m}0(z)\leq a_{\bf m}z^2$, 
after noting that 
$$
\lim_{k\to+\infty}\frac{1}{k}\sum_{i,j=0}^k m_{|i-j|}(i-j)^2=
\lim_{k\to+\infty}\frac{2}{k}\sum_{n=1}^k (k-n+1)m_{n}n^2=
2\sum_{n=1}^{+\infty}m_n n^2=a_{\mathbf m}.$$ 

It then suffices to prove that for all fixed $N$ we have 
$$
\widehat Q_{\bf m}0(z)\ge 2\sum_{n=1}^{N}m_n n^2z^2.
$$
With fixed $\alpha\in(-1,0)$, let $u$ be a test function for the problem in \eqref{limscala} with $f=0$ for $k^{1+\alpha}>N$. We then have 
\begin{equation}\label{epv}
{1\over k} \Bigl(\sum_{i=1}^k2 m_1(u_i-u_{i-1})^2\Bigr)\ge 2m_1 z^2.
\end{equation}

If $n\in\{2,\ldots,N\}$ and $\ell\in\{0,\ldots, n-1\}$, let $i_\ell=\lceil\frac{k-\ell}n\rceil$. We can rewrite the energy due to interactions at distance $n$ as 
\begin{eqnarray*}
{1\over k}2m_n\sum_{\ell=0}^{n-1}\sum_{i=1}^{i_\ell} (u_{\ell+in}-u_{\ell+(i-1)n})^2
&\ge& {1\over k}2m_n\sum_{\ell=0}^{n-1}i_\ell\Bigl({1\over i_\ell}\sum_{i=1}^{i_\ell}(u_{\ell+in}-u_{\ell+(i-1)n})\Bigr)^2\\
&=&{1\over k}2m_n\sum_{\ell=0}^{n-1}i_\ell\Bigl(\frac{u_{\ell+i_\ell n}-u_{\ell}}{i_\ell}\Bigr)^2={1\over k}2m_n\sum_{\ell=0}^{n-1}i_\ell n^2 z^2\\
&\ge&2m_n \frac{n}k \Bigl\lceil\frac{k-n}n\Bigr\rceil n^2 z^2= 2m_n (1+o_k(1)) n^2 z^2,
\end{eqnarray*}
where we have used the convexity inequality and the boundary condition $u_j=jz$ close to the boundary.
Summing up for $n\in\{2,\ldots,N\}$ and using \eqref{epv}, we prove the claim.
\end{proof}

In the following proposition we compare $\widehat Q_{\bf m}f$ with the convex envelope of $f$ and with $f$ itself (to be more accurate, taking into account the case that $f$ is not lower semicontinuous, with the lower-semicontin\-uous envelope of $f$).

\begin{proposition}[trivial bounds for $\widehat Q_{\bf m}f$]\label{disug}
Let $\bf m$ be as in \eqref{propm} and let $f\colon \mathbb R\to [0,+\infty)$ be a non-negative 
function satisfying \eqref{crescita} and \eqref{crescitasotto}. 
The inequalities 
\begin{equation}\label{estimates}
f^{\ast\ast}(z)+a_\mathbf m
 z^2
\leq
 \widehat Q_{\bf m}f(z)\leq \big(f(z)+a_\mathbf m
 z^2\big)^{\ast\ast}
 \le \overline f(z)+a_\mathbf m 
 z^2 
\end{equation}
hold, where
$\overline f$ denotes the lower-semi\-continuous envelope of $f$; i.e., the largest lower-semiconti\-nuous function not larger than $f$. 
\end{proposition}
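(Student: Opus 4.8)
The plan is to establish the three inequalities in \eqref{estimates} from left to right, using the variational characterization of $\widehat Q_{\bf m}f$ together with the estimates already proved. The two rightmost inequalities are relatively soft: for the middle one, $\widehat Q_{\bf m}f(z)\le (f(z)+a_{\bf m}z^2)^{**}$, I would take the affine interpolant $u_i=iz$ as a test function whenever we want to realize the value $f(z)+a_{\bf m}z^2$, and then invoke additivity (Remark \ref{additivity}) in the spirit of Proposition \ref{convex} to pass to the convex envelope; more precisely, since $\widehat Q_{\bf m}f(z)$ is itself convex by Proposition \ref{propT} and is dominated by $f(z)+a_{\bf m}z^2$ (again using $u_i=iz$ and the computation of $\lim_k\frac1k\sum_{|i-j|}m_{|i-j|}(i-j)^2=a_{\bf m}$ carried out in the proof of Lemma \ref{ubquic}), it lies below the convex envelope of that function. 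For the last inequality, $(f(z)+a_{\bf m}z^2)^{**}\le \overline f(z)+a_{\bf m}z^2$, I would note that $z\mapsto a_{\bf m}z^2$ is convex, so the convex envelope of $f+a_{\bf m}z^2$ is at most $\overline f + a_{\bf m}z^2$ pointwise (the lower-semicontinuous envelope dominates the convex envelope, and adding a convex function commutes appropriately with taking envelopes from above).

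The substantive inequality is the lower bound $f^{**}(z)+a_{\bf m}z^2\le \widehat Q_{\bf m}f(z)$, and this is where I expect the main work to lie. The naive split ``$f\ge f^{**}$ on the nearest-neighbour term, plus the quadratic term is $\ge a_{\bf m}z^2$ by Lemma \ref{ubquic}'' does not immediately work, because the minimizing configuration for $\widehat Q_{\bf m}f$ need not make the quadratic part close to $a_{\bf m}z^2$ — that bound in Lemma \ref{ubquic} was obtained for the \emph{pure} quadratic problem where the affine configuration is optimal. The clean way around this is to invoke the decomposition estimate of Remark \ref{lowerrem}: write $f = f + 0$ and ${\bf m}={\bf m}'+{\bf m}''$ with ${\bf m}'$ the kernel that is $0$ for all $n\ge2$ and equals $m_1$ at $n=1$ (absorbing the nearest-neighbour quadratic contribution), and ${\bf m}''$ the kernel with $m''_1=0$ and $m''_n=m_n$ for $n\ge2$. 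Then Remark \ref{lowerrem} gives $\widehat Q_{\bf m}f(z)\ge \widehat Q_{{\bf m}'}f(z)+\widehat Q_{{\bf m}''}0(z)$. By Remark \ref{corNN}, $\widehat Q_{{\bf m}'}f(z)=(f(z)+2m_1z^2)^{**}\ge f^{**}(z)+2m_1z^2$ (since adding a convex term only raises the convex envelope). And by Lemma \ref{ubquic} — applied with kernel ${\bf m}''$, for which one checks $\widehat Q_{{\bf m}''}0(z)=a_{{\bf m}''}z^2 = (a_{\bf m}-2m_1)z^2$; here one must be slightly careful since $m''_1=0$ so \eqref{crescitasotto} with $f=0$ may fail, but the lower bound part of the proof of Lemma \ref{ubquic}, which only uses test functions and the convexity inequality, still yields $\widehat Q_{{\bf m}''}0(z)\ge (a_{\bf m}-2m_1)z^2$, and this is all we need. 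Summing, $\widehat Q_{\bf m}f(z)\ge f^{**}(z)+2m_1z^2+(a_{\bf m}-2m_1)z^2=f^{**}(z)+a_{\bf m}z^2$.

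The main obstacle, then, is really bookkeeping about the growth hypothesis \eqref{crescitasotto}: Lemma \ref{ubquic} is stated under $m_1>0$, whereas the piece ${\bf m}''$ has $m''_1=0$. I would handle this by extracting from the proof of Lemma \ref{ubquic} the one-sided statement ``$\widehat Q_{\bf m''}0(z)\ge 2\sum_{n=1}^N m''_n n^2 z^2$ for every $N$,'' which is proved there purely by the convexity inequality on interactions at each distance $n\ge2$ together with the boundary condition $u_j=jz$ near the endpoints in formulation \eqref{limscala} — no positivity of $m_1$ and no coercivity is used in that direction. Letting $N\to\infty$ gives the bound with the full sum $a_{\bf m}-2m_1$. (Strictly speaking one should check that the formulation \eqref{limscala}, used in that argument, is available here; since $m_1>0$ for the full kernel ${\bf m}$ we may equally well run the decomposition argument at the level of the finite-$k$ energies with boundary layer $k^\alpha$ before passing to the limit, which sidesteps any issue.) A secondary, purely cosmetic point is to note that when $f$ is merely lower semicontinuous we have $\overline f = f$ and the chain reads as stated; when $f$ fails lower semicontinuity, the minimum problems only see $\overline f$, which is why $\overline f$ appears on the right. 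Once these points are dispatched, all four quantities are sandwiched and the proof is complete. $\qquad\square$
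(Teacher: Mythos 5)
Your proof is correct, but your lower bound uses a more involved decomposition than the paper. The paper invokes Remark \ref{lowerrem} with the trivial split ${\bf m}={\bf 0}+{\bf m}$, $g=f$, $h=0$, giving $\widehat Q_{\bf m}f\ge \widehat Q_{\bf 0}f+\widehat Q_{\bf m}0=f^{**}+a_{\bf m}z^2$ in one step via Remark \ref{corNN} and Lemma \ref{ubquic}. You instead split the kernel at $n=1$, keeping $m_1$ in ${\bf m}'$ and putting $m_n$, $n\ge 2$, into ${\bf m}''$; this costs you the extra inequality $(f+2m_1z^2)^{**}\ge f^{**}+2m_1z^2$ and a patch to Lemma \ref{ubquic}, since ${\bf m}''$ has $m''_1=0$ and falls outside that lemma's hypothesis. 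Your patch — extracting only the lower-bound half of the lemma's proof, which uses the convexity inequality and the boundary conditions but not $m_1>0$ — is valid, and it is in fact relevant even for the paper's simpler decomposition: the paper cites Lemma \ref{ubquic} for $\widehat Q_{\bf m}0=a_{\bf m}z^2$, which literally requires $m_1>0$, whereas Proposition \ref{disug} only assumes \eqref{crescitasotto}, which can hold with $m_1=0$ if $f$ is coercive; in that edge case one needs precisely your one-sided extraction. On the upper bound your arguments (affine test function, convexity, continuity of $\widehat Q_{\bf m}f$) match the paper's; your direct derivation of $(f+a_{\bf m}z^2)^{**}\le\overline f+a_{\bf m}z^2$ from the fact that the convex envelope is dominated by the lower-semicontinuous envelope fills in a step the paper leaves implicit.
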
 
\begin{proof}
By using $u_i=iz$ as a test function in the definition of $\widehat Q_{\bf m}f(z)$  
we get the inequality $\widehat Q_{\bf m}f(z)\leq f(z)+a_{\bf m}z^2$ as in the first part of the proof of Lemma  \ref{ubquic}.
Since $\widehat Q_{\bf m}f$ is continuous by Proposition \ref{propT}, this ensures that
$\widehat Q_{\bf m}f(z)\leq \overline f(z)+a_{\bf m}z^2$.  
Since $\widehat Q_{\bf m}f(z)$ is convex, we also obtain $
\widehat Q_{\bf m}f(z)\leq (f(z)+a_\mathbf m z^2)^{\ast\ast}$.   
The lower bound is obtained by using Remark \ref{lowerrem} with the choice $g=f$, $h=0$,
$\bf m^\prime=\bf 0$ and $\bf m^{\prime\prime}=\bf m$. This gives 
$$\widehat Q_{\bf m}f(z)\geq  \widehat Q_{\bf 0}f(z)+\widehat Q_{\bf m}0(z)  = f^{\ast\ast}(z)+a_{\bf m}z^2$$ 
since $\widehat Q_{\bf m}0(z)= a_{\bf m}z^2$ by Lemma  \ref{ubquic}, and $\widehat Q_{\bf 0}f(z)=f^{\ast\ast}(z)$.
\end{proof}

\begin{corollary}\label{convcor}
If $f$ is convex, then $\widehat Q_{\bf m}f(z)=f(z)+a_{\bf m}z^2$. 
\end{corollary}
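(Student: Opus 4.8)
The plan is to derive Corollary \ref{convcor} directly from Proposition \ref{disug}. When $f$ is convex, it is in particular lower semicontinuous, so $\overline f = f$, and the rightmost term in the chain of inequalities \eqref{estimates} collapses to $f(z)+a_{\bf m}z^2$. This already pins down the upper bound: $\widehat Q_{\bf m}f(z)\le f(z)+a_{\bf m}z^2$.

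For the matching lower bound I would observe that convexity of $f$ gives $f^{**}=f$, so the leftmost term in \eqref{estimates} reads $f(z)+a_{\bf m}z^2$ as well. Hence the lower bound of Proposition \ref{disug} yields $\widehat Q_{\bf m}f(z)\ge f(z)+a_{\bf m}z^2$, and the two bounds together force equality.

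There is essentially no obstacle here; the only point requiring a line of care is the logical step that a real-valued convex function on $\mathbb R$ is continuous, hence lower semicontinuous, which legitimizes replacing $\overline f$ by $f$ — and likewise the elementary identity $f^{**}=f$ for convex $f$. Both are standard and can be invoked without comment. One should also note that hypothesis \eqref{crescitasotto} is not genuinely needed in the convex case beyond what is already assumed in Proposition \ref{disug}: if $f$ is convex and satisfies the upper growth bound \eqref{crescita}, then \eqref{crescitasotto} holds automatically (with a possibly larger constant $c$) unless $f$ is affine near $-\infty$ with zero slope, a degeneracy covered by the remark following \eqref{crescitasotto}; in any event the statement is applied with the standing assumptions of the section in force.

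So the proof is a two-line deduction:

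\begin{proof}
Since $f$ is convex, it is continuous, hence $\overline f=f$, and moreover $f^{**}=f$. Substituting these into the chain of inequalities \eqref{estimates} of Proposition \ref{disug}, the leftmost and rightmost terms both equal $f(z)+a_{\bf m}z^2$, whence $\widehat Q_{\bf m}f(z)=f(z)+a_{\bf m}z^2$.
\end{proof}
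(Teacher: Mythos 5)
Your proof is correct and is exactly the deduction the paper leaves implicit: Corollary \ref{convcor} appears immediately after Proposition \ref{disug} with no proof given, precisely because it follows by noting that $f^{**}=f$ and $\overline f = f$ for convex $f$, collapsing both ends of the chain \eqref{estimates} to $f(z)+a_{\bf m}z^2$. Your additional remarks on continuity and on the role of \eqref{crescitasotto} are sound but not needed; the two-line argument is the whole proof.
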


\begin{remark}[non-sharpness of lower bounds by decomposition]\rm 
If we apply Corollary \ref{convcor} to the estimate 
in Remark \ref{lowerrem} with  
$h$ convex and ${\bf m}^{\prime\prime}\neq {\bf 0}$, then the estimate gives an equality 
only if $\widehat Q_{\bf m}f(z)=f(z)+a_{\bf m}z^2$.  
\end{remark}

\subsection{Lower bound: optimization on nearest-neighbour clusters}
Rather remarkably, one can explicitly  compute  $\widehat Q_{\bf m}f$ when there is only one non-zero coefficient $m_M$ of $\bf m$ beside nearest neighbours.
The computation is obtained by optimizing on clusters of nearest neighbours of length $M$. 
 As a consequence one can obtain lower bound for a general $\bf m$, which are in general not sharp but however useful.
 
For any given $\lambda\geq 0$, we set  
\begin{equation}\label{flambda}
f_{\lambda}(z)=f(z)+\lambda z^2.
\end{equation}
In particular $f_{2m_1}(z)= f(z)+2m_1 z^2$ describes the total energy due to nearest-neighbour interactions.
We first rewrite Corollary \ref{convcor} in terms of  the effect of the convexity of this contribution.

\begin{proposition}[convex nearest-neighbour interactions]\label{coimqc}
Let $f$ be such that $f_{2m_1}$ is convex. Then
$$
\widehat Q_{\bf m}f(z)= f(z)+a_{\mathbf m} z^2.
$$
More in general, for an arbitrary $f$ this equality holds at all $z$ such that $f_{2m_1}(z)=f_{2m_1}^{**}(z)$.
\end{proposition}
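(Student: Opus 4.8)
The plan is to sandwich $\widehat Q_{\bf m}f(z)$ between the value $f(z)+a_{\bf m}z^2$ from above and the sharpened bound $f_{2m_1}^{\ast\ast}(z)+(a_{\bf m}-2m_1)z^2$ from below, and then observe that at the points $z$ in the statement the two coincide. The upper bound is immediate: testing the infimum in Definition \ref{defQcap} with the affine function $u_i=iz$ gives $\widehat Q_{\bf m}f(z)\le f(z)+a_{\bf m}z^2$, exactly as in the opening line of the proof of Proposition \ref{disug}. When $f_{2m_1}$ is convex this is consistent with the bound $(f(z)+a_{\bf m}z^2)^{\ast\ast}$ of Proposition \ref{disug}, since then $f(z)+a_{\bf m}z^2=f_{2m_1}(z)+(a_{\bf m}-2m_1)z^2$ is itself convex.

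For the lower bound I would use the boundary-layer formulation \eqref{limscala}, taking test functions $u\in\mathcal A_{k^{\alpha}}(k;z)$ with $\alpha\in(-1,0)$, so that $u_i=iz$ on a mesoscopic layer near each endpoint. The key is the energy splitting already exploited in Remark \ref{queffgamma}: writing $\sum_{i,j=0}^k m_{|i-j|}(u_i-u_j)^2=2\sum_{h=1}^k\sum_{j=0}^{k-h}m_h(u_{j+h}-u_j)^2$, the term $h=1$ merges with $\sum_{i=1}^k f(u_i-u_{i-1})$ to give $\sum_{i=1}^k f_{2m_1}(u_i-u_{i-1})$, so that the total energy equals
$$\sum_{i=1}^k f_{2m_1}(u_i-u_{i-1})\ +\ 2\sum_{h=2}^k\ \sum_{j=0}^{k-h}m_h\,(u_{j+h}-u_j)^2.$$
The first sum is bounded below, using $f_{2m_1}(v)\ge f_{2m_1}^{\ast\ast}(v)$, the convexity of $f_{2m_1}^{\ast\ast}$, and $\tfrac1k\sum_i(u_i-u_{i-1})=z$ (Jensen's inequality), by $k\,f_{2m_1}^{\ast\ast}(z)$. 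For the second sum, after discarding the nonnegative terms with $h>N$, I would invoke verbatim the residue-class cluster estimate from the second half of the proof of Lemma \ref{ubquic} — convexity along each arithmetic progression $\ell,\ell+h,\ell+2h,\dots$, telescoping, and the affine pinning on the boundary layer — to get $2\sum_{h=2}^N\sum_j m_h(u_{j+h}-u_j)^2\ge k\,(1+o_k(1))\,2\sum_{h=2}^N m_h h^2 z^2$. Dividing by $k$, letting $k\to\infty$ and then $N\to\infty$ yields
$$\widehat Q_{\bf m}f(z)\ \ge\ f_{2m_1}^{\ast\ast}(z)+2\sum_{h\ge2}m_h h^2 z^2\ =\ f_{2m_1}^{\ast\ast}(z)+(a_{\bf m}-2m_1)z^2,$$
valid for every $z$; this improves the trivial lower bound $f^{\ast\ast}(z)+a_{\bf m}z^2$ of Proposition \ref{disug}, since $f^{\ast\ast}+2m_1z^2\le f_{2m_1}^{\ast\ast}$ by convexity of the left-hand side.

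It remains to match the two bounds. At any $z$ with $f_{2m_1}(z)=f_{2m_1}^{\ast\ast}(z)$ the lower bound becomes $f_{2m_1}(z)+(a_{\bf m}-2m_1)z^2=f(z)+2m_1z^2+(a_{\bf m}-2m_1)z^2=f(z)+a_{\bf m}z^2$, which is exactly the upper bound, so $\widehat Q_{\bf m}f(z)=f(z)+a_{\bf m}z^2$ there; if $f_{2m_1}$ is convex this holds for all $z$, which is the first assertion (and in particular recovers Corollary \ref{convcor}, taken with $f$ convex, since then $f_{2m_1}$ is convex as well). The only delicate point is the accounting of the $o_k(1)$ error produced by the mesoscopic boundary layer and by the ceilings in the residue-class decomposition, but this is precisely the computation already carried out in the proof of Lemma \ref{ubquic}, so it introduces no genuinely new difficulty; conceptually, the statement just records that the $2m_1$-convex part of $f$ passes through the nonlocal penalization untouched, while the rest of the kernel contributes only its (convex) quadratic average $(a_{\bf m}-2m_1)z^2$.
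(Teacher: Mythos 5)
Your proof is correct and takes essentially the same route as the paper's: decompose the kernel into the $n=1$ contribution (absorbed into $f$ to form $f_{2m_1}$) and the tail $n\geq 2$, bound the first by $f_{2m_1}^{\ast\ast}(z)$ via Jensen, bound the second by $(a_{\bf m}-2m_1)z^2$ via the residue-class estimate, and observe that the two bounds collapse onto the affine-test upper bound $f(z)+a_{\bf m}z^2$ exactly where $f_{2m_1}=f_{2m_1}^{\ast\ast}$. The paper packages the same decomposition into Remark \ref{lowerrem} together with Remark \ref{corNN} and Lemma \ref{ubquic}, choosing ${\bf m}'=(m_1,0,0,\dots)$, $g=f$, $h=0$, whereas you unpack it by invoking the boundary-layer formulation \eqref{limscala} once and splitting the energy of a single admissible $u$. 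A minor advantage of your version is that you never need $\widehat Q_{\bf m''}0$ as a stand-alone quantity for the kernel ${\bf m}''=(0,m_2,m_3,\dots)$ — a mild subtlety in the paper's argument, since Lemma \ref{ubquic} as stated assumes $m_1>0$ and only the lower-bound half of its proof survives when $m''_1=0$, which happens to be all that is actually used.
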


\begin{proof} Applying Remark \ref{lowerrem} with $g=f$, $h=0$ and $\bf m'$ defined as $m'_1=m_1$ and $m'_n=0$ if $n\ge 2$, for all $z$ such that $f_{2m_1}(z)=f_{2m_1}^{**}(z)$ we have 
$$
\widehat Q_{\bf m}f(z)\ge \widehat Q_{\bf m'}f(z)+ \widehat Q_{\bf m''}0(z)= f^{**}_{2m_1}(z)+ a_{\bf m''}z^2= f_{2m_1}(z)+ a_{\bf m''}z^2=f(z)+a_{\mathbf m} z^2,
$$
where we have used Remark \ref{corNN}, Lemma \ref{ubquic} and the convexity hypothesis.
The converse inequality holds by Proposition \ref{disug}.
\end{proof}

Now, we can define nearest-neighbour cluster energies. More precisely, for any integer $M\geq 2$ we define 
\begin{equation}\label{lowerPM}
P^M\!f(z)=\frac{1}{M}\min\Big\{\sum_{j=1}^Mf_{2m_1}(z_j): \sum_{j=1}^{M}z_j=Mz
\Big\}+2m_M M^2z^2.
\end{equation}
For completeness of notation, we also set
$P^1\!f(z)=f_{2m_1}(z)$.

Note that if $M\ge2$ and $f_{2m_1}$ is convex then $P^M\!f(z)=f(z)+2m_1z^2+2m_M M^2z^2$.

\begin{definition}[concentrated kernels]\label{def-concentrated} 
Let $M\geq 1$. We say that a kernel $\bf m$ is {\em concentrated at $M$} if $m_n=0$ if $n\not\in\{1,M\}$.
\end{definition}

\begin{proposition}[relaxation with concentrated kernel]\label{MniPM}
If $\bf m$ is concentrated at $M$, then $\widehat Q_{\bf m}f=(P^M\!f)^{\ast\ast}$.
\end{proposition}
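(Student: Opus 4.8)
The plan is to prove the two inequalities $\widehat Q_{\bf m}f\le (P^M\!f)^{**}$ and $\widehat Q_{\bf m}f\ge (P^M\!f)^{**}$ separately, exploiting the fact that a concentrated kernel makes the energy split into $M$ essentially independent ``channels''. For the upper bound, I would first observe that by convexity of $\widehat Q_{\bf m}f$ (Proposition \ref{propT}) it suffices to show $\widehat Q_{\bf m}f(z)\le P^M\!f(z)$ for every $z$. Given $z$, pick a near-optimal decomposition $z_1,\dots,z_M$ with $\sum_j z_j=Mz$ attaining $P^M\!f(z)$, and build a test function on $[0,k]$ that is $M$-periodic in increments: $u_i-u_{i-1}=z_{(i\bmod M)}$. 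Since $m_n=0$ except for $n\in\{1,M\}$, the nonlocal energy only sees nearest-neighbour and distance-$M$ pairs; the distance-$M$ increments $u_{i+M}-u_i$ all equal $Mz$, contributing $2m_M M^2 z^2$ per site in the limit (as in the computation in Lemma \ref{ubquic}), while the nearest-neighbour part contributes $\frac1M\sum_j f_{2m_1}(z_j)$ per site. Handling the boundary conditions $u_0=0$, $u_k=kz$ requires a small correction on a vanishing boundary layer, which is justified by Remark \ref{boundary-affine}.

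For the lower bound, the key is the cluster-optimization idea announced before the proposition. I would use the boundary-layer formulation \eqref{limscala} (or \eqref{limdelta}) so that $u_i=iz$ near both endpoints. Then group the indices $\{0,\dots,k\}$ into consecutive blocks of length $M$, say $B_r=\{rM,\dots,(r+1)M\}$. The nearest-neighbour energy $\sum_i f_{2m_1}(u_i-u_{i-1})$ splits exactly over these blocks. For the distance-$M$ term, write $\sum_{i}2m_M(u_{i+M}-u_i)^2$ and note $u_{i+M}-u_i=\sum_{\text{increments across a window of }M}$; by the convexity (Jensen/Cauchy--Schwarz) inequality applied along each residue class mod $M$ exactly as in the proof of Lemma \ref{ubquic}, one telescopes to get $\ge 2m_M M^2 \bar z_r^2$ where $\bar z_r$ is the average increment over block $B_r$. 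Thus the total energy is bounded below by $\sum_r \big(\sum_{j\in B_r} f_{2m_1}(\cdot)+2m_M M^2\bar z_r^2\big)\ge \sum_r M\, P^M\!f(\bar z_r)$ by the very definition of $P^M\!f$. Since $\sum_r M\bar z_r \approx kz$, a final application of Jensen's inequality to the convex function $(P^M\!f)^{**}$ — together with the fact that averaging over blocks and then over $r$ reconstructs the boundary data — gives $\frac1k(\text{energy})\ge (P^M\!f)^{**}(z)-o(1)$.

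The main obstacle I anticipate is making the lower-bound telescoping rigorous at the block boundaries: the distance-$M$ interactions $u_{i+M}-u_i$ for $i$ not a multiple of $M$ straddle two adjacent blocks $B_r,B_{r+1}$, so the naive ``energy splits over blocks'' statement is false for that term, and one must instead regroup by residue classes mod $M$ (as in Lemma \ref{ubquic}) rather than by contiguous blocks, then reconcile the two groupings. The clean way is: bound the nearest-neighbour part by the contiguous-block grouping and bound the distance-$M$ part by the residue-class grouping, and check that both lower bounds can be simultaneously expressed through the block-averages $\bar z_r$ with only an $o(k)$ discrepancy coming from $O(M)$ ``incomplete'' interactions near the endpoints and from replacing $\lceil (k-\ell)/M\rceil$ by $k/M$. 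A secondary, more routine point is the passage from $P^M\!f$ to its convex envelope: the lower bound naturally produces $\frac1k\sum_r M\,P^M\!f(\bar z_r)$, which one convexifies via $P^M\!f\ge (P^M\!f)^{**}$ and Jensen; the upper bound as sketched only gives $P^M\!f(z)$, so one genuinely needs the convexity of $\widehat Q_{\bf m}f$ to upgrade it to $(P^M\!f)^{**}(z)$. Finally one should remark that the additivity structure here — the energy being a sum over $M$ decoupled nearest-neighbour problems plus a rigid quadratic term — is exactly what makes the relaxation formula collapse to a finite-cell computation, i.e. this is the GCB regime.
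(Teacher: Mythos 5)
Your upper bound is essentially the paper's: build an $M$-periodic-in-increments test function from a near-minimizer of the cell problem defining $P^M\!f(z)$, correct it near the endpoints (Remark \ref{boundary-affine}), and pass from $P^M\!f$ to $(P^M\!f)^{**}$ by the convexity of $\widehat Q_{\bf m}f$.

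The lower bound, however, has a genuine gap, and it sits exactly where you anticipate trouble. The reconciliation you propose --- contiguous blocks $B_r$ for the NN energy, residue classes for the distance-$M$ energy, then ``express both through the block averages $\bar z_r$'' --- cannot close, and the discrepancy is not an $o(k)$ boundary effect: it is a factor of $M$. To recover $M\,P^M\!f(\bar z_r)$ on a contiguous block of $M$ sites you need a distance-$M$ contribution of $2m_MM^3\bar z_r^2$ attributed to that block; the single aligned pair $(u_{(r+1)M}-u_{rM})^2$ supplies only $2m_MM^2\bar z_r^2$, and the $M-1$ straddling pairs simply cannot be written in terms of $\bar z_r$. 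If instead you telescope them along residue classes as in Lemma \ref{ubquic}, you only recover the global average $z$, which takes you back to the trivial estimate $f^{**}(z)+a_{\bf m}z^2$, strictly below $(P^M\!f)^{**}(z)$ for non-convex $f$. The missing device --- the one the paper actually uses --- is to distribute the NN energy uniformly over all $M$ offsets: up to $O(M)$ boundary terms, $\sum_i f_{2m_1}(u_i-u_{i-1})=\sum_{\ell=0}^{M-1}\frac1M\sum_i f_{2m_1}(u_i-u_{i-1})$, and for each offset $\ell$ one groups the $\frac1M$-weighted NN increments into the $\ell$-shifted windows $[\ell+(i-1)M,\ell+iM]$. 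Every distance-$M$ pair then lies inside exactly one shifted window (so none is discarded, and the $M$-factor deficit disappears), while every NN increment carries total weight $M\cdot\frac1M=1$. For each $(\ell,i)$-window the $\frac1M$-weighted NN energy plus the aligned distance-$M$ term dominates $P^M\!f$ evaluated at that window's own average increment; applying $P^M\!f\geq(P^M\!f)^{**}$ and Jensen along each residue class (with the boundary-layer conditions of \eqref{limscala} to identify the residue-class average with $z$) then yields $k(P^M\!f)^{**}(z)$ up to $O(M)$ terms, which is the desired lower bound after dividing by $k$.
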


\begin{proof} Remark \ref{corNN} proves the claim for $M=1$. Now, assume $M\geq 2$. 
We can use formula \eqref{limscala} for the computation of $\widehat Q_{\bf m}f(z)$; in particular, we may suppose that test functions satisfy $u_i=zi$ if $i\le M$ and $i\ge k-M$. Let $u$ be a minimizer; using the notation in the proof of  Lemma \ref{ubquic}  with $i_\ell=\lceil\frac{k-\ell}M\rceil$,  we can write,
\begin{eqnarray*} 
&&\sum_{i=1}^k\Bigl(f(u_i-u_{i-1})+ 2 m_1(u_i-u_{i-1})^2\Bigr)+ 2m_M\sum_{i=M}^{k}(u_{i}-u_{i-M})^2\\
&=&\sum_{\ell=0}^{M-1}\sum_{i=1}^{Mi_\ell}{1\over M}\Bigl(f(u_i-u_{i-1})+ 2 m_1(u_i-u_{i-1})^2\Bigr) 
+2m_M\sum_{\ell=0}^{M-1}\sum_{i=1}^{i_\ell} (u_{\ell+iM}-u_{\ell+(i-1)M})^2 +C_z,
\end{eqnarray*}
where $C_z$ is a constant taking into account extra boundary interactions, with $|C_z|\le M C(1+z^2)$ independent of $k$.
We then estimate 
\begin{eqnarray*} 
&&\hskip-1cm\sum_{\ell=0}^{M-1}\sum_{i=1}^{Mi_\ell}{1\over M}\Bigl(f(u_i-u_{i-1})+ 2 m_1(u_i-u_{i-1})^2\Bigr) +2m_M\sum_{\ell=0}^{M-1}\sum_{i=1}^{i_\ell} (u_{\ell+iM}-u_{\ell+(i-1)M})^2 
\\
&\ge&\sum_{\ell=0}^{M-1}\sum_{i=1}^{i_\ell} P^Mf\Bigl({u_{\ell+iM}-u_{\ell+(i-1)M}\over M}\Bigr) 
\ge\sum_{\ell=0}^{M-1}\sum_{i=1}^{i_\ell} (P^Mf)^{**}\Bigl({u_{\ell+iM}-u_{\ell+(i-1)M}\over M}\Bigr) 
\\
&\ge&\sum_{\ell=0}^{M-1}i_\ell(P^Mf)^{**}\Bigl(\frac{u_{\ell+i_\ell M}-u_{\ell}}{i_\ell M}\Bigr) 
=\sum_{\ell=0}^{M-1}i_\ell(P^Mf)^{**}(z) \\
&\ge& M \Bigl\lceil\frac{k-M}M\Bigr\rceil (P^Mf)^{**}(z)\,.
\end{eqnarray*}
Dividing by $k$ and taking the limit as $k\to +\infty$ we obtain the lower bound.

To prove that the lower bound is sharp it suffices to choose a minimizer $z_1,\ldots, z_M$ for $P^Mf(z)$, extend it by $M$-periodicity and define a test function $u$ on $\{0,\ldots, k\}$ with $k=nM$ by setting $u_0=0$, $u_i-u_{i-1}=z$ if $i\in\{1,\ldots, M\}\cup \{k-M+1,\ldots, k\}$, and $u_i-u_{i-1}=z_i$ otherwise. Using this test function and letting $k\to+\infty$, we obtain $\widehat Q_{\bf m}f\le P^Mf$. Since $\widehat Q_{\bf m}f$ is convex, we finally get $\widehat Q_{\bf m}f\le (P^Mf)^{**}$.
\end{proof} 

\begin{remark}[general concentrated interactions]\label{gecoin}\rm In the previous proposition we have considered quadratic interactions between $M$th neighbours. Actually, it is not necessary to assume quadraticity or even convexity of these interactions, and the same proof shows that
\begin{equation}
\lim_{k\to+\infty}{1\over k}\min\Bigl\{\sum_{i=1}^kf(u_i-u_{i-1})+ \sum_{i=M}^kg(u_i-u_{i-M}): u_0=0, u_k=kz\Bigr\}=  \psi^{**}(z),
\end{equation}
where $f,g\colon\mathbf R\to[0,+\infty)$ are such that
 $f$ is of quadratic growth and $g$ satisfies a quadratic bound from above, and  $\psi$ is defined by
\begin{equation}\label{lowerPM-gen}
\psi(z)=\frac{1}{M}\min\bigg\{\sum_{j=1}^Mf(z_j): \sum_{j=1}^{M}z_j=Mz\bigg\}+g(z).
\end{equation}
\end{remark}

\begin{remark}[periodic recovery sequences and multiplicity of minimizers]\rm
Note that if $P^Mf(z)=(P^Mf)^{**}(z)$ and $\{z_i\}$ is a minimizer for $P^Mf(z)$ extended by $M$-periodicity, a function $u$ with $u_0=0$, $u_i-u_{i-1}=z_i$ gives a recovery sequence for the $\Gamma$-limit of the functionals \eqref{def-fe-prima} at $u(x)=zx$. Note that  $u_i-zi$ is $M$-periodic.

We also observe that if $\{z_1,\ldots, z_M\}$ is a minimizer, then any permutation of its values gives a minimizer.
\end{remark}

\begin{proposition}[a lower bound for general $\bf m$]  Let $\bf m$ be any kernel; then for any $M$ the following estimate holds
\begin{eqnarray}\label{lbpM}
\widehat Q_{\bf m}f(z)\geq(P^M\!f)^{\ast\ast}(z)+2\sum_{\substack{n\ge 2\\ n\neq M}}n^2m_n z^2,
\end{eqnarray} 
and in particular we have
$\displaystyle 
\widehat Q_{\bf m}f(z)\geq\sup_{M\ge1}\biggl((P^M\!f)^{\ast\ast}(z)+2\sum_{\substack{n\ge 2\\ n\neq M}}n^2m_n z^2\biggr)$.
\end{proposition}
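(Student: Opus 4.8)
The plan is to decompose the kernel $\bf m$ into a part concentrated at $M$ and a residual part, and then apply the subadditivity-type estimate of Remark \ref{lowerrem} together with the explicit formula of Proposition \ref{MniPM}. Concretely, I would write ${\bf m}={\bf m}'+{\bf m}''$, where ${\bf m}'$ is concentrated at $M$ in the sense of Definition \ref{def-concentrated}, namely $m'_1=m_1$, $m'_M=m_M$ and $m'_n=0$ otherwise, while ${\bf m}''$ collects all the remaining interactions: $m''_n=m_n$ for $n\ge2$, $n\neq M$, and $m''_1=0$. Splitting the energy density as $f=f+0$, Remark \ref{lowerrem} yields
$$
\widehat Q_{\bf m}f(z)\ \ge\ \widehat Q_{{\bf m}'}f(z)+\widehat Q_{{\bf m}''}0(z).
$$

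For the first term, since ${\bf m}'$ is concentrated at $M$, Proposition \ref{MniPM} gives $\widehat Q_{{\bf m}'}f=(P^M\!f)^{**}$ (for $M\ge2$; the case $M=1$ is handled by $P^1\!f=f_{2m_1}$ together with Remark \ref{corNN}, and then the residual sum is over all $n\ge2$). For the second term, I would like to invoke Lemma \ref{ubquic}, which gives $\widehat Q_{{\bf m}''}0(z)=a_{{\bf m}''}z^2=2\sum_{n\neq M,\,n\ge2}n^2 m_n z^2$, exactly the claimed residual quadratic term. Once both identifications are in place, summing them gives \eqref{lbpM}, and taking the supremum over $M\ge1$ of a family of lower bounds is immediate.

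The main obstacle is that Lemma \ref{ubquic} is stated under the hypothesis $m_1>0$, which fails for ${\bf m}''$ (indeed $m''_1=0$), so the growth condition \eqref{crescitasotto} with $f=0$ is not available and $\widehat Q_{{\bf m}''}0$ need not even be well defined via Definition \ref{defQcap}. I would circumvent this in one of two ways. The cleaner route is to observe that the \emph{lower} bound $\widehat Q_{{\bf m}''}0(z)\ge 2\sum_{n=1}^{N}n^2 m''_n z^2$ for every fixed $N$ — which is all that is needed here — was proved in Lemma \ref{ubquic} by a direct cluster/convexity argument (the displayed chain of inequalities using $i_\ell=\lceil (k-\ell)/n\rceil$ and Jensen) that never uses $m_1>0$: it only uses the boundary layer condition $u_j=jz$ near the endpoints, available through the mesoscopic-scale formula \eqref{limscala}. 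So I would re-run that estimate directly inside the minimization defining $\widehat Q_{\bf m}f$, applied to the interactions at distances $n\ge2$, $n\neq M$, letting $N\to\infty$ at the end. Alternatively, one can replace ${\bf m}'$ by $\widehat{\bf m}'$ with $\widehat m'_1=m_1+\delta$ and compensate, or simply note that adding an auxiliary nearest-neighbour term $\delta z^2$ to both sides and letting $\delta\to0$ (using continuity of $\widehat Q_{\bf m}f$ from Proposition \ref{propT}) legitimizes the use of Lemma \ref{ubquic}; but the direct argument is more transparent and I would present that one. The remaining bookkeeping — tracking the constant $C_z$ from the boundary layer, which is $O(1/k)$ after dividing by $k$, and checking that the two index-partitions (clusters of length $M$ for the $P^M$ part, arithmetic progressions of step $n$ for each residual distance) can be used simultaneously on the same test function — is routine and parallels the proofs of Proposition \ref{MniPM} and Lemma \ref{ubquic} verbatim.
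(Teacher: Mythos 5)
Your decomposition ${\bf m}={\bf m}'+{\bf m}''$ with ${\bf m}'$ concentrated at $M$, combined with Remark~\ref{lowerrem}, Proposition~\ref{MniPM}, and the quadratic cluster estimate for the residual part, is precisely the route the paper's proof takes, so your proposal is correct. Your observation that the hypothesis $m_1>0$ in Lemma~\ref{ubquic} formally fails for ${\bf m}''$, and that the needed lower bound nevertheless goes through by running the cluster/Jensen argument inside the minimization for $\widehat Q_{\bf m}f$ itself (where the boundary-layer formula~\eqref{limscala} is available since the \emph{full} kernel and $f$ satisfy \eqref{crescitasotto}), is a careful refinement of the paper's terser one-line justification.
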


\begin{proof} Inequality \eqref{lbpM} is obtained by using Remark \ref{lowerrem} with ${\bf m^\prime}=(m_1,0,\dots, 0, m_M, 0, \dots)$, Proposition \ref{MniPM}, and the fact that $\widehat Q_{\bf m^{\prime\prime}}0(z)=2\sum_{n\not\in\{1,M\}}n^2m_n z^2$.
If $M=1$, the estimate is an immediate consequence of Remarks \ref{corNN} and \ref{lowerrem}. 
\end{proof} 

\subsection{Upper bound: optimization over periodic patterns}
In order to give an upper bound for $\widehat  Q_{\bf m}f$, it is of interest to consider minimum problems on sets of $N$-periodic functions. We will see that when the value $\widehat  Q_{\bf m}f(z)$ is obtained 
by this periodic minimization, which can be interpreted as a Cauchy-Born approach, it is possible to deduce further structural properties of the relaxed functional. 

For $N\in\mathbb N$ we define 
\begin{equation}\label{phiminus1}
\widehat R^N_{\bf m}f(z)=\frac{1}{N}\inf\big\{F^\#(u;[0,N]):\  i\mapsto u_i-zi \hbox{ is } N\hbox{-periodic}\big\}, 
\end{equation}
where
\begin{equation*}
F^\#(u;[0,N])= \sum_{i=1}^{N} f(u_i-u_{i-1})+\sum_{i=1}^N\sum_{j\in\mathbb Z}m_{|i-j|}(u_i-u_j)^2.
\end{equation*} 
Note that each site $i\in\{1,\dots, N\}$ interacts with all $j\in\mathbb Z$. 
Using periodic functions as test functions in the $\Gamma$-limit, we see that $\widehat R^N_{\bf m}f(z)\geq \widehat Q_{\bf m}f(z)$ for all $N$, 
so that, setting 
\begin{equation*}
\widehat R_{\bf m}f(z) 
=\Big(\inf_{N} \widehat R^N_{\bf m}f(z)\Big)^{\ast\ast}, 
\end{equation*}
 we obtain a bound for the $\bf m$-relaxation of $f$.  More specifically, we can write 
\begin{equation}\label{boundR}
f(z)+a_{\bf m}z^2\geq \widehat R^N_{\bf m}f(z) \geq \widehat R_{\bf m}f(z)\geq \widehat Q_{\bf m}f(z) 
\geq a_{\bf m}z^2\end{equation}
where  $N$ is arbitrary; the first estimate is obtained by 
taking $u_i=iz$. 

An application of Remark \ref{boundary-affine} to boundary conditions allows one to show that in \eqref{phiminus1} 
we can asymptotically neglect 
the interaction terms with sites outside $[0,N]$. Then, we have the following proposition. 
\begin{proposition}\label{perest}
For all $z\in\mathbb R$ we have $\widehat R_{\bf m}f(z)=\lim\limits_{N\to+\infty}\widehat R^N_{\bf m}f(z)=\widehat Q_{\bf m}f(z)$. 
\end{proposition}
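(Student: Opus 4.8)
The plan is to prove the chain of equalities $\widehat R_{\bf m}f(z)=\lim_{N\to\infty}\widehat R^N_{\bf m}f(z)=\widehat Q_{\bf m}f(z)$. The non-trivial content is that the periodic minimization recovers the full relaxation, since the inequality $\widehat R_{\bf m}f(z)\ge\widehat Q_{\bf m}f(z)$ is already recorded in \eqref{boundR}. First I would establish the existence of the limit $\lim_N\widehat R^N_{\bf m}f(z)$ and that it equals $\inf_N\widehat R^N_{\bf m}f(z)$ up to convexification: a standard subadditivity-type argument applies, namely, given an $N_1$-periodic competitor and an $N_2$-periodic competitor, one can build a $(N_1+N_2)$-periodic (or, after many repetitions, $(pN_1+qN_2)$-periodic) competitor by concatenating blocks, paying only a boundary-layer cost that is negligible after rescaling because $\bf m$ decays fast by \eqref{propm}. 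This shows $\lim_N\widehat R^N_{\bf m}f(z)$ exists and, combined with the convexity of $z\mapsto\lim_N\widehat R^N_{\bf m}f(z)$ obtained from the same mixing construction across different values of $z$, that it coincides with $\widehat R_{\bf m}f(z)=(\inf_N\widehat R^N_{\bf m}f(z))^{**}$.

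Next comes the heart of the argument: the upper bound $\widehat Q_{\bf m}f(z)\ge\lim_N\widehat R^N_{\bf m}f(z)$, equivalently, that a near-optimal configuration for the problem defining $\widehat Q_{\bf m}f(z)$ can be turned into an $N$-periodic competitor with asymptotically the same energy per site. Here I would invoke Remark \ref{boundary-affine}: use formula \eqref{limscala} so that a near-minimizer $u$ on $\{0,\dots,k\}$ satisfies $u_i=zi$ on a boundary layer of width $k^\alpha$ with $\alpha\in(-1,0)$. The affine boundary data let me periodize: extend $u$ from $[0,k]$ to all of $\mathbb Z$ by declaring $u_{i+k}-u_i=kz$, which is consistent with the boundary conditions and produces a $k$-periodic (in the sense $i\mapsto u_i-zi$ is $k$-periodic) configuration. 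Its energy $F^\#(u;[0,k])$ differs from the energy $\sum_{i=1}^k f(u_i-u_{i-1})+\sum_{i,j=0}^k m_{|i-j|}(u_i-u_j)^2$ only by interaction terms that cross the period boundary, and since $m_n=o(n^{-\beta})$ with $\beta>3$ and $u$ has at most linear growth (its increments are $L^2$-bounded by the coercivity \eqref{crescitasotto}), those extra terms sum to $o(k)$. Therefore $\frac{1}{k}\widehat R^k_{\bf m}f(z)\le\frac1k F^\#(u;[0,k])\le\frac1k(\text{near-optimal energy})+o(1)$, and taking $k\to\infty$ gives $\lim_N\widehat R^N_{\bf m}f(z)\le\widehat Q_{\bf m}f(z)$.

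Combining this with \eqref{boundR}, which gives $\widehat R_{\bf m}f(z)\ge\widehat Q_{\bf m}f(z)$, and with the first paragraph's identification $\widehat R_{\bf m}f(z)=\lim_N\widehat R^N_{\bf m}f(z)$, all quantities coincide.

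The main obstacle I anticipate is the careful bookkeeping of boundary-layer errors in the periodization step, specifically, verifying uniformly in $k$ that the interaction terms straddling the seam at $i\equiv0\pmod k$ contribute $o(k)$ to $F^\#$. This requires an a priori bound on $\sum_i(u_i-u_{i-1})^2$ for near-minimizers (available from \eqref{crescitasotto} since the penalized energy controls $\sum m_1(u_i-u_{i-1})^2$, hence $\sum(u_i-u_{i-1})^2$ when $m_1>0$, and in the degenerate $m_1=0$ case one uses instead the superlinear growth of $f$), followed by a tail estimate of the form $\sum_{n\ge1} n\, m_n\cdot(\text{local }L^2\text{ mass})<\infty$, which is exactly what $\beta>3$ in \eqref{propm} is designed to guarantee; the summability condition $\sum_h c_h<\infty$ already used in Remark \ref{queffgamma} is the relevant input. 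The other delicate point is ensuring the periodization is compatible with \emph{both} the left and right affine boundary layers simultaneously, which is why the formulation \eqref{limscala} with a genuine boundary layer (rather than the bare conditions $u_0=0$, $u_k=kz$) is essential.
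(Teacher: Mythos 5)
Your argument is correct, and it fleshes out exactly the hint the paper gives in the sentence preceding the statement (that Remark \ref{boundary-affine} lets one neglect interactions with sites outside $[0,N]$ in \eqref{phiminus1}). The periodization of a near-minimizer with affine boundary layers, followed by a tail estimate on the cross-seam interaction using $\beta>3$, is precisely what that remark is pointing at, and your bookkeeping of the error terms is sound.

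One remark worth making: the first paragraph of your proposal, establishing convergence of $N\mapsto\widehat R^N_{\bf m}f(z)$ by a subadditivity/concatenation argument, is logically unnecessary. Once you have the upper bound $\limsup_N\widehat R^N_{\bf m}f(z)\le\widehat Q_{\bf m}f(z)$ from the periodization step, you can combine it directly with the chain \eqref{boundR}, which gives $\widehat R^N_{\bf m}f(z)\ge\widehat R_{\bf m}f(z)\ge\widehat Q_{\bf m}f(z)$ for every $N$, hence $\liminf_N\widehat R^N_{\bf m}f(z)\ge\widehat Q_{\bf m}f(z)$. This already forces the limit to exist and equal $\widehat Q_{\bf m}f(z)$, and since $\inf_N\widehat R^N_{\bf m}f(z)$ is then squeezed between $\widehat Q_{\bf m}f(z)$ and itself, the convexity of $\widehat Q_{\bf m}f$ (Proposition \ref{propT}) gives $\widehat R_{\bf m}f(z)=(\inf_N\widehat R^N_{\bf m}f(z))^{**}=\widehat Q_{\bf m}f(z)$ without ever invoking concatenation. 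Moreover, as stated your concatenation step is slightly glib: pasting two generic periodic competitors at a seam does not give a negligible interface cost unless both are first modified to be affine near their period boundaries, which is the same use of Remark \ref{boundary-affine} you make in the second paragraph. Dropping the first paragraph and going straight to the two-sided squeeze makes the argument both shorter and cleaner.
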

\noindent Accordingly, the $\bf m$-relaxation can be alternatively defined as a limit of minimum problems constructed on periodic functions.

\begin{remark}[global periodic solutions]\rm Note that in general the equality in Proposition \ref{perest}  is not attained at finite $N$. However, in some cases the knowledge of $\widehat R^N_{\bf m}f$ for some finite $N$ is sufficient for the description of $\widehat Q_{\bf m}f$. A notable case is that of nearest and next-to-nearest neighbor interactions, for which a general formula for $\widehat Q_{\bf m}f$ can be proven using this approach. In the notation above that formula simply reads $\widehat Q_{\bf m}f= (\widehat R^2_{\bf m}f)^{**}$ \cite{BGS,PP}. In particular, if $f$ is a double-well energy with minimum value $0$ attained for $z\in\{-1,1\}$ then in a neighbourhood of $0$ we have $\widehat Q_{\bf m}f(z)= \widehat R^2_{\bf m}f(z)$; that is, the minimum for $\widehat Q_{\bf m}f(z)$ is reached on functions with $u_i-zi$ 2-periodic, up to an error due to the boundary conditions and vanishing as $k\to+\infty$. In this sense, such problems have `global' solutions and are therefore solvable by the application of the GCB rule.\end{remark}

\subsection{The $\bf m$-transform of $f$}  
In view of Proposition \ref{disug},  in order to compare $\widehat Q_{\bf m}f$ with $f$ we can subtract the quadratic term.
In this way, the bounds in \eqref{estimates} are rewritten as
\begin{equation}\label{estimates2}
f^{\ast\ast}(z)\leq \widehat Q_{\bf m}f(z)-a_\mathbf m z^2 \le \overline f(z).
 \end{equation} 
 This suggests to interpret the function $\widehat Q_{\bf m}f(z)-a_\mathbf m z^2$ as an independent operator acting on $f$. 
 We then give the following definition. 
 \begin{definition}[$\mathbf m$-transform of $f$]\label{defin} 
Let $\bf m$ be as in \eqref{propm} and let $f\colon\mathbb R\to[0,+\infty)$ satisfy \eqref{crescita} and \eqref{crescitasotto}.  
The {\em ${\mathbf m}$-transform of $f$}  is the function $Q_{\bf m}f\colon\mathbb R\to[0,+\infty)$ defined as 
\begin{equation}\label{quet}
Q_{\bf m}f(z)=\widehat Q_{\bf m}f(z)-a_{\mathbf m}z^2.
\end{equation}
\end{definition}

 Given that, by \eqref{estimates2}, 
 \begin{equation}\label{estimates3} 
 f^{\ast\ast}(z)\leq Q_{\bf m}f(z) \le \overline f(z),
 \end{equation} 
the $\mathbf m$-transform of $f$ can be viewed as an $\mathbf m$-dependent  interpolation between $f$ and $f^{**}$. 
 
We start the study of the $\mathbf m$-transform  with the observation that at $z$ fixed the construction of $Q_{\bf m}f(z)$ can be interpreted in a variational sense as a minimization problem with a penalization term involving a distance from the affine function $L_z$.  This claim is justified by Remarks \ref{vadeqm} and \ref{penaltyrem} below.  

\begin{remark}[variational definition of $Q_{\bf m}f$]\label{vadeqm}\rm 
Note that, when $u_i=iz$, then 
$$a_{\mathbf m}z^2=\lim_{k\to+\infty} 
{1\over k} \sum_{i,j=0}^k m_{|i-j|}(u_i-u_j)^2.$$ 
Hence, we have the equality 
\begin{equation}\label{quet1}
\displaystyle Q_{\mathbf m}f(z)=\displaystyle\!\lim_{k\to+\infty}\frac{1}{k}\inf\bigg\{\sum_{i=1}^k\! f(u_i-u_{i-1})+\!
\sum_{i,j=0}^k \! m_{|i-j|}\big((u_i-u_j)^2-(i-j)^2z^2\big)\!: 
\!u \in \mathcal A(k; z)
\bigg\}.
\end{equation}
\end{remark}

\begin{remark}[interpretation of the penalty term as a distance]\label{penaltyrem}\rm 
If $m_1>0$, then the last sum in \eqref{quet1}  is a  measure of the distance from $u_i$ to the affine function $L_z(i)=iz$. 
To show this, we first note that by Remark \ref{boundary-affine} we can restrict to test functions $u$ such that 
$u_{i}=iz$ for $i\leq k^{\alpha+1}$ and $i\geq k-k^{\alpha+1}$ for some $\alpha\in (-1,0)$. 

Now, for any $\ell\in\{1,\dots, k\}$ we consider the sum of the terms with $|i-j|=\ell$, obtaining

\begin{eqnarray*}
&&\hspace{-1cm}\sum_{|i-j|=\ell}\big((u_i-u_j)^2-(i-j)^2z^2\big) \\
&&= 
\sum_{|i-j|=\ell}\big((u_i-iz)-(u_j-jz)\big)^2 +2z\sum_{|i-j|=\ell} ((u_i-iz)-(u_j-jz))(i-j)\\ 
&&= 
\sum_{|i-j|=\ell}\big((u_i-iz)-(u_j-jz)\big)^2 +4z\ell\sum_{i-j=\ell} ((u_i-iz)-(u_j-jz))\\
&&= 
\sum_{|i-j|=\ell}\big((u_i-iz)-(u_j-jz)\big)^2 +4z\ell
\sum_{r=0}^{\ell-1} \big((u_{k_{r,\ell}}-k_{r,\ell}z)-(u_{r}-rz)\big), 
\end{eqnarray*} 
where $k_{r,\ell}=r+\ell \lfloor \frac{k-r}{\ell}\rfloor$.

If $\ell\leq k^{\alpha+1}$, then 
$r\leq k^{\alpha}k$ and  
$k_{r,\ell}=r+\ell \lfloor \frac{k-r}{\ell}\rfloor\geq   k-\ell \geq (1-k^{\alpha})k$, 
so that $u_{k_{r,\ell}}-u_{r}=\ell \lfloor \frac{k-r}{\ell}\rfloor=\ell \lfloor \frac{k}{\ell}\rfloor$, and 
the last term in the sum vanishes, 
so that 
\begin{eqnarray*}
\frac{1}{k}\sum_{|i-j|\leq k^{\alpha+1}}\big((u_i-u_j)^2-(i-j)^2z^2\big) 
= \frac{1}{k}\sum_{|i-j|\leq k^{\alpha+1}}\big((u_i-iz)-(u_j-jz)\big)^2.
\end{eqnarray*} 

Now, we fix $\delta>0$. Recalling the decay condition \eqref{propm} on $\bf m$, there exists 
$\ell_\delta$ such that for $\ell>\ell_\delta$ 
we have $m_\ell <\delta \ell^{-\beta}$. If $k$ is such that $k^{\alpha+1}>\ell_\delta$, then 
\begin{eqnarray*}
&&\frac{1}{k}\sum_{|i-j|>k^{\alpha+1}}^k \! m_{|i-j|}\big((u_i-u_j)^2-(i-j)^2z^2\big)\leq \frac{2}{k}\sum_{\ell>k^{\alpha+1}}^k\sum_{i=\ell}^k \! m_{\ell}(u_i-u_{i-\ell})^2\\
&&\leq \frac{2}{k}\sum_{\ell>k^{\alpha+1}}\ell^2m_{\ell}\sum_{i=1}^k \! (u_i-u_{i-1})^2
\leq \frac{2\delta}{k}\sum_{\ell>k^{\alpha+1}}\ell^{2-\beta}\sum_{i=1}^k \! (u_i-u_{i-1})^2\,.
\end{eqnarray*}
Note that in our computations we limit to $u$ satisfying $\sum_{i=1}^k \! (u_i-u_{i-1})^2\le Ck$ by \eqref{crescitasotto}, so that this term is negligible as $k\to+\infty$. 
Likewise, we obtain
\begin{eqnarray*}
{2m_1\over k} \sum_{i=1}^k \! (u_i-u_{i-1}-z)^2&\le&
{1\over k}\sum_{\ell=1}^k\sum_{|i-j|=\ell}m_{|i-j|}\big((u_i-u_j)^2-(i-j)^2z^2\big) \\
&\le &{2\over k}\Bigl(\sum_{\ell=1}^\infty \ell^2 m_\ell\Bigr)\sum_{i=1}^k(u_i-u_{i-1}-z)^2\,. \\
\end{eqnarray*}
This double inequality shows that the quadratic part is equivalent to the square of the $L^2$ norm of the derivative of 
$u-L_z$, where $u$ is identified with the piecewise-affine function on $(0,1)$ with $u'=u_i-u_{i-1}$ on $({i-1\over k}, {i\over k})$.
\end{remark}

Some general algebraic properties deriving from the definition of $Q_{\bf m}f$ are the following.

\begin{remark}[properties of $Q_{\bf m}$]\label{properties}\rm  

\ 

{\rm (i)} $Q_{\bf m} (f+g)\ge Q_{s{\bf m}}f+ Q_{(1-s){\bf m}} g$ for all $s\in(0,1)$;

{\rm (ii)} if $g$ is convex $Q_{\bf m} (f+g)\ge (Q_{\mathbf m}f)+ g$;

{\rm (iii)} if $g$ is affine then  $Q_{\bf m} (f+g)= (Q_{\bf m}f)+ g$;

{\rm (iv)} if $r\geq 0$, then 
 $
Q_{\bf m}(rf)(z)= rQ_{{\bf m}/r}f(z);
 $

{\rm (v)} if $r\in \mathbb R$ and $(f\circ L_r)(z)= f(rz)$ then 
 $
Q_{\bf m}(f\circ L_r)(z)= Q_{{\bf m}/r^2}f(rz);
 $

{\rm (vi)} if $\lambda \in \mathbb  R$ and we denote $(f\circ T_\lambda)(z)= f(z-\lambda)$ then
 $
Q_{\bf m}(f\circ T_\lambda)(z)= Q_{\bf m}f(z-\lambda).
 $
\smallskip 

Properties (i)-(v) follow directly from the definition of $Q_{\bf m} f$. 
We give some details for the proof of (vi), since for this  
we have to modify the boundary condition of the test functions, 
using \eqref{limscala} in Remark \ref{boundary-affine}. 
For any test function $u$ for $Q_{\bf m} (f\circ T_\lambda)(z)$ we consider $u^\lambda$ given by 
$u^\lambda_i=u_i-\lambda i$, which is a test function for 
$Q_{\bf m} f(z-\lambda)$ obtaining 
\begin{eqnarray*}
&&\sum_{i=1}^k f(u_i-u_{i-1}-\lambda)+
\sum_{i,j=0}^k m_{|i-j|}(u_i-u_j)^2-\sum_{i,j=0}^k m_{|i-j|}(i-j)^2z^2\\
&&\hspace{5mm}=
\sum_{i=1}^k f(u^\lambda_i-u^\lambda_{i-1})+
\sum_{i,j=0}^k m_{|i-j|}(u^\lambda_i-u^\lambda_j)^2-\sum_{i,j=0}^k m_{|i-j|}(i-j)^2(z-\lambda)^2\\
&&\hspace{10mm}+
2\lambda\sum_{i,j=0}^k m_{|i-j|}(u_i-u_j-z(i-j))(i-j).
\end{eqnarray*}
Then, (vi) holds if we show that 
$$
\lim_{k\to+\infty}\frac{1}{k}\sum_{i,j=0}^k m_{|i-j|}(u_i-u_j)(i-j)=a_{\bf m}z. 
$$ 
Now, we note that in the sum $ \sum_{i,j=0}^k m_{|i-j|}(u_i-u_j)(i-j)$ we can regroup the terms with $|i-j|=\ell$ 
and obtain a telescopic sum whose ending terms are in the boundary layer. Hence, since for each $\ell$ these sums are exactly 
$\ell$, we have   
\begin{eqnarray*}\lim_{k\to+\infty}\frac{1}{k}\sum_{i,j=0}^k m_{|i-j|}(u_i-u_j)(i-j)&=&
\lim_{k\to+\infty}\frac{2}{k}\sum_{\ell=1}^k \ell (m_{\ell}(u_k-u_0)\ell)
\\
&=&
\lim_{k\to+\infty}\frac{2}{k} kz\sum_{\ell=1}^k m_{\ell}\ell^2=a_{\bf m}z, 
\end{eqnarray*}
concluding the proof of (vi). 
\end{remark}

\begin{definition}[stability under $\bf m$-transform] 
We say that $z$ is a {\em point of $\bf m$-stability for $f$} if 
$Q_{\bf m}f(z)=f(z)$.
If this equality holds for all $z$, we say that $f$ is {\em ${\bf m}$-stable}. 
\end{definition} 
\begin{remark}[global properties of points of stability]\rm 
Let $z$ be a point of $\bf m$-stability for $f$. Then, the value of 
$\widehat Q_{\bf m}f(z)$ is realized by choosing the affine function $u\in\mathcal A(k;z)$ given by $u_i=iz$ in each minimum problem in Definition \ref{defQcap}. 
\end{remark}

We recall that $f_{\lambda}(z)=f(z)+\lambda z^2$ as in \eqref{flambda}.

\begin{proposition}[$\bf m$-stability and convexity]\label{comst}\ 

{\rm \ (i)} if $f$ is $\bf m$-stable then $f_{a_{\bf m}}$ is convex;

{\rm (ii)} if $f_{2m_1}$  is convex then $f$ is $\bf m$-stable. 
\end{proposition}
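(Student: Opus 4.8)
The plan is to derive both implications directly from results already established, with essentially no new computation: part (i) will follow from the convexity of $\widehat Q_{\bf m}f$ (Proposition \ref{propT}) together with the definition of the $\bf m$-transform, and part (ii) is a reformulation of Proposition \ref{coimqc}. I do not expect any genuine obstacle here; the only point requiring care is to unfold Definition \ref{defin}, namely $Q_{\bf m}f(z)=\widehat Q_{\bf m}f(z)-a_{\bf m}z^2$, correctly, and to keep the standing hypotheses \eqref{crescita}--\eqref{crescitasotto} in force so that Propositions \ref{propT} and \ref{coimqc} apply.

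For (i), I would start from the definition of $\bf m$-stability: $Q_{\bf m}f(z)=f(z)$ for every $z$. By \eqref{quet} this is the same as the statement $\widehat Q_{\bf m}f(z)=f(z)+a_{\bf m}z^2=f_{a_{\bf m}}(z)$ for every $z$. Then I would invoke Proposition \ref{propT}, which gives that $\widehat Q_{\bf m}f$ is convex (here $\bf m$ satisfies \eqref{propm} and $f\ge 0$ satisfies \eqref{crescita} and \eqref{crescitasotto}); since $f_{a_{\bf m}}$ coincides with $\widehat Q_{\bf m}f$, it is convex, which is the assertion. As a by-product, $f=Q_{\bf m}f$ is then continuous, consistently with $\widehat Q_{\bf m}f$ being finite-valued and convex.

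For (ii), assuming $f_{2m_1}$ convex, I would simply apply Proposition \ref{coimqc} (whose hypothesis is exactly this convexity), obtaining $\widehat Q_{\bf m}f(z)=f(z)+a_{\bf m}z^2$ for all $z$; subtracting $a_{\bf m}z^2$ and using \eqref{quet} yields $Q_{\bf m}f(z)=f(z)$ for all $z$, i.e.\ $f$ is $\bf m$-stable. The substance of both parts is thus entirely carried by the two cited propositions, whose own proofs are where the real work lies — the convexity of $\widehat Q_{\bf m}f$ coming from lower semicontinuity of the associated $\Gamma$-limit, and the explicit identification in Proposition \ref{coimqc} from the decomposition estimate of Remark \ref{lowerrem} together with Lemma \ref{ubquic}.
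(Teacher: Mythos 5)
Your proof is correct and follows exactly the paper's own argument: part (i) unfolds $\bf m$-stability via \eqref{quet} to identify $f_{a_{\bf m}}$ with the convex function $\widehat Q_{\bf m}f$ (Proposition \ref{propT}), and part (ii) is a direct application of Proposition \ref{coimqc}. No gaps; the only difference is that you spell out the intermediate algebra that the paper leaves implicit.
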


\begin{proof} Claim (i) follows from the definition of $\bf m$-stability since $f_{a_{\bf m}}=\widehat Q_{\bf m}f$. Claim (ii) is given by Proposition \ref{coimqc}.
\end{proof}

\begin{remark}[`moderately' non-convex functions are  $\bf m$-stable]\label{nocopeco}\rm
The proposition above implies that if $f$ is  `moderately non-convex' then it is also $\bf m$-stable.  This is valid in particular if $f$ is twice differentiable and  \begin{equation}\label{modnconv}
\inf_z f^{\prime\prime}(z)>-4 m_1.
\end{equation} 
\end{remark}

\begin{figure}[h!]
\centerline{\includegraphics[width=0.6\textwidth]{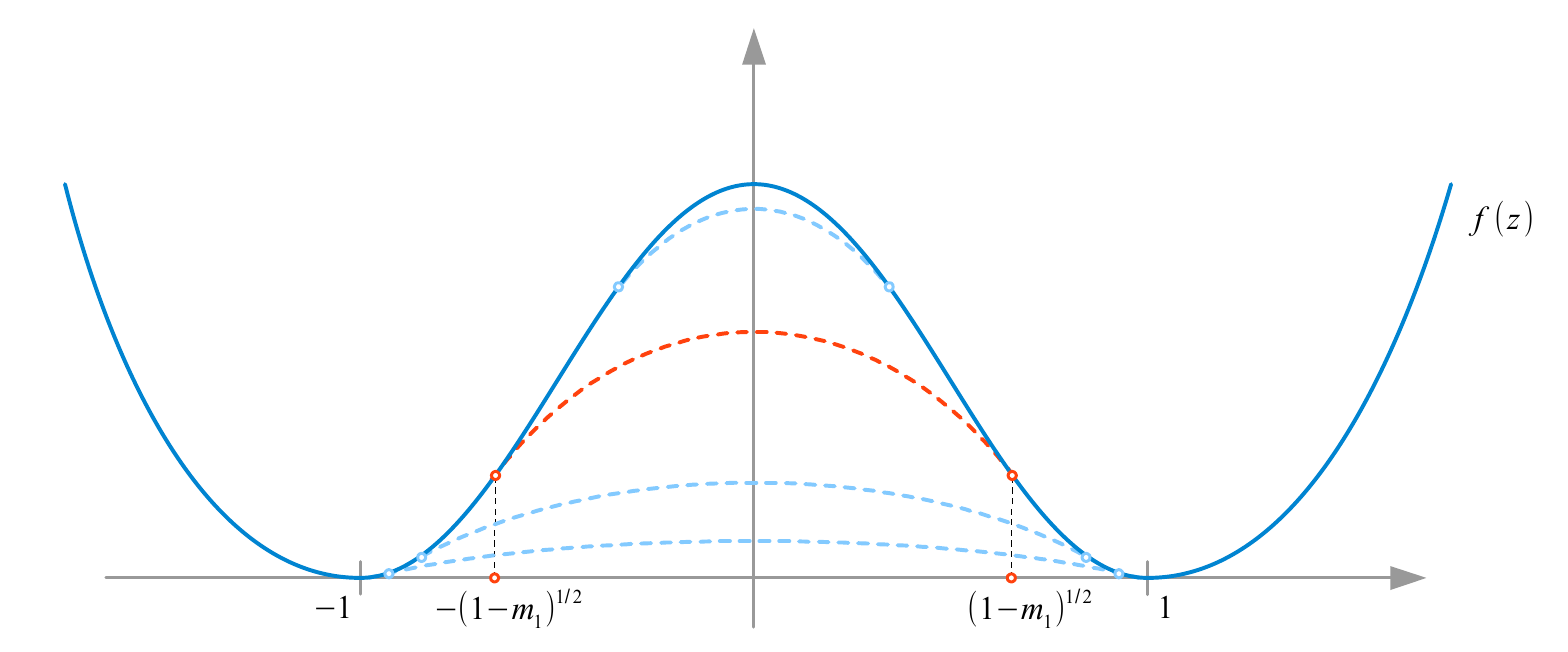}}
\caption{the function $Q_{\bf m}f$ in Remark \ref{corNN2} 
with $f(z)=(1-z^2)^2$ 
for different values of $m_1<1$.}
\label{cor-fig}
\end{figure} 

\begin{remark}[nearest-neighbour interactions]\label{corNN2}\rm 
By Remark \ref{corNN} we get that 
\begin{enumerate}
\item[{\rm(i)}] if $m_n=0$ for any $n\ge 1$, then 
$Q_{\bf m}f(z)=\widehat Q_{\bf m}f(z)=f^{\ast\ast}(z);$
\item[{\rm(ii)}] if $m_n=0$ for any $n\ge 2$, then 
$Q_{\bf m}f(z)=(f(z)+2m_1z^2)^{**}-2m_1z^2.$
\end{enumerate}
In the second case, we note that 
in general if $m_1\neq 0$ both inequalities in \eqref{estimates3} may be strict for some values of $z$. 
For example, if $f(z)=(1-z^2)^2$ and $m_1\leq 1$, then 
$$Q_{\bf m}f(z)=\begin{cases}
(1-z^2)^2 & \hbox{\rm if }\ z\leq -\sqrt{1-m_1}\\
m_1(2-m_1)   -2m_1z^2 & \hbox{\rm if }\  
|z| \leq \sqrt{1-m_1}\\
(1-z^2)^2 & \hbox{\rm if }\ z\geq \sqrt{1-m_1},
\end{cases} $$
and both inequalities are strict for $|z|<\sqrt{1-m_1}$ (see Fig.~\ref{cor-fig}).  
Conversely, if 
$m_1\geq 1$ then 
$Q_{\bf m}f(z)=f(z)$ for any $z$;  in particular in this case $f$ is $\bf m$-stable (but not convex).
\end{remark}

\begin{remark}[regularity properties]\label{regqf}
\rm From equality \eqref{quet} we deduce that for any $\bf m$ the operator $Q_{\bf m}$ has the same regularity properties of $\widehat Q_{\bf m}$; that is, $Q_{\bf m}f$ has the regularity properties of a convex function. In particular, 
$Q_{\bf m}f$ is locally Lipschitz, which is then a necessary condition for $f$ to be $\bf m$-stable.   
Note that by \eqref{estimates2} the convexity of $f$ is a sufficient condition for the stability with respect to any $\bf m$. 
\end{remark} 

\begin{proposition} Let $Q_{\bf m}^0f=f$ and define iteratively $Q_{\bf m}^nf=Q_{\bf m}(Q_{\bf m}^{n-1}f)$. Then the sequence $Q_{\bf m}^nf$ is non-increasing and its limit $Q_{\bf m}^{\infty}f$ is $\bf m$-stable.
\end{proposition}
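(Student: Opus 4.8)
The plan is to prove the two assertions in turn, keeping throughout $\bf m$ as in \eqref{propm} and $f$ satisfying \eqref{crescita}--\eqref{crescitasotto} with constant $c$. First I would check, by induction on $n$, that every iterate $Q_{\bf m}^{n}f$ is again an admissible potential: non-negativity is built into Definition~\ref{defin}; the upper bound \eqref{crescita} is inherited because \eqref{estimates3} gives $Q_{\bf m}^{n}f\le Q_{\bf m}^{n-1}f\le\dots\le f$; and \eqref{crescitasotto} is preserved because \eqref{estimates3} also gives $Q_{\bf m}^{n}f\ge (Q_{\bf m}^{n-1}f)^{\ast\ast}\ge f^{\ast\ast}$ (using monotonicity of $^{\ast\ast}$ and $(f^{\ast\ast})^{\ast\ast}=f^{\ast\ast}$) while $f^{\ast\ast}(z)+m_1z^2\ge\frac1c z^2$ (since $f(z)\ge(\frac1c-m_1)z^2$ forces the same bound for $f^{\ast\ast}$ when $\frac1c\ge m_1$, and is trivial otherwise). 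Granted admissibility of $Q_{\bf m}^{n}f$, applying \eqref{estimates3} to $g=Q_{\bf m}^{n}f$ yields $Q_{\bf m}^{n+1}f=Q_{\bf m}(Q_{\bf m}^{n}f)\le\overline{Q_{\bf m}^{n}f}\le Q_{\bf m}^{n}f$, so the sequence is non-increasing; being bounded below by $f^{\ast\ast}$ it converges pointwise to $Q_{\bf m}^{\infty}f:=\inf_nQ_{\bf m}^{n}f\ge f^{\ast\ast}$.

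Next I would record that $Q_{\bf m}^{\infty}f$ is itself admissible (verified exactly as above) and, crucially, continuous: by Proposition~\ref{propT} each $z\mapsto Q_{\bf m}^{n}f(z)+a_{\bf m}z^2=\widehat Q_{\bf m}(Q_{\bf m}^{n-1}f)(z)$ is convex (cf.\ \eqref{quet}), and a pointwise non-increasing limit of convex functions that stays above the finite convex function $f^{\ast\ast}(z)+a_{\bf m}z^2$ is again convex and finite, hence continuous; therefore $\overline{Q_{\bf m}^{\infty}f}=Q_{\bf m}^{\infty}f$. Admissibility makes $Q_{\bf m}(Q_{\bf m}^{\infty}f)$ well defined and lets the earlier results apply to it; in particular \eqref{estimates3} gives at once the easy inequality $Q_{\bf m}(Q_{\bf m}^{\infty}f)(z)\le\overline{Q_{\bf m}^{\infty}f}(z)=Q_{\bf m}^{\infty}f(z)$.

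The heart of the matter is the reverse inequality $\widehat Q_{\bf m}(Q_{\bf m}^{\infty}f)(z)\ge Q_{\bf m}^{\infty}f(z)+a_{\bf m}z^2$ for every $z$. I would obtain it by passing to the limit in $n$ through the periodic problems $\widehat R^{N}_{\bf m}$ of \eqref{phiminus1} rather than through the defining discrete problems, the point being that for a fixed period $N$ only the finitely many increments $u_i-u_{i-1}$ of an admissible competitor feed the potential, while the quadratic part of $F^{\#}$ does not depend on the potential at all. Concretely: fix $z$, $N$ and $\varepsilon>0$, and pick an $N$-periodic (modulo $L_z$) competitor $u$ that is $\varepsilon$-optimal for $\widehat R^{N}_{\bf m}(Q_{\bf m}^{\infty}f)(z)$; using the same $u$ for $\widehat R^{N}_{\bf m}(Q_{\bf m}^{n}f)(z)$ and exploiting the monotone convergence $Q_{\bf m}^{n}f(w)\downarrow Q_{\bf m}^{\infty}f(w)$ at the finitely many values $w=u_i-u_{i-1}$, a limit of a finite sum gives $\limsup_n\widehat R^{N}_{\bf m}(Q_{\bf m}^{n}f)(z)\le\widehat R^{N}_{\bf m}(Q_{\bf m}^{\infty}f)(z)$ after letting $\varepsilon\to0$. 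Using $\widehat Q_{\bf m}(Q_{\bf m}^{n}f)\le\widehat R^{N}_{\bf m}(Q_{\bf m}^{n}f)$ from \eqref{boundR} and taking the infimum over $N$, which by \eqref{boundR} and Proposition~\ref{perest} equals $\widehat Q_{\bf m}(Q_{\bf m}^{\infty}f)(z)$, gives $\limsup_n\widehat Q_{\bf m}(Q_{\bf m}^{n}f)(z)\le\widehat Q_{\bf m}(Q_{\bf m}^{\infty}f)(z)$. Since $\widehat Q_{\bf m}(Q_{\bf m}^{n}f)(z)=Q_{\bf m}^{n+1}f(z)+a_{\bf m}z^2$ decreases to $Q_{\bf m}^{\infty}f(z)+a_{\bf m}z^2$, this is precisely the required inequality, and together with the easy one we conclude $Q_{\bf m}(Q_{\bf m}^{\infty}f)=Q_{\bf m}^{\infty}f$; that is, $Q_{\bf m}^{\infty}f$ is $\bf m$-stable.

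I expect the main obstacle to be exactly this passage to the limit in $n$: the iterates converge to $Q_{\bf m}^{\infty}f$ only pointwise and only monotonically, which is too weak to control the sums $\sum_{i=1}^{k}Q_{\bf m}^{n}f(u_i-u_{i-1})$ uniformly over test functions with possibly large increments, so a direct argument through Definition~\ref{defin} runs into a lack of uniform integrability. Routing the argument instead through the periodic bounds $\widehat R^{N}_{\bf m}$ turns it into a limit of sums with a fixed finite number of terms, where pointwise monotone convergence suffices, at the modest cost of the already available facts $\widehat R^{N}_{\bf m}f\ge\widehat Q_{\bf m}f$ and $\inf_N\widehat R^{N}_{\bf m}f=\widehat Q_{\bf m}f$. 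A secondary but genuinely needed point is the verification that the iteration never leaves the admissible class, so that \eqref{estimates3} and Proposition~\ref{perest} may be invoked at every stage including for $Q_{\bf m}^{\infty}f$, and in particular that $Q_{\bf m}^{\infty}f$ is continuous, which is what makes the easy inequality $Q_{\bf m}(Q_{\bf m}^{\infty}f)\le\overline{Q_{\bf m}^{\infty}f}=Q_{\bf m}^{\infty}f$ available.
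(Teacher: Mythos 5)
Your proof is correct, but it takes a genuinely different --- and more self-contained --- route than the paper's. The paper argues in two steps: the iterates $Q_{\bf m}^nf$ are equi-Lipschitz on compact sets (Remark \ref{regqf}), so by Ascoli--Arzel\`a they converge uniformly on compacts to $Q_{\bf m}^{\infty}f$; and $Q_{\bf m}$ is asserted to be continuous with respect to uniform convergence on compact sets, which then gives $Q_{\bf m}(Q_{\bf m}^{\infty}f)=\lim_nQ_{\bf m}(Q_{\bf m}^{n-1}f)=Q_{\bf m}^{\infty}f$. That continuity assertion is exactly the real content of the fixed-point identity and is not proved in the paper. You sidestep both steps: you get continuity of $Q_{\bf m}^{\infty}f$ from the fact that $Q_{\bf m}^{\infty}f+a_{\bf m}z^2$ is a finite decreasing pointwise limit of convex functions, hence convex and continuous; and you replace the appeal to abstract continuity of $Q_{\bf m}$ by a one-sided passage to the limit through the periodic upper bounds $\widehat R^N_{\bf m}$, where a competitor contributes only finitely many increments to the potential term, so pointwise monotone convergence $Q_{\bf m}^nf\downarrow Q_{\bf m}^{\infty}f$ at those finitely many values suffices, the quadratic part being potential-independent. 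Combined with $\widehat Q_{\bf m}\le\widehat R^N_{\bf m}$ and Proposition \ref{perest} this gives precisely the inequality the paper attributes to continuity, while the opposite inequality is immediate from \eqref{estimates3}. You also verify, as the paper tacitly assumes, that the iteration stays inside the admissible class so that \eqref{estimates3}, Proposition \ref{perest}, and \eqref{boundR} may be quoted at every stage including the limit. The monotonicity of $Q_{\bf m}^nf$ is obtained the same way in both. In short, your proof is longer but closes a gap that the paper's slicker version leaves open.
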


\begin{proof}
The sequence is non-increasing by \eqref{estimates2}. Moreover $Q_{\bf m}^nf\ge f^{**}$ for all $n$. Since the functions $Q_{\bf m}^nf$ are  equi-Lipschitz continuous by Remark \ref{regqf}, they converge uniformly on compact sets to their limit $Q_{\bf m}^{\infty}f$ by Ascoli-Arzel\`a's Theorem. Since $Q_{\bf m}$ is continuous with respect to the uniformly convergence on compact sets, we have $Q_{\bf m}^{\infty}f= \lim_n Q_{\bf m}^nf =  Q_{\bf m}(\lim_n Q_{\bf m}^{n-1}f)=  Q_{\bf m}(Q_{\bf m}^{\infty}f)$.
\end{proof}

The following proposition states that for non-trivial kernel concentrated at $M\geq 2$ stable functions are only $f$ such that  $f_{2m_1}$ is convex, which is a trivial condition implying stability by Proposition \ref{comst}(ii). Moreover, iteration of the $\bf m$ transform gives a strictly decreasing sequence. 

\begin{proposition} \label{stab1M}
Let $\bf m$ be a non-trivial kernel concentrated at $M\geq 2$; that is,  with $m_M\neq 0$.  
In this case: 

{\rm \ \ (i)} $f$ is ${\bf m}$-stable if and only if $f_{2m_1}$ is convex; 

{\rm \ (ii)}  if $f_{2m_1}$ is not convex then for any $n$, there exists $z$ such that 
$Q_{\bf m}^nf(z)>Q_{\bf m}^{n+1}f(z)$; 

{\rm (iii)}  $Q_{\bf m}^{\infty}f(z)=f_{2m_1}^{\ast\ast}(z)-2m_1z^2$. 
\end{proposition}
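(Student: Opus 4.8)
The plan is to read all three assertions off the explicit formula $\widehat Q_{\bf m}f=(P^M\!f)^{**}$ from Proposition \ref{MniPM}. It is convenient to isolate the elementary operator $P^M_0g(z)=\frac1M\min\{\sum_{j=1}^Mg(z_j):\sum_jz_j=Mz\}$, so that $P^M\!g=P^M_0g_{2m_1}+2m_MM^2z^2$ and, recalling $a_{\bf m}=2m_1+2m_MM^2$,
$$Q_{\bf m}g(z)=\big(P^M_0g_{2m_1}+2m_MM^2z^2\big)^{**}(z)-a_{\bf m}z^2 .$$
Two elementary facts will be used throughout: $(P^M_0g)^{**}=g^{**}$ for every $g$ (from $g^{**}\le P^M_0g\le g$, the left inequality by Jensen: $\sum g(z_j)\ge\sum g^{**}(z_j)\ge Mg^{**}(z)$); and, for a \emph{continuous} $g$, $P^M_0g=g$ holds iff $g$ is convex (the $M$-point Jensen inequality contains the inequality at ratio $1/M$, which together with continuity already forces convexity).

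\emph{Proof of (i).} Only the implication ``$\bf m$-stable $\Rightarrow f_{2m_1}$ convex'' needs an argument, the converse being Proposition \ref{comst}(ii). If $Q_{\bf m}f=f$ then $(P^M\!f)^{**}=\widehat Q_{\bf m}f=f+a_{\bf m}z^2$; on the other hand $(P^M\!f)^{**}\le P^M\!f=P^M_0f_{2m_1}+2m_MM^2z^2\le f_{2m_1}+2m_MM^2z^2=f+a_{\bf m}z^2$, so all four terms coincide and in particular $P^M_0f_{2m_1}=f_{2m_1}$. By Remark \ref{regqf} an $\bf m$-stable $f$ has the regularity of a convex function, hence is continuous; therefore $P^M_0f_{2m_1}=f_{2m_1}$ yields convexity of $f_{2m_1}$.

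\emph{Proof of (iii).} Set $\nu=f_{2m_1}^{**}$. The limit $Q_{\bf m}^\infty f$ exists and is $\bf m$-stable by the previous proposition, so $Q_{\bf m}^\infty f+2m_1z^2$ is convex by (i); since $Q_{\bf m}^\infty f\le f$, this convex function is $\le f_{2m_1}$, hence $\le\nu$, i.e.\ $Q_{\bf m}^\infty f\le\nu-2m_1z^2$. For the reverse inequality I prove $(Q_{\bf m}^nf)_{2m_1}\ge\nu$ by induction on $n$: it holds for $n=0$ since $f_{2m_1}\ge\nu$, and if it holds for $n-1$ then, $\nu$ being convex and $P^M_0$ monotone, $P^M_0(Q_{\bf m}^{n-1}f)_{2m_1}\ge P^M_0\nu=\nu$, so $P^M(Q_{\bf m}^{n-1}f)\ge\nu+2m_MM^2z^2$; the right-hand side is convex, hence passing to convex envelopes and subtracting $a_{\bf m}z^2$ gives $Q_{\bf m}^nf\ge\nu+2m_MM^2z^2-a_{\bf m}z^2=\nu-2m_1z^2$. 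Letting $n\to\infty$ completes (iii).

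\emph{Proof of (ii).} I argue by contradiction, using the following lemma: if $q(z)=2m_MM^2z^2$ (strictly convex, since $m_M>0$) and $\phi$ is continuous and coercive, then $(\phi+q)^{**}=\phi^{**}+q$ forces $\phi=\phi^{**}$. Indeed, if $\phi(z_0)>\phi^{**}(z_0)$, choose $z_0=ta+(1-t)b$ attaining $(\phi+q)^{**}(z_0)$; then $(\phi+q)^{**}(z_0)=\bigl[t\phi(a)+(1-t)\phi(b)\bigr]+\bigl[tq(a)+(1-t)q(b)\bigr]$, where the first bracket is $\ge\phi^{**}(z_0)$ and the second $\ge q(z_0)$, strictly when $a\ne b$, whereas if $a=b=z_0$ the first bracket equals $\phi(z_0)>\phi^{**}(z_0)$; either way $(\phi+q)^{**}(z_0)>\phi^{**}(z_0)+q(z_0)$, a contradiction. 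Now suppose $(Q_{\bf m}^nf)_{2m_1}$ were convex for some $n\ge1$ and take $n$ minimal; put $g=Q_{\bf m}^{n-1}f$, so $g_{2m_1}$ is not convex (by minimality if $n\ge2$, by hypothesis if $n=1$) while $Q_{\bf m}g$ is $\bf m$-stable, and its orbit is then constant, so by (iii) applied to $g$ we get $Q_{\bf m}g=g_{2m_1}^{**}-2m_1z^2$, i.e.\ $(P^M_0g_{2m_1}+q)^{**}=g_{2m_1}^{**}+q=(P^M_0g_{2m_1})^{**}+q$. The lemma with $\phi=P^M_0g_{2m_1}$ forces $P^M_0g_{2m_1}$ convex, hence $P^M_0g_{2m_1}=g_{2m_1}^{**}$. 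It remains to contradict the non-convexity of $g_{2m_1}$: on a connected component $(c,d)$ of $\{g_{2m_1}>g_{2m_1}^{**}\}$ the identity $P^M_0g_{2m_1}=g_{2m_1}^{**}$ would force every point of $(c,d)$ to be an average of $M$ contact points of $g_{2m_1}$ lying in the affine interval of $g_{2m_1}^{**}$ through $(c,d)$, and this must be excluded using the structure of the iterate $g=Q_{\bf m}^{n-1}f$ (whose graph over that affine interval is governed by the previous $\bf m$-transform). Granting this, $(Q_{\bf m}^nf)_{2m_1}$ is non-convex for every $n$, so by (i) $Q_{\bf m}^nf$ is never $\bf m$-stable, $Q_{\bf m}^{n+1}f\ne Q_{\bf m}^nf$, and since $Q_{\bf m}^{n+1}f\le Q_{\bf m}^nf$ pointwise by \eqref{estimates2}, the inequality is strict somewhere. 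The main obstacle is exactly this last step — ruling out $P^M_0g_{2m_1}=g_{2m_1}^{**}$ for a non-convex $g_{2m_1}$ — which genuinely needs the special form of the iterates (for bi-convex $f$ the contact set inside each affine interval reduces to the two mesoscopic endpoints plus the $M-1$ intermediate commensurate states, so only finitely many points of $(c,d)$ can be such averages and the contradiction is immediate; the argument above localizes the general case to this picture).
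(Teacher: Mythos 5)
Your treatment of (i) and (iii) is correct and takes a genuinely different (and tighter) route than the paper. For (i) the paper constructs a maximal non-convexity interval $(S_0,S_M)$ of $f_{2m_1}$ and verifies directly that $f(S_h)>Q_{\bf m}f(S_h)$ at the interior equi-spaced points $S_h$; you instead isolate the operator $P^M_0g(z)=\frac1M\min\{\sum_j g(z_j):\sum_j z_j=Mz\}$, squeeze $(P^M\!f)^{**}$ between two copies of $f+a_{\bf m}z^2$, deduce $P^M_0 f_{2m_1}=f_{2m_1}$, and then invoke continuity (Remark \ref{regqf}) together with the $M$-point Jensen inequality. For (iii) the paper passes to the limit over the dense set of points $S_0+\frac{k}{M^n}(S_M-S_0)$; your proof instead runs an induction $\nu\le (Q_{\bf m}^nf)_{2m_1}$ using the monotonicity of $P^M_0$ and the fact that convex envelopes dominate any convex lower bound. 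The two auxiliary facts you isolate, $(P^M_0g)^{**}=g^{**}$ and, for continuous $g$, $P^M_0g=g\Leftrightarrow g$ convex, are both correct and carry most of the weight. This is a worthwhile simplification of the paper's argument for these two items.

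Part (ii), however, has a genuine gap, which you candidly flag. Your lemma (strict convexity of $q$ together with $(\phi+q)^{**}=\phi^{**}+q$ forces $\phi$ convex) is correct, and the reduction is sound up to the step where you must rule out $P^M_0g_{2m_1}=g_{2m_1}^{**}$ for non-convex $g_{2m_1}$. That implication is \emph{false} without extra structure on $g$: the identity $P^M_0g_{2m_1}=g_{2m_1}^{**}$ holds on an affine segment $[c',d']$ of the envelope precisely when the contact set $K=\{g_{2m_1}=g_{2m_1}^{**}\}\cap[c',d']$ satisfies $\frac1M(K+\cdots+K)=[c',d']$ ($M$ summands), and a proper closed $K$ can do this — for $M=2$ take $K=[0,\tfrac13]\cup[\tfrac23,1]$ inside $[0,1]$. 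So one really does need the structure of the iterate $g=Q_{\bf m}^{n-1}f$, and the paper obtains it by a different and more explicit mechanism than your minimal-$n$ dichotomy: using \eqref{striu} and \eqref{stur} it shows that after one application of $Q_{\bf m}$ each $(S_h,S_{h+1})$ is itself a maximal non-convexity interval of $(Q_{\bf m}f)_{2m_1}$ whose only contact points with the convex envelope are its endpoints, so the argument of (i) can be rerun verbatim there, giving a strict drop at the next generation of $M-1$ equi-spaced points, and so on inductively. It is precisely this self-similar propagation of the finite contact set $\{S_h\}$ that your abstract argument does not supply; closing your proof amounts to carrying out that bookkeeping, which is what the paper does.
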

\begin{proof}
{\rm (i)}  By Proposition \ref{comst} we only have to prove that the convexity of $f_{2m_1}$ is necessary for the ${\bf m}$-stability of $f$. We then suppose that $f$ is $\bf m$-stable and $f_{2m_1}$ is not convex, and show that there exists $\overline z$ such that $f(\overline z)>Q_{\bf m}f(\overline z)$, contradicting the ${\bf m}$-stability of $f$. 
 
From Proposition \ref{coimqc} we have that $\widehat Q_{\bf m}f(z)=f_{a_{\bf m}}(z)$ for all $z$ such that $f_{2m_1}(z)=f_{2m_1}^{\ast\ast}(z)$. We consider a maximal interval where $f_{2m_1}>f_{2m_1}^{\ast\ast}$. By the growth conditions on $f$ and its continuity (since we suppose that it is $\bf m$-stable) this interval is a bounded open interval $(S_0,S_M)$, and we have 
$\widehat Q_{\bf m}f(S_0)=f_{a_{\bf m}}(S_0)$ and $\widehat Q_{\bf m}f(S_M)=f_{a_{\bf m}}(S_M)$.

Note that, upon setting
$$
r(z)=f_{2m_1}(S_0)+\frac{f_{2m_1}(S_M)-f_{2m_1}(S_0)}{S_M-S_0}(z-S_0),
$$
 for $z\in [S_0,S_M]$ we have 
 \begin{equation*}
\frac{1}{M}\min\Big\{\sum_{j=1}^Mf_{2m_1}(z_j): \sum_{j=1}^M z_j=Mz\Big\}\geq
f_{2m_1}^{\ast\ast}(z)=r(z), 
\end{equation*}
with equality if and only if minimal $z_j$ belong to $\{S_0,S_M\}$ for all $j$,
which implies that  $z\in\{S_h:h\in\{0,\dots, M\}\}$, where
$$S_h 
=S_0+h\frac{S_M-S_0}{M}.$$
We then have
\begin{equation*}
P^M\!f(S_h)=r(S_h)+2m_MM^2S_h^2.  
\end{equation*}
Since
\begin{equation*}
P^M\!f(z)\geq(P^M\!f)^{\ast\ast}(z)\geq f_{2m_1}^{**}(z)+2m_M M^2z^2=
r(z)+2m_M M^2z^2,
\end{equation*}
and $Q_{\bf m}f=(P^M\!f)^{\ast\ast}$  by Proposition \ref{MniPM}, in particular we 
have
\begin{equation*}
\widehat Q_{\bf m}f(S_h)=(P^M\!f)^{\ast\ast}(S_h)=P^M\!f(S_h)=r(S_h)+2m_MM^2S_h^2,  
\end{equation*}
from which we get
$$Q_{\bf m}f(S_h)=r(S_h)-2m_1S_h^2.$$
If $h\in\{1,\ldots,M-1\}$ we have
$$f(S_h)+2m_1S_h^2=f_{2m_1}(S_h)>f_{2m_1}^{\ast\ast}(S_h)=r(S_h)$$ 
which implies
$$f(S_h)>r(S_h)-2m_1S_h^2 =Q_{\bf m}f(S_h),$$
which contradicts the stability of $f$. 

Note that indeed 
\begin{equation}\label{striu}
\widehat Q_{\bf m}f(z) > r(z)+2m_MM^2z^2\hbox{ if }  z\in (S_h,S_{h+1}).
\end{equation}
To check this observe that, since $\widehat Q_{\bf m}f(z)= (P^M\!f)^{\ast\ast}(z)$, there exist 
$z_1,z_2\in [S_h,S_{h+1}]$ and $t\in [0,1]$ such that $z=tz_1+(1-t)z_2$ and 
\begin{eqnarray*}\nonumber
\widehat Q_{\bf m}f(z)&=& t\,P^M\!f(z_1)+ (1-t) \,P^M\!f(z_2)\\
&\ge&  t\,r(z_1)+ (1-t) r(z_2)+ t\,2m_MM^2z_1^2+ (1-t) \,2m_MM^2z_2^2,\nonumber
\end{eqnarray*}
and we get \eqref{striu} unless $z_1=z_2=z$. The latter case is ruled out, as we would have $\widehat Q_{\bf m}f(z)= t\,P^M\!f(z)$;
that is, $z\in\{S_h, S_{h+1}\}$.
\smallskip

{\rm (ii)} 
We fix $h\in \{0,\ldots, M-1\}$ and consider any interval $(S_h,S_{h+1})$ as defined in the proof of claim (i) above. 
If  $z\in (S_h,S_{h+1})$, by \eqref{striu} we have
\begin{eqnarray}\label{stur}
Q_{\bf m}f(z)+2m_1z^2&=& \widehat Q_{\bf m}f(z)- 2m_MM^2z^2\nonumber
\\
&>&r(z)= f^{**}_{2m_1}(z)=(f(z)+2m_1z^2)^{**}\nonumber\\
&\ge&
(Q_{\bf m}f(z)+2m_1z^2)^{**}\,.
\end{eqnarray}
Hence, each $(S_h,S_{h+1})$ is an interval of non-convexity of $Q_{\bf m}f(z)+2m_1z^2$ and we may repeat the argument of the proof of claim (i) to show that $Q_{\bf m}^f(z)> Q^2_{\bf m}f(z)$ in $M-1$ equi-spaced points in $(S_h,S_{h+1})$.
The argument can be then used iteratively.

{\rm (iii)} If $f_{2m_1}$ is convex the claim is trivial. Suppose otherwise. By \eqref{stur} we have 
$$(f(z)+2m_1z^2)^{\ast\ast}=(Q_{\bf m}f(z)+2m_1z^2)^{\ast\ast}=r(z)$$ 
for $z\in [S_0,S_M]$, and, iterating the argument also
$$r(z)=(f(z)+2m_1z^2)^{\ast\ast}=(Q^n_{\bf m}f(z)+2m_1z^2)^{\ast\ast},$$ 
for $z\in [S_0,S_M]$ and $n\ge1$.
As in the proof of claim (ii) above we have 
$$Q^n_{\bf m}f(z)+2m_1z^2=r(z) \ \ \hbox{\rm if }\ z=S_0+\frac{k}{M^n}(S_M-S_0)$$ 
for all $k\leq M^n$, and then  
$$Q^\infty_{\bf m}f(z)+2m_1z^2=r(z) \ \ \hbox{\rm if }\ z=S_0+\frac{k}{M^n}(S_M-S_0)$$
for some $n$ and for all $k\leq M^n$. By density, the equality then extends to all $z\in  [S_0,S_M]$.
Arguing in this way in each interval of non-convexity of $f_{2m_1}$ we conclude.
\end{proof} 

\begin{corollary}\label{cormult}
The same claims of the previous proposition hold if $\bf m$ is such that $M\geq 2$ exists such that 
$m_n=0$ if $n\not\in\{1, M\mathbb N\}$. 
\end{corollary}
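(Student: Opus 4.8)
The plan is to reproduce the proof of Proposition~\ref{stab1M} with one substitution: wherever that proof uses the exact identity $\widehat Q_{\bf m}f=(P^M\!f)^{\ast\ast}$, valid only for a kernel concentrated at a single value $M$, I would instead use a matching pair of two-sided bounds for $\widehat Q_{\bf m}f$. At the outset one must exclude the degenerate reading $\{n\ge2:m_n\neq0\}=\emptyset$, exactly as Proposition~\ref{stab1M} assumes $m_M\neq0$: otherwise $Q_{\bf m}f=f_{2m_1}^{\ast\ast}-2m_1z^2$ is already $\bf m$-stable and claim~(ii) is false for $n\ge1$. So fix $M\ge2$ as in the statement and pick $M'\in M\mathbb N$ with $m_{M'}\neq0$ (so $M'\ge M\ge2$); such an $M'$ exists since the support of $\bf m$ beyond nearest neighbours lies in $M\mathbb N$. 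Write $P(g)(z)=\frac1M\min\{\sum_{j=1}^M g(z_j):\sum_{j=1}^M z_j=Mz\}$ for the $M$-fold average inf-convolution, so that $P(g)\ge g^{\ast\ast}$ always, $P(g)=g$ for convex $g$, and --- by Proposition~\ref{convex} --- the $n$-fold iterate of $P$, which is the $M^n$-fold average inf-convolution, decreases pointwise to $g^{\ast\ast}$.

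The upper bound I would prove by testing with $M$-periodic configurations: for a given $z$, take an optimal $M$-tuple $(z_1,\dots,z_M)$ for $P(f_{2m_1})(z)$, extend it by $M$-periodicity, and correct the increments to the slope $z$ on a thin boundary layer as in Remark~\ref{boundary-affine}, just as in the recovery-sequence part of the proof of Proposition~\ref{MniPM}. For such a configuration $u_i-u_{i-hM}=hMz$ whenever $m_{hM}\neq0$, so every interaction beyond nearest neighbours attains precisely its affine value, and in the limit $\widehat Q_{\bf m}f(z)\le P(f_{2m_1})(z)+(a_{\bf m}-2m_1)z^2$, that is, $Q_{\bf m}f(z)\le P(f_{2m_1})(z)-2m_1z^2$. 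For the lower bound, apply \eqref{lbpM} with $M'$ in place of $M$, together with Proposition~\ref{MniPM} and the elementary inequality $(P^{M'}\!f)^{\ast\ast}\ge f_{2m_1}^{\ast\ast}+2m_{M'}(M')^2z^2$ (the right-hand side being convex and $\le P^{M'}\!f$); this yields $Q_{\bf m}f(z)\ge f_{2m_1}^{\ast\ast}(z)-2m_1z^2$. Crucially, the strict inequality \eqref{striu} (valid for the kernel concentrated at $M'$) fed into \eqref{lbpM} in the same way upgrades this to $Q_{\bf m}f(z)> f_{2m_1}^{\ast\ast}(z)-2m_1z^2$ for every $z$ lying in the interior of an $M'$-subdivision subinterval of a maximal interval of non-convexity of $f_{2m_1}$. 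On the other hand, by Proposition~\ref{coimqc}, $Q_{\bf m}f(z)=f(z)=f_{2m_1}^{\ast\ast}(z)-2m_1z^2$ at every $z$ with $f_{2m_1}(z)=f_{2m_1}^{\ast\ast}(z)$, in particular at the two endpoints of any such interval.

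Given these, the three claims follow as in Proposition~\ref{stab1M}. For (i): ``if'' is Proposition~\ref{comst}(ii); for ``only if'', if $f_{2m_1}$ is not convex and $(S_0,S_M)$ is a maximal non-convexity interval, then the division points $S_h=S_0+\tfrac hM(S_M-S_0)$ satisfy $P(f_{2m_1})(S_h)=f_{2m_1}^{\ast\ast}(S_h)$ (put $M-h$ increments at $S_0$ and $h$ at $S_M$), so the upper bound forces $Q_{\bf m}f(S_h)\le f_{2m_1}^{\ast\ast}(S_h)-2m_1S_h^2<f(S_h)$ for $h\in\{1,\dots,M-1\}$, contradicting stability. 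For (iii): iterating the upper bound, with $P$ order-preserving, shows $(Q_{\bf m}^nf)_{2m_1}$ is bounded above by the $n$-fold iterate of $P$ applied to $f_{2m_1}$, and the latter decreases to $f_{2m_1}^{\ast\ast}$; hence $Q_{\bf m}^{\infty}f\le f_{2m_1}^{\ast\ast}-2m_1z^2$. Conversely $f\ge f_{2m_1}^{\ast\ast}-2m_1z^2$, the right-hand side is $\bf m$-stable by Proposition~\ref{comst}(ii) (its $2m_1$-shift is the convex function $f_{2m_1}^{\ast\ast}$), and $Q_{\bf m}$ is order-preserving, so $Q_{\bf m}^nf\ge f_{2m_1}^{\ast\ast}-2m_1z^2$ for every $n$; therefore $Q_{\bf m}^{\infty}f=f_{2m_1}^{\ast\ast}-2m_1z^2$. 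For (ii): I would show by induction that $(Q_{\bf m}^nf)_{2m_1}$ is non-convex for all $n$. The base case is the hypothesis; for the inductive step, apply the strict lower bound and Proposition~\ref{coimqc} to $g=Q_{\bf m}^nf$ (continuous, being locally Lipschitz by Remark~\ref{regqf}): then $(Q_{\bf m}g)_{2m_1}=(Q_{\bf m}^{n+1}f)_{2m_1}$ equals $g_{2m_1}^{\ast\ast}$ at the two endpoints of a maximal non-convexity interval of $g_{2m_1}$ while strictly exceeding it at an interior point, so it lies strictly above its chord and is not convex. Then claim~(i) shows $Q_{\bf m}^nf$ is not $\bf m$-stable, and since $Q_{\bf m}^{n+1}f\le Q_{\bf m}^nf$ everywhere, there is a $z$ with $Q_{\bf m}^nf(z)>Q_{\bf m}^{n+1}f(z)$.

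The hard part is the strict lower bound in the second step and the realization that it, rather than the non-strict pair of bounds, is what makes claim~(ii) work: if one only knew $f_{2m_1}^{\ast\ast}(z)-2m_1z^2\le Q_{\bf m}g(z)\le P(g_{2m_1})(z)-2m_1z^2$, then $(Q_{\bf m}g)_{2m_1}$ could coincide with the convex function $g_{2m_1}^{\ast\ast}$, and the induction for (ii) would collapse. The strict version precludes this by exhibiting, strictly inside the non-convexity interval (located via the $M'$-subdivision, which refines the $M$-one since $M\mid M'$), a point where $(Q_{\bf m}g)_{2m_1}>g_{2m_1}^{\ast\ast}$, while Proposition~\ref{coimqc} pins $(Q_{\bf m}g)_{2m_1}=g_{2m_1}^{\ast\ast}$ at the endpoints; the two together force non-convexity. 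The only other delicate point is that when $m_M=0$ one cannot use $M$ itself in \eqref{striu} and must pass to $M'$, which is why $M'$ was introduced.
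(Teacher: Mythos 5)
Your proof is correct, and it takes a noticeably more explicit route than the paper's. The paper's own proof is a one-line comparison: it bounds the quadratic form of $\bf m$ from above by that of the kernel $\bf\tilde m$ concentrated at $M$ with $\tilde m_1=m_1$ and $\tilde m_M=\sum_{j\ge1}j^2m_{jM}$ (so $a_{\bf\tilde m}=a_{\bf m}$), and then invokes Proposition~\ref{stab1M} for $\bf\tilde m$. Your upper bound via $M$-periodic test configurations does the same work and is logically equivalent at the $M$-subdivision points. Where your proposal genuinely adds something is in claim (ii): the paper's comparison gives only $Q_{\bf m}\le Q_{\bf\tilde m}$, and an upper bound by itself does not rule out that the non-increasing sequence $Q_{\bf m}^nf$ reaches the limit $f_{2m_1}^{\ast\ast}-2m_1z^2$ after finitely many steps, which is exactly what (ii) denies. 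You correctly identify that a strict lower bound on $Q_{\bf m}g$ away from $g_{2m_1}^{\ast\ast}-2m_1z^2$ is needed, and you supply it by extracting a sub-kernel concentrated at some $M'\in M\mathbb N$ with $m_{M'}\neq0$, feeding \eqref{striu} into \eqref{lbpM}, and pairing the resulting strict inequality in the interior of the $M'$-subintervals with Proposition~\ref{coimqc} at the endpoints to drive the induction. Two further points you handle well and the paper leaves implicit: the exclusion of the degenerate case $\{n\ge2:m_n\neq0\}=\emptyset$ (matching the hypothesis $m_M\neq0$ of Proposition~\ref{stab1M}, and here the requirement that $\tilde m_M\neq0$), and the necessity of passing from $M$ to $M'$ because $m_M$ itself may vanish, which is precisely why the paper aggregates the whole tail into a single $\tilde m_M$. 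In short, the paper's terse proof is adequate for (i) and (iii) and leaves the lower-bound half needed for (ii) to the reader; your proposal reconstructs that half carefully.
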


\begin{proof} The proof follows by noting that 
$$
\sum_{i,j=0}^k m_{|i-j|}(u_i-u_j)^2\le 2m_1\sum_{i=1}^k(u_i-u_{i-1})^2+ \tilde m_M\sum_{{i,j=0,\ |i-j|=M}}^k(u_i-u_j)^2,
$$
where $\tilde m_M= \sum_{j=1}^\infty j^2 m_{jM}$, and arguing by comparison, applying the previous proposition to the kernel $\bf \tilde m$ where $\tilde m_1=m_1$ and $\tilde m_n=0$ if $n\not\in\{1, M\}$
\end {proof} 

Proposition \ref{stab1M} does not hold for `incommensurate' kernels; i.e., such that there are interactions not multiple of a common $M>1$. In the example below we treat a paradigmatic case.

\bigskip 

\begin{example}[incommensurability and non-trivial $\bf m$-stability]
\label{exincomm}\rm 
Let $\bf m$ be such that $m_n\neq 0$ if and only if $n\in\{2,3\}$. 

Let $k\in \mathbb N$ and consider the quadratic function 
$$G(z_1,\dots, z_k)=2m_2\sum_{i=1}^k(z_i+z_{i+1})^2+
2m_3\sum_{i=1}^k (z_i+z_{i+1}+z_{i+2})^2$$
defined on $k$ periodic sequences $\{z_i\}_{i\in\mathbb Z}$. 
Noting that 
$$2m_2(z_{i+1}+z_{i+2})^2+2m_3(z_i+z_{i+1}+z_{i+2})^2\geq \min\{m_2,m_3\}z_i^2,$$ 
we obtain that 
$$H_c(z_1,\dots, z_k)=G(z_1,\dots, z_k)-2c\sum_{i=1}^kz_i^2\geq 0$$
for any $c\in(0,\frac{\min\{m_2,m_3\}}{2})$.  Hence, since $H_c$ is a symmetric non-negative $2$-homogeneous polynomial of degree $2$, it is convex. Then 
\begin{equation}
\frac{1}{k}\min\Big\{H_c(z_1,\dots, z_k): \sum_{i=1}^kz_i=kz\Big\}=8m_2z^2+18m_3z^2-2cz^2.
\end{equation} 

Now, we suppose that $f(z)+2cz^2$ is convex for some $c\in(0,\frac{\min\{m_2,m_3\}}{2})$. 
Then for any $k$ 
\begin{eqnarray*}
&&\frac{1}{k}\min\Big\{\sum_{i=1}^k f(z_i)+G(z_1,\ldots, z_k) : \sum_{i=1}^k z_i=kz\Big\}\\
&&=\frac{1}{k}\min\Big\{\sum_{i=1}^k (f(z_i)+2cz_i^2)+H_c(z_1,\dots, z_k) : \sum_{i=1}^k z_i=kz\Big\}\\
&&=f(z)+2cz^2+8m_2z^2+18m_3z^2-2cz^2=f(z)+a_{\bf m}z^2.
\end{eqnarray*} 
Note that by Remark \ref{boundary-affine} in the definition 
of $Q_{\bf m}f$ 
we can take $u_i-u_{i-1}=z$ for $i=1,2,3$ and $i=k,k-1,k-2$, and consider 
the function $u_i-u_{i-1}$ extended by $k$-periodicity.
Indeed, the minimum problem in \eqref{limscala} is estimated from below by the periodic problem 
up to a term $O(\frac{1}{k})$. 
Hence, 
$\widehat Q_{\bf m}f(z)\geq f(z)+a_{\bf m}z^2$,
and $f$ is $\bf m$-stable, since the other inequality is true by \eqref{estimates2}. 
Note that this implies that in general the condition $f_{2m_1}$ convex is not necessary for the $\bf m$-stability of $f$, since in this case it suffices that $f_{2m_1+2c}$ be convex.
\end{example}

\begin{definition}[effective strength of nearest-neighbour interaction] 
Let $$G_k(z_1,\dots,z_k)=\sum_{n=1}^{+\infty}2m_n\sum_{i=1}^k \Big(\sum_{j=i}^{i+n}z_j\Big)^2$$
defined on $k$-periodic sequences $\{z_j\}_{j\in\mathbb Z}$. 
We define the {\em effective strength of nearest-neighbour interaction} $m_1^{\rm eff}$ for $\bf m$ as the supremum 
of all constant $c$ such that 
$$G_k(z_1,\dots,z_k)\geq 2c\sum_{i=1}^kz_i^2$$ 
for all $k\in \mathbb N$ and for all $\{z_j\}_{j\in\mathbb Z}$. 
\end{definition}

\begin{remark}[lower bound with $m_1^{\rm eff}$]\rm 
Note that $m_1^{\rm eff}\geq m_1$. 
The two values coincide if and only if $\bf m$ satisfies the generalized concentration hypothesis of Corollary \ref{cormult}. 
Repeating the argument in Example \ref{exincomm}, we obtain that 
a sufficient condition for the $\bf m$ stability of a function $f$ is the convexity of $f_{2m_1^{\rm eff}}$. 
Moreover, we have the estimate 
\begin{equation}
Q_{\bf m}f(z)\geq f^{\ast\ast}_{2m_1^{\rm eff}}(z)-2m_1^{\rm eff}z^2. 
\end{equation}
This can be achieved again following Example \ref{exincomm}, estimating $f_{2m_1^{\rm eff}}$ with its convex envelope.  
\end{remark} 

\subsubsection{Interpolation by parameterized kernels} 
The penalization kernel $\bf m$ may depend  on a scale parameter $\sigma$, measuring either the range or the scale of incompatibility. Of particular interest are kernels that tend to $0$ as $\sigma\to +\infty$, while they loose their summability as $\sigma\to0$.  
Kernels $\bf m$ with such a dependence on a scale parameter $\sigma$ can be used to interpolate between the extreme bounds in \eqref{estimates3}. 

A suitable class of such kernels is  constructed as follows.  
Let $m\colon[0,+\infty)\to[0,+\infty)$ be a continuous non-increasing function such that $m$ is strictly positive up to some $\overline x>0$, and 
$$\int_{0}^{+\infty}x^2m(x)\, dx<+\infty.$$ 
These conditions are satisfied by $m(x)=e^{-x}$; in this case, by setting $m_n=m^{\sigma}_n=m(\sigma n)$, we obtain the exponential kernels   $m_n=e^{-\sigma n}$ studied in more detail in Section \ref{exp-sec}. 

The following proposition holds. 
\begin{proposition}\label{interpolation-sigma} 
Let $m\colon[0,+\infty)\to[0,+\infty)$ be as above, and for all $\sigma>0$ consider the kernel  ${\bf m}^\sigma=\{m(\sigma n)\}_n$. Let $f\colon \mathbb R\to [0,+\infty)$ satisfy growth assumptions \eqref{crescita} and $\eqref{crescitasotto}$.  
Then,   
\begin{equation}\label{interpsigma}
\lim_{\sigma\to+\infty}
Q_{{\bf m}^\sigma} f(z)=f^{\ast\ast}(z) \ \ \hbox{\rm and }\ \ 
\lim_{\sigma\to 0^+} Q_{{\bf m}^\sigma} f(z)= \overline f(z).
\end{equation}
\end{proposition}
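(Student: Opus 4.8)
The plan is to prove the two limits in \eqref{interpsigma} separately, each via a matching pair of inequalities built from the monotone structure of the family $\{{\bf m}^\sigma\}_\sigma$ and the trivial bounds in \eqref{estimates3}. First note that since $m$ is non-increasing, the map $\sigma\mapsto m(\sigma n)$ is non-increasing for each fixed $n$, hence $\sigma\mapsto {\bf m}^\sigma$ is non-increasing componentwise; by the monotonicity of the functional in Definition \ref{defQcap} with respect to $\bf m$ (adding non-negative quadratic terms only increases the infimum), $\sigma\mapsto \widehat Q_{{\bf m}^\sigma}f(z)$ is non-increasing, and likewise $a_{{\bf m}^\sigma}$ is non-increasing in $\sigma$. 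Moreover $a_{{\bf m}^\sigma}=2\sum_n m(\sigma n)n^2$: the substitution $x=\sigma n$ turns this into a Riemann sum for $\frac{2}{\sigma^3}\int_0^{+\infty}x^2 m(x)\,dx$ (up to the usual error), so $a_{{\bf m}^\sigma}\to 0$ as $\sigma\to+\infty$ and $a_{{\bf m}^\sigma}\to+\infty$ as $\sigma\to 0^+$, the latter because the integral is strictly positive (here is where $m>0$ on $(0,\overline x)$ enters). These observations reduce the problem to controlling $Q_{{\bf m}^\sigma}f(z)=\widehat Q_{{\bf m}^\sigma}f(z)-a_{{\bf m}^\sigma}z^2$ in the two regimes.

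\textbf{The limit $\sigma\to+\infty$.} By \eqref{estimates3} we always have $Q_{{\bf m}^\sigma}f(z)\ge f^{**}(z)$, so it suffices to produce an upper bound converging to $f^{**}(z)$. For this I would use the decomposition estimate: fix $z$ and pick $z_1,z_2,\theta$ with $f^{**}(z)=\theta f(z_1)+(1-\theta)f(z_2)$ and $\theta z_1+(1-\theta)z_2=z$ (assume $\theta$ rational for the construction, then pass to the limit, or use a quasiperiodic construction as in the introduction). Build the test function $u$ on $\{0,\dots,k\}$ with slopes in $\{z_1,z_2\}$ arranged so that $|u_i-iz|\le C$ uniformly in $i$ and $k$. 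Then the nearest-neighbour part contributes $\frac1k\sum_i f(u_i-u_{i-1})\to f^{**}(z)$, while the penalty part is $\frac1k\sum_{i,j}m(\sigma|i-j|)(u_i-u_j)^2$; splitting $(u_i-u_j)^2\le 2(u_i-iz-(u_j-jz))^2+2(i-j)^2z^2$, the second piece gives $a_{{\bf m}^\sigma}z^2+o(1)$ and the first is bounded by $\frac{C'}{k}\sum_{i,j}m(\sigma|i-j|)=C'\sum_{n\ge1}m(\sigma n)+o(1)$, which tends to $0$ as $\sigma\to+\infty$ by dominated convergence (each term $\to 0$, dominated by $m(n)$ which is summable since $\sum n^2 m(n)<\infty$). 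Subtracting $a_{{\bf m}^\sigma}z^2$ and taking $k\to+\infty$ then $\sigma\to+\infty$ yields $\limsup_\sigma Q_{{\bf m}^\sigma}f(z)\le f^{**}(z)$, closing this case.

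\textbf{The limit $\sigma\to 0^+$.} By \eqref{estimates3} we always have $Q_{{\bf m}^\sigma}f(z)\le\overline f(z)$, so it suffices to produce a lower bound converging to $\overline f(z)$. The natural tool is the effective-strength lower bound from the Remark following the definition of $m_1^{\rm eff}$: $Q_{{\bf m}^\sigma}f(z)\ge f^{**}_{2m_1^{\rm eff}({\bf m}^\sigma)}(z)-2m_1^{\rm eff}({\bf m}^\sigma)z^2$. Thus it is enough to show $m_1^{\rm eff}({\bf m}^\sigma)\to+\infty$ as $\sigma\to 0^+$, for then $z\mapsto f(z)+2m_1^{\rm eff}z^2$ becomes convex for $\sigma$ small (using the quadratic growth bound \eqref{crescita} on $f$, which prevents $f$ from being ``too concave''), so $f^{**}_{2m_1^{\rm eff}}=f_{2m_1^{\rm eff}}$ pointwise on compacts once $\sigma$ is small, giving $Q_{{\bf m}^\sigma}f(z)\ge f(z)$, hence $\ge\overline f(z)$ after taking the lower-semicontinuous envelope via the continuity of $Q_{{\bf m}^\sigma}f$ (Proposition \ref{propT}/Remark \ref{regqf}). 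To see $m_1^{\rm eff}\to+\infty$: for a $k$-periodic sequence $\{z_j\}$ with $\sum_{i=1}^k z_i=0$ (the relevant reduction, since an affine shift is handled separately), one has $\sum_{i=1}^k(\sum_{j=i}^{i+n}z_j)^2\ge c_0\sum_i z_i^2$ for a fixed range of $n$ (say $n\le N_0$), with $c_0>0$ independent of $k$ — this is the discrete Poincaré-type estimate underlying Lemma \ref{ubquic} and Example \ref{exincomm} — so $G_k\ge 2\big(\sum_{n\le N_0}m(\sigma n)\big)c_0\sum_i z_i^2$, and $\sum_{n\le N_0}m(\sigma n)\ge N_0\, m(\sigma N_0)\to N_0\,m(0^+)>0$... actually grows: choosing $N_0=\lfloor\overline x/(2\sigma)\rfloor$ so all these $n$ satisfy $\sigma n\le\overline x/2$ and $m(\sigma n)\ge m(\overline x/2)>0$, one gets $\sum_{n\le N_0}m(\sigma n)\ge N_0\,m(\overline x/2)\to+\infty$. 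Hence $m_1^{\rm eff}({\bf m}^\sigma)\to+\infty$.

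\textbf{Main obstacle.} The delicate point is the $\sigma\to 0^+$ direction: one must verify that the effective-strength lower bound, established in the excerpt only as a pointwise inequality, genuinely forces $Q_{{\bf m}^\sigma}f\ge\overline f$ and not merely $\ge f^{**}$ — the issue being that $m_1^{\rm eff}$ diverging makes $f_{2m_1^{\rm eff}}$ convex \emph{globally} only if $\inf_z f''(z)>-\infty$, which is not assumed; \eqref{crescita} gives only quadratic growth, not a two-sided curvature bound. The fix is to argue locally: on any bounded interval $[-R,R]$, the inequality $Q_{{\bf m}^\sigma}f(z)\ge f(z)$ can be obtained by a localized convexity argument once $2m_1^{\rm eff}({\bf m}^\sigma)$ exceeds the (finite on $[-R,R]$) modulus of non-convexity of $f$ there, which is legitimate since test functions can be taken with controlled slopes; then letting $R\to\infty$ and using lower semicontinuity recovers $\overline f(z)$ for each fixed $z$. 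Making this localization rigorous — controlling boundary-layer contributions and ensuring the localized estimate is uniform enough — is the part that requires genuine care; everything else is a bookkeeping assembly of results already in the paper.
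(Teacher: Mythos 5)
Your treatment of the limit $\sigma\to+\infty$ is correct. The lower bound $Q_{{\bf m}^\sigma}f\ge f^{**}$ comes for free from \eqref{estimates3}, and a two-slope laminate with $|u_i-iz|\le C$ does control the penalty: after splitting, the residual $\frac1k\sum_{i,j}m(\sigma|i-j|)\big((u_i-iz)-(u_j-jz)\big)^2$ is bounded by $C'\sum_n m(\sigma n)\to 0$ by dominated convergence. (Your splitting introduces a harmless extra $a_{{\bf m}^\sigma}z^2$ which also vanishes.) The paper reaches the same conclusion more economically, without building any test function: it uses only the upper estimate $Q_{{\bf m}^\sigma}f(z)\le (f+a_{{\bf m}^\sigma}z^2)^{**}(z)-a_{{\bf m}^\sigma}z^2$ from Proposition \ref{disug}, notes that this is a non-increasing family of convex functions as $a_{{\bf m}^\sigma}\downarrow 0$, and identifies the limit as the largest convex minorant of $f$, namely $f^{**}$.

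The $\sigma\to 0^+$ direction is genuinely incomplete, as you yourself flag, and the route you choose has two unresolved points. First, $m_1^{\rm eff}({\bf m}^\sigma)\to+\infty$ is only sketched: a per-$n$ Poincar\'e inequality of the form $\sum_i\big(\sum_{j=i}^{i+n}z_j\big)^2\ge c_0\sum_i z_i^2$ with $c_0$ independent of $k$ cannot hold uniformly in $n$, because for any $n$ one can take $z$ to be $(n{+}1)$-periodic with zero mean over a period, which makes the left side vanish; the contributions from different ranges $n$ must therefore be assembled jointly, and the uniformity over $k$ in the definition of $m_1^{\rm eff}$ is not automatic. Second, you try to pass from $f^{**}_{2m_1^{\rm eff}}(z)-2m_1^{\rm eff}z^2$ to $f(z)$ via convexity of $f_{2m_1^{\rm eff}}$, acknowledge this fails globally under the standing hypotheses, and only gesture at a localization fix. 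That particular obstacle is in fact removable without convexity: the quantity $(f+\lambda z^2)^{**}(z)-\lambda z^2$ increases in $\lambda$, is bounded above by $\overline f(z)$, and dominates the Moreau--Yosida envelope $\inf_w\{f(w)+\lambda(z-w)^2\}$, which tends to $\overline f(z)$; hence it converges to $\overline f(z)$ as $\lambda\to+\infty$ --- but you neither observe nor prove this, so the gap stands. The paper avoids the whole $m_1^{\rm eff}$ route. After reducing by monotonicity of $a_{{\bf m}^\sigma}$ to the indicator kernel $m=\chi_{[0,1]}$, it takes minimizers $u^k$ for $\sigma_k\to 0$, derives by telescoping the averaging inequality $N_k\frac1k\sum_i(w^k_i)^2\le\frac4k G^{\sigma_k}(u^k)$ with $w^k_i=u^k_i-u^k_{i-1}-z$ and $N_k=\lfloor 1/\sigma_k\rfloor$, which forces the piecewise-affine interpolants $\tilde u^k$ to converge strongly in $H^1$ to $zx$, and then concludes by lower semicontinuity of $w\mapsto\int_0^1\overline f(w')\,dt$ along that strongly convergent sequence. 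You should either adopt that compactness argument or supply the two missing steps above.
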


\begin{proof} 
Setting $\displaystyle a_{{\mathbf m}^\sigma}=2\sum_{n=1}^{+\infty}m(\sigma n) n^2$, 
we obtain that 
$$a_{{\mathbf m}^\sigma}\leq 2\int_{0}^{+\infty} m(\sigma(x+1)) (x+1)^2\, dx \leq 
\frac{2}{\sigma}\int_{0}^{+\infty} m(y) y^2\, dy=\frac{C}{\sigma}\to 0 \ \ \hbox{\rm as }\ \sigma\to+\infty.$$
Then, the first equality in \eqref{interpsigma} 
follows directly from Proposition \ref{disug}, as we have
$$f^{\ast\ast}(z)
\leq
 Q_{{\bf m}_\sigma}f(z)\leq \psi_\sigma^{\ast\ast}(z)-a_{{\mathbf m}^\sigma} z^2\leq f(z)$$
where $\psi_\sigma(z)=
f(z)+  a_{{\mathbf m}^\sigma}z^2$. Since $a_{{\mathbf m}^\sigma}$ decreases to $0$ as $\sigma\to+\infty$, 
then there exists a convex function $\psi$ such that 
$$
f^{\ast\ast}(z)\leq \psi(z)
=\lim_{\sigma\to +\infty} \psi^{\ast\ast}_\sigma(z)
=\lim_{\sigma\to +\infty} \Big(\psi^{\ast\ast}_\sigma(z)-a_{{\mathbf m}^\sigma}z^2\Big)\leq f(z).
 $$
Hence, $\psi(z)=f^{\ast\ast}(z)=\lim\limits_{\sigma\to+\infty}Q_{{\bf m}^\sigma}f(z)$.  

Now, we prove the second limit in  \eqref{interpsigma}. 
Since \eqref{estimates} holds, it is sufficient to show that 
$\lim\limits_{\sigma\to 0} Q_{{\bf m}^\sigma} f(z)\geq\overline f(z)$. 
Up to scaling, we can suppose that $\overline x=1$ and $m(1)=1$. 
Since $m$ is non-increasing, it is sufficient to prove the desired equality for 
$m=\chi_{[0,1]}$. The function $a_{{\mathbf m}^\sigma}$ is non-increasing with respect to $\sigma$; hence, for any $z$ there exists the limit of $Q_{{\bf m}^\sigma} f(z)$ as $\sigma\to 0$.
Let $\sigma_k\to 0$ as $k\to+\infty$ and let $u^k$ be a minimizer  in $[0,k]$ for the minimum problem  in the formula of $Q_{{\bf m}^{\sigma_k}}$ in Remark \ref{vadeqm}; that is, $u^k$ is an admissible minimizer for $G^{\sigma_k}(u)$  
defined by 
$$
G^{\sigma_k}(u)=\sum_{i=1}^k\! f(u_i-u_{i-1})+\! \sum_{i,j=0}^k \! m(\sigma_k |i-j|)
\big((u_i-u_j)^2-(i-j)^2z^2\big).
$$ 
 Let $N_k=\lfloor\frac{1}{\sigma_k}\rfloor$. By Remark \ref{boundary-affine}, 
we can assume that 
 the test functions $u$, defined for $i\in\mathbb Z$, satisfy $u_i=iz$ for 
$i\leq N_k$ and $i\geq k-N_k$.  
Reasoning as in Remark \ref{penaltyrem}, 
for any $\ell=1,\dots, N_k$ and $r=1,\dots, \ell$ we have 
$$\sum_{i=1}^{\lfloor k/l\rfloor}\Big((u^k_{i\ell+r}-u^k_{(i-1)\ell+r})^2-z^2\ell^2\Big)
=\sum_{i=1}^{\lfloor k/l\rfloor}\Big(u^k_{i\ell+r}-u^k_{(i-1)\ell+r}-z\ell\Big)^2\geq 0.$$  
We now define a discrete function $w^k$ by setting 
$w^k_i=u^k_i-u^k_{i-1}-z$. For any $1\leq n\leq N_k$, 
we can write 
$$w^k_i=\sum_{j=i}^{i+n} w^k_j-\sum_{j=i+1}^{i+n} w^k_j,$$ 
so that, by summing over $n$ 
\begin{eqnarray*}
N_k\frac{1}{k}\sum_{i=1}^k(w^k_i)^2&\leq&\frac{2}{k}\sum_{n=1}^k\sum_{i=1}^k\big((u^k_{i+n}-u^k_{i-1}-(n+1)z)^2+(u^k_{i+n}-u^k_{i}-nz)^2\big)\\
&\leq& \frac{4}{k} G^{\sigma_k}(u^k),
\end{eqnarray*}
recalling that $m(\sigma_k n)=1$ if $\sigma_k n\leq 1$ and $0$ otherwise. 
Now, let $\tilde u^k$ denote the piecewise-affine extension to $[0,1]$ of the discrete function defined by $\tilde u^k(\frac{i}{k})=\frac{1}{k}u^k_i$, 
so that $(\tilde u^k)^\prime-z=w^k_i$ in each interval $(\frac{i-1}{k},\frac{i}{k})$.
Since $\frac{4}{k} G^{\sigma_k}(u^k)$ is equibounded, 
we obtain that 
$$\int_0^1 ((\tilde u^k)^\prime-z)^2\, dt=\frac{1}{k}\sum_{i=1}^k(w^k_i)^2\leq \frac{C}{N_k}\to 0 \ \ \hbox{\rm as }\ \ k\to+\infty.$$
Hence, $\tilde u^k\to z x$ in $H^1(0,1)$.
We get 
\begin{eqnarray*}
\lim_{k\to+\infty}Q_{{\bf m}^{\sigma_k}}f(z)&\geq&\liminf_{k\to+\infty} \frac{1}{k}\sum_{i=1}^k f(u^k_i-u^k_{i-1}) 
\geq \liminf_{k\to+\infty} \frac{1}{k}\sum_{i=1}^k \overline f(u^k_i-u^k_{i-1}) \\
&=&\liminf_{k\to+\infty} \frac{1}{k}\sum_{i=1}^k \overline f( w_i^k+z )=\liminf_{k\to+\infty} \int_0^1\overline f(\tilde u_k^\prime)\, dt \geq \overline f(z),  
\end{eqnarray*}
by the lower-semicontinuity of the functional $w\mapsto \int_0^1\overline f(w^\prime)\, dt$ with respect to the strong $H^1$-convergence. 
\end{proof}

\begin{remark}[`singular' kernels depending on $\sigma$]\label{sigmasing}\rm 
If $\bf m$ is a kernel concentrated at some $M\geq 2$, with $m_M\neq 0$, we consider a different type of parameter dependence. 
In this case, we can 
set ${\bf m}^\sigma=\{m_n^{\sigma}\}_n=\{\phi(\sigma) m_n\}_n$, with $\phi$ decreasing and such that $\lim_{\sigma\to 0^+}\phi(\sigma)=+\infty$ and $\lim_{\sigma\to+\infty}\phi(\sigma)=0$; for instance, we may consider  
$$m_n^\sigma=\frac{1}{\sigma}m_n.$$ Since $m_n$ is not decreasing, this case cannot be treated directly by applying the result 
of the above  proposition.
However, the same argument as in the proof of Proposition \ref{interpolation-sigma} can be used as well giving
\begin{equation}\label{sigmasinginf}
\lim_{\sigma\to+\infty}Q_{{\bf m}^\sigma}f(z)=f^{\ast\ast}(z).
\end{equation}
As for the limit as $\sigma\to 0^+$, we can follow the proof up to the definition of $w_i^k$, obtaining 
\begin{eqnarray*}
\frac{1}{k}\sum_{i=1}^k(w^k_i)^2&\leq&\frac{2}{k}\sum_{n=1, M}^k\sum_{i=1}^k\big((u^k_{i+n}-u^k_{i-1}-(n+1)z)^2+(u^k_{i+n}-u^k_{i}-nz)^2\big)\\
&\leq& \max\Big\{\frac{\sigma_k}{m_1},\frac{\sigma_k}{m_M}\Big\}\frac{4}{k} G^{\sigma_k}(u^k),
\end{eqnarray*}
and we can conclude exactly as above, proving that 
\begin{equation}\label{sigmasing0}\lim_{\sigma\to0^+}Q_{{\bf m}^\sigma}f(z)=\overline f(z).
\end{equation} 
Note that if $m_1=0$ equality \eqref{sigmasing0} in general does not hold (while \eqref{sigmasing0} is always valid). As an example, we refer to 
Remark \ref{casem10}. 
\end{remark}

In general, for $\sigma$-dependent kernels equalities \eqref{interpsigma} are achieved only asymptotically. However, in some cases they are reached for some finite values of $\sigma>0$. To highlight this fact, we give the following definition. 
\begin{definition}[critical transition value of $\sigma$] 
Let $f\colon \mathbb R\to [0,+\infty)$ be a continuous function satisfying growth assumptions \eqref{crescita} and $\eqref{crescitasotto}$. 
Let $\{\bf m^{\sigma}\}_{\sigma>0}$ be a family of parameterized kernels.  
We define the critical transition value of $\sigma$ by setting 
$$\sigma_c=\sigma_c(f)=\sup\{\sigma>0: Q_{\bf m^\tau}f=f \ \hbox{\rm for all } \tau<\sigma\}.$$ 
We set $\sigma_c=0$ if $Q_{\bf m^\sigma}f<f$ for any $\sigma$. 
\end{definition}
 
\begin{example}[existence of positive critical transition values]\label{excriticalsigma}\rm 
Let $f(z)=(1-z^2)^2$ and let $\bf m$ be concentrated at some $M\geq 2$, as in Remark \ref{sigmasing}. We set 
$m_1^\sigma=\frac{m_1}{\sigma}$ and $m_M^\sigma=\frac{m_M}{\sigma}$. 
By Remark \ref{nocopeco}, we have that $\sigma_c=m_1$ is the critical transition value. 
Note that, conversely, for any $\sigma$ we have that 
$Q_{{\bf m}^{\sigma}}f(z)>f^{\ast\ast}(z)$ 
for some values of $z$, hence the limit in \eqref{sigmasinginf} is only reached at $+\infty$. 
\end{example}

In the sequel, an important role will be played by the two special families of kernels depending on $\sigma$, exponential and 
concentrated at some $M$, introduced above and already illustrated in Fig.~\ref{istogram}, for
which the computation of $Q_{\bf m} f$ can be performed analytically. 
In both cases, we will be able  trace explicitly the role of  the scale parameter $\sigma$  characterizing the range/strength of the penalization kernel. 
 
\section{Description of minimizers by a phase function}
\label{constraint-sec} 
In this section, we focus on the important case of `generalized double well' 
potentials  when  the domain of a function $f$ can be  subdivided  in two sub-domains of convexity, which in what follows we refer to as $A$ and $\mathbb R\setminus A$. We call such  potentials  {\em bi-convex} and refer to the two convex branches of $f$ as  {\em phases}.  
Given that some  microstructures in such models  can be interpreted  as a `phase mixtures', it will be convenient  to introduce the `volume-fraction parameter' $\theta$ representing the percentage of indices $i$ such that $u_i-u_{i-1}$ is in
  the set $A$.  
The computation of minima with prescribed volume fraction $\theta$ gives an upper bound for  $\widehat Q_{\bf m}f$. 

With the introduction of $\theta$, one can proceed in two steps. The first step involves the computation of   the  function $\widehat Q_{\bf m}f(\theta,z)$ which is obtained  by a constrained minimization  with prescribed  $\theta$.  Then the function  $\widehat Q_{\bf m}f$ can be  obtained by a one-dimensional optimization of $\widehat Q_{\bf m}f(\theta,z)$ over $\theta$, which  also defines the  {\em phase function} $\theta(z)$ such that  $\widehat Q_{\bf m}f(\theta(z),z)=\widehat Q_{\bf m}f(z)$. In the problems of interest the function $\theta(z)$ will  have  a complex `staircase' structure reflecting the existence of the  {\em locking states} at the values of $\theta$ that are stable under   variation  of $z$. 

\begin{remark}[constrained minimization and the structure of the phase function]\label{const_min}\rm
To understand the role of the constrained minimization producing the function $\widehat Q_{\bf m}f(\theta,z)$ and to reveal the link between the shape of the phase function $\theta(z)$ and  the structure of the  relaxed energy  $\widehat Q_{\bf m}f$, it will be instructive  to consider first the case when 
only $M$-neighbour interactions are taken into account. We recall that in this case  there exists $M\geq 2$ such that $m_M\neq 0$ and $m_n=0$ for any $n\neq M$. 

Proposition \ref{MniPM} gives a formula for $\widehat Q_{\bf m}f(z)$. 
If $f$ is bi-convex, we can subdivide its computation 
by introducing  a dependence on  the fraction $\theta$ of $z_i=u_i-u_{i-1}$ belonging to the convexity region $A$. 
More precisely, for any $n=0,\dots, M$ we can first compute the minimum at  a fixed fraction 
$\theta_n=\frac{n}{M}$ of $z_i$ belonging to $A$.  Using the convexity, 
such  minimum problems reduce to the computation of   
\begin{equation}\label{pieMMeenne}
\left. \begin{array}{ll}
P^{M,n}(z)=&\displaystyle\min\Big\{(1-\theta_n)f(z^-)+\theta_n f(z^+): z^-\leq z^\ast, z^+\geq z^\ast, \\
&\hspace{2cm}\displaystyle(1-\theta_n)z^-+\theta_nz^+=z\Big\}
+2m_M(Mz)^2. 
\end{array}
\right. 
\end{equation} 
The optimal bounds are then completely characterized by the functions $P^{M,n}$, in the sense that 
\begin{equation*}
\widehat Q_{\bf m}f(z)=\big(\min_n P^{M,n}(z)\big)^{\ast\ast}.
\end{equation*}  
We will show that all the $M+1$ values $\theta_n$ are locking states in the sense above. These values of $\theta$ are particularly relevant since the shape of $\widehat Q_{\bf m}f(z)$ will be shown to depend exclusively on `phase mixtures' with `volume fraction' $\theta_n$. Another property enjoyed by $\theta_n$ is that the minimum problems corresponding to values of $z$ for which $\theta(z)=\theta_n$ admit periodic solutions.
\end{remark}  

\subsection{Phase-constrained relaxation and related properties}
We now give some precise definitions, and obtain some general bounds valid for any choice of $f$ and $\bf m$. 

\medskip
Let $z^\ast\in\mathbb R$ and let 
$A=[z^\ast,+\infty)$.  
For a given $\theta\in\mathbb Q\cap[0,1]$ and $N\in\mathbb N$ 
we consider the set of test functions $u$ with a 
percentage $\theta$ of indices $i$ such that $u_i-u_{i-1}\in A$. 
Since we need a closed condition, 
the form of the constraint is given as follows: 
\begin{eqnarray}\label{vtheta}
&&\mathcal V(N;\theta)=\{u\colon [0,N]\cap\mathbb Z\to\mathbb R: 
\#\{i: u_i-u_{i-1}>z^\ast\}\leq\theta N,\nonumber \\
&&\hspace{5cm}\#\{i: u_i-u_{i-1}<z^\ast\}\leq(1-\theta) N\}. 
\end{eqnarray} 
For any 
$z\in\mathbb R$ we can then define the function 
\begin{eqnarray}\label{defqtheta}
\widehat Q_{\bf m}f(\theta,z)=\liminf_{\substack{N\to+\infty\\ \theta N\in\mathbb N}}\frac{1}{N}
\inf\Big\{F_1(u;[0,N]): u\in\mathcal A(N;z)\cap\mathcal V(N;\theta) \Big\}, 
\end{eqnarray} 
where $F_1$ is 
the (non-scaled) functional defined for $u\colon[0,N]\cap\mathbb Z\to\mathbb R$ by 
\begin{equation}\label{defF1}
F_1(u;[0,N])=\sum_{i=1}^{N} f(u_i-u_{i-1})+\sum_{i,j=0}^Nm_{|i-j|}(u_i-u_j)^2
\end{equation} 
(see \eqref{def-fe-prima} with $\e=1$ and $I=[0,N]$). 
In the notation $\widehat Q_{\bf m}f(\theta,z)$ we omit the dependence on $z^\ast$.  
Note that a corresponding definition could be given also for a more general set $A$. 

In order to obtain bounds for $Q_{\bf m}f$, we also define 
\begin{equation}\label{defqtetazeta}
Q_{\bf m}f(\theta,z)=\widehat Q_{\bf m}f(\theta,z)-a_{\mathbf m}z^2. 
\end{equation}
\begin{theorem}[optimization over the  phase fraction]\label{minpro}
The following equality holds:
$$\inf_{\theta\in\mathbb Q\cap[0,1]}\widehat Q_{\bf m}f(\theta,z)=\widehat Q_{\bf m}f(z).$$
\end{theorem}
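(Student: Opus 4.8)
The plan is to prove two inequalities. For the inequality $\inf_{\theta}\widehat Q_{\bf m}f(\theta,z)\ge\widehat Q_{\bf m}f(z)$, observe that for every rational $\theta$ with $\theta N\in\mathbb N$, the constrained infimum in \eqref{defqtheta} is taken over $\mathcal A(N;z)\cap\mathcal V(N;\theta)\subset\mathcal A(N;z)$, so it is bounded below by the unconstrained infimum $\frac1N\inf\{F_1(u;[0,N]):u\in\mathcal A(N;z)\}$. Taking $\liminf_{N\to+\infty}$ along $N$ with $\theta N\in\mathbb N$ of the right-hand side recovers $\widehat Q_{\bf m}f(z)$ (the full limit exists by Remark \ref{queffgamma}, so any subsequential liminf equals it). Hence $\widehat Q_{\bf m}f(\theta,z)\ge\widehat Q_{\bf m}f(z)$ for every admissible $\theta$, and the infimum over $\theta$ inherits the bound.

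For the reverse inequality $\inf_{\theta}\widehat Q_{\bf m}f(\theta,z)\le\widehat Q_{\bf m}f(z)$, the idea is to start from an almost-optimal test function for $\widehat Q_{\bf m}f(z)$ and read off its phase fraction. Fix $k$ large and let $u\in\mathcal A(k;z)$ be $\e$-almost optimal for the $k$-th problem in Definition \ref{defQcap}. Set $\theta^{(k)}=\frac1k\#\{i:u_i-u_{i-1}>z^\ast\}$; this is a rational number in $[0,1]$ with $\theta^{(k)}k\in\mathbb N$. Then $u$ is automatically admissible in $\mathcal A(k;\theta^{(k)})\cap\mathcal V(k;\theta^{(k)})$ (after possibly enlarging $\theta^{(k)}$ on a negligible set to absorb indices with $u_i-u_{i-1}=z^\ast$), so $\frac1k F_1(u;[0,k])\ge\widehat Q_{\bf m}f(\theta^{(k)},z)$ up to the $o(1)$ error coming from the fact that $\widehat Q_{\bf m}f(\theta,z)$ is itself defined as a liminf over a growing $N$. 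Consequently $\inf_\theta\widehat Q_{\bf m}f(\theta,z)\le\widehat Q_{\bf m}f(\theta^{(k)},z)\le\frac1k F_1(u;[0,k])+o(1)\le\widehat Q_{\bf m}f(z)+\e+o(1)$, and letting $k\to+\infty$ and then $\e\to0$ finishes it.

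The main obstacle is the mismatch of indices: $\widehat Q_{\bf m}f(\theta,z)$ is a liminf over $N\to+\infty$ with $\theta N\in\mathbb N$, whereas the natural almost-minimizer at scale $k$ gives a \emph{single} value $\theta^{(k)}$ whose denominator may grow with $k$. One clean way around this is to use the boundary-layer reformulation of Remark \ref{boundary-affine}: replace the sharp count by allowing test functions to equal $L_z$ on a mesoscopic boundary layer of width $k^\alpha$, $\alpha\in(-1,0)$, which costs nothing asymptotically and lets one splice together many copies of the (now fixed-$\theta$, fixed-period) almost-optimal configuration to build admissible competitors for $\widehat Q_{\bf m}f(\theta^{(k)},z)$ at all large $N$ in the required arithmetic progression. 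Periodic extension as in the proof of Proposition \ref{MniPM} makes the long-range sums $\sum m_{|i-j|}(u_i-u_j)^2$ controllable, with the usual $O(1/N)$ boundary correction. I would also remark that since by Theorem \ref{minpro} (once proved) the infimum over $\theta$ is attained in the limit by some optimal phase function value, this is precisely what defines the phase function $\theta(z)$ used throughout Section \ref{constraint-sec}; no extra work is needed for that identification beyond the two inequalities above.
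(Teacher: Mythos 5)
Your overall strategy matches the paper's: the easy inequality follows by inclusion of admissibility sets (the paper does not even bother to spell this out), and the hard inequality is obtained by extracting the phase fraction $\theta^{(k)}$ from an almost-optimal competitor $u$ at scale $k$, extending $u$ so that $u_i - zi$ is $k$-periodic, and using the extension as a competitor for $\widehat Q_{\bf m}f(\theta^{(k)},z)$ at scales $Nk$. The one point that would have to be fixed is the claim that the splicing produces ``the usual $O(1/N)$ boundary correction'' as in Proposition \ref{MniPM}; that intuition transfers from the $M$-periodic case only because there the period $M$ is \emph{fixed}. Here the period is $k$ and there are $N-1$ joints, each contributing a boundary-layer error that scales with the block, so after dividing by $Nk$ the error does \emph{not} vanish as $N\to+\infty$. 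The paper's estimate separates the inter-block interactions into two pieces: pairs $(i,j)$ with $|i-j|\ge \delta k$ are controlled by the tail decay $m_\ell = o(\ell^{-\beta})$ (this is the mechanism your proposal does not invoke at all) and contribute $O(\eta)$; pairs near the joints, where $u$ is affine, contribute $O(\delta)$ through the finiteness of $a_{\bf m}$. The resulting bound $\frac{1}{Nk}F_1 \le \widehat Q_{\bf m}f(z) + C(\eta+\delta)$ holds \emph{uniformly} in $N$, and the conclusion is reached by first taking $\liminf_{N\to\infty}$ and then sending $\eta,\delta\to 0$; writing the errors as a single $o(1)$ absorbed by ``letting $k\to+\infty$'' obscures that the boundary-layer width must be sent to zero independently of (or slaved to) $k$. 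Once these two pieces of the inter-block estimate are put in and the limits ordered accordingly, your argument is precisely the paper's.
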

\begin{proof}
It is sufficient to prove that $\widehat Q_{\bf m}f(z)\geq \inf_{\theta\in\mathbb Q\cap[0,1]}\widehat Q_{\bf m}f(\theta,z)$. To this end, with $\eta>0$ fixed we choose $\delta>0$, $k \in \mathbb N$
and $u$ an admissible test function for  the minimum in \eqref{limdelta} such that 
$$\frac{1}{k}\Bigl(\sum_{i=1}^k f(u_i-u_{i-1})+
\sum_{i,j=0}^k m_{|i-j|}(u_i-u_j)^2\Bigr) \leq  \widehat Q_{\bf m}f(z)+\eta.$$
Setting 
$$\theta=\frac{\#\{i: \ u_i-u_{i-1}\geq z^\ast\}}{k},$$ 
we extend $u$ to $\mathbb Z$ so that $u_i-zi$ is $k$-periodic. 
Since $u\in \mathcal A(Nk;z)\cap\mathcal V(Nk;\theta)$, we can use 
it as a test function for 
\begin{equation}\label{defqntheta}
\frac{1}{Nk}
\inf\Big\{F_1(v;[0,Nk]): v\in\mathcal A(Nk;z)\cap\mathcal V(Nk;\theta) \Big\}
\end{equation}
in the computation of $\widehat Q_{\bf m}f(\theta,z)$. 

We subdivide the estimate of $F_1(v;[0,Nk])$ by grouping interactions in three (partially overlapping) different subsets taking into account the location of the interacting sites in the subintervals $[(r-1)k, rk]$ for  $r\in\{1,\ldots, N\}$.

\medskip 
i) (interactions within a single subinterval) $i, j\in [(r-1)k, rk]$ for some $r\in\{1,\ldots, N\}$. Summing over all $i,j$ and $r$ gives the contribution 
\begin{equation}\label{conto1}
{1\over k} F_1(u;[0,k])
\end{equation}
to \eqref{defqntheta}.

\medskip
ii) (interactions between different intervals, but not close to the endpoints) $i\in I_r^\delta= [(r-1)k+k\delta, rk-k\delta]\cap \mathbb Z$, $j\in I_s =[(s-1)k, sk]\cap \mathbb Z$ for some $r,s\in\{1,\ldots, N\}$ with $r\neq s$.

Let $i'=i-(r-1)k$ and $j'=j-(s-1)k$.  We can write
\begin{eqnarray*}
(u_i-u_j)^2&=&(u_{i'}-u_{j'}+z(r-s)k)^2\le 2(u_{i'}-u_{j'})^2+2z^2(r-s)^2k^2\\
&\le&  2(i'-j')\sum_{l=j'+1}^{i'}(u_{l}-u_{l-1})^2+2z^2(r-s)^2k^2
\end{eqnarray*}
(we can suppose for simplicity that $j'<i'$).
By \eqref{crescitasotto} we have that 
$$
\sum_{l=j'+1}^{i'}(u_{l}-u_{l-1})^2\le \sum_{l=1}^{k}(u_{l}-u_{l-1})^2\le c\, F_1(u; [0,k])\le Ck,
$$
so that $(u_i-u_j)^2\le  2Ck^2+2z^2(r-s)^2k^2$.

We may suppose that $k$ is large enough, so that 
$
m_l\le {\eta\over l^\beta} 
$ 
if $l\ge k\delta$, where $\beta$ is the decay exponent of $\bf m$. Note that $|i-j|\ge\big||s-r|+\delta-1\big|k\ge \delta k$. Hence,
summing over such $i,j$, $r$ and $s$ we obtain 
\begin{eqnarray}\label{conto2} \nonumber
{1\over Nk}\sum_{r\neq s}\sum_{i\in I^\delta_r}\sum_{j\in I_s}m_{|i-j|}(u_i-u_j)^2&\le&  {1\over N}\sum_{r\neq s}2k (C+z^2(r-s)^2)\sum_{i\in I^\delta_r}\sum_{j\in I_s}m_{|i-j|}\\  \nonumber
&\le&  2{1\over N} \sum_{r\neq s}(C+z^2(r-s)^2){\eta \over \big||s-r|+\delta-1\big|^\beta}k^{3-\beta}\\
&\le&  2 \sum_{n=1}^\infty(C+z^2n^2){\eta \over |n+\delta-1|^\beta}k^{3-\beta}\le \widetilde C \eta\,.
\end{eqnarray}

\medskip
iii) (interactions between different intervals, close to the endpoints) $i,j\in J^\delta_r =[ rk-k\delta, rk+k\delta]\cap\mathbb Z$ for some $r\in\{1,\ldots, N-1\}$.

For such $i,j$ we have $u_i-u_j= z(i-j)$. Hence, we have
\begin{eqnarray}\label{conto3} \nonumber
{1\over Nk}\sum_{r=1}^{N-1} \sum_{i,j\in J^\delta_r}m_{|i-j|} (u_i-u_j)^2
&=&{z^2\over Nk}\sum_{r=1}^{N-1} \sum_{i,j\in J^\delta_r}m_{|i-j|} (i-j)^2
\\ 
&\le& {z^2\over k}\sum_{-k\delta\le l\le k\delta} \sum_{n\in \mathbb Z} m_n n^2
\le \widetilde C \delta.
\end{eqnarray}

\medskip
By \eqref{conto1}--\eqref{conto3} we obtain the estimate
$$
{1\over Nk} F_1(u;[0,Nk])\le {1\over k} F_1(u;[0,k])+ \widetilde C(\eta+\delta)\le \widehat Q_{\bf m} f(z)+C(\eta+\delta).
$$
Taking the liminf as $N\to+\infty$, by the arbitrariness of $\eta$ and $\delta$ we obtain the claim.
\end{proof} 

We now study the general properties of $\widehat Q_{\bf m}f(\theta,z)$ as a function  of $\theta$. To that end, we write $\theta$ as the quotient of (coprime) integer numbers $p$ and $q$, so that   
\begin{eqnarray}\label{qthetapq}
\widehat Q_{\bf m}f(\theta,z)=\liminf_{k\to+\infty}\frac{1}{kq}\inf\Big\{F_1(u;[0,kq]): u\in\mathcal A(kq;z)\cap\mathcal V(kq;\theta) \Big\}. \nonumber
\end{eqnarray}

\medskip 
We will need to develop some technical ideas related to the possibility of modifying boundary conditions. 
We note that the usual cut-off argument as in Remark \ref{boundary-affine} cannot be directly followed, since forcing the test function to satisfy the affine condition $u_i=iz$ near the boundary may be incompatible with the constraint. Still, we can modify the argument with a compatible condition remaining   close to the affine function near the boundary. 

To make this precise, for any $\delta>0$ we introduce the set 
$$\widetilde{\mathcal A}_\delta(N;z)=\{u\in\mathcal A(N;z): \ |u_i-u_{i-1}|\leq |z^\ast|+2|z|
\ \hbox{\rm if } i\leq \delta N \ \hbox{\rm and }\ i\geq (1-\delta)N\}$$ 
and state the following result. 
\begin{lemma}[compatible boundary conditions]\label{boundary-Q}  
The following equality holds 
\begin{eqnarray*} 
\widehat Q_{\bf m}f(\theta,z)=\lim_{\delta\to 0}
\liminf_{k\to+\infty}\frac{1}{kq}\inf\Big\{F_1(u;[0,kq]): u\in \mathcal V(kq;\theta) \cap \widetilde{\mathcal A}_\delta(kq;z)  
 \Big\} \nonumber
\end{eqnarray*}
for any $\theta=\frac{p}{q}\in \mathbb Q\cap[0,1]$ and $z\in\mathbb R$. 
\end{lemma}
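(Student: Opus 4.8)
\emph{Plan.} I would prove the two inequalities separately: ``$\ge$'' is immediate from inclusion of the admissible classes, while ``$\le$'' needs a boundary–modification (de-localization) argument adapted to the fact that $\mathcal V(\cdot\,;\theta)$ is a \emph{closed} constraint, so that the affine cut-off of Remark~\ref{boundary-affine} cannot be used verbatim. For the easy inequality, note that $\widetilde{\mathcal A}_\delta(N;z)\subseteq\mathcal A(N;z)$ by definition, hence $\mathcal V(kq;\theta)\cap\widetilde{\mathcal A}_\delta(kq;z)\subseteq\mathcal V(kq;\theta)\cap\mathcal A(kq;z)$ and the infimum of $F_1$ over the smaller class is not smaller than over the larger one; writing $\widehat Q_{\bf m}f(\theta,z)$ in the form with $N=kq$ and taking $\liminf$ in $k$ gives, for every $\delta>0$, that the right-hand side is $\ge\widehat Q_{\bf m}f(\theta,z)$. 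Since shrinking $\delta$ enlarges $\widetilde{\mathcal A}_\delta$, that $\liminf$ is non-decreasing in $\delta$, so the limit as $\delta\to0$ exists and is still $\ge\widehat Q_{\bf m}f(\theta,z)$.

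For the reverse inequality, fix $\eta>0$ and $\delta\in(0,\tfrac12\min\{\theta,1-\theta\})$ (the cases $\theta\in\{0,1\}$ being trivial or vacuous). By definition of $\widehat Q_{\bf m}f(\theta,z)$ I would pick $k_0$ and $v\in\mathcal V(k_0q;\theta)\cap\mathcal A(k_0q;z)$ with $\frac1{k_0q}F_1(v;[0,k_0q])\le\widehat Q_{\bf m}f(\theta,z)+\eta$; by \eqref{crescitasotto} and near-optimality $\sum_i(v_i-v_{i-1})^2\le Ck_0q$, so all increments of $v$ lie in a fixed compact set $K$ depending only on $k_0$. Extend $v$ to $\mathbb Z$ so that $i\mapsto v_i-zi$ is $k_0q$-periodic and, for large $N$, glue $N$ translated copies into $w$ on $[0,Nk_0q]$; exactly as in the proof of Theorem~\ref{minpro}, $\frac1{Nk_0q}F_1(w;[0,Nk_0q])\le\widehat Q_{\bf m}f(\theta,z)+\eta+o_N(1)$, $w\in\mathcal V(Nk_0q;\theta)\cap\mathcal A(Nk_0q;z)$, and $w_{rk_0q}=rk_0qz$ at every block endpoint. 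I would then replace $w$ on the two boundary layers $[0,\lfloor\delta N\rfloor k_0q]$ and $[(N-\lfloor\delta N\rfloor)k_0q,\,Nk_0q]$ by constant increments equal to $z$ — admissible since $|z|\le|z^\ast|+2|z|$ — keeping the block endpoints fixed; this produces $\hat u\in\mathcal A(Nk_0q;z)$ satisfying the boundary bound of $\widetilde{\mathcal A}_\delta$ and differing from $w$ on only $2\lfloor\delta N\rfloor k_0q$ increments, all in the fixed compact set $K\cup\{z\}$.

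The point requiring care is that this replacement may spoil membership in $\mathcal V$. If $z>z^\ast$ (the case $z<z^\ast$ is symmetric, $z=z^\ast$ trivial) the new increments count among those $>z^\ast$, so $\#\{i:\hat u_i-\hat u_{i-1}>z^\ast\}$ may exceed $\theta Nk_0q$ by at most $2(1-\theta)\lfloor\delta N\rfloor k_0q$, while the count of increments $<z^\ast$ only decreases. To repair this I would lower that many increments $>z^\ast$ inside the middle blocks down to $z^\ast$, and redistribute the resulting displacement deficit — of size $O(\delta Nk_0q)$, because the middle increments lie in $K$ — over the remaining middle increments $>z^\ast$, which for $\delta<\theta/2$ still number $\ge(\theta N-2\lfloor\delta N\rfloor)k_0q=\Theta(Nk_0q)$, raising each by $O(\delta)$ so that none crosses $z^\ast$ (when the middle has too few increments $>z^\ast$ for an overshoot to arise, no repair is needed). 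Since $f$ is uniformly continuous on the fixed compact set $K\cup\{z\}$, only $O(\delta Nk_0q)$ increments are perturbed, each by $O(\delta)$, and $\mathbf m$ is summable with the decay \eqref{propm}, both the nearest-neighbour and the long-range parts of $F_1$ change by $o_\delta(1)\,Nk_0q+o_N(1)\,Nk_0q$. Hence $\hat u\in\mathcal V(Nk_0q;\theta)\cap\widetilde{\mathcal A}_\delta(Nk_0q;z)$ with $\frac1{Nk_0q}F_1(\hat u)\le\widehat Q_{\bf m}f(\theta,z)+\eta+o_\delta(1)+o_N(1)$; letting $N\to\infty$, then $\delta\to0$, then $\eta\to0$ gives the reverse inequality.

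\emph{Main obstacle.} Everything hinges on the last step: because $\mathcal V$ is a closed constraint expressed through the strict-inequality counts $\#\{>z^\ast\}$ and $\#\{<z^\ast\}$, one cannot simply impose an affine boundary layer, and the boundary modification must be compensated in the bulk without creating a displacement defect and without flipping a macroscopic number of increments across $z^\ast$. The hypotheses \eqref{crescitasotto} (which confines the increments of the near-optimal competitor to a fixed compact set, making $f$ effectively uniformly continuous along the modification) and \eqref{propm} (which controls the long-range energy of a spatially small perturbation) are exactly what render this rebalancing asymptotically negligible.
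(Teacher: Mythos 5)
Your ``$\ge$'' direction (inclusion of admissible classes, monotonicity in $\delta$) is correct, and your overall strategy for ``$\le$'' — periodic gluing, affine boundary replacement, then a bulk rebalancing to restore $\mathcal V$ — is genuinely different from the paper's. As written, however, two steps do not hold.

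\textbf{The gluing step is not justified.} You assert that gluing $N$ copies of $v$ yields $\frac1{Nk_0q}F_1(w;[0,Nk_0q])\le\widehat Q_{\bf m}f(\theta,z)+\eta+o_N(1)$ ``exactly as in the proof of Theorem~\ref{minpro}''. But that proof works with a competitor coming from the boundary-layer formulation \eqref{limdelta}, which forces $u_i=iz$ near the endpoints; this is precisely what makes the cross-block interactions near block junctions (case (iii) in that proof) cost only $O(\delta)$. Your $v$ lies only in $\mathcal V(k_0q;\theta)\cap\mathcal A(k_0q;z)$, with no control on the individual increments near $0$ and $k_0q$ beyond $\sum_i(v_i-v_{i-1})^2\le Ck_0q$; a near-minimizer may well carry an increment of size $\sim\sqrt{k_0q}$ adjacent to the boundary, and after gluing the interactions across each block junction then contribute $O(1)$ per unit length — not $o_N(1)$. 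The very control you implicitly invoke here (boundary increments in a fixed bounded set) is exactly what Lemma~\ref{boundary-Q} is meant to supply, so this step cannot be borrowed from Theorem~\ref{minpro}.

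\textbf{The repair estimate is wrong as stated and under-controls the nonlocal term.} The claim that ``only $O(\delta Nk_0q)$ increments are perturbed, each by $O(\delta)$'' does not match your construction: the $O(\delta Nk_0q)$ increments lowered to $z^\ast$ move by $O(1)$ (indeed up to $O(\sqrt{k_0q})$, since $K$ depends on $k_0$), while the $\Theta(Nk_0q)$ remaining increments above $z^\ast$ are raised by a small amount — two different regimes, neither the one you describe. For the nearest-neighbour part one can still piece together a bound, but the long-range change is governed by the cumulative drift $\hat u_i-w_i$, which your argument leaves uncontrolled; if the lowerings cluster, the drift reaches $\Theta(\delta Nk_0q)$ and the nonlocal energy change is not $o_\delta(1)Nk_0q$. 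One must interleave lowerings and raisings to keep the drift $O(1)$ and then use the decay $\beta>3$ to kill the tail, neither of which appears.

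The paper's proof avoids both issues with a single device: starting from $u\in\mathcal V\cap\mathcal A$ it defines $u^z$ with $u^z_i-u^z_{i-1}=z^\ast$ where $u_i-u_{i-1}\ge z^\ast$ and $=2z-z^\ast$ otherwise. Its increments lie in $\{z^\ast,\,2z-z^\ast\}$, hence satisfy the $\widetilde{\mathcal A}_\delta$ bound automatically, and its phase counts match those of $u$ up to negligibly few indices, so the $\mathcal V$ constraint survives a small adjustment; interpolating between $u$ and $u^z$ over a layer of size $O(\delta kq)$ is then the cut-off of Remark~\ref{boundary-affine}. No gluing, no redistribution, no drift to track.
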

\begin{proof} 
Let $z\in\mathbb R$; we may suppose without loss of generality $z\le z^\ast$. 
Let $u\in \mathcal V(kq;\theta) \cap {\mathcal A}(kq;z)$ be a test function. 
We modify $u$ separately close to the two endpoints $i=0$ and $i=kq$. Let $u^z$ be a function with $u^z_0=0$, and such that $u^z_i-u^z_{i-1}=z^\ast$ if $u_i-u_{i-1}\ge z^\ast$ and $u^z_i-u^z_{i-1}=2z-z^\ast$ if $u_i-u_{i-1}\le z^\ast$.
By a cut-off argument as in Remark \ref{boundary-affine} we can modify $u$ on $[0, 2kq{\delta}]$ in a function $\widetilde u$ in such a way that $\widetilde u_i= u^z$ on $[0, kq{\delta}]$, and $\widetilde u_i-\widetilde u_{i-1}\not \in \{u_i-u_{i-1}, u^z_i-u^z_{i-1}\}$ except for at most $ kq{\delta}/N$ for a given arbitrary $N$. Since $u^z_i-u^z_{i-1}=z^\ast$ on a strictly positive percentage of points in $[0, kq{\delta}]$ (hence, we can suppose larger than $ kq{\delta}/N$), up to slightly modifying $\widetilde u$ on such points we have that $\widetilde u$ satisfies the constraint; i.e., $\widetilde u\in\mathcal V(kq;\theta)$.
The same argument can be repeated close to $i=kq$. Note that the energy of $u^z$ is comparable to that of the affine function $zi$, so that we obtain an estimate for the energy of $\widetilde u$.
\end{proof}

This lemma allows to prove the convexity of $\widehat Q_{\bf m}f$ in both variables.

\begin{proposition}[convexity of $\widehat Q_{\bf m}f$]\label{convex-q}
The function 
$$(\theta,z)\mapsto \widehat Q_{\bf m}f(\theta,z)$$ 
is convex; more precisely, 
$$(1-t)\widehat Q_{\bf m}f(\theta_1,z_1)
+t\,\widehat Q_{\bf m}f(\theta_2,z_2)\geq \widehat Q_{\bf m}f((1-t)\theta_1+t\theta_2,(1-t)z_1+tz_2)$$ 
for any $t\in[0,1]\cap \mathbb Q$, $\theta_h=\frac{p_h}{q_h}\in [0,1]\cap \mathbb Q$ and $z_h\in\mathbb R$. 
\end{proposition}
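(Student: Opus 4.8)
The plan is to establish the inequality directly at the level of competitors, by an interleaving (lamination) construction analogous to the one in the proof of Theorem~\ref{minpro}. Fix $t=\tfrac sr\in[0,1]\cap\mathbb Q$, $\theta_h=\tfrac{p_h}{q_h}\in[0,1]\cap\mathbb Q$ and $z_h\in\mathbb R$ for $h=1,2$, and write $\theta=(1-t)\theta_1+t\theta_2$, $z=(1-t)z_1+tz_2$. Given $\eta>0$, by Lemma~\ref{boundary-Q} fix $\delta>0$ small and, for a large common length $L=q_1q_2\,j$ (a multiple of both $q_1$ and $q_2$), test functions $u^{(h)}\in\mathcal V(L;\theta_h)\cap\widetilde{\mathcal A}_\delta(L;z_h)$ with $\tfrac1L F_1(u^{(h)};[0,L])\le\widehat Q_{\bf m}f(\theta_h,z_h)+\eta$. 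Concatenate $r-s$ copies of $u^{(1)}$ and $s$ copies of $u^{(2)}$ in an arbitrary order (each copy restarting from relative height $0$) into a pattern $P$ of length $N_0=rL$, and then repeat $P$ periodically $m$ times to obtain $v=v^{(m)}$ on $[0,mN_0]$. By construction $v_0=0$, $v_{mN_0}=mN_0z$, the map $i\mapsto v_i-zi$ is $N_0$-periodic, $\theta\,mN_0\in\mathbb N$, and the increments of $v$ that are $>z^\ast$ (resp.\ $<z^\ast$) are exactly those of the constituent copies, with additive counts; since each $u^{(h)}$ satisfies both bounds defining $\mathcal V(L;\theta_h)$, the sums satisfy both bounds for $v$, so $v\in\mathcal A(mN_0;z)\cap\mathcal V(mN_0;\theta)$ and is admissible in the minimum problem defining $\widehat Q_{\bf m}f(\theta,z)$ with $N=mN_0\to+\infty$.

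Next one decomposes $F_1(v;[0,mN_0])$. The nearest-neighbour part $\sum_i f(v_i-v_{i-1})$ equals, exactly, $m(r-s)$ times $\sum_i f(u^{(1)}_i-u^{(1)}_{i-1})$ plus $ms$ times the analogue for $u^{(2)}$; and the contribution to $\sum_{i,j}m_{|i-j|}(v_i-v_j)^2$ of pairs $i,j$ lying in the same block of the decomposition of $v$ likewise equals $m(r-s)$ (resp.\ $ms$) times the quadratic energy of $u^{(1)}$ (resp.\ $u^{(2)}$), by translation invariance of the kernel. Since $\tfrac{m(r-s)L}{mN_0}=1-t$ and $\tfrac{msL}{mN_0}=t$, after dividing by $mN_0$ these contributions assemble into $(1-t)\tfrac1LF_1(u^{(1)};[0,L])+t\tfrac1LF_1(u^{(2)};[0,L])\le(1-t)\widehat Q_{\bf m}f(\theta_1,z_1)+t\widehat Q_{\bf m}f(\theta_2,z_2)+\eta$. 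What remains — and this is the crux — is to bound, uniformly in $m$, the cross-block contribution $\tfrac1{mN_0}\sum^{\mathrm{cross}}_{i,j}m_{|i-j|}(v_i-v_j)^2$ by a quantity that can be made arbitrarily small.

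The cross-block estimate is where the decay hypothesis \eqref{propm} ($\beta>3$) is used decisively, and it follows the three-region pattern of the proof of Theorem~\ref{minpro}. The key preliminary fact is a uniform bound $|v_i-zi|\le CL$ for all $i$, with $C$ depending only on $r,z_1,z_2$ and not on $L$: at most $r-s$ copies of $u^{(1)}$ and $s$ of $u^{(2)}$ precede any block, so the accumulated macroscopic discrepancy is $O(L)$, while inside a block $|u^{(h)}_{i'}-z i'|\le|u^{(h)}_{i'}-z_h i'|+|z_h-z|\,L=O(L)$, the fluctuation $|u^{(h)}_{i'}-z_h i'|$ being controlled by Cauchy–Schwarz together with the lower bound \eqref{crescitasotto} applied to the equibounded density $\tfrac1LF_1(u^{(h)})$. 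Hence for $i,j$ in different blocks with $\ell=|i-j|\ge\delta L$ one has $(v_i-v_j)^2\le 8C^2L^2+2z^2\ell^2$, and since there are at most $2mN_0$ ordered pairs at each distance $\ell$,
$$\frac1{mN_0}\sum_{\substack{i,j\ \mathrm{cross}\\ |i-j|\ge\delta L}}m_{|i-j|}(v_i-v_j)^2\ \le\ 16C^2L^2\!\!\sum_{\ell\ge\delta L}m_\ell\ +\ 4z^2\!\!\sum_{\ell\ge\delta L}m_\ell\ell^2\,,$$
which tends to $0$ as $L\to\infty$, the second sum being a tail of $\sum_\ell m_\ell\ell^2<+\infty$ and $L^2\sum_{\ell\ge\delta L}m_\ell\le\delta^{-2}\sum_{\ell\ge\delta L}m_\ell\ell^2\to0$. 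The remaining cross pairs, those with $|i-j|<\delta L$, necessarily lie in adjacent blocks with both indices within $\delta L$ of their common junction, so by $u^{(h)}\in\widetilde{\mathcal A}_\delta$ all intervening increments are bounded by $B:=|z^\ast|+2\max(|z_1|,|z_2|)$, giving $(v_i-v_j)^2\le B^2|i-j|^2$; summing over the $\le mr=mN_0/L$ junctions bounds this part by $2\delta B^2\sum_n m_nn^2$, i.e.\ $O(\delta)$ uniformly in $m$ and $L$. Collecting all contributions and letting successively $m\to\infty$ (the bound is $m$-free), then $L\to\infty$, then $\delta\to0$, and finally $\eta\to0$, yields $\widehat Q_{\bf m}f\big((1-t)\theta_1+t\theta_2,(1-t)z_1+tz_2\big)\le(1-t)\widehat Q_{\bf m}f(\theta_1,z_1)+t\widehat Q_{\bf m}f(\theta_2,z_2)$, which is the claim. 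The two delicate points are the partition of the cross-block interactions into the regions above and the verification that the interleaved competitor meets the closed constraint $\mathcal V$; both are handled exactly as in Theorem~\ref{minpro} and Lemma~\ref{boundary-Q}.
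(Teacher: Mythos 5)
Your proof follows the same interleaving (lamination) strategy as the paper's own proof: concatenate near-optimal blocks for $(\theta_1,z_1)$ and $(\theta_2,z_2)$ in the proportion $(1-t):t$, verify that the assembled competitor satisfies the constraint $\mathcal V$ by additivity of the counting bounds, and show that the cross-block interaction energy is negligible via a three-region split and the decay $\beta>3$. All of that is sound, and the boundary-layer control via $\widetilde{\mathcal A}_\delta$ and the region-(3) $O(\delta)$ bound match what the paper does in Theorem~\ref{minpro}. However, there is a genuine gap at the very first step: you posit test functions $u^{(1)}$ and $u^{(2)}$ of a \emph{common} length $L=q_1q_2 j$ with $\frac1LF_1(u^{(h)};[0,L])\le\widehat Q_{\bf m}f(\theta_h,z_h)+\eta$ for $h=1,2$ simultaneously. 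But $\widehat Q_{\bf m}f(\theta,z)$ is defined in \eqref{defqtheta} as a $\liminf$ over $N$ with $\theta N\in\mathbb N$, and Lemma~\ref{boundary-Q} again produces a $\liminf$. The two subsequences realizing those liminfs may be disjoint, and nothing guarantees that, along a common multiple of $q_1$ and $q_2$, the minimum is simultaneously within $\eta$ of both values. Establishing that would require first upgrading the $\liminf$ to a $\lim$ — which is itself a one-parameter version of exactly the near-periodicity argument you are constructing — and you do not do this.

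The paper avoids the issue by refusing to synchronize the block lengths: it chooses, for each $h$, a sequence $k_N^h$ along which the $\liminf$ is attained, sets $M_N^h=k_N^hq_h$, and builds the competitor of length $M_N=nM_N^1M_N^2$ out of $(n-m)M_N^2$ copies of $u^1$ (length $M_N^1$) followed by $mM_N^1$ copies of $u^2$ (length $M_N^2$); the proportions $(1-t):t$ emerge automatically and no relation between $M_N^1$ and $M_N^2$ is needed. Your construction is repaired by the same device: take $u^{(1)}$ of some length $L_1$ and $u^{(2)}$ of an independent length $L_2$ (both near-optimal, both multiples of the respective $q_h$), and let the period $P$ consist of $(r-s)L_2$ copies of $u^{(1)}$ together with $sL_1$ copies of $u^{(2)}$, so $|P|=rL_1L_2$. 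The fraction of $P$ covered by $u^{(1)}$-blocks is then still $1-t$, the $\mathcal V$-constraint still holds, and your cross-block estimates go through with $\min(L_1,L_2)$ in place of $L$.
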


\begin{proof}
For any $k\in\mathbb N$ and $\delta>0$ 
we define 
\begin{eqnarray*}
\widehat Q_{\bf m}f_k^\delta(\theta,z)=
\frac{1}{kq}\inf\Big\{F_1(u;[0,kq]): u\in \mathcal V(kq;\theta)\cap \widetilde{\mathcal A}_\delta(kq;z)
\Big\}. 
\end{eqnarray*} 
With fixed $\delta>0$, we choose sequences $k^1_N, k^2_N\to+\infty$ (omitting the dependence on $\delta$) such that 
$$\liminf_{k\to+\infty}\widehat Q_{\bf m}f_k^\delta(\theta_h,z_h)=
\lim_{N\to+\infty}\widehat Q_{\bf m}f_{k_N^h}^\delta(\theta_h,z_h)$$
for $h=1,2$. We set $M_N^h=k_N^h q_h$. 
Recalling Lemma \ref{boundary-Q}, for any fixed $\eta>0$ we find $\delta_\eta>0$ such that for $0<\delta<\delta_\eta$ small enough there exists a test function $u^h\in\widetilde{\mathcal A}_\delta(M_N^h; z_h)$ (again omitting the dependencies) such that 
\begin{equation}\label{kh}
\widehat Q_{\bf m}f(\theta_h,z_h)\geq
\lim_{N\to+\infty}\widehat Q_{\bf m}f_{k_N^h}^\delta(\theta_h,z_h)-\eta 
= \lim_{N\to+\infty} \frac{1}{M_N^h} F_1(u^h;[0,M_N^h])-\eta.
\end{equation}
Setting $M_N=nM_N^1M_N^2$, we define a test function $u$ in $[0,M_N]\cap\mathbb N$ by means of  
suitable translations of $u^1$ and $u^2$. 
More precisely,  
we set $t=\frac{m}{n}$ and 
$$u_i=
\left\{ \begin{array}{ll}
\widehat u^1_i & \hbox{\rm if } i\in [0,(n-m)M_N^1M_N^2]\\
\widehat u^2_{i-(n-m)M_N^1M_N^2}+\widehat u^1_{(n-m)M_N^1M_N^2}
 & \hbox{\rm if } i\in ((n-m)M_N^1M_N^2, M_N]
\end{array}
\right. 
$$
where $\widehat u^h\colon [0,M_N^h]\cap\mathbb N\to\mathbb R$
is given by 
$$\widehat u^h=u^h_{i-(j-1)M_N^1} + (j-1)M_1z_1 \ \ \hbox{\rm if } i\in [(j-1)M^h_N, j M^h_N]$$
with $j=1,\dots, (n-m)M_N^2$ if $h=1$ and $j=1,\dots, mM_N^1$ if $h=2$. 
The function $u$ is an admissible test function for $\widehat Qf_{k_N^1k_N^2}(\theta,z)$, 
where 
$$\theta=(1-t)\theta_1+t\theta_2= \frac{(n-m)q_2 p_1 + mq_1p_2}{nq_1q_2}=\frac{p}{q}
\ \ \hbox{\rm and } \ z=(1-t)z_1+tz_2.$$ 
Indeed, $M_N=k_N^1k_N^2q$, and 
\begin{eqnarray*}
\frac{\#\{i: u_i-u_{i-1}\geq z^\ast\}}{M_N}=\frac{(n-m)N_2 k^1_Np_1 + mN_1k^2_Np_2}{M}=\theta; 
\end{eqnarray*}
the boundary conditions are satisfied since $u_{M_N}=M_Nz$. 
We get 
\begin{equation}\label{test}
\frac{1}{M_N} F_1(u; [0,M_N])\geq \widehat Q_{\bf m}f_{k_N^1k_N^2}(\theta, z). 
\end{equation}
Since $u^h\in\widetilde{\mathcal A}_\delta(M_N^h; z)$, recalling that  
$m_{|i-j|}=o(|i-j|^{-\beta})$ with $\beta>3$ we obtain  
\begin{eqnarray*}
\frac{1}{M_N} 
F_1(u; [0,M_N]) & \leq & \frac{(n-m)M_N^2 M_N^1}{M_N} F_1(u^1;[0,M_N^1])
+
\frac{m M_N^1M_N^2}{M_N} F_1(u^2;[0,M_N^2])\\
&&+c(\delta)o(1)_{N\to+\infty}+C\delta\\
& = &\frac{n-m}{n}\widehat Qf_{k_N^1}^\delta(\theta_1,z_1)+
\frac{m}{n}\widehat Qf_{k_N^2}^\delta(\theta_2,z_2)+c(\delta)o(1)_{N\to+\infty}+C\delta.
\end{eqnarray*}
Taking the $\liminf$ as $N\to+\infty$ and recalling \eqref{kh} and \eqref{test} we get 
\begin{eqnarray*} 
\widehat Q_{\bf m}f(\theta,z)&\leq&\liminf_{N\to+\infty} \widehat Q_{\bf m}f_{k_N^1k_N^2}(\theta,z)\\
&\leq&\liminf_{N\to+\infty} \frac{1}{M_N} F_1(u; [0,M_N])\\
&\leq&\liminf_{N\to+\infty}  \Big(  \frac{n-m}{n}\widehat Q_{\bf m}f_{k_N^1}^\delta(\theta_1,z_1)+
\frac{m}{n}\widehat Q_{\bf m}f_{k_N^2}^\delta(\theta_2,z_2)\Big)+C\delta \\
&\leq&\frac{n-m}{n}\widehat Q_{\bf m}f(\theta_1,z_1)+
\frac{m}{n}\widehat Q_{\bf m}f(\theta_2,z_2))+\eta+C\delta .
\end{eqnarray*}
Since $\eta>0$ is arbitrary and $\delta\in (0,\delta_\eta)$, this concludes the proof. 
\end{proof} 

\subsection{Phase function and locking states}
By the convexity of the function $\theta\mapsto \widehat Q_{\bf m}f(\theta,z)$,   
we can extend it (and consequently also $Q_{\bf m}f(\theta,z)$) 
 to the irrational values of $\theta\in(0,1)$ by continuity. 
 This naturally leads to a definition which singles out some critical values for $\theta$ remaining `stably optimal' for a range of values of the loading parameter $z$. 
\begin{definition}[locking states]
We say that $\theta$ is a {\rm locking state} for $f$ and ${\bf m}$ if 
the set $\{z : Q_{\bf m}f(\theta,z)=Q_{\bf m}f(z)\}$ contains an open interval.  
\end{definition}

The  special values of $\theta$, for which the relaxed energy $\widehat Q_{\bf m}f(\theta,z)$ can be  obtained by considering periodic minimizers,  play a particular role in the construction  of $\widehat Q_{\bf m}f$. Usually, the arrangements of such minimizers remain optimal over an interval of the values of $z$ and the corresponding $\theta$ are locking states (see Remark \ref{emepe}). 
The analysis of some model examples from this standpoint will show 
how the knowledge of such special values of $\theta$ can allow one to 
compute  the whole relaxed energy $\widehat Q_{\bf m}f(z)$ (for instance for concentrated kernels).

\smallskip 
We can now introduce a `phase function' as follows. 

\begin{definition}[phase function]\label{phasefunctiondef} We define the {\rm phase (multi)function}
$\Theta(z)$ by 
$$\Theta(z)=\big\{\theta\in[0,1]: \hbox{\rm sc} (Q_{\bf m}f)(\theta,z)
=Q_{\bf m}f(z)\big\},$$
where {\rm sc}$(Q_{\bf m}f)$ denotes the lower semicontinuous envelope of $Q_{\bf m}f(\theta,z)$ with respect to $\theta$. 
In order to define a {\rm phase function} $\theta(z)$, we select $\theta(z)$ as 
the minimum of the set $\Theta(z)$. 
\end{definition} 

\begin{remark}[a selection issue] \rm
Note that in order to have $\theta$ well defined we have made a choice of $\theta(z)$ as a minimum in the case when $\Theta(z)$ is not a singleton. This is an arbitrary choice and may lead to some difficulty in the interpretation of this value, for example in cases where the dependence on $\theta\in[0,1]$ is symmetric, or in degenerate cases (see for instance items (b) and (c) with the corresponding examples in Section \ref{sec-degene}). 
\end{remark}

\begin{remark}[locking states as the `steps' (constancy intervals) developed by $\theta(z)$]\rm 
The definition of the phase function $\theta(z)$ allows one to to interpret locking states as the values $\overline\theta$ for which $\theta^{-1}(\overline\theta)$ contains an open interval. 
\end{remark}

\begin{remark}[possible non-semicontinuity at the extreme points]\label{remnlsc}\rm 
Note that $\hbox{\rm sc} (Q_{\bf m}f)(\theta,z)$ differs from $Q_{\bf m}f(\theta,z)$ only at most for $\theta\in\{0,1\}$, by the continuity of $Q_{\bf m}f(\theta,z)$ in $(0,1)$. 
If the function $\theta\mapsto Q_{\bf m}(\theta,z)$ is lower semicontinuous 
in $0$ and $1$, then the multi-function $\Theta(z)$ coincides with the set  
$$\overline\Theta(z)=\{\theta\in[0,1]: Q_{\bf m}f(\theta,z)=Q_{\bf m}f(z)\}.$$ 
In general, the set $\overline\Theta(z)$ can be empty, in which case, by Definition \ref{phasefunctiondef}, 
$\Theta(z)$ is a singleton and $\theta(z)=0$ (or $1$) if 
there exists $\theta_n\to 0$  (or $1$, respectively) 
such that $Q_{\bf m}f(\theta_n,z)\to Q_{\bf m}f(z)$ 
(see Example \ref{tcm0} below). 
\end{remark} 

\begin{proposition}\label{phasefunctionaffine} 
If $\widehat Q_{\bf m}f$ is affine in an open interval $I$ and $\Theta(z)=\{\theta(z)\}$ for all $z\in I$,   then $\theta$ is affine in $I$. 
\end{proposition}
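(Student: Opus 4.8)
The plan is to use three ingredients already at our disposal: the joint convexity of $(\theta,z)\mapsto\widehat Q_{\bf m}f(\theta,z)$ (Proposition \ref{convex-q}, applied to its continuous extension to all $\theta\in[0,1]$ and to arbitrary real convex combinations); the identity $\widehat Q_{\bf m}f(z)=\inf_{\theta}\widehat Q_{\bf m}f(\theta,z)$ of Theorem \ref{minpro}; and the assumed affinity of $\widehat Q_{\bf m}f$ on $I$. To handle cleanly the behaviour at the endpoints $\theta\in\{0,1\}$ it is convenient to work throughout with $\widehat\Phi(\theta,z):={\rm sc}(Q_{\bf m}f)(\theta,z)+a_{\bf m}z^2$, the lower semicontinuous envelope in $\theta$ of $\widehat Q_{\bf m}f(\cdot,z)$. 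This function is still jointly convex, lower semicontinuous in $\theta$, satisfies $\inf_\theta\widehat\Phi(\theta,z)=\widehat Q_{\bf m}f(z)$, and by Definition \ref{phasefunctiondef} one has $\Theta(z)=\{\theta:\widehat\Phi(\theta,z)=\widehat Q_{\bf m}f(z)\}$; in particular, for $z\in I$ the hypothesis $\Theta(z)=\{\theta(z)\}$ yields $\widehat\Phi(\theta(z),z)=\widehat Q_{\bf m}f(z)$.

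Then I would fix $z_1,z_2\in I$ and $t\in[0,1]$, set $z_t=(1-t)z_1+tz_2\in I$, $\theta_h=\theta(z_h)$ and $\overline\theta=(1-t)\theta_1+t\theta_2\in[0,1]$, and estimate, by joint convexity of $\widehat\Phi$,
$$\widehat\Phi(\overline\theta,z_t)\le(1-t)\widehat\Phi(\theta_1,z_1)+t\,\widehat\Phi(\theta_2,z_2)=(1-t)\widehat Q_{\bf m}f(z_1)+t\,\widehat Q_{\bf m}f(z_2)=\widehat Q_{\bf m}f(z_t),$$
where the first equality uses the characterisation of $\theta(z_h)$ recalled above and the second the affinity of $\widehat Q_{\bf m}f$ on $I$. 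On the other hand $\widehat Q_{\bf m}f(z_t)=\inf_\theta\widehat\Phi(\theta,z_t)\le\widehat\Phi(\overline\theta,z_t)$ by Theorem \ref{minpro}. Hence both inequalities are equalities and $\widehat\Phi(\overline\theta,z_t)=\widehat Q_{\bf m}f(z_t)$, i.e. $\overline\theta\in\Theta(z_t)$.

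Since by hypothesis $\Theta(z_t)=\{\theta(z_t)\}$, this forces $\theta(z_t)=\overline\theta=(1-t)\theta(z_1)+t\theta(z_2)$. As $z_1,z_2\in I$ and $t\in[0,1]$ were arbitrary, $\theta$ satisfies $\theta((1-t)z_1+tz_2)=(1-t)\theta(z_1)+t\theta(z_2)$ for all such $z_1,z_2,t$, and is therefore affine on $I$: on any closed subinterval $[a,b]\subset I$ it is the linear interpolation of $\theta(a)$ and $\theta(b)$, and the slopes obtained from overlapping subintervals must agree, so $\theta$ has a single slope on the whole interval $I$. This would conclude the proof.

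I expect the only genuinely delicate point to be the justification that the joint-convexity inequality of Proposition \ref{convex-q} — stated there for rational parameters — may be used for arbitrary real $t,\theta_1,\theta_2$ and for the semicontinuous envelope $\widehat\Phi$. This is handled by the continuity of $\widehat Q_{\bf m}f(\theta,z)$ in $z$ and by the continuous extension in $\theta$ already used to define locking states, together with the elementary facts that a function satisfying joint convexity on a dense set and jointly continuous there is jointly convex, and that taking the lower semicontinuous envelope in one variable of a jointly convex function preserves joint convexity and does not alter the partial infimum over that variable.
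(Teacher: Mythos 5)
Your proof follows exactly the same chain of inequalities as the paper's: joint convexity of the constrained relaxation (Proposition \ref{convex-q}), the infimum characterisation of Theorem \ref{minpro}, and the affinity hypothesis on $I$ to close the circle and conclude that $(1-t)\theta_1+t\theta_2\in\Theta(z_t)$. The additional care you take — replacing $\widehat Q_{\bf m}f(\theta,\cdot)$ by its lower-semicontinuous envelope in $\theta$ to be safe when $\theta(z_h)\in\{0,1\}$, and justifying the passage from rational to arbitrary real convex combinations — is correct and tightens points that the paper passes over silently.
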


\begin{proof}
Let $z_1,z_2\in I$, $\theta_1=\theta(z_1)$, and  $\theta_2=\theta(z_2)$. For $t\in (0,1)$, Proposition \ref{minpro}, the convexity of $\widehat Q_{\bf m}f(\theta,z)$ and the hypothesis imply that
\begin{eqnarray*}
\widehat Q_{\bf m}f(tz_1+(1-t)z_2)&\le& \widehat Q_{\bf m}f(t \theta_1+(1-t)\theta_2,tz_1+(1-t)z_2)\\
&\le &
t \widehat Q_{\bf m}f(\theta_1,z_1)+ (1-t)\widehat Q_{\bf m}f(\theta_2,z_2)\\
&= &t \widehat Q_{\bf m}f(z_1)+ (1-t)\widehat Q_{\bf m}f(z_2)\\ & =
&\widehat Q_{\bf m}f(tz_1+(1-t)z_2),
\end{eqnarray*}
and the claim follows.\end{proof}

\begin{remark}[locking states and periodic microstructures]\rm
The definition of locking state is formally disconnected from the periodicity of the associated minimizers. However, the two notions are perhaps related. Indeed,  if the value of the minimum energy $\widehat Q_{\bf m}f(z)$ is reached by some periodic minimizer with a given `pattern' or microstructure (describing the arrangement of $u_i-u_{i-1}$ in the two energy wells), then one can  expect the same pattern to be optimal also for small perturbations of $z$ (with of course, a small variation of the values of $u$). This would  then  entail that the corresponding $\theta$ is a locking state, however, the formalization of this statement remains  unproven even if   it   holds in all our examples.
\end{remark}

\subsection{Phase-constrained analysis for decoupled interactions}
In this section we focus on the two extreme cases when the effects of $f$ and $\bf m$ can be decoupled;  namely, either when $\bf m$ vanishes or when $f$ is convex. 
A comparison with these cases will highlight how for general $f$ and $\bf m$ the interplay between non-convexity and non-locality gives rise to complex superposition effects. Such effects will be analyzed in the following sections in two particularly meaningful examples. 

\subsubsection{Convexification as an envelope of phase-constrained problems}\label{sec-degene}
We start by considering the case when the kernel $\bf m$ vanishes. 
We know that in this case 
$$Q_{\bf m}f(z)=\widehat Q_{\bf m}f(z)=f^{\ast\ast}(z)$$
for any $z$. We can still  focus on the dependence of the partially relaxed energy on the volume fraction $\theta$, which is already non trivial. Moreover, it  shows some features that  we  will  later encounter  in  more complex examples.  

In this section, we will use ${\bf 0}$ instead of ${\bf m}$ in the notation. 
Suppose that while $f\colon\mathbb R\to[0,+\infty)$ is not convex,  there exists $z^\ast\in\mathbb R$ such that the restrictions of $f$ to $(-\infty,z^\ast]$ and $[z^\ast,+\infty)$ are convex. 
For such  $f$, we now compute both  $Q_{\bf 0}f(\theta,z)$ and $\Theta(z)$.

\begin{remark}[growth condition]\rm 
The growth condition from below on $f(z)+2m_1z^2$ assumed in the previous sections, in this case would imply 
a growth condition on $f$. 
Nevertheless, for the results of this section it is not necessary, and below we also treat cases where it is not satisfied, showing some non-continuity effects. 
\end{remark}

Let $f_0$ and $f_1$ denote the restrictions of $f$ to $(-\infty,z^\ast]$ and to $[z^\ast,+\infty)$, respectively. 
For $\theta\in(0,1)$, by using the convexity of $f_0$ and $f_1$  we get
\begin{eqnarray*}
Q_{\bf 0}f(\theta,z)=\inf\{(1-\theta)f_0(t)+\theta f_1(s): t\leq z^\ast, \ s\geq z^\ast,\ (1-\theta)t+\theta s=z\}.  
\end{eqnarray*} 
As for the limit cases $\theta=0$ and $\theta=1$, we have   
$$Q_{\bf 0}f(0,z)=\begin{cases}f_0(z) &\hbox{\rm if } z\leq z^\ast\\
+\infty &\hbox{\rm if } z > z^\ast
\end{cases} 
\quad \hbox{\rm and }\quad 
Q_{\bf 0}f(1,z)=\begin{cases}+\infty &\hbox{\rm if } z < z^\ast\\
f_1(z) &\hbox{\rm if } z \geq z^\ast.
\end{cases}$$ 

We subdivide the subsequent analysis in dependence of the shape of the function $ f^{\ast\ast} (z)$ representing the convex envelope of $f$; more precisely, on whether the `non-convexity set' $\{z: f^{\ast\ast}(z)<f(z)\}$ is a bounded interval, a half line or the whole line. Note that in this set $ f^{\ast\ast}$ is affine.

\paragraph{Case (a): the non-convexity set is a bounded interval.} 
We suppose that there exist $z_0\in(-\infty,z^\ast]$ and 
$z_1\in[z^\ast,+\infty)$ such that 
\begin{equation}\label{convenv}
f^{\ast\ast}(z)=
\begin{cases}
f(z) & \hbox{\rm if } \ z\in\mathbb R\setminus(z_0,z_1)\\
r(z) & \hbox{\rm if } \ z\in[z_0,z_1], 
\end{cases}
\end{equation}
where $r$  is affine and $r(z)<f(z)$ in $(z_0,z_1)$, then 
$Q_{\bf 0}f(z)$ is obtained as a minimum of $Q_{\bf 0}f(\theta,z)$. 
In this case, $\Theta(z)$ is a single value $\theta(z)$ for any $z$, and 
$$\theta(z)=\begin{cases}
0 & \hbox{\rm if } z\leq z_0\\
\displaystyle\frac{z-z_0}{z_1-z_0} & \hbox{\rm if } z_0\leq z \leq z_1\\
1 & \hbox{\rm if } z\geq z_1. 
\end{cases}$$
Note that trivially $Q_{\bf 0}f(z)$ is the convex envelope of the minimum of the two 
functions $Q_{\bf 0}f(0,z)$ and $Q_{\bf 0}f(1,z);$ that is, of $\min\{Q_{\bf 0}f(\theta,z): \theta \ \hbox{\rm is a locking state}\}$, since the only locking states are $0$ and $1$.

Note moreover that, if $\lim\limits_{z\to+\infty}\frac{f(z)}{z}=+\infty$ and 
$f^\prime_-(z^\ast)$ is finite, then the formula giving $Q_{\bf 0}f(\theta,z)$ can be simplified for $z$ large enough. 
Indeed, there exists 
$z^+$ such that for any $\theta\in(0,1)$
\begin{eqnarray*}
Q_{\bf 0}f(\theta,z)=(1-\theta)f_0(z^\ast)+\theta f_1\Big(\frac{z-(1-\theta)z^\ast}{\theta}\Big) \ \ \hbox{\rm if}
\ z\geq z^+. 
\end{eqnarray*} 
Correspondingly, if 
$\lim\limits_{z\to-\infty}\frac{f(z)}{|z|}=+\infty$ and $f^\prime_+(z^\ast)$ is finite then, for any $\theta\in(0,1)$,  
\begin{eqnarray*}
Q_{\bf 0}f(\theta,z)=(1-\theta)f_0\Big(\frac{z-\theta z^\ast}{1-\theta}\Big)+\theta f_1(z^\ast) \ \ \hbox{\rm if} \ z\leq z^-
\end{eqnarray*}
for $|z^-|$ large enough.  
\begin{figure}[h!]
\centerline{\includegraphics[width=0.8\textwidth]{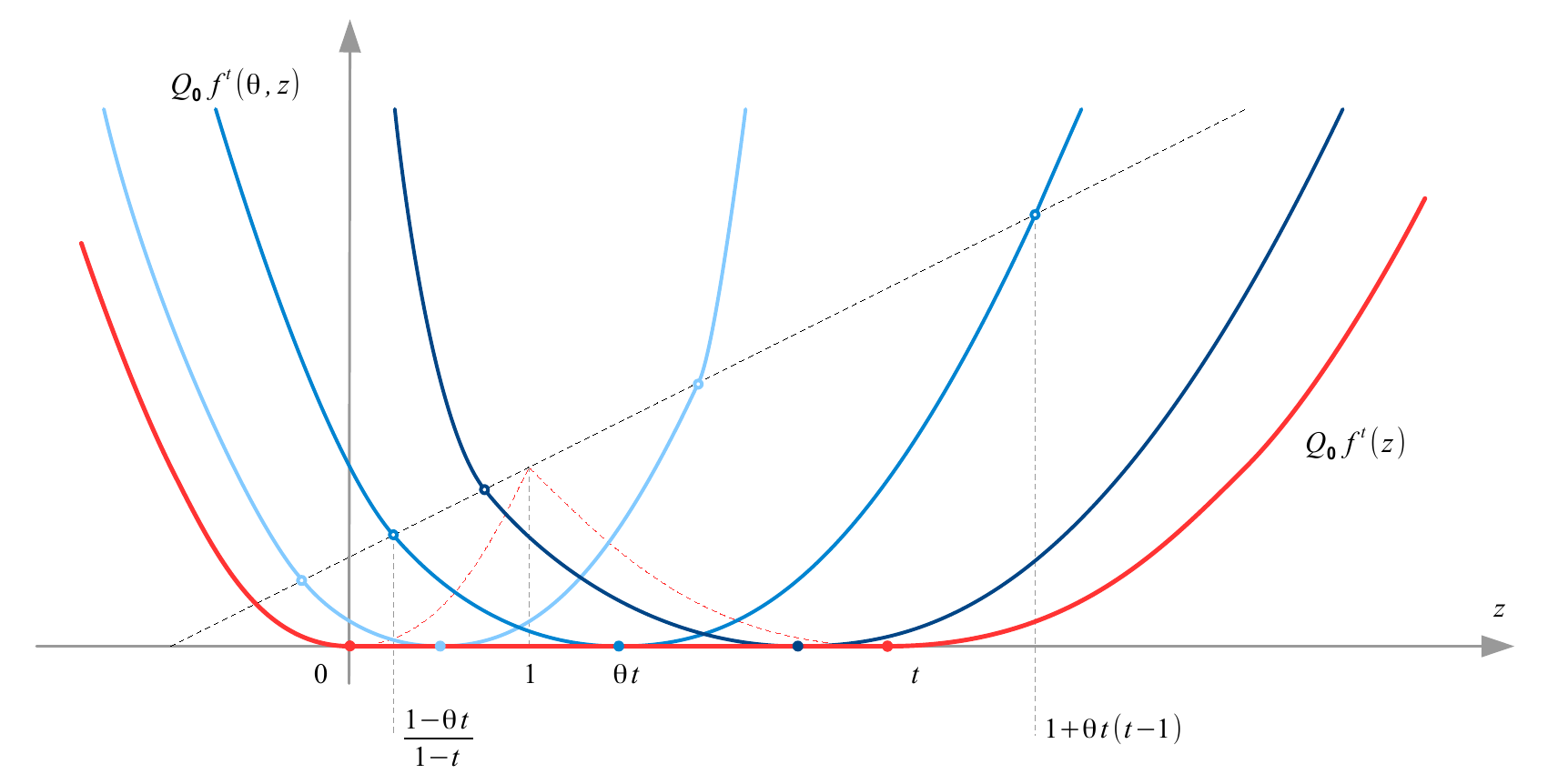}}
\caption{Graph of $Q_{\bf 0}f^t(\theta,z)$ for different values of $\theta$.}
\label{fig_fb1}
\end{figure} 
\begin{figure}[h!]
\centerline{\includegraphics[width=0.8\textwidth]{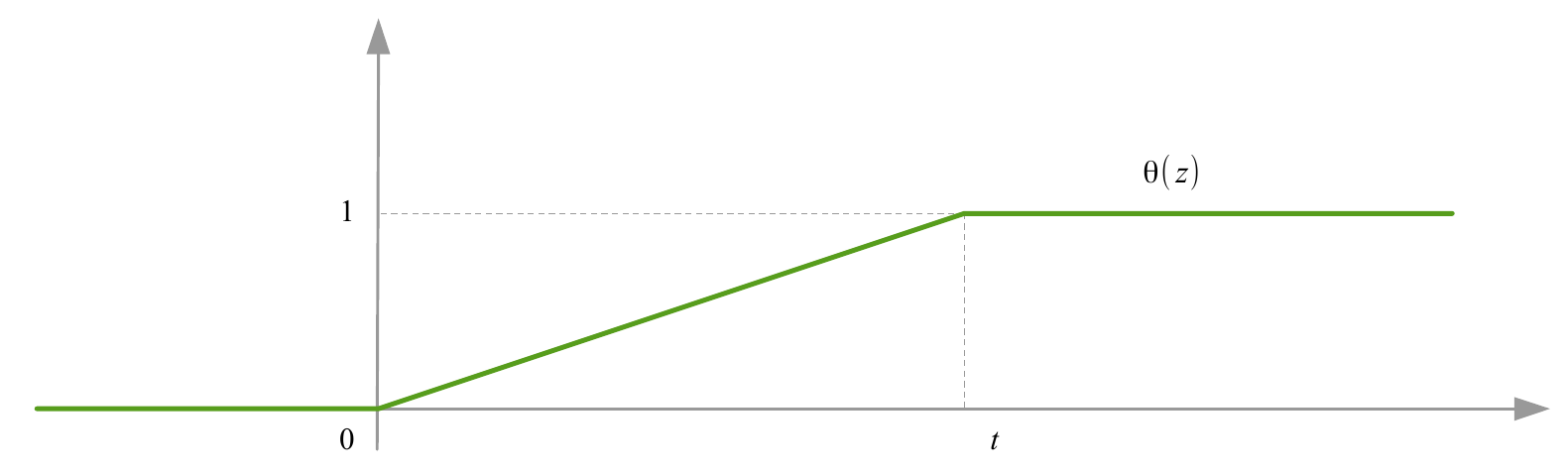}}
\caption{Graph of the phase function $\theta(z)$ for the function $f^t$ in Example \ref{dwm0}.}
\label{fig_fb2}
\end{figure}

\begin{example}[double-well bi-quadratic potential]\label{dwm0}\rm 
For any $t>1$ we define 
$$f^{t}(z)=\begin{cases}
z^2 & \hbox{\rm if }\ z\leq 1 \\
\displaystyle\Big(\frac{z-{t}}{1-{t}}\Big)^2 & \hbox{\rm if }\ z\geq 1.
\end{cases}$$
If $\theta\in(0,1)$, we get 
$$Q_{\bf 0}f^{t}(\theta,z)=\begin{cases}
\vspace{2mm}
\displaystyle\frac{(z-\theta)^2}{1-\theta}+\theta & \displaystyle\hbox{\rm if }\ z\leq \frac{1-\theta {t}}{1-{z_0}}\\
\vspace{2mm}
\displaystyle\frac{(z-\theta {t})^2}{1-\theta+\theta(1-{t})^2} & \displaystyle\hbox{\rm if }\ \frac{1-\theta {t}}{1-{t}}\leq z\leq 
1+\theta {t}({t}-1)\\
\displaystyle\frac{(z-1+\theta(1-{t}))^2}{\theta(1-{t})^2}+1-\theta 
& \displaystyle\hbox{\rm if }\ z\geq 
1+\theta {t}({t}-1) 
\end{cases}$$
(see Fig.~\ref{fig_fb1} and Fig.~\ref{fig_fb2} for the graph of $Q_{\bf 0}f^{t}(\theta,z)$ and $\theta(z)$, respectively, with different values of 
$\theta$ and ${t}$ fixed). 
\begin{figure}[h!]
\centerline{\includegraphics[width=1\textwidth]{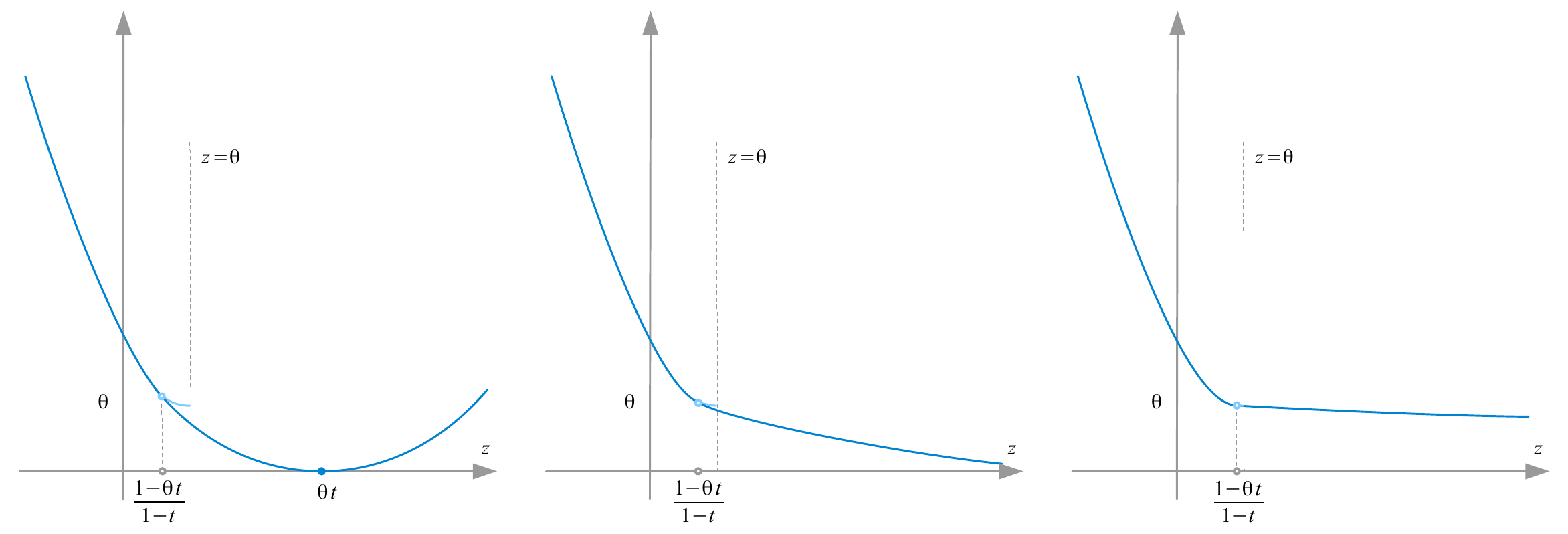}}
\caption{Graph of $Q_{\bf 0}f^{t}(\theta,z)$ with $\theta$ fixed and increasing values of ${t}$.}
\label{fig_fb3}
\end{figure} 

In Fig.~\ref{fig_fb3} we picture the graph for a fixed $\theta$ and increasing values of ${t}$. 

\end{example}

\begin{remark}[fracture as limit of phase transitions]\rm 
If $f^{t}$ is defined as in Example \ref{dwm0}, then for any fixed $\theta\in(0,1)$ 
$$\lim_{{t}\to+\infty}Q_{\bf 0}f^{t}(\theta,z)
=\begin{cases}
\displaystyle\frac{(z-\theta)^2}{1-\theta}+\theta & \hbox{\rm if }\ z\leq \theta\\
\theta & \hbox{\rm if }\ z\geq \theta.
\end{cases}$$ 
This limit function is $Q_{\bf 0}f(\theta,z)$ for $f$ the truncated parabola (see Example \ref{tcm0} below with $\tilde f(z)=z^2$). 
This asymptotic behaviour is illustrated in Fig.~\ref{fig_fb3} above. 

From a mechanical standpoint, in the limit as $t \to +\infty$ we can recover the case fracture as limit of phase-transitions problems as the second well gets moves to the right and its curvature diminishes. 
For a mechanical interpretation of this phenomenon, we refer to \cite{Trusk}.  
In that perspective, also the energies at fixed $\theta$ are of interest, because it is the case when something prevents cracks from localization. The resulting constrained material becomes `tension free'. 
\end{remark}

\medskip 
\paragraph{Case (b): the non-convexity set is a half line.}
Let $f^{\ast\ast}(z)<f(z)$ on a half-line, and 
assume that the half-line is bounded from below, the other case being symmetric. 

By the convexity properties of $f_0$ and $f_1$, up to the subtraction of the affine function asymptotic to $f_1$ at $+\infty$, it is not restrictive 
to assume that 
$\lim\limits_{z\to+\infty}f_1(z)\in [\min f_0, +\infty)$, so that 
$f^{\ast\ast}=\min f_0$ in $[z_0^{\rm min},+\infty)$, where $z_0^{\rm min}$ is the largest minimizer of $f_0$ in $(-\infty,z^\ast]$.

For any $\theta\in(0,1)$ and $z\geq (1-\theta)z_0^{\rm min}+\theta z^\ast$, 
we can use $z_0^{\rm min}$ and $\frac{z-(1-\theta)z_0^{\rm min}}{\theta}$ as test values for $Q_{\bf 0}f(\theta,z)$. If $z>z_0^{\rm min}$, taking the limit as $\theta\to 0$ we get 
$$
Q_{\bf 0}f(z)=f_0(z_0^{\rm min})=
\lim_{\theta\to 0}\Big((1-\theta)f_0(z_0^{\rm min})+\theta f_1\Big(\frac{z-(1-\theta)z_0^{\rm min}}{\theta}\Big)
\Big)=\lim_{\theta\to 0}Q_{\bf 0}f(\theta,z).
$$
Since $Q_{\bf 0}f(\theta,z)=+\infty$ for $z>z^\ast\geq z_0^{\rm min}$, the function $\theta\mapsto Q_{\bf 0}f(\theta,z)$ is not lower semicontinuous in $0$. If we also assume  that $\lim\limits_{z\to+
\infty}f_1(z)>\min f_0$, then  
$$Q_{\bf 0}f(\theta,z)>Q_{\bf 0}f(z) 
\ \ \hbox{\rm for any }\ \ \theta\in[0,1] \ \ \hbox{\rm and }\ \ z>z_0^{\rm min}$$ 
and $\overline\Theta(z)=\emptyset$ (see Remark \ref{remnlsc}). 
Since for $z\leq z^\ast$ 
we have 
$Q_{\bf 0}f(0,z)=Q_{\bf 0}f(z)$, it follows that  
$ 
\theta(z)=0$ for any $z\leq z^\ast$.

\begin{example}[truncated convex potentials]\label{tcm0}\rm 
\begin{figure}[h!]
\centerline{\includegraphics[width=1\textwidth]{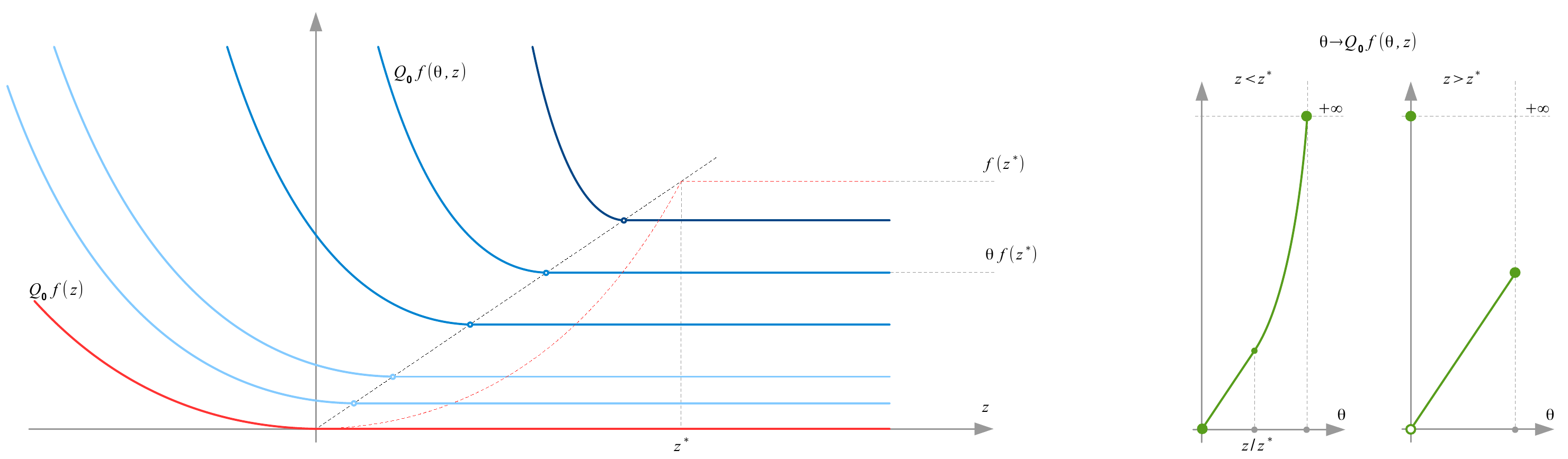}}
\caption{(a) $Q_{\bf 0}f(\theta,z)$ for a truncated convex potential and (b) $\theta\mapsto Q_{\bf 0}f(\theta,z)$ for different values of $z$.}
\label{fig_frattura_0}
\end{figure} 
Let $f$ be the truncated convex given by  
\begin{equation}\label{def-ftrunc1}
f(z)=\begin{cases} \tilde f(z) & \hbox{\rm if } z\leq z^\ast\\
\tilde f(z^\ast) 
& \hbox{\rm if } z\geq z^\ast, 
\end{cases}
\end{equation}
where $\tilde f$ is a convex function such that 
the only minimum point of $\tilde f$ is $0$ with $\tilde f(0)=0$, and $z^\ast>0$. 
In particular, we can take $\tilde f(z)=z^2$, in which case $f$ is called a truncated quadratic potential. 
For $\theta\in(0,1)$ we get 
\begin{eqnarray*}
&&Q_{\bf 0}f(\theta,z)=\begin{cases}
\vspace{1mm}
\displaystyle\theta f(z^\ast)+(1-\theta)
f\Big(\frac{z-\theta z^\ast}{1-\theta}\Big) & \hbox{ if } z <  \theta z^\ast\\
\displaystyle\theta f(z^\ast)  & \hbox{ if } z \geq \theta z^\ast. 
\end{cases} 
\end{eqnarray*} 
For all such $f$ the graphs of $Q_{\bf 0}f(\theta,z)$ and of $Q_{\bf 0}f(z)$ have the form as those pictured in Fig.~\ref{fig_frattura_0}(a). 
In Fig.~\ref{fig_frattura_0}(b) the function $\theta\mapsto Q_{\bf 0}f(\theta,z)$ is represented for two different values of $z$, highlighting the lack of lower semicontinuity in $0$ if $z>0$.   
Note that for any $\theta\in(0,1)$ we have $Q_{\bf 0}f(\theta,z)> 
f(z)$ in $(-\infty,\theta z^\ast]$.  
Moreover, 
the optimal volume fraction $\theta(z)$ is always equal to zero, 
even though $Q_{\bf 0}f(\theta,z)=Q_{\bf 0}f(0,z)$ only if $z\leq 0$ 
(see Remark \ref{lockingrem} below). 
\end{example}

\paragraph{Case (c): the non-convexity set is the whole line.}
If $f^{\ast\ast}<f$ in the whole $\mathbb R$, then in our hypothesis it is constant, and as in case (b) it is not restrictive to suppose that both 
$\lim\limits_{z\to-\infty}f(z)$ and 
$\lim\limits_{z\to+\infty}f(z)$ are finite, so that $$Q_{\bf 0}f(z)=\min\Bigl\{\lim\limits_{z\to-\infty}f(z), \lim\limits_{z\to+\infty}f(z)\Bigr\}.$$ For $\theta\in(0,1)$, 
$$Q_{\bf 0}f(\theta,z)=(1-\theta)\lim_{z\to-\infty}f(z)+\theta\lim_{z\to+\infty}f(z).$$ 
The function $\theta\mapsto Q_{\bf 0}f(\theta,z)$ is not lower semicontinuous 
in $0$ if $z> z^\ast$ and in $1$ if $z< z^\ast$.  

If $\lim\limits_{z\to-\infty}f(z)=\lim\limits_{z\to+\infty}f(z)$, then $Q_{\bf 0}f(\theta,z)=Q_{\bf 0}f(z)$ for any 
$\theta\in(0,1)$ and $z\in\mathbb R$, hence $\Theta(z)=[0,1]$ for any $z$ 
and $\theta(z)=0$ for any $z$.

If $\lim\limits_{z\to+\infty}f(z)<\lim\limits_{z\to-\infty}f(z)$, then for any $z$ we have $\Theta(z)=\{1\}$ and $\theta(z)=1$.  
Conversely, if $\lim\limits_{z\to+\infty}f(z)>\lim\limits_{z\to-\infty}f(z)$, then for any $z$ we have $\Theta(z)=\{0\}$ and $\theta(z)=0$. Note that $\overline\Theta(z)=\emptyset$ 
at least for any $z<z^\ast$ in the first case, and 
at least for any $z>z^\ast$ in the second.

We give some simple examples of case (c), highlighting the difference between $\Theta$ and $\overline\Theta$ due to the lack of semicontinuity at the endpoints. 
 
\begin{example}\label{ex-deg-2}\rm 
\noindent $\bullet$ If $f(z)=\min\{1, e^{-z}\}$, then $Q_{\bf 0}f(z)=0$ and $Q_{\bf 0}f(\theta,z)=1-\theta$ for $\theta\in(0,1)$. Since $Q_{\bf 0}f(0,z)$ and $Q_{\bf 0}f(1,z)$ are strictly positive, then 
$\overline\Theta(z)=\emptyset$ for any $z$. In this case, there is no locking state. 

\medskip 

\noindent $\bullet$ If $f(z)=\max\{\min\{1,2e^{-z}-1\}, 0\}$, then $Q_{\bf 0}f(z)=0$ and $Q_{\bf 0}f(\theta,z)=1-\theta$ for $\theta\in(0,1)$ as in the previous case. In this case, $Q_{\bf 0}f(1,z)=0$ if $z\geq \log 2$, hence 
$\overline\Theta(z)=\emptyset$ for any $z<\log 2$ and $\overline\Theta(z)=\{1\}$ if $z\geq \log 2$. 
The only locking state is $\theta=1$. 

\medskip 

\noindent $\bullet$ If $f(z)=e^{-|z|}$, then $Q_{\bf 0}f(z)=Q_{\bf 0}f(\theta,z)=0$ for any $\theta\in(0,1)$ and $z\in\mathbb R$. The set $\overline\Theta(z)=(0,1)$ for any $z$, while $\Theta(z)=[0,1]$. The only locking state is $\theta=0$. 
\end{example}

\begin{remark}[locking states in the degenerate cases]\label{lockingrem}\rm 
While we still have that trivially $Q_{\bf 0}f(z)$ is the convex envelope of 
$\min\{Q_{\bf 0}f(0,z), Q_{\bf 0}f(1,z)\}$, in the examples of cases (b) and (c) nor both values $\theta=0$ and  $\theta=1$ are regarded as locking states. 
In the last of Examples \ref{ex-deg-2} this is due to the arbitrary choice of defining $\theta(z)$ as an infimum. 
As a consequence, the notion of locking state is not relevant in the computation of $Q_{\bf 0}f$, in the sense that we cannot recover $Q_{\bf 0}f(z)$ from the only knowledge of  $Q_{\bf 0}f(\theta,z)$ for $\theta$ locking states. 
In Example \ref{tcm0}, indeed we have the only locking state $\theta=0$ but $Q_{\bf 0}f(0,z)=+\infty$ for $z>0$. 
\end{remark}

\subsubsection{Convex potentials: phase-constrained interpolation}\label{trconm01}
We now consider the second extreme case; that is, when the function $f$ is convex on all $\mathbb R$ and the kernel $\bf m$ is arbitrary. As we noticed in Proposition \ref{disug}, in this  case  the  function $Q_{\bf m}f$ is trivially equal to $f$ for any choice of $\bf m$. Nevertheless, the results  of the constrained minimization producing the functions $Q_{\bf m}f(\theta,z)$ 
are non-trivial even  in this case. They provide  further information regarding the general structure of the dependence of $Q_{\bf m}f(\theta,z)$  on the phase variable $\theta$. Moreover, such examples can  serve as   comparison limit cases for non-convex energies $f$. 

\medskip  

Let $f\colon\mathbb R\to\mathbb R$ be a convex function while  ${\bf m}$ can be arbitrary. In this case, we would need  the growth 
hypothesis 
$\lim\limits_{z\to\pm\infty} f(z)+2m_1z^2=+\infty$  only to use some technical result concerning the variation of the boundary conditions. 
We fix an arbitrary $z^\ast\in\mathbb R$ and define $A=[z^\ast,+\infty)$. 

As for $\theta=0$ and $\theta=1$, by   definition  we have 
\begin{eqnarray*}
		Q_{\bf m}f(0,z)=\begin{cases}  
			f(z)& {\rm if }\ z\leq z^\ast\\
			+\infty & {\rm if }\ z>z^\ast
		\end{cases} 
		\quad  {\rm and } \quad 
		Q_{\bf m}f(1,z)=\begin{cases} 
			+\infty & {\rm if }\ z<z^\ast\\
			f(z)& {\rm if }\ z\geq z^\ast.
		\end{cases}
	\end{eqnarray*} 
	In particular, $Q_{\bf m}f(z)=Q_{\bf m}f(0,z)$ for $z\leq z^\ast$ and  
$Q_{\bf m}f(z)=Q_{\bf m}f(1,z)$ for $z>z^\ast$. 
Moreover, the following proposition holds. 	
\begin{proposition}
For $\theta\in(0,1)$, we have  
	\begin{equation}\label{qtetaconvessa}
	Q_{\bf m}f(\theta,z)=
	\begin{cases}\vspace{1mm}
	\displaystyle \theta f(z^\ast)+(1-\theta)f\Big(\frac{z-\theta z^\ast}{1-\theta}\Big) 
	+a_{\bf m}\frac{\theta}{1-\theta}(z-z^\ast)^2
	& {\rm if }\ z<z^\ast \\
	\displaystyle (1-\theta) f(z^\ast)+\theta f\Big(\frac{z-(1-\theta) z^\ast}{\theta}\Big) 
	+a_{\bf m}\frac{1-\theta}{\theta}(z-z^\ast)^2
	& {\rm if }\ z\geq z^\ast. 
		\end{cases}
	\end{equation}
\end{proposition}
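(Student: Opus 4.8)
The plan is to prove the two inequalities $\le$ and $\ge$ in \eqref{qtetaconvessa} separately, first reducing to $z\ge z^\ast$. The substitution $u_i\mapsto 2z^\ast i-u_i$ sends $A=[z^\ast,+\infty)$ to its complement, $z$ to $2z^\ast-z$, $\theta$ to $1-\theta$, replaces $f$ by $f(2z^\ast-\cdot)$, and leaves $a_{\bf m}$ and the admissible classes $\mathcal A(N;z)\cap\mathcal V(N;\theta)$ invariant; hence it interchanges the two formulas, and it is enough to treat $z\ge z^\ast$. Throughout I abbreviate
\[
s=\frac{z-(1-\theta)z^\ast}{\theta},\qquad\text{so that}\quad s\ge z^\ast,\quad (1-\theta)z^\ast+\theta s=z,\quad (1-\theta)(z^\ast)^2+\theta s^2=z^2+\tfrac{1-\theta}{\theta}(z-z^\ast)^2 .
\]
By \eqref{defqtetazeta} the claim for $z\ge z^\ast$ is equivalent to $\widehat Q_{\bf m}f(\theta,z)=(1-\theta)f(z^\ast)+\theta f(s)+a_{\bf m}\big((1-\theta)(z^\ast)^2+\theta s^2\big)$.

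For the upper bound I would use, for each $N$ with $\theta N\in\mathbb N$, the single-interface competitor $u^N\in\mathcal A(N;z)$ defined by $u^N_i-u^N_{i-1}=z^\ast$ for $1\le i\le(1-\theta)N$ and $u^N_i-u^N_{i-1}=s$ for $(1-\theta)N<i\le N$; since $s\ge z^\ast$, exactly $\theta N$ of its increments lie in $A$, so $u^N\in\mathcal V(N;\theta)$. The nearest-neighbour part of $\tfrac1N F_1(u^N;[0,N])$ tends to $(1-\theta)f(z^\ast)+\theta f(s)$. Splitting the double sum $\sum_{i,j}m_{|i-j|}(u^N_i-u^N_j)^2$ into pairs lying both in $\{0,\dots,(1-\theta)N\}$, both in $\{(1-\theta)N,\dots,N\}$, or straddling the interface, the first two groups give $\big((1-\theta)(z^\ast)^2+\theta s^2\big)a_{\bf m}N+O(1)$ by the computation used for Lemma \ref{ubquic}, while the straddling terms contribute only $O\!\big(\sum_h m_h h^3\big)=O(1)$, finite because $\beta>3$ in \eqref{propm}. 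Passing to the $\liminf$ in the definition of $\widehat Q_{\bf m}f(\theta,z)$ yields the upper bound.

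For the lower bound I would take an arbitrary $u\in\mathcal A(N;z)\cap\mathcal V(N;\theta)$, reduce via Lemma \ref{boundary-Q} to competitors that are affine-controlled on the boundary layers, and use the scale decomposition $F_1(u;[0,N])=\sum_{h\ge1}\sum_j f^h\big(\tfrac{u_{j+h}-u_j}{h}\big)$ from Remark \ref{queffgamma}, where $f^1=f_{2m_1}$ is convex and $f^h(w)=2h^2m_hw^2$ for $h\ge2$. For $h=1$ the term equals $\sum_i f_{2m_1}(u_i-u_{i-1})$; splitting the increments according to whether they are $\le z^\ast$ or $>z^\ast$, applying Jensen on each group, and then optimising over the split (here convexity of $f_{2m_1}$ together with $z\ge z^\ast$ and monotonicity of $f_{2m_1}'$ force the extremal split, fraction $1-\theta$ at $z^\ast$ and fraction $\theta$ at $s$) gives $\sum_i f_{2m_1}(u_i-u_{i-1})\ge N\big((1-\theta)f_{2m_1}(z^\ast)+\theta f_{2m_1}(s)\big)$. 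For each $h\ge2$ one partitions $\{0,\dots,N-h\}$ into the $h$ residue classes modulo $h$, applies Cauchy--Schwarz within each class, and uses the boundary conditions to bound $\sum_j(u_{j+h}-u_j)^2$ from below; adding all scales, dividing by $N$ and letting $N\to\infty$, and finally using $f_{2m_1}=f+2m_1(\cdot)^2$ together with the algebraic identity for $(1-\theta)(z^\ast)^2+\theta s^2$ above, reproduces the claimed value. Subtracting $a_{\bf m}z^2$ concludes the case $z\ge z^\ast$, and the reflection gives $z<z^\ast$.

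The main obstacle is the long-range ($h\ge2$) part of the lower bound: the residue-class Cauchy--Schwarz estimate naturally only registers the macroscopic slope $z$, whereas the target value carries the strictly larger second moment $(1-\theta)(z^\ast)^2+\theta s^2$. One therefore has to show that the phase constraint, which together with near-optimality of the nearest-neighbour energy drives a competitor toward the two-value profile $\{z^\ast,s\}$, propagates this rigidity to every scale $h$, so that the long-range differences $u_{j+h}-u_j$ cannot all remain close to $hz$; quantifying this propagation while keeping the tail $\sum_h m_h h^3$ controlled through \eqref{propm} is where the real work lies. The boundary cases $\theta\in\{0,1\}$ have already been recorded before the statement.
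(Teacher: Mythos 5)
Your upper bound (single-interface competitor), the reflection to reduce to $z\ge z^\ast$, and the $h=1$ lower bound via convexity of $f_{2m_1}$ all track the paper's proof, which fixes $z<z^\ast$ and declares the remaining cases symmetric. You also correctly locate the real difficulty: for $h\ge2$ the residue-class Jensen estimate only returns $n^2z^2$, not the target $n^2\big((1-\theta)(z^\ast)^2+\theta s^2\big)$, and you honestly leave that step open.

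The mechanism you propose to close it (that the phase constraint forces $u_{j+h}-u_j$ away from $hz$) is, however, not available, and this should be read as a warning rather than a technical obstacle. Take $\theta=\tfrac12$, $z<z^\ast$, $s=2z-z^\ast$, and the period-$2$ alternating competitor with $u_i-u_{i-1}\in\{z^\ast,s\}$: it lies in $\mathcal A(N;z)\cap\mathcal V(N;\tfrac12)$, keeps $u_i-iz$ bounded by $|z^\ast-z|$, so that $u_{j+h}-u_j=hz$ exactly for even $h$ and $hz\pm(z^\ast-z)$ for odd $h$, and (using $\tfrac12[(z^\ast)^2+s^2]=z^2+(z^\ast-z)^2$) its energy per site is $\tfrac12 f(z^\ast)+\tfrac12 f(s)+a_{\bf m}z^2+2(z^\ast-z)^2\sum_{n\text{ odd}}m_n$. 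Subtracting $a_{\bf m}z^2$ gives a value for $Q_{\bf m}f(\tfrac12,z)$ that is \emph{strictly below} the right-hand side of \eqref{qtetaconvessa} whenever $m_n>0$ for some $n\ge2$. Equidistributing the two increment values \emph{minimises} the squared window sums; it does not maximise them. This is exactly why you should be skeptical of the corresponding step in the paper's own proof of the lower bound, which ``substitutes $z^\ast$ and $\tfrac{\widetilde z-\widetilde\theta z^\ast}{1-\widetilde\theta}$'' and appeals to ``the convexity of the square'' to assert $\sum_{i\in P_j}(z_{i+1}+\cdots+z_{i+n})^2\ge\lfloor\theta kq/n\rfloor(nz^\ast)^2+\lfloor(1-\widetilde\theta)kq/n\rfloor(nw)^2$: that inequality fails for the alternating competitor above. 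So the gap you flag is genuine and, as far as I can see, cannot be filled: for kernels $\bf m$ supported beyond nearest neighbours, the right-hand side of \eqref{qtetaconvessa} appears to be only an upper bound, with the true constrained minimum governed by the equidistributed (quasiperiodic) two-value profile rather than the single interface. Test the statement on a concrete example (say $f(z)=z^2$, $z^\ast=1$, $m_2=1$, $m_1$ small, $\theta=\tfrac12$, $z=0$) before investing further in a lower bound.
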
 

\begin{proof} 
We fix $z<z^\ast$. Let $z_i$ be such that $\sum_{i=1}^{kq}z_i= kqz$, and
$$
\overline z=\frac{1}{\#I}\sum_{i\in I}(f(z_i)+2m_1z_i^2),
$$ where $I=\{i: z_i\ge z^*\}$ and $\#I=\theta kq$.
Since $f$ is convex, we get
\begin{eqnarray}\label{primconv}\nonumber
&&\hspace{-2cm}\frac{1}{kq}\sum_{i=1}^{kq}(f(z_i)+2m_1z_i^2)
=\frac{1}{kq}\Bigl(\sum_{i\in I}(f(z_i)+2m_1z_i^2)+\sum_{i\not\in I}(f(z_i)+2m_1z_i^2)\Bigr)\\ \nonumber
&&\geq\theta (f(\overline z)+2m_1(\overline z)^2)
\nonumber
+(1-\theta)\Big(f\big(\frac{z-\theta \overline z}{1-\theta}\big)+2m_1\big(\frac{z-\theta \overline z}{1-\theta}\big)^2\Big)\\ 
&&\geq\theta (f(z^\ast)+2m_1(z^\ast)^2)+(1-\theta)\Big(f\big(\frac{z-\theta z^\ast}{1-\theta}\big)
+2m_1\big(\frac{z-\theta z^\ast}{1-\theta}\big)^2\Big),
\end{eqnarray}
since $\overline z\ge z^*$ and $\frac{z-\theta \overline z}{1-\theta}\le \frac{z-\theta z^\ast}{1-\theta}$. 

Let $M\in \mathbb N$ and  $n\leq M$ be fixed. We define $n$ partitions of the interval $[0,kq]$ given by the set of points 
$$P_j=\Big\{hn+j: h=0,\dots, \Big\lfloor \frac{kq-j}{n}\Big\rfloor-1\Big\}, \ j=0,\dots, n-1.$$ 
Let $u$ be an admissible test function for $\widehat Q_{\bf m}f(\theta,z)$. Recalling Lemma \ref{boundary-Q},  we can suppose $u\in\mathcal V(kq;\theta)\cap \widetilde{\mathcal A}_\delta(kq;z)$. 
With fixed $n$ and $j$, let $\widetilde z$ and $\widetilde\theta$ be defined by 
\begin{eqnarray*}
	&&u_{\lfloor \frac{kq-j}{n}\rfloor n+j}-u_j=
	\Big\lfloor \frac{kq-j}{n}\Big\rfloor n \widetilde z \\ 
	&&\widetilde\theta\Big\lfloor \frac{kq-j}{n}\Big\rfloor n=\#\Big\{i\in \Big[j, \Big\lfloor \frac{kq-j}{n}\Big\rfloor n+j\Big]\cap
	\mathbb Z: u_i-u_{i-1}\geq z^\ast\Big\}.
\end{eqnarray*} 
Since $u\in \widetilde{\mathcal A}_{\delta}(kq;z)$ and $n\leq \delta kq$, we obtain $(kq-2n) |z - \widetilde z|
\leq 4n|z| +2n |z|$.   
Moreover 
$(kq-2n)|\theta-\widetilde\theta|\leq 2n+2n \theta$, 
so that (uniformly with respect to $n$ and $j$)
\begin{equation}\label{varbordo} 
\widetilde z=z+o(1)_{k\to+\infty}, \ \  \widetilde \theta=\theta+o(1)_{k\to+\infty}.
\end{equation}
In particular, if $k$ is large enough then $\widetilde z<z^\ast$.
By substituting to any $z_i\geq z^\ast$ the value $z^\ast$ and 
to any $z_i<z^\ast$ the value $\frac{\widetilde z-\widetilde\theta z^\ast}{1-\widetilde\theta}$, 
the convexity of the square gives   
$$\frac{1}{kq}\sum_{i\in P_{j}}(z_{i+1}+\dots+z_{i+n})^2\geq\frac{1}{kq} \Big\lfloor\frac{\theta kq}{n}\Big\rfloor (nz^\ast)^2+\frac{1}{kq}\Big\lfloor\frac{(1-\widetilde\theta)kq}{n}\Big\rfloor 
\Big(n\frac{\widetilde z-\widetilde\theta z^\ast}{1-\widetilde\theta}\Big)^2.$$
Hence, recalling \eqref{varbordo} 
\begin{eqnarray*}
	\sum_{i,j=0}^{kq}m_{|i-j|}(u_i-u_j)^2&\geq&  2 \sum_{n=1}^M m_n n \Big(  \Big\lfloor\frac{\theta kq}{n}\Big\rfloor (nz^\ast)^2+\Big\lfloor\frac{(1-\theta)kq}{n}\Big\rfloor (n \frac{z-\theta z^\ast}{1-\theta})^2\Big)\\
	&&+o(1)_{k\to+\infty}
\end{eqnarray*}
which, together with \eqref{primconv}, gives the estimate
\begin{eqnarray*}
	\frac{1}{kq}F_{1}(u;[0,kq])&\geq& 
	\theta f(z^\ast)+(1-\theta)f\Big(\frac{z-\theta z^\ast}{1-\theta}\Big)\\
	&&+
	2 \sum_{n=1}^M m_n \Big( \theta(nz^\ast)^2+(1-\theta) (n \frac{z-\theta z^\ast}{1-\theta})^2\Big)+o(1)_{k\to+\infty}.
\end{eqnarray*} 
We obtain that 
\begin{equation*}
\widehat Q_{\bf m}f(\theta,z)\geq
\theta f(z^\ast)+(1-\theta)f\Big(\frac{z-\theta z^\ast}{1-\theta}\Big)
+2  \sum_{n=1}^M m_n \Big( \theta(nz^\ast)^2+(1-\theta) \big(n \frac{z-\theta z^\ast}{1-\theta}\big)^2\Big).
\end{equation*} 
Since $M$ is arbitrary, we conclude that 
\begin{equation*}
Q_{\bf m}f(\theta,z)\geq
\theta f(z^\ast)+(1-\theta)f\Big(\frac{z-\theta z^\ast}{1-\theta}\Big)+a_{\bf m}\Big(\theta(z^\ast)^2 
+ (1-\theta) \big(\frac{z-\theta z^\ast}{1-\theta}\big)^2\Big)-a_{\bf m}z^2, 
\end{equation*} 
which gives the lower bound for \eqref{qtetaconvessa} in the case $z<z^\ast$. 

As for the upper estimate, we define a test function $\overline u$ by setting 
$$\overline u_i=\begin{cases}\vspace{1mm}
z^\ast i  & {\rm if }\ i\leq \theta kq\\
\displaystyle z^\ast \theta kq + \frac{z-\theta z^\ast}{1-\theta}(i-\theta kq)  & {\rm if }\ i > \theta kq; 
\end{cases}$$
since $m_n=o(n^{-\beta})_{n\to+\infty}$ with $\beta>3$ 
we obtain 
\begin{eqnarray*}
	\frac{1}{kq}F_1(\overline u;[0,kq])&=&\theta f(z^\ast)+(1-\theta)f\Big(\frac{z-\theta z^\ast}{1-\theta}\Big)\\
	&&+
	2 \sum_{n=1}^{\infty} m_n\Big( \theta(nz^\ast)^2+(1-\theta) (n \frac{z-\theta z^\ast}{1-\theta})^2\Big)+o(1)_{k\to+\infty}, 
\end{eqnarray*}
which gives the upper bound for $k\to+\infty$. 
Similar arguments allow one to prove 
\eqref{qtetaconvessa} for $z>z^\ast$ or $z=z^\ast$. 
\end{proof}
\begin{figure}[h!]
	\centerline{\includegraphics[width=0.6\textwidth]{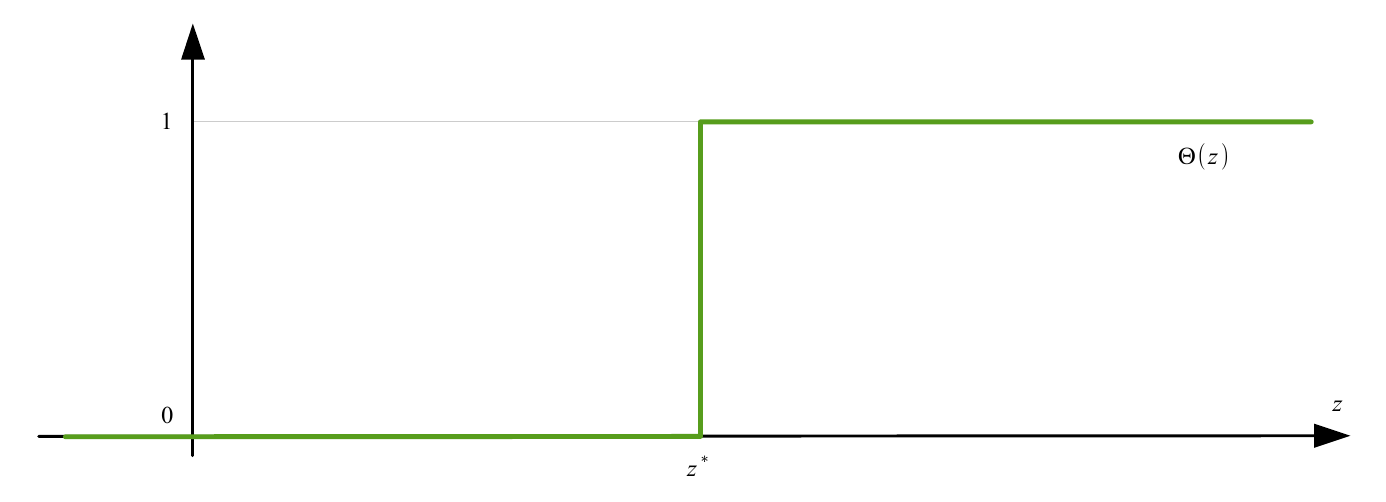}}
	\caption{the phase multifunction $\Theta(z)$ in the convex case.}
	\label{figure_thetacost}
\end{figure} 

Note that the phase multifunction $\Theta(z)$ is given by (see Fig.~\ref{figure_thetacost})  
\begin{equation} 
\Theta(z)=\begin{cases} 
\{0\} & \hbox{\rm if } z<z^\ast\\
[0,1] & \hbox{\rm if } z=z^\ast\\
\{1\} & \hbox{\rm if } z>z^\ast. 
\end{cases}
\end{equation} 
Here $0$ and $1$ are the only locking states.
\medskip 

A particular  interesting  sub-case in this general class of problems is represented by semi-degenerate  quadratic-affine functions, often used in theories of plasticity. Assume for instance that for all $\tau\in\mathbb R$  the function 
$\ell^\tau\colon\mathbb R\to\mathbb R$ is defined as 
\begin{equation}\label{quadraticalfa}
\ell^{\tau}(z)=\begin{cases}
z^2 & \hbox{\rm if }\ z\leq 1\\
2\tau(z-1)+1& \hbox{\rm if }\ z>1. 
\end{cases}
\end{equation} 
Using the general expression for $Q_{\bf m}f(\theta,z)$ in \eqref{qtetaconvessa}, we can now obtain an explicit formula for $Q_{\bf m}
\ell^\tau(\theta,z)$ in the convex case $\tau\geq 1$, with the natural choice $A=[1,+\infty)$. 

\begin{example}[convex-affine potentials]\label{plasticity-ex2}\rm 
	\begin{figure}[h!]
		\centerline{\includegraphics[width=0.9\textwidth]{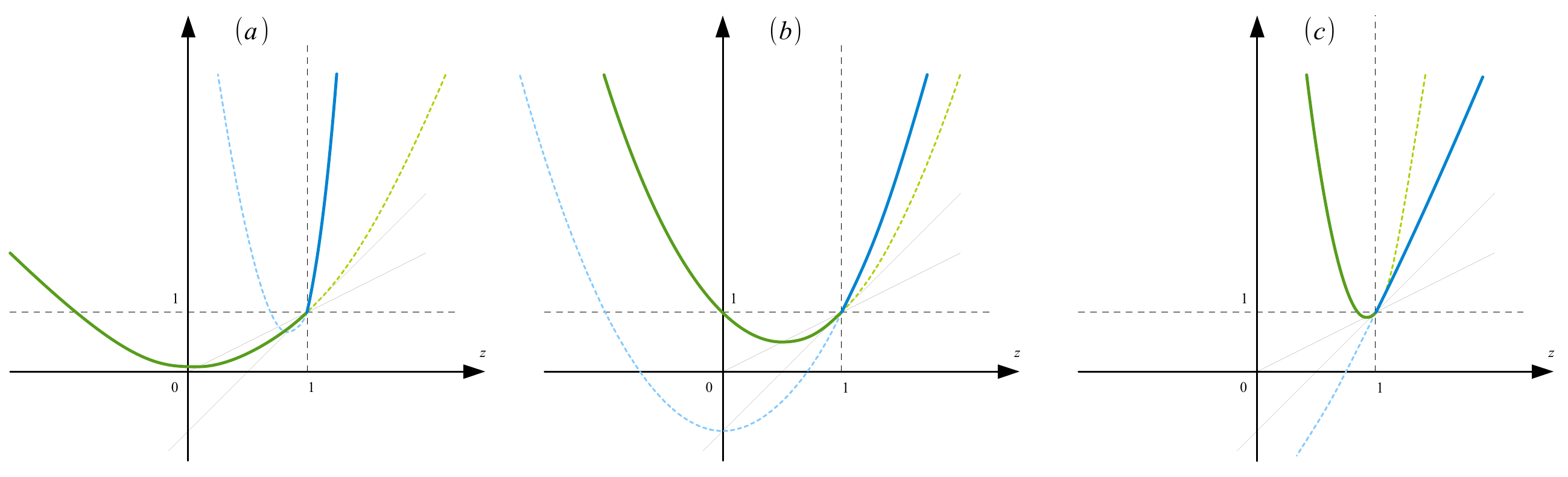}}
		\caption{$Q_{\bf m}
		\ell^\tau(\theta,z)$ for increasing values of $\theta$.}
		\label{figure_convexaffinege1}
	\end{figure} 
	Let  
	$\ell^\tau$ 
	be defined as in \eqref{quadraticalfa}. In the convex case $\tau\geq 1$, for any $\theta\in(0,1)$ we have  
	\begin{equation*}
	Q_{\bf m}
	\ell^\tau(\theta,z)=\begin{cases}
	\vspace{2mm}
	\displaystyle\frac{1+a_{\bf m}\theta}{1-\theta}z^2
	-\frac{2(1+a_{\bf m})\theta}{1-\theta}z
	+\frac{(1+a_{\bf m})\theta}{1-\theta}&\hbox{\rm if } z\leq1\\
	\displaystyle\frac{a_{\bf m}(1-\theta)}{\theta} z^2
	-\Big(\frac{2a_{\bf m}(1-\theta)} {\theta} -\tau\Big)z+
	1-2\tau+\frac{a_{\bf m}(1-\theta)}{\theta}  
	&\hbox{\rm if } z\geq1
	\end{cases}
	\end{equation*}
\noindent These constructions are illustrated in Fig.~\ref{figure_convexaffinege1}. 	
\end{example}
In Sections \ref{mneigh-sec} and \ref{exp-sec} we will also treat the non-convex case of 
$\ell^\tau$; that is, $\tau<1$, with particular choices of the interaction kernel $\bf m$.  
Note that all the general results concerning $\ell^\tau$ 
still hold if we take a convex $\tilde f$ instead of the quadratic term. 

\subsection{Spin representation and optimal microstructures} 
We observe that for bi-convex problems 
a more detailed way to describe the behaviour of extremal functions is by using a two-value function which labels the position of the strain variable, whether in one or in the other of the two convex zones of $f$. Such `spin function' can be viewed as a characteristic function of  the microstructure of an  extremal. 
Note that periodic spin functions determine a corresponding rational volume fraction $\theta$. 

To illustrate the geometry of microstructures  we restate periodic minimum problems for bi-convex functions  in terms of a spin representation. This will allow us  to rewrite non-convex minimum problems as minima of a family of convex problems, and to obtain a better control of the geometry of minimizers.  We will use this formulation in some explicit examples in the next sections, to characterize  
optimal periodic geometries. 

We begin by formally introducing the {\em spin variable} $\underline s\in\{-1,1\}^N$  parameterizing the location of the argument of a bi-convex function $f$. The corresponding  volume fraction is then $$\theta={1\over 2N}\sum_{j=1}^N(1-s_j).$$ 

Let $f_{-1},f_{1}\colon\mathbb R\to\mathbb R$ be such that 
\begin{equation}\label{fmin}
f(z)=\min\{f_{-1}(z), f_1(z)\}=\begin{cases} 
f_{-1}(z) & \hbox{\rm if }\ z\in \mathbb R\setminus A\\
f_1(z) & \hbox{\rm if }\ z\in A. 
\end{cases}
\end{equation}   
The slight difference in the notation with respect to previous sections, where the two functions were denoted by $f_0$ and $f_1$, is due to the focus on individual components of the  spin vector  taking the values  $-1$ and $1$.
While the definitions and properties will hold without any further assumptions, in the applications 
we will consider the `natural' case when $A$ is a half-line and the functions $f_{-1},f_{1}$ are convex. 

Omitting the dependence on $A$, for $N\in\mathbb N$  
we define $\widehat R^N_{\bf m}f\colon \{-1,1\}^N\times \mathbb R\to\mathbb R$ by setting 
\begin{equation}\label{phiminus}
\widehat R^N_{\bf m}f(\underline{s},z)=\frac{1}{N}\inf\big\{F^\#(u,\underline s;[0,N]):\  i\mapsto u_i-zi \hbox{ is } N\hbox{-periodic}\big\}, 
\end{equation}
where 
\begin{equation*}
F^\#(u,\underline s;[0,N])= \sum_{i=1}^{N} f_{s_i}(u_i-u_{i-1})+\sum_{i=1}^N\sum_{j\in\mathbb Z}m_{|i-j|}(u_i-u_j)^2.
\end{equation*}
Note that $\widehat R^N_{\bf m}f$ depends on the choice of $f_1$ and $f_{-1}$ and not only on their  minimum $f$.  

\begin{remark}[regularity with respect to $z$]
\rm If $f_1$ and $f_{-1}$ are of class $C^1(\mathbb R)$ then the function 
$z\mapsto \widehat R^N_{\bf m}f(\underline{s},z)$ 
is of class $C^1(\mathbb R)$ for any fixed $\underline{s}\in \{-1,1\}^N$. 
This is a direct consequence of the Euler-Lagrange equations characterizing the minumum points of  $F^\#$.
\end{remark}

Now we add the phase constraint, minimizing over all $\underline s$ corresponding to a given volume fraction, which eventually will give an alternative chatacterization of $\widehat Q_{\bf m}f(\theta,z)$.  More precisely, fixed  
$\theta=\frac{p}{q}\in \mathbb Q\cap[0,1]$, for any $N\in q\mathbb Z$ 
we define the  function  
\begin{equation}\label{deffin}
\Phi^N_{\bf m}f(\theta,z)=\min\{\widehat R^N_{\bf m}f(\underline s, z): \ \underline s\in \mathcal S_{N}(\theta)\},
\end{equation}
where $\mathcal S_{N}(\theta)$ is the
set of admissible spin vectors  
$$\mathcal S_{N}(\theta)=\{\underline s\in \{-1,1\}^{N}: \ \#\{i: s_i=1\}=\theta N \}$$
and again we omit the dependence on $A$. 
Moreover, we define $$\Phi_{\bf m} f(\theta,z)=\liminf_{N\to+\infty}\Phi^N_{\bf m}f(\theta,z).$$ 
The following proposition states that the analysis of $\widehat Q_{\bf m}f(\theta,z)$ can be 
reduced to the periodic spin formulation 
giving $\Phi_{\bf m} f(\theta,z)$.

\begin{proposition}[periodic spin characterization of $\widehat Q_{\bf m} f(\theta,z)$]\label{equalityoffandphi} The following equality holds:
$$
\Phi_{\bf m} f(\theta,z)= \widehat Q_{\bf m} f(\theta,z)
$$
In particular the function $(\theta,z)\mapsto \Phi_{\bf m} f(\theta,z)$
is convex.
\end{proposition}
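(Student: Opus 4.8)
The identity is proved by the two inequalities $\Phi_{\bf m}f(\theta,z)\le \widehat Q_{\bf m}f(\theta,z)$ and $\widehat Q_{\bf m}f(\theta,z)\le \Phi_{\bf m}f(\theta,z)$; once these are in hand, the convexity of $(\theta,z)\mapsto\Phi_{\bf m}f(\theta,z)$ is an immediate consequence of Proposition \ref{convex-q}.

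For the first inequality I would start from a near-optimal competitor $u\in\mathcal A(N;z)\cap\mathcal V(N;\theta)$ for the infimum defining $\widehat Q_{\bf m}f(\theta,z)$ — which, by Lemma \ref{boundary-Q}, may be taken to lie in $\widetilde{\mathcal A}_\delta(N;z)$ — and extend it to $\mathbb Z$ so that $i\mapsto u_i-zi$ is $N$-periodic. The key observation is that the constraint defining $\mathcal V(N;\theta)$ is exactly what is needed to produce an \emph{aligned} spin vector $\underline s\in\mathcal S_N(\theta)$: set $s_i=1$ on the (at most $\theta N$) indices with $u_i-u_{i-1}>z^\ast$ together with $\theta N-\#\{i:u_i-u_{i-1}>z^\ast\}$ of the indices with $u_i-u_{i-1}=z^\ast$ — there are enough of these since $\#\{i:u_i-u_{i-1}<z^\ast\}\le(1-\theta)N$ — and $s_i=-1$ otherwise. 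Then $f_{s_i}(u_i-u_{i-1})=f(u_i-u_{i-1})$ for every $i$, so the local part of $F^\#(u,\underline s;[0,N])$ coincides with that of $F_1(u;[0,N])$, while the nonlocal part differs only by ``cross-period'' interactions, which are negligible: since $u$ has increments bounded near the periodization seam, these are controlled by $C\sum_\ell \ell^2 m_\ell<+\infty$ (decay hypothesis \eqref{propm}, $\beta>3$) and hence $o(N)$. Therefore $\widehat R^N_{\bf m}f(\underline s,z)\le\frac1N F^\#(u,\underline s;[0,N])\le\frac1N F_1(u;[0,N])+o(1)$, and, passing to the infimum over $\underline s$, then to the $\liminf$ over $N$ and to $\delta\to0$, one obtains $\Phi_{\bf m}f(\theta,z)\le\widehat Q_{\bf m}f(\theta,z)$.

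For the converse inequality I would run the construction in reverse, turning a periodic spin minimizer into an admissible competitor for $\widehat Q_{\bf m}f(\theta,z)$. Fix $\eta>0$, a near-optimal $\underline s\in\mathcal S_N(\theta)$ for $\Phi^N_{\bf m}f(\theta,z)$ and a periodic near-minimizer $u$ of $\widehat R^N_{\bf m}f(\underline s,z)$. I would first argue that at such a minimizing pair the spin may be assumed aligned with $u$ as far as $\underline s\in\mathcal S_N(\theta)$ permits: whenever an index with $s_i=-1$, $u_i-u_{i-1}\in A$ coexists with one having $s_j=1$, $u_j-u_{j-1}\notin A$, interchanging the two spins does not increase $F^\#$ (because $f_1\le f_{-1}$ on $A$ and $f_{-1}\le f_1$ on $\mathbb R\setminus A$); iterating removes one of the two types of misalignment, and the residual misaligned increments can be taken equal to $z^\ast$. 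Consequently $\#\{i:u_i-u_{i-1}>z^\ast\}\le\theta N$ and $\#\{i:u_i-u_{i-1}<z^\ast\}\le(1-\theta)N$. The $L$-fold periodic repetition $v$ of $u$ on $[0,LN]$, corrected on a boundary layer near the endpoints by the usual cut-off (Remark \ref{boundary-affine}), then belongs to $\mathcal A(LN;z)\cap\mathcal V(LN;\theta)$, and $\frac1{LN}F_1(v;[0,LN])\to\widehat R^N_{\bf m}f(\underline s,z)$ as $L\to+\infty$ — here $F_1$ of the repetition does not exceed $F^\#$ of it, since $f\le f_{s_i}$, and the cross-period terms are again controlled by the decay of $\bf m$. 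Hence $\widehat Q_{\bf m}f(\theta,z)\le\widehat R^N_{\bf m}f(\underline s,z)+\eta\le\Phi^N_{\bf m}f(\theta,z)+2\eta$; taking $\liminf_N$ and $\eta\to0$ gives the claim.

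The technical heart of the proof, and the step I expect to be the main obstacle, is this alignment in the second inequality: showing that the optimal periodic spin configuration can be chosen so that spin-up sites carry increments in $A$ and spin-down sites increments $\le z^\ast$, with any leftover misalignment collapsed onto the interface value $z^\ast$, so that the periodized competitor satisfies the \emph{two-sided} constraint defining $\mathcal V(LN;\theta)$ and not merely one of its two inequalities. This is where the bi-convexity of $f$ (convexity of $f_{-1}$ and $f_1$) is used, as it makes any over- or under-population of the phase $A$ correctable at no energy cost. The remaining ingredients — periodic repetition with vanishing cross-period interactions, and the modification of boundary conditions — are the routine homogenization estimates already employed in the proofs of Propositions \ref{minpro} and \ref{perest}.
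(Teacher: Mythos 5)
Your two inequalities are the right decomposition, and your proof of $\Phi_{\bf m}f(\theta,z)\le\widehat Q_{\bf m}f(\theta,z)$ is exactly the argument the paper uses for that direction: start from a $\widehat Q$-near-minimizer in $\widetilde{\mathcal A}_\delta\cap\mathcal V$, extend $u_i-zi$ periodically, build the aligned spin $\underline s\in\mathcal S_N(\theta)$ (your explicit construction, using the two-sided constraint and the interface sites to pad out to exactly $\theta N$ spin-up sites, is correct and is what the paper leaves implicit), and control the cross-period interactions by the boundary-layer condition and the decay of $\bf m$.

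The gap is in your ``converse'' inequality $\widehat Q_{\bf m}f(\theta,z)\le\Phi_{\bf m}f(\theta,z)$, and it is precisely the step you yourself flag as ``the technical heart.'' The spin-swap argument is fine: it produces a spin $\underline s'\in\mathcal S_N(\theta)$ (swaps preserve the count) with $F^\#(u,\underline s')\le F^\#(u,\underline s)$ and at most one surviving type of misalignment. But the surviving misaligned sites, say $s_i=-1$ with $u_i-u_{i-1}>z^\ast$, are what make $u\notin\mathcal V(N;\theta)$, and your fix — ``the residual misaligned increments can be taken equal to $z^\ast$'' — is a modification of the configuration $u$, not of the spin. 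It changes $\sum_i(u_i-u_{i-1})$ and hence breaks $u_N-u_0=Nz$ unless compensated elsewhere, and any compensation alters both the local and the nonlocal (long-range quadratic) energy. There is no reason such a rearrangement is energy-nonincreasing for a general bi-convex $f$ with general convex extensions $f_{\pm1}$, and indeed for, e.g., $\theta=0$ and $z>z^\ast$ with finite $f_{-1}$ on $A$ one has $\Phi^N(0,z)<+\infty$ while $\mathcal A(N;z)\cap\mathcal V(N;0)=\emptyset$, so $u$ cannot be turned into an admissible competitor without changing the spin count.

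The paper never faces this because it uses the remark after Definition \ref{energiuel}: since $f=\min\{f_{-1},f_1\}$, in the minimization defining $\widehat R^N_{\bf m}f$ one may take $f_1=+\infty$ outside $A$ and $f_{-1}=+\infty$ inside $A$ without changing the value, so that any $u$ with $F^\#(u,\underline s)<+\infty$ is automatically aligned with $\underline s$ and hence $u|_{[0,N]}\in\mathcal V(N;\theta)$. With this normalization the inequality $\Phi_{\bf m}f\ge\widehat Q_{\bf m}f$ really does ``follow by definition'': one just observes $F^\#(u,\underline s;[0,N])\ge F_1(u;[0,N])$ (equal local parts at aligned sites, more nonnegative nonlocal terms in $F^\#$) and takes infima; the $L$-fold repetition and boundary-layer cut-off you invoke in this direction are not needed. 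You should replace the swap-and-collapse paragraph by this normalization of $f_{\pm1}$, after which that direction becomes the trivial one, as the paper claims.
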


\begin{proof}
The inequality $\Phi_{\bf m} f(\theta,z)\ge \widehat Q_{\bf m} f(\theta,z)$ directly follows by definition. Conversely, given a minimum point $u$ for 
\begin{eqnarray}\nonumber
\widehat Q^{\delta, Nq}_{\bf m}f(\theta,z)=\frac{1}{Nq}\inf\Big\{F_1(u;[0,Nq]): u\in\widetilde{\mathcal A}_\delta(Nq;z)\cap\mathcal V(Nq;\theta) \Big\}, \nonumber
\end{eqnarray} we can extend it to $\mathbb Z$ so that $u_i-zi$ is $Nq$-periodic. Using this extended test function in the definition of $\Phi^{Nq}_{\bf m} f(\theta,z)$, with the same computations as in the proof of Lemma \ref{boundary-Q} we obtain
$$
\Phi^{Nq}_{\bf m} f(\theta,z)\le \widehat Q^{\delta, Nq}_{\bf m}f(\theta,z) +o(1)
$$ as $N\to+\infty$ and $\delta\to 0$. 
\end{proof}

We are interested in those $\theta$ for which the constrained relaxation $\widehat Q_{\bf m}f(\theta,z)$ is characterized by periodic minimization; that is, for which there is an interval of $z$ such that the corresponding optimal spin function $\underline s$ is periodic. Such $\underline s$ will be locally $z$-independent, and this will allow to derive regularity properties for $\widehat Q_{\bf m}f(\theta,z)$. 
For those special values of $\theta$, we think of such functions $\widehat Q_{\bf m}f(\theta,\cdot)$ as describing {\em energy meta-wells}. For brevity of notation, we directly say that the corresponding value of $\theta$ is an {\em energy well}. As we are going to show below, this concept is closely related  to that of a locking state. 
\begin{definition}[energy meta-wells]\label{energiuel} Let $f$ be as in \eqref{fmin} and let $\Phi^N_{\bf m}$ be as in \eqref{deffin}. The value $\theta\in [0,1]\cap\mathbb Q$ 
is an {\rm energy   well of $f$  at $z$} (related to the sequence ${\bf m}$) if there exists $N$ such that $N\theta\in\mathbb Z$ and
\begin{equation}\label{enwelldef}
\Phi^N_{\bf m} f(\theta,z)=\Phi_{\bf m} f(\theta,z).
\end{equation}
We say 
that $\theta$ is an {\rm energy well of $f$ in an open interval $I$} 
if there exists $N$ such that \eqref{enwelldef} holds for all $z\in I$; 
if such $I$ exists, we say that $\theta$ is a {\em non-degenerate energy well of $f$}. 
If $I=\mathbb R$, we simply say that 
$\theta$ is an {\rm energy well of $f$}. 
\end{definition}
Note that the definition {\it a priori} depends on $f_1$ and $f_{-1}$. However, the condition that $f=\min\{ f_1,f_{-1}\}$ implies that in the minimization procedure we may assume $f_1=+\infty$ outside $A$ and  $f_{-1}=+\infty$ inside $A$, which shows that the definition indeed only depends on $f$. 

\begin{remark}[energy meta-wells and periodic solutions]\label{emepe} \rm 
By Proposition \ref{equalityoffandphi} we also have that if $\theta\in [0,1]\cap\mathbb Q$ 
is an {\rm energy well} of $f$ at $z$ then $$\Phi^N_{\bf m} f(\theta,z)=\widehat Q_{\bf m} f(\theta,z).$$
This implies the existence of periodic minimizers; that is, of test function $u_i$  minimizing $\widehat Q_{\bf m} f(\theta,z)$ with $u_i-zi$ $N$-periodic.
\end{remark}

\begin{remark}\rm 
If $\theta$ is an energy well of $f$ at $z$, then there exists $N$ such that 
$$\Phi^{kN}_{\bf m} f(\theta,z)=\Phi^{N}_{\bf m}f(\theta,z)=\Phi_{\bf m} f(\theta,z)$$ 
for any $k\geq 1$. 
\end{remark}

We now examine the regularity of $\Phi_{\bf m} f$ at fixed $\theta$.

\begin{proposition}[differentiability with respect to $z$]\label{diffinz}
If $\theta$ is an energy well of $f$ in an open interval $I$, then the function
$z\mapsto \Phi_{\bf m} f(\theta,z)$ 
is differentiable at any $z\in I$.   
\end{proposition}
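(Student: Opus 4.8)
The plan is to exploit that $z\mapsto\Phi_{\bf m}f(\theta,z)$ is convex and, on $I$, is simultaneously the lower envelope of finitely many differentiable functions. First I would record that, by Proposition~\ref{equalityoffandphi} together with Proposition~\ref{convex-q}, the function $g(z):=\Phi_{\bf m}f(\theta,z)=\widehat Q_{\bf m}f(\theta,z)$ is convex; hence its one-sided derivatives $g'_-(z)$ and $g'_+(z)$ exist everywhere and satisfy $g'_-(z)\le g'_+(z)$. Thus differentiability at a given $z_0\in I$ is equivalent to ruling out a corner, i.e. to proving $g'_-(z_0)=g'_+(z_0)$.

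Then I would unfold the energy-well hypothesis through Definition~\ref{energiuel}: there is a fixed $N$ with $N\theta\in\mathbb Z$ such that
$$g(z)=\Phi^N_{\bf m}f(\theta,z)=\min\big\{\widehat R^N_{\bf m}f(\underline s,z):\ \underline s\in\mathcal S_N(\theta)\big\}\qquad\text{for all }z\in I.$$
The index set $\mathcal S_N(\theta)$ is finite; moreover each branch $h_{\underline s}(z):=\widehat R^N_{\bf m}f(\underline s,z)$ is a bona fide function — the infimum defining $\widehat R^N_{\bf m}f$ is attained thanks to the coercivity provided by $m_1>0$ (or by \eqref{crescitasotto}) once the additive constant is normalised — and is differentiable in $z$ by the Remark on regularity with respect to $z$.

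The decisive step is a one-sided barrier argument. Fixing $z_0\in I$, I would choose $\underline s^\ast\in\mathcal S_N(\theta)$ attaining the minimum at $z_0$ and set $h:=h_{\underline s^\ast}$; by the displayed identity $h\ge g$ on $I$ and $h(z_0)=g(z_0)$, so $h$ is a differentiable upper barrier touching $g$ at $z_0$. Comparing difference quotients of $g$ and $h$ to the right of $z_0$ yields $g'_+(z_0)\le h'(z_0)$, and to the left (where dividing by $z-z_0<0$ reverses the inequality) yields $g'_-(z_0)\ge h'(z_0)$; combined with $g'_-(z_0)\le g'_+(z_0)$ this forces all three to coincide, hence $g$ is differentiable at $z_0$, and $z_0\in I$ was arbitrary. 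The point where the energy-well hypothesis is truly used — and the main obstacle one must respect — is here: convexity of $g$ alone only gives differentiability off a countable set, and a minimum of finitely many $C^1$ functions need not be $C^1$ (e.g. $\min\{z,-z\}$); what makes it work is that the envelope is \emph{both} convex and, at every point of $I$, touched from above by one of the genuinely differentiable cell energies $\widehat R^N_{\bf m}f(\underline s^\ast,\cdot)$.
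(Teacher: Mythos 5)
Your argument is correct and follows essentially the same route as the paper's: on $I$ the function $\Phi_{\bf m}f(\theta,\cdot)$ equals $\Phi^N_{\bf m}f(\theta,\cdot)$, a finite minimum of $C^1$ branches, and convexity then forbids a corner at any point of $I$. Your one-sided barrier comparison of difference quotients is just a more explicit writing-out of the paper's terse remark that "the derivatives of these functions must agree at the intersections."
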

\begin{proof} 
Given $\theta$ an energy well in $I$ and the corresponding $N$ as in Definition \ref{energiuel}, note that $z\mapsto \Phi_{\bf m} f(\theta,z)$ is the minimum of a finite number of $C^1$ functions, corresponding to  $\underline s\in \mathcal S_{N}(\theta)$. Since $z\mapsto \Phi_{\bf m} f(\theta,z)$ is convex the derivatives of these functions must agree at the intersections.
\end{proof}

A central question in the description of $\widehat Q_{\bf m}f$ is the reduction to a set $X$ of $\theta$ such that the claim of Theorem \ref{minpro} holds taking the infimum only on $X$ and such that the computation of $\widehat Q_{\bf m}f(\theta,z)$ can be carried on for $\theta\in X$. This is the case for concentrated kernel. We will see in the examples that $\theta$ in these $X$ are often energy wells. The following proposition shows that if such an energy well is `essential'  then it is a locking state.

\begin{proposition}[energy wells and locking states]
Let $X\subset[0,1]\cap \mathbb Q$ be such that 
\begin{equation}\label{inftheta}
\Big(\inf_{\theta\in X}\{\widehat Q_{\bf m}f(\theta,z)\}\Big)^{\ast\ast}
= \widehat Q_{\bf m}f(z)
\end{equation} 
for all $z$, and let $\theta^\ast\in X$ be an energy well that is {\em essential} in \eqref{inftheta}; that is, such that
\begin{equation}\label{minimality}
\Big(\inf_{\theta\in X\setminus\{\theta^\ast\}}\{\widehat Q_{\bf m}f(\theta,z)\}\Big)^{\ast\ast}
> \widehat Q_{\bf m}f(z)
\end{equation} 
 for some $z$. Then, $\theta^\ast$ is a locking state. 
\end{proposition}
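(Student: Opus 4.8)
The plan is to argue by contradiction: suppose $\theta^\ast$ is not a locking state, so that the set $\{z : Q_{\bf m}f(\theta^\ast,z)=Q_{\bf m}f(z)\}$ (equivalently, by Proposition \ref{equalityoffandphi} and the definitions, the set where $\widehat Q_{\bf m}f(\theta^\ast,z)=\widehat Q_{\bf m}f(z)$ up to the $a_{\bf m}z^2$ shift) contains no open interval, hence is closed with empty interior. The essentiality hypothesis \eqref{minimality} guarantees the existence of a point $z_0$ where removing $\theta^\ast$ from $X$ strictly raises the convexified infimum above $\widehat Q_{\bf m}f(z_0)$; I want to show this forces $\theta^\ast$ to be `locally optimal' near $z_0$ on a whole interval, contradicting the nowhere-density.

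First I would unravel \eqref{inftheta} and \eqref{minimality}: since $\widehat Q_{\bf m}f$ is the convex envelope of $z\mapsto\inf_{\theta\in X}\widehat Q_{\bf m}f(\theta,z)$ and also (by Theorem \ref{minpro} and convexity of the two-variable function, Proposition \ref{convex-q}) itself convex, at the point $z_0$ from \eqref{minimality} the value $\widehat Q_{\bf m}f(z_0)$ must be attained by a convex combination involving $\theta^\ast$; otherwise the infimum over $X\setminus\{\theta^\ast\}$ would already convexify down to $\widehat Q_{\bf m}f(z_0)$. So there are points $z_1,z_2$ and $t\in[0,1]$ with $z_0=tz_1+(1-t)z_2$, $\widehat Q_{\bf m}f(z_0)=t\,\widehat Q_{\bf m}f(\theta^\ast,z_1)+(1-t)(\cdots)$, and at least one of these boundary points, say $z_1$, genuinely uses $\theta=\theta^\ast$, i.e. $\widehat Q_{\bf m}f(\theta^\ast,z_1)=\widehat Q_{\bf m}f(z_1)$. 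The key point is to upgrade this single point $z_1$ to an interval. Here I would use that $\theta^\ast$ is an \emph{energy well}: by Definition \ref{energiuel} and Proposition \ref{equalityoffandphi}, there is a fixed $N$ with $\Phi^N_{\bf m}f(\theta^\ast,\cdot)=\widehat Q_{\bf m}f(\theta^\ast,\cdot)$, so $z\mapsto\widehat Q_{\bf m}f(\theta^\ast,z)$ is a minimum of finitely many $C^1$ functions (one per spin vector in $\mathcal S_N(\theta^\ast)$) and in particular is piecewise-$C^1$; moreover by Proposition \ref{diffinz} it is differentiable on the relevant interval.

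Now the contradiction mechanism: on a neighbourhood of $z_1$, $\widehat Q_{\bf m}f$ is convex, $\widehat Q_{\bf m}f(\theta^\ast,\cdot)$ lies weakly above it everywhere and touches at $z_1$, and $\widehat Q_{\bf m}f(\theta^\ast,\cdot)$ is differentiable at $z_1$ — hence the two functions have the same derivative at $z_1$. I claim this forces equality on a whole interval. Indeed, write $g(z)=\widehat Q_{\bf m}f(\theta^\ast,z)-\widehat Q_{\bf m}f(z)\ge 0$ with $g(z_1)=0$ and (one-sided) $g'(z_1)=0$. If $g$ were not identically zero near $z_1$, then since $\widehat Q_{\bf m}f$ is convex and $\widehat Q_{\bf m}f(\theta^\ast,\cdot)$ is, near $z_1$, itself convex on each $C^1$-piece, one analyzes the right and left pieces separately: on the piece where $\widehat Q_{\bf m}f(\theta^\ast,\cdot)$ agrees with some fixed $C^1$ branch $h_{\underline s}$, the function $h_{\underline s}-\widehat Q_{\bf m}f$ is the difference of a $C^1$ function and a convex function, nonnegative, vanishing to first order at $z_1$; combined with the fact that $\widehat Q_{\bf m}f$ restricted to where $\widehat Q_{\bf m}f(\theta^\ast,\cdot)$ is the \emph{active} minimal branch must actually coincide with that branch (since $\widehat Q_{\bf m}f\le\inf_{\theta\in X}\widehat Q_{\bf m}f(\theta,\cdot)$ and here the latter equals $h_{\underline s}(z_1)=\widehat Q_{\bf m}f(z_1)$ at the contact), one concludes $g\equiv 0$ on a one-sided neighbourhood. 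The cleaner route — which I would actually write — is: by convexity of $\widehat Q_{\bf m}f$ and differentiability of $\widehat Q_{\bf m}f(\theta^\ast,\cdot)$ at $z_1$ with matching derivative, the convex function $\widehat Q_{\bf m}f$ is sandwiched between its tangent line $L$ at $z_1$ and $\widehat Q_{\bf m}f(\theta^\ast,\cdot)$; if $\widehat Q_{\bf m}f$ were affine $=L$ on an interval around $z_1$ then Theorem \ref{minpro} plus the energy-well structure let me run essentially the argument of Proposition \ref{phasefunctionaffine} to show $\theta^\ast$ is optimal on that interval; if instead $\widehat Q_{\bf m}f$ is strictly convex arbitrarily close to $z_1$ on at least one side, then the tangent line strictly undershoots, and one shows the active minimal branch of $\inf_{\theta\in X}$ near $z_1$ on that side cannot be any $\theta\ne\theta^\ast$ (else, by lower-semicontinuity and \eqref{minimality}-type reasoning near $z_0$, essentiality would fail), forcing $\theta(z)=\theta^\ast$ on that side.

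The main obstacle I anticipate is exactly this last dichotomy: turning the pointwise contact plus first-order tangency at $z_1$ into genuine agreement on an \emph{open} interval. The subtlety is that a nonnegative function vanishing with zero derivative at a point need not vanish in a neighbourhood — one really must exploit the rigid structure (convexity of $\widehat Q_{\bf m}f$, the finite piecewise-$C^1$ nature of $\widehat Q_{\bf m}f(\theta^\ast,\cdot)$ from the energy-well hypothesis, and the envelope relation \eqref{inftheta}) rather than soft analysis. I would organize the final write-up so that the energy-well hypothesis is invoked precisely to reduce to finitely many smooth branches, making the "zero of the difference is isolated or identically zero on a piece" alternative rigorous; the essentiality hypothesis \eqref{minimality} is then what rules out the "isolated zero" alternative, because an isolated contact of $\widehat Q_{\bf m}f(\theta^\ast,\cdot)$ with $\widehat Q_{\bf m}f$ contributes nothing to the convex envelope and so could be dropped from $X$ without changing \eqref{inftheta}, contradicting essentiality at $z_0$.
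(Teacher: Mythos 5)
Your proposal is correct and, once the meandering middle part is set aside, it lands on exactly the paper's argument: $\theta^\ast$ being an energy well gives differentiability of $z\mapsto\widehat Q_{\bf m}f(\theta^\ast,z)$ via Proposition \ref{diffinz}, the essentiality condition \eqref{minimality} rules out that its contact set with $\widehat Q_{\bf m}f$ consists only of isolated points (since then $\theta^\ast$ could be dropped from $X$ without altering the convex envelope), and the inequality $\widehat Q_{\bf m}f(\theta^\ast,\cdot)\ge\widehat Q_{\bf m}f$ rules out transversal crossing, forcing coincidence on an interval. The detour through the essentiality point $z_0$ and the auxiliary contact point $z_1$ is a reasonable way to exhibit a contact point explicitly but is not needed in the paper's more compact formulation, which you rediscover in your final paragraph.
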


\begin{proof} 
We recall that $\widehat Q_{\bf m}f(\theta,z)=\Phi_{\bf m} f(\theta,z)$ by Proposition \ref{equalityoffandphi}. Since $\theta^\ast$ is an energy well, by Proposition \ref{diffinz}, the function $z\mapsto\Phi_{\bf m}f(\theta^\ast,z)$ is differentiable. By the essentiality condition \eqref{minimality}, that function cannot be tangent to $\widehat Q_{\bf m}f$ in an isolated point, nor can be transversal to it. Hence, it must coincide with  $\widehat Q_{\bf m}f$ in an interval.\end{proof}

As for regularity properties of $Q_{\bf m}f$ with respect to $\theta$, we note that in general locking states are points where the characterization of the energy changes.  
This suggests that we may have a jump in the derivative at these points.

\begin{conjecture}[Non differentiability at the energy wells]
If $X\subset[0,1]\cap \mathbb Q$ is such that \eqref{inftheta} holds for all $z$ and $\theta^\ast\in X$ is an energy well satisfying \eqref{minimality}, 
hen the function
$\theta\mapsto Q_{\bf m}f(\theta,z) $
is not differentiable in $z$ at $\theta^\ast$. 
\end{conjecture}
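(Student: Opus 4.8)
We read the statement as the non-differentiability of $\theta\mapsto Q_{\bf m}f(\theta,z)$ at $\theta=\theta^\ast$ for $z$ ranging in a suitable interval, and we argue by contradiction, assuming differentiability at $\theta^\ast$. By the proposition relating energy wells and locking states, $\theta^\ast$ is a locking state, so there is an open interval $I$ with $Q_{\bf m}f(\theta^\ast,z)=Q_{\bf m}f(z)$ for all $z\in I$; shrinking $I$ and using Proposition \ref{equalityoffandphi}, Proposition \ref{diffinz} and Remark \ref{emepe}, we may also assume that $\theta^\ast$ is an energy well of period $N$ throughout $I$, realised by a single $z$-independent optimal spin pattern $\underline s^\ast\in\mathcal S_N(\theta^\ast)$. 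Fix $z$ in the interior of $I$ and assume $\theta^\ast\in(0,1)$ (the boundary cases being trivial or degenerate). Since $\theta^\ast$ realises the infimum in Theorem \ref{minpro}, it is an interior minimiser of the convex function $\theta\mapsto Q_{\bf m}f(\theta,z)$, so differentiability there forces both one-sided derivatives $\partial^+_\theta Q_{\bf m}f(\theta^\ast,z)$ and $\partial^-_\theta Q_{\bf m}f(\theta^\ast,z)$ to vanish.

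The core of the plan is to identify these one-sided derivatives with \emph{defect-formation energies}. For the right derivative I would work on period $qN$, use $q-1$ copies of $\underline s^\ast$ together with one modified block carrying one extra index with $s_i=1$ (so that the volume fraction is $\theta^\ast+\frac{1}{qN}$), and observe that, since $m_n=o(n^{-\beta})$ with $\beta>3$, the energy increment caused by a single localised defect is a bounded quantity $\Delta^+(z)\ge 0$; letting $q\to+\infty$ this yields $\partial^+_\theta Q_{\bf m}f(\theta^\ast,z)\le \Delta^+(z)$. The matching lower bound is the delicate point: one must show that any near-minimiser at volume fraction $\theta^\ast+h$ differs from a union of optimal $\theta^\ast$-blocks on at least of order $hN$ disjoint windows, and that each such window carries an energy excess bounded below by a fixed positive constant. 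Here one would use the strict bi-convexity of $f$ away from its degenerate set together with the fact that, by Remark \ref{penaltyrem}, the quadratic part is comparable to $\sum_i(u_i-u_{i-1}-z)^2$, so a defect cannot be made asymptotically free. This gives $\partial^+_\theta Q_{\bf m}f(\theta^\ast,z)=\Delta^+(z)$ and, symmetrically, $\partial^-_\theta Q_{\bf m}f(\theta^\ast,z)=-\Delta^-(z)$ with $\Delta^-(z)\ge 0$ the formation energy of \emph{removing} one index with $s_i=1$; non-differentiability then reduces to the inequality $\Delta^+(z)+\Delta^-(z)>0$.

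It is precisely at this step that the essentiality hypothesis \eqref{minimality} must enter. If instead $\Delta^+(z)=\Delta^-(z)=0$ throughout $I$, then $\theta\mapsto \widehat Q_{\bf m}f(\theta,z)$ has a differentiable minimum with zero slope at $\theta^\ast$ for all $z\in I$ (subtracting $a_{\bf m}z^2$ does not affect $\theta$-differentiability); combined with the differentiability in $z$ of $\widehat Q_{\bf m}f(\theta^\ast,\cdot)$ on $I$ (Proposition \ref{diffinz}) and the joint convexity of $\widehat Q_{\bf m}f$ (Proposition \ref{convex-q}), $\widehat Q_{\bf m}f$ is differentiable at every $(\theta^\ast,z)$, $z\in I$, with a $\theta$-independent gradient, so each tangent line of $\widehat Q_{\bf m}f(\theta^\ast,\cdot)$ is a global, $\theta$-flat supporting hyperplane of $\widehat Q_{\bf m}f$; one then wants to deduce that $\inf_{\theta\in X\setminus\{\theta^\ast\}}\widehat Q_{\bf m}f(\theta,\cdot)$ already coincides with $\widehat Q_{\bf m}f$ on $I$ (e.g.\ by approximating $\theta^\ast$ by a sequence in $X$ when $X$ accumulates there), whence its convex envelope equals $\widehat Q_{\bf m}f$ on $I$, contradicting \eqref{minimality}. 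The two genuinely hard parts are therefore the defect-counting lower bound of the second paragraph and the localisation of essentiality to the locking interval $I$; both become transparent in the two archetypal families. For the concentrated kernels of Definition \ref{def-concentrated} one has $\theta^\ast=n/M$, and on each $[n/M,(n+1)/M]$ the function $\widehat Q_{\bf m}f(\theta,\cdot)$ is the joint convexification of the node branches $P^{M,n}$ of \eqref{pieMMeenne} (Remark \ref{const_min}, Proposition \ref{MniPM}), so differentiability at the node $n/M$ forces $P^{M,n}(z)$ to be a convex combination of $P^{M,n-1}(z)$ and $P^{M,n+1}(z)$, i.e.\ $\theta^\ast$ redundant at $z\in I$; for the exponential kernels of Section \ref{exp-sec} the explicit formulas obtained there deliver the one-sided derivatives directly. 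The lack of a single argument working for all admissible $\bf m$ and all bi-convex $f$ is exactly what leaves the statement at the level of a conjecture.
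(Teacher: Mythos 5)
This statement is presented in the paper as a \emph{conjecture}, and the paper contains no general proof of it; the text immediately after the conjecture says only that it will be \emph{verified} in the two case studies, via Remark \ref{nondiff} for concentrated kernels and Remark \ref{nondiff-exp} for the truncated convex potential with exponential kernel. In both places the non-differentiability is read off directly from the explicit interpolation formulas: the left and right one-sided $\theta$-derivatives at a locking state are computed from different pairs of adjacent energy wells ($P^{M,n-1},P^{M,n}$ on the left versus $P^{M,n},P^{M,n+1}$ on the right in the concentrated case; $g^\sigma_N,g^\sigma_{N+1}$ versus $g^\sigma_{N-1},g^\sigma_N$ in the exponential case), and these slopes are generically different. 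You identify the status of the statement correctly, and your concluding reduction to the two archetypal families coincides with the paper's own verification.

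On the sketch itself: the idea of interpreting one-sided $\theta$-derivatives at an interior minimiser of a convex function as defect-formation energies is sound and consonant with the paper's case-by-case arguments, but the two points you flag as hard are indeed where any general argument presently breaks. The defect-counting lower bound is genuinely open, because without prior structural information on near-minimisers at volume fraction $\theta^\ast+h$ there is no a priori reason they should decompose into $\theta^\ast$-blocks plus $O(hN)$ isolated, uniformly costly defects; this is exactly what Lemma~\ref{formula-n-prop} and its exponential analogue supply in the two archetypal cases and what is missing in general. Moreover, the final essentiality deduction, that $\Delta^+=\Delta^-=0$ forces $\inf_{\theta\in X\setminus\{\theta^\ast\}}\widehat Q_{\bf m}f(\theta,\cdot)$ to already convexify to $\widehat Q_{\bf m}f$ on the locking interval, uses approximation of $\theta^\ast$ by a sequence in $X$; this works when $X$ accumulates at $\theta^\ast$ (as it does for the exponential kernel at $\theta^\ast=0$), but not when $X$ is finite, as for concentrated kernels where $X=\{0,1/M,\dots,1\}$ — there the fallback via the explicit node branches $P^{M,n}$ is not a shortcut but the whole argument. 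So your proposal is a reasonable research programme consistent with the paper's case verifications, and you are right to conclude that it does not establish the statement in general, which is precisely why the paper lists it as a conjecture.
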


This conjecture is reminiscent of regularity properties in dynamical systems, where the global structure of minimizers can be used in the proofs, as in the work of J.~Mather \cite{Mather}.  
Anyway, we will prove that it holds in the case studies (see Remark \ref{nondiff} for the $M$-th neighbour case, and Remark \ref{nondiff-exp} for the truncated convex potential and exponential kernel).

\begin{remark}[Generalized Cauchy-Born (GCB) states]\rm
The spin representation of a  microstructure allows one to effectively parametrize   periodic minimizers.  Such a representation can be  expected to exist for locking states which can be viewed as examples of   `global' solutions. We can also interpret  such states as respecting the  generalized Cauchy-Born (GCB) rule. To  make the notion of the GCB rule more general we may  refer to the possibility of computing the macroscopic energy by solving an appropriate boundary value problem on a finite   representative `cell'. 
The question arises in which cases  any minimizer   can be viewed as a GCB state in the above sense or as a simple mixture (a convex combination) of such  states. We will see in the next sections that for broad classes  of  physically interesting  non-convex energies $f$ and the penalization kernels $\bf m$ only GCB states are relevant.
\end{remark}

\section{Relaxation with concentrated-kernel penalization}
\label{mneigh-sec} 
In this section, we analyze the relaxation of a general bi-convex function $f$ 
with a concentrated kernel $\bf m$. 
We recall that in this case  there exists $M\neq 2$ such that 
$m_n=0$ for all $n\geq 2$ except for $n=M$ and that such penalization leads to a  non-additive problem (see Definition \ref{def-concentrated}). 
We show that the optimal microstructures in this case are   restricted to periodic states, corresponding to a fraction $\theta_n= {n\over M}$ for $n\in\{0,\dots, M\}$, and  compatible mixtures of such periodic states corresponding to neighbouring values of the phase fractions $\theta_n$ and $\theta_{n+1}$, in other words,  to first and second order laminates. 

\smallskip 
Following the notation of Section \ref{constraint-sec}, let $z^\ast\in \mathbb R$, $A=[z^\ast,+\infty)$, and let 
$f\colon\mathbb R\to\mathbb R$ be such that 
the restrictions of $f$ to $(-\infty,z^\ast]$ and $[z^\ast, +\infty)$ are convex. 
In this section, we again use the notation 
\begin{equation}\label{def-psi-NN}
f_{2m_1}(z)=f(z)+2m_1z^2
\end{equation}
for the overall nearest-neighbour interactions.

We assume that growth hypothesis 
\eqref{crescitasotto} holds, so that 
$f_{2m_1}(z)\to+\infty$ as $z\to\pm\infty$. 
Note that the analysis can also cover the degenerate case when this condition is not satisfied. 
As a model, in Remark \ref{casem10} we will consider the case of a truncated quadratic potential $f$ with $m_1=0$,  highlighting the effect of degeneracy.

\subsection{Formulas for the relaxation}   
In the case of a bi-convex $f$, formula 
of Proposition \ref{MniPM} describing $\widehat Q_{\bf m}f$ can be further specified as follows 
\begin{equation}\label{QMn}
\widehat Q_{\bf m}f(z)=\big(\min_n P^{M,n}(z)\big)^{\ast\ast},
\end{equation}  
where for any $n\in\{0,\dots, M\}$ we let $\theta_n=\frac{n}{M}$ and introduce
\begin{equation}\label{PMn}
\left. \begin{array}{ll}
P^{M,n}(z)=&\displaystyle\min\Big\{(1-\theta_n)f_{2m_1}(z^-)+\theta_n f_{2m_1}(z^+): z^-\leq z^\ast, z^+\geq z^\ast, \\
&\hspace{2cm}\displaystyle(1-\theta_n)z^-+\theta_nz^+=z\Big\}
+2m_M(Mz)^2. 
\end{array}
\right.
\end{equation} 
Now we prove that for any rational $\theta$ the constrained function $\widehat Q_{\bf m}f(\theta,z)$, defined in \eqref{defqtheta}, can be also characterized in terms of the functions $P^{M,n}$, which themselves correspond 
to particular values of $\theta$, 
in the sense that $P^{M,n}(z)=\widehat Q_{\bf m}f(\theta_n,z)$.

\begin{theorem}
[shape of $\widehat Q_{\bf m}f$ and of the phase function $\theta$]\label{structureqm} There exists an ordered family of disjoint intervals 
$(s_n^-,s_n^+)$, where $s_0^-=-\infty$ and $s_M^+=+\infty$, such that  

\smallskip 
{\rm (i)}\  
$\widehat Q_{\bf m}f(z)=P^{M,n}(z)$ in $(s_n^-,s_n^+)$ and it is affine in each of the remaining intervals; that is, between $s_n^+$ and $s_{n+1}^-$ for each $n$;

\smallskip 
{\rm (ii)}\ $\theta(z)=\theta_n$ in $(s_n^-,s_n^+)$ and it is affine in each of the remaining intervals. 

\smallskip 
{\rm (iii)}\ the set of the locking states of $f$ is $\{\theta_n\}$ and 
$$
\widehat Q_{\bf m}f(z)= \big(\min\{\widehat Q_{\bf m}f(\theta,z): \theta \hbox{ is a locking state}\} \big)^{\ast\ast}\,.
$$
\end{theorem}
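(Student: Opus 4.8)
The plan is to start from the closed formula \eqref{QMn}, $\widehat Q_{\bf m}f=\big(\min_n P^{M,n}\big)^{**}$, together with the identification $P^{M,n}(z)=\widehat Q_{\bf m}f(\theta_n,z)$, which follows from comparing \eqref{PMn} with the definition \eqref{defqtheta} of the phase-constrained energy (the constrained problem at rational $\theta_n=n/M$ reduces, by convexity of the two branches of $f$ and by Proposition \ref{MniPM} applied with the $M$-cluster, to the finite minimisation in \eqref{PMn}). So the whole theorem is really a statement about the convexification of the finite family of functions $\{P^{M,0},\dots,P^{M,M}\}$, each of which is, on its effective domain, a sum of a convex function (the convexified $M$-cluster energy at fixed volume fraction) plus the strictly convex quadratic $2m_M M^2 z^2$; in particular each $P^{M,n}$ is convex and superlinear, so the lower envelope $g:=\min_n P^{M,n}$ is continuous, piecewise equal to one of the $P^{M,n}$, and coercive.

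Next I would analyse the convex envelope $g^{**}$ of $g=\min_n P^{M,n}$. The key structural input is that the functions $P^{M,n}$ are \emph{ordered} in a convexity-compatible way: as $z$ increases, the index $n$ realising the minimum is non-decreasing. This monotonicity I would extract from the explicit form \eqref{PMn}: fixing the optimal splitting $z^-\le z^*\le z^+$, increasing $n=M\theta_n$ shifts weight toward the right branch, so that $P^{M,n}(z)-P^{M,n-1}(z)$ is monotone in $z$ and changes sign exactly once (a standard single-crossing argument for parametrised convex minimisation, using that $f_{2m_1}$ restricted to each branch is convex and superlinear). Granting this, the graph of $g^{**}$ is obtained by: on a maximal interval $(s_n^-,s_n^+)$ where $P^{M,n}$ lies on or below all the others \emph{and} is itself convex there (which it is, being convex), we have $g^{**}=g=P^{M,n}$; between consecutive such intervals, $g^{**}$ is the common supporting affine function (a bitangent line to $P^{M,n}$ at $s_n^+$ and to $P^{M,n+1}$ at $s_{n+1}^-$). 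That these intervals are non-degenerate, ordered, and exhaust $\mathbb R$ with $s_0^-=-\infty$, $s_M^+=+\infty$ follows from coercivity ($P^{M,0}$ dominates for $z\to-\infty$, $P^{M,M}$ for $z\to+\infty$, by the growth of $f_{2m_1}$) plus the single-crossing property. This proves (i), with $\widehat Q_{\bf m}f=g^{**}$.

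For (ii) I would invoke Proposition \ref{phasefunctionaffine}: on each affine interval between $s_n^+$ and $s_{n+1}^-$ the relaxed energy $\widehat Q_{\bf m}f$ is affine and equals $t\,\widehat Q_{\bf m}f(\theta_n,s_n^+)+(1-t)\,\widehat Q_{\bf m}f(\theta_{n+1},s_{n+1}^-)$ for the appropriate $t$, so by the convexity of $(\theta,z)\mapsto\widehat Q_{\bf m}f(\theta,z)$ (Proposition \ref{convex-q}) and Theorem \ref{minpro} the optimal phase on that interval is the affine interpolation between $\theta_n$ and $\theta_{n+1}$, and one checks $\Theta(z)$ is the singleton $\{t\theta_n+(1-t)\theta_{n+1}\}$ there (because the bitangent touches $P^{M,n}$ and $P^{M,n+1}$ only, so only these two constrained energies are active); then Proposition \ref{phasefunctionaffine} gives affinity. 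On $(s_n^-,s_n^+)$ one has $\widehat Q_{\bf m}f(z)=P^{M,n}(z)=\widehat Q_{\bf m}f(\theta_n,z)$ and strict inequality $\widehat Q_{\bf m}f(\theta,z)>\widehat Q_{\bf m}f(z)$ for $\theta\neq\theta_n$ (again from the single-crossing / strict-convexity of the quadratic penalty separating the $P^{M,n}$), so $\theta(z)=\theta_n$ there. This simultaneously identifies each $\theta_n$ as a locking state, since $\{z:\theta(z)=\theta_n\}\supset(s_n^-,s_n^+)$ is a non-degenerate interval; conversely any locking state $\bar\theta$ must, by Theorem \ref{minpro} and the fact that on any interval of $z$ the minimum over $\theta$ is attained (after convexification) only at the $\theta_n$ or their interpolations, satisfy $\widehat Q_{\bf m}f(\bar\theta,z)=\widehat Q_{\bf m}f(z)=P^{M,n}(z)$ on an interval, forcing $\bar\theta=\theta_n$. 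Finally (iii) is then immediate: $\widehat Q_{\bf m}f=g^{**}=\big(\min_n P^{M,n}\big)^{**}=\big(\min\{\widehat Q_{\bf m}f(\theta,z):\theta\text{ locking}\}\big)^{**}$ since the locking states are exactly the $\theta_n$ and $P^{M,n}=\widehat Q_{\bf m}f(\theta_n,\cdot)$.

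The main obstacle I expect is establishing the single-crossing / ordering property of the family $\{P^{M,n}\}_n$ rigorously — i.e. that the index achieving $\min_n P^{M,n}(z)$ is non-decreasing in $z$ and that the contact intervals $(s_n^-,s_n^+)$ are non-degenerate and correctly ordered. This is where the bi-convexity of $f$, the superlinear growth \eqref{crescitasotto}, and the strict convexity of the penalty $2m_M M^2 z^2$ all have to be combined carefully; everything else (the envelope description, the phase-function affinity via Proposition \ref{phasefunctionaffine}, the locking-state bookkeeping) is then formal. A secondary technical point is handling degenerate configurations where an optimal splitting has $z^-=z^+=z^*$ or where a branch of $f$ is affine (so $P^{M,n}$ could itself have a flat piece); one treats these by perturbation or by noting that the quadratic term $2m_M M^2 z^2$ restores strict convexity.
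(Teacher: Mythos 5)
Your proposal correctly identifies that part (i) and part (iii) essentially reduce to the convex-envelope formula $\widehat Q_{\bf m}f=\bigl(\min_n P^{M,n}\bigr)^{**}$ (which is Proposition \ref{MniPM} specialised to the bi-convex case), and the single-crossing structure of the family $\{P^{M,n}\}$, which gives the ordering of the intervals $(s_n^-,s_n^+)$, is indeed a tractable consequence of the explicit form \eqref{PMn} and the strict convexity of the quadratic penalty. That part is in order.

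The gap lies in part (ii) and, more fundamentally, in the several places where you use the identification $P^{M,n}=\widehat Q_{\bf m}f(\theta_n,\cdot)$ and the control of $\widehat Q_{\bf m}f(\theta,\cdot)$ for $\theta\notin\{\theta_0,\dots,\theta_M\}$. You write that the identification "follows from comparing \eqref{PMn} with the definition \eqref{defqtheta}", and later that $\Theta(z)$ is a singleton on the interpolation intervals "because the bitangent touches $P^{M,n}$ and $P^{M,n+1}$ only, so only these two constrained energies are active", and that any locking state must be a $\theta_n$ because "the minimum over $\theta$ is attained (after convexification) only at the $\theta_n$ or their interpolations". None of these facts is available from \eqref{QMn} alone. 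Formula \eqref{QMn} controls the \emph{unconstrained} relaxation $\widehat Q_{\bf m}f(z)$; it says nothing about the constrained quantity $\widehat Q_{\bf m}f(\theta,z)$ for a general rational $\theta=p/q$ with $q$ not dividing $M$. A priori the infimum in Theorem \ref{minpro} could be realised by constrained configurations whose broken-bond structure is not at all periodic of period $M$, and whose energy $\widehat Q_{\bf m}f(\theta,z)$ is not expressible in terms of the $P^{M,n}$. The easy direction (an upper bound $\widehat Q_{\bf m}f(\theta_n,z)\le P^{M,n}(z)$ by $M$-periodic test functions) is all that the Cauchy--Born-type construction gives you for free; the matching lower bound, and the full interpolation statement $\widehat Q_{\bf m}f(\theta,z)=$ convex combination of $P^{M,n}(w_n)$ and $P^{M,n+1}(w_{n+1})$ for $\theta\in[\theta_n,\theta_{n+1}]$, is precisely the content of the paper's Lemma \ref{formula-n-prop}, which is proved via a separate combinatorial argument (Lemma \ref{alphaklemma}) regrouping the energy contributions on clusters of $M$ consecutive sites into an $n$-part and an $(n+1)$-part. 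Your appeal to Proposition \ref{phasefunctionaffine} then becomes circular: it needs $\Theta(z)$ to be a singleton, which you would be asserting rather than proving. In short, you have misdiagnosed the "main obstacle": the single-crossing is the easy part, while the interpolation lemma — establishing that periodic test functions with period a divisor of $M$ capture the constrained relaxation at \emph{all} rational $\theta$, not just at the $\theta_n$ — is where the real work is, and your proposal has no replacement for it.
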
 
\begin{proof}
The proof of {\rm (i)} and {\rm (ii)} will follow from Lemma \ref{formula-n-prop} below, while {\rm (iii)} is obtained by \eqref{QMn}. 
\end{proof}
\begin{remark}\label{rem-shapeM}\rm  Note that if $\theta(z)=\theta_n$ then the value of $\widehat Q_{\bf m}f(z)$ is attained on periodic minimizers. 
The phase function $\theta$ 
can be explicitly written as 
\begin{equation}\label{tetaM}
\theta(z)=\begin{cases}\vspace{2mm}
0 & \hbox{\rm if } \ z\leq s_0^+\\
\displaystyle\theta_n+\frac{1}{M}\frac{z-s_n^+}{s_{n+1}^--s_n^+} &  \hbox{\rm if } \ s_n^+ \leq z \leq s_{n+1}^-\\
\displaystyle\theta_n & \hbox{\rm if } \ s_n^- \leq z \leq s_{n}^+\\
 1 & \hbox{\rm if } \ s_M^- \leq z.  
\end{cases}
\end{equation} 
Moreover, if we write the convex envelope of the minimum of $P^{M,n}$ and $P^{M,n+1}$  
as 
\begin{equation}\label{minpnm} 
\min\big\{P^{M,n},P^{M,n+1}\big\}^{\ast\ast}(z)=\begin{cases}
P^{M,n}(z) &  \hbox{\rm if } \ z \leq s_{n}^+\\
r^{M,n}(z) &  \hbox{\rm if } \ s_n^+ \leq z \leq s_{n+1}^-\\
P^{M,n+1}(z) &  \hbox{\rm if } \  s_{n+1}^-\leq z, 
\end{cases}
\end{equation} 

where $r^{M,n}$ is the interpolating affine function 
\begin{eqnarray*}
r^{M,n}(z)&=&P^{M,n}(s_n^+)+\frac{P^{M,n+1}(s_{n+1}^-)-P^{M,n}(s_n^+)}{s_{n+1}^--s_n^+}(z-s_n^+),
\end{eqnarray*}   
then $\widehat Q_{\bf m}f(z)=r^{M,n}(z)$ if $z\in[s_n^+,s_{n+1}^-]$.   
Note that this characterization of $\widehat Q_{\bf m}f$ holds under assumption \eqref{crescitasotto}, while it may fail if this condition is dropped, as we show in Remark \ref{casem10} below. 
\end{remark}

The main technical point of this section is Lemma \ref{formula-n-prop} giving an explicit formula for the constrained minimizations involving only pairs of successive locking states.  
The proof of this fact relies on the following algebraic lemma. 

\begin{lemma}[an algebraic lemma]\label{alphaklemma} 
Let $n\in[0,M-1]\cap \mathbb N$.  If $\theta\in \big[\frac{n}{M},\frac{n+1}{M}\big]$, then there exist coefficients  $\alpha^n_k$, $k=0,\dots, M$, such that $\alpha^n_k\in [0,I_k]$ for any $k$ 
and 
\begin{equation}\label{alphak}
\begin{cases}
\displaystyle\sum_{k=0}^M\alpha^n_k=\frac{M\theta-n}{M}Nq\\
\displaystyle\sum_{k=0}^Mk\alpha^n_k=(n+1)\frac{M\theta-n}{M}Nq. 
\end{cases}
\end{equation} 
\end{lemma}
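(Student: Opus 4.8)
The plan is to read this lemma purely combinatorially. In the intended application $I_0,\dots,I_M$ are the numbers of length-$M$ blocks of a test function $u\in\mathcal V(Nq;\theta)$ containing exactly $k$ indices $i$ with $u_i-u_{i-1}\in A$, so that $\sum_{k=0}^M I_k=B:=Nq/M$ and $\sum_{k=0}^M kI_k=\theta Nq$; the hypothesis $\theta\in[\frac{n}{M},\frac{n+1}{M}]$ is then exactly $nB\leq\theta Nq\leq(n+1)B$. Set $x:=\frac{M\theta-n}{M}Nq=\theta Nq-nB$, so that $0\leq x\leq B$. Subtracting $(n+1)$ times the first required identity from the second, one sees it suffices to produce reals $\alpha_0,\dots,\alpha_M$ with $0\leq\alpha_k\leq I_k$, $\sum_k\alpha_k=x$ and $\sum_k(k-n-1)\alpha_k=0$: such a choice automatically yields $\sum_k k\alpha_k=(n+1)x$ as in the statement. (If integer $\alpha_k$ were needed one would round, at a cost $O(M)=o(Nq)$ absorbed elsewhere.)

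First I would record the relevant aggregates. Write $L=\{k:k\leq n\}$, $H=\{k:k\geq n+1\}$, and put $B_L=\sum_{k\in L}I_k$, $B_H=\sum_{k\in H}I_k$, $C_L=\sum_{k\in L}(n+1-k)I_k$, $C_H=\sum_{k\in H}(k-n-1)I_k$. Three elementary facts drive everything: (i) $C_L\geq B_L$, since $n+1-k\geq1$ on $L$; (ii) $C_H-C_L=\sum_k(k-n-1)I_k=\theta Nq-(n+1)B\leq0$; and (iii) $x-B=\theta Nq-(n+1)B=C_H-C_L$, which is why $0\leq x\leq B$ is compatible with $C_H\leq C_L$.

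Then I would try the two-parameter ansatz $\alpha_k=pI_k$ for $k\in L$ and $\alpha_k=PI_k$ for $k\in H$, with $p,P\in[0,1]$ to be determined. The two identities become the linear system $pB_L+PB_H=x$ and $PC_H-pC_L=0$. In the generic case $C_L,C_H>0$ (which forces $B_L>0$, hence $B_LC_H+C_LB_H>0$) this has the nonnegative solution $p=\frac{xC_H}{B_LC_H+C_LB_H}$, $P=\frac{xC_L}{B_LC_H+C_LB_H}$; substituting $x=B_L+B_H+C_H-C_L$ one checks that $p\leq1$ is equivalent to $(C_H-C_L)(C_H+B_H)\leq0$ and $P\leq1$ to $(C_H-C_L)(C_L-B_L)\leq0$, both of which hold by (i) and (ii). The degenerate cases are disposed of by hand: if $C_L=0$ then every block lies in phase $n+1$, so $\theta=\frac{n+1}{M}$, $x=B=I_{n+1}$, and $\alpha_{n+1}=I_{n+1}$ (all other $\alpha_k=0$) works; if $C_H=0<C_L$ then $I_k=0$ for $k\geq n+2$, and $\alpha_{n+1}=x$ with all other $\alpha_k=0$ works, the only point being $x\leq I_{n+1}$, which follows from $\theta Nq\leq nB_L+(n+1)I_{n+1}$ together with $nB=nB_L+nI_{n+1}$.

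I expect the genuine difficulty to be organizational rather than analytic: pinning down the dictionary between the $\alpha^n_k$, the block counts $I_k$ and $\theta$, and handling the boundary cases $C_L=0$ and $C_H=0$ cleanly — in particular $n=M-1$, where $C_H\equiv0$ and the decomposition is forced onto fully-filled blocks. Once the aggregates $B_L,B_H,C_L,C_H$ and the inequalities (i)–(iii) are in place, the verification is routine algebra with no sharp estimate involved, so this is really a bookkeeping lemma.
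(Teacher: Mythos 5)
Your argument is correct, and it takes a genuinely different route from the paper's. The paper attacks the existence problem indirectly: it rewrites the two linear constraints as a one-parameter family of hyperplanes
$H_\lambda(\alpha)=\sum_{k}(\lambda+k)\alpha_k-(\lambda+n+1)\frac{M\theta-n}{M}Nq=0$
and shows that for every $\lambda\in\mathbb R$ this hyperplane meets the box $\Pi_{k=0}^M[0,I_k]$, by exhibiting two vertices of the box at which $H_\lambda$ takes opposite signs (the origin and $(I_0,\dots,I_M)$ when $\lambda\notin(-(n+1),-n)$, and the vertex $v_k=0$ for $k\le n$, $v_k=I_k$ for $k>n$ together with the origin otherwise). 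The sufficiency of ``each $H_\lambda=0$ meets the box'' for the affine subspace to meet the box is a separation argument that the paper asserts rather than spells out. Your proof is instead fully constructive: after reducing the constraints to $\sum_k\alpha_k=x$ and $\sum_k(k-n-1)\alpha_k=0$ with $x=\theta Nq-nB$, you solve directly via the two-parameter ansatz $\alpha_k=pI_k$ on $\{k\le n\}$, $\alpha_k=PI_k$ on $\{k\ge n+1\}$, obtaining the explicit coefficients $p=xC_H/(B_LC_H+C_LB_H)$, $P=xC_L/(B_LC_H+C_LB_H)$, whose admissibility $0\le p,P\le 1$ reduces to the two inequalities $C_L\ge B_L$ and $C_H\le C_L$, with the degenerate cases $C_L=0$ and $C_H=0$ (the latter always occurring when $n=M-1$) handled by putting all mass on $\alpha_{n+1}$ and checking $x\le I_{n+1}$. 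What the paper's route buys is brevity and a single displayed hyperplane computation; what yours buys is explicitness and elementary verification with no invocation of a separation theorem, and an explicit solution that one could feed back into the laminate construction in the proof of the interpolation lemma.
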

\begin{proof}
The linear system \eqref{alphak} has infinitely many solutions depending on $M-1$ parameters. 
We have to show that there exists one solution in $\Pi_{k=0}^M[0,I_k]$. 
To this end, it is sufficient to show that the hyperplane given by the equation  
$$H_\lambda(\alpha_0^n, \dots, \alpha_M^n)=
\sum_{k=0}^M (\lambda+k)\alpha^n_k-(\lambda+n+1)\frac{M\theta-n}{M}Nq=0$$
intersects $\Pi_{k=0}^M[0,I_k]$ for any $\lambda\in\mathbb R$, which happens if for any $\lambda\in \mathbb R$ there exist 
two points $v,w\in \mathbb R^{M+1}$ such that 
$H_\lambda(v) \ H_\lambda(w)\leq 0.$ 
Since $n\leq M\theta\leq n+1$ and 
\begin{equation*}
H_\lambda(0,\dots, 0)=-(\lambda+n+1)\frac{M\theta-n}{M}Nq, \quad 
H_\lambda(I_0^n, \dots, I_M^n)=
(\lambda+n)\frac{(n+1-M\theta)}{M}Nq, 
\end{equation*}
we get 
$H_\lambda(0,\dots, 0)\ H_\lambda(I_0^n, \dots, I_M^n) \leq 0$  
if  $\lambda\leq -(n+1) \ \hbox{ or } \lambda \geq -n$. 
 
For the remaining cases, we note that by \eqref{teta} 
$$(n+1)\sum_{k=0}^M(M-k)I_k-(M-(n+1))\sum_{k=0}^MkI_k=\big((n+1)-M\theta\big) Nq.$$
Since $(n+1)(M-k)- (M-(n+1))k=M(n+1-k)\leq 0$ if $k\geq n+1$, 
we get  
$$\sum_{k=0}^n (n+1-k)I_k\geq \frac{(n+1)-M\theta}{M} Nq.$$
If we choose 
$v=(v_0,\dots, v_k)$ with $v_k=0$ if $k\leq n$ and $v_k=I_k$ if $k>n$
we obtain 
\begin{eqnarray*}
H_{-(n+1)}(v)&=&-(n+1)\Big(\sum_{k=0}^MI_k-\sum_{k=0}^nI_k\Big)+\sum_{k=0}^M kI_k-\sum_{k=0}^n kI_k \\
&=&\frac{M\theta-(n+1)}{M}Nq+\sum_{k=0}^n(n+1-k)I_k\geq 0.
\end{eqnarray*}
Noting that 
$$H_{-n}(v)=\frac{M\theta-(n+1)}{M}Nq+\sum_{k=0}^n(n-k)I_k-\frac{M\theta-(n+1)}{M}Nq\geq 0,$$
it follows that $H_{\lambda}(v)\geq 0$ for any $\lambda\in (-(n+1),-n)$. Since $H_{\lambda}(0, \dots, 0)\leq 0$, this concludes the proof of Lemma \ref{alphaklemma}.  
\end{proof}

Now, we state the interpolation lemma.  
\begin{lemma}[interpolation between locking states]\label{formula-n-prop}
Let $\theta\in[\theta_n,\theta_{n+1}]\cap \mathbb Q$, 
with $n$ integer such that $0\leq n<M$, and $\theta_n=\frac{n}{M}$ as above.  
Then the following formula holds: 
\begin{eqnarray}\label{formula-n}\nonumber
&&\widehat Q_{\bf m}f(\theta,z)=\displaystyle\min\Big\{M(\theta_{n+1}-\theta) P^{M,n}(w_n)
+M(\theta-\theta_n) P^{M,n+1}(w_{n+1}): \\
&& \hspace{3cm}
M(\theta_{n+1}-\theta)w_n+M(\theta-\theta_n)w_{n+1}
= z\Big\}. 
\end{eqnarray}
\end{lemma}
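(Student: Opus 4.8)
The plan is to prove Lemma~\ref{formula-n-prop} by establishing the two inequalities separately, using the characterization $\widehat Q_{\bf m}f(\theta,z)=\Phi_{\bf m}f(\theta,z)$ from Proposition~\ref{equalityoffandphi} together with the cluster structure already exploited in the proof of Proposition~\ref{MniPM}. The right-hand side of \eqref{formula-n} is a one-dimensional minimization over a splitting of $z$ into two pieces carrying the two successive locking fractions $\theta_n$ and $\theta_{n+1}$; denote it $R(\theta,z)$. Since $M(\theta_{n+1}-\theta)+M(\theta-\theta_n)=1$, the two weights are a convex combination, so by Proposition~\ref{MniPM} (which gives $\widehat Q_{\bf m}f(\theta_n,\cdot)=P^{M,n}$, after identifying $P^{M,n}=\widehat Q_{\bf m}f(\theta_n,\cdot)$ as indicated just before the lemma) and the convexity of $\widehat Q_{\bf m}f(\theta,z)$ in $(\theta,z)$ from Proposition~\ref{convex-q}, we immediately get $R(\theta,z)\ge \widehat Q_{\bf m}f(\theta,z)$: any admissible splitting realizes the convex combination $(\theta,z)=M(\theta_{n+1}-\theta)(\theta_n,w_n)+M(\theta-\theta_n)(\theta_{n+1},w_{n+1})$. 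So the ``$\le$'' half of \eqref{formula-n} (i.e.\ $\widehat Q_{\bf m}f(\theta,z)\le R(\theta,z)$) is the easy direction, just convexity plus the known values at locking states.

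The substantive direction is the lower bound $\widehat Q_{\bf m}f(\theta,z)\ge R(\theta,z)$. Here I would argue as in the proof of Proposition~\ref{MniPM}: take a test function $u$ admissible for $\widehat Q_{\bf m}f(\theta,z)$ on $[0,kq]$ satisfying the $\theta$-constraint and (by Lemma~\ref{boundary-Q}) the compatible boundary conditions, and decompose the domain into the $M$ arithmetic-progression partitions $P_0,\dots,P_{M-1}$ with step $M$, as before. On each cluster of $M$ consecutive nearest-neighbour increments the energy is bounded below by $P^Mf$ evaluated at the cluster average; but now, because of the phase constraint, each cluster has a \emph{prescribed number} $k\in\{0,\dots,M\}$ of increments lying in $A=[z^\ast,+\infty)$. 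So I would introduce, for each partition, the count $I_k=I_k^n$ of clusters of ``type $k$'' (exactly $k$ increments in $A$), and replace $P^Mf$ by the refined cluster energy $P^{M,k}$ — exactly the family appearing in \eqref{PMn} and already identified with $\widehat Q_{\bf m}f(\theta_k',\cdot)$. The global $\theta$-constraint translates into the two linear relations $\sum_k I_k = Nq$ (up to boundary terms) and $\sum_k k\,I_k = \theta M Nq$, which is precisely the system in \eqref{teta} that Lemma~\ref{alphaklemma} is built to analyze. Then convexity of each $(P^{M,k})^{\ast\ast}$ lets me pass from cluster-by-cluster energies to the cluster averages, and the key combinatorial point — that the optimal distribution of cluster types uses \emph{only} $k=n$ and $k=n+1$ when $\theta\in[\theta_n,\theta_{n+1}]$ — is exactly what Lemma~\ref{alphaklemma} furnishes: it produces feasible coefficients $\alpha_k^n\in[0,I_k]$ supported near $n,n+1$ realizing the split, so that lower-bounding by the two extreme types and taking $k\to\infty$ yields $R(\theta,z)$. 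For the matching upper bound one builds a periodic test function alternating clusters of type $n$ and type $n+1$ in the right proportion, extended by $M$-periodicity, exactly as in the recovery-sequence construction of Proposition~\ref{MniPM}.

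The main obstacle, and the place the proof really lives, is the combinatorial optimization over cluster types: a priori the constrained minimum could be attained by a spread-out distribution of types $k$, and one must show the energy is never decreased by pushing all the ``mass'' onto two adjacent types $n,n+1$. This is where the convexity of $k\mapsto (P^{M,k})^{\ast\ast}(\cdot)$ in the appropriate sense, combined with the two linear constraints, forces extremality on an edge of the feasible polytope — and Lemma~\ref{alphaklemma} is the technical device that makes this rigorous by exhibiting the requisite feasible point on the relevant hyperplane. A secondary but routine nuisance is bookkeeping of the $O(1/k)$ boundary corrections from incomplete clusters and from the finite-range interaction terms across cluster boundaries; these are handled exactly as the term $C_z$ and the $i_\ell=\lceil(k-\ell)/M\rceil$ estimates in the proof of Proposition~\ref{MniPM}, so I would import those estimates verbatim. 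Once the lower bound $\widehat Q_{\bf m}f(\theta,z)\ge R(\theta,z)$ is in hand, combining it with the easy convexity upper bound gives equality, proving \eqref{formula-n}; and then parts (i)–(ii) of Theorem~\ref{structureqm} follow by minimizing $R(\theta,z)$ over $\theta$ and reading off the piecewise-affine structure of $\theta(z)$ from \eqref{minpnm}.
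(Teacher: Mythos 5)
Your upper bound rests on a circular citation. You claim $P^{M,n}=\widehat Q_{\bf m}f(\theta_n,\cdot)$ follows from Proposition~\ref{MniPM} together with an identification ``indicated just before the lemma''. But Proposition~\ref{MniPM} asserts only $\widehat Q_{\bf m}f=(P^Mf)^{**}$, which is the \emph{unconstrained} relaxation and says nothing about the phase-constrained quantity $\widehat Q_{\bf m}f(\theta_n,\cdot)$; and the sentence ``$P^{M,n}(z)=\widehat Q_{\bf m}f(\theta_n,z)$'' preceding Theorem~\ref{structureqm} is a forward statement that Lemma~\ref{formula-n-prop} itself establishes (take $\theta=\theta_n$ in \eqref{formula-n}). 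What your convexity-plus-Proposition~\ref{convex-q} argument actually needs is only the one-sided bound $\widehat Q_{\bf m}f(\theta_n,\cdot)\le P^{M,n}$, and this must be proved directly: use the $M$-periodic test function with $n$ of every $M$ increments in $A$, i.e.\ the construction the paper performs in the recovery-sequence part of its proof. Once that inequality is in hand, your route (convexity of $\widehat Q_{\bf m}f(\theta,z)$ in $(\theta,z)$ applied to the convex combination $(\theta,z)=M(\theta_{n+1}-\theta)(\theta_n,w_n)+M(\theta-\theta_n)(\theta_{n+1},w_{n+1})$) is a legitimate alternative to the paper's piecewise-periodic test function; as written, though, your proposal supplies no independent source for the input it relies on.

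Your description of the lower bound mislocates the combinatorics. You present Lemma~\ref{alphaklemma} as producing coefficients $\alpha_k^n$ ``supported near $n,n+1$'' and characterize the crux as showing ``the optimal distribution of cluster types uses only $k=n$ and $k=n+1$''. Neither matches what the lemma or the proof do. The $\alpha_k^n$ are only constrained to satisfy two linear equations (total mass $\sum_k\alpha_k^n=\frac{M\theta-n}{M}Nq$ and weighted mean of the type index $\sum_k k\alpha_k^n=(n+1)\sum_k\alpha_k^n$) and may be spread over all $k\in\{0,\dots,M\}$. Moreover the paper proves nothing about the shape of an optimal configuration: the lower bound is obtained by splitting each count $I_k$ into $\alpha_k^n$ and $I_k-\alpha_k^n$ (whose respective type-averages equal $n+1$ and $n$ by \eqref{teta} and \eqref{alphak}), and then applying a \emph{second} round of convexity of $\psi_{-1}$, $\psi_1$ and of the square so that each of the two subsums is bounded below by a quantity of the form $(\,\cdot\,)\cdot P^{M,n+1}(z_{n+1})$ and $(\,\cdot\,)\cdot P^{M,n}(z_n)$ for suitable $z_n,z_{n+1}$ averaging to $z$. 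Your heuristic (``push the mass onto an edge of the polytope'') points at why the statement is plausible, but if you tried to carry it out as described you would have no mechanism to dispose of clusters of intermediate type; the algebraic two-stage regrouping through $\alpha_k^n$ is precisely the device that absorbs them. You would need to replace the heuristic with this splitting to close the argument.
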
 
\noindent We mention that in view of growth condition \eqref{crescitasotto} the minimum in \eqref{formula-n} is achieved.

\begin{proof} Up to scaling, we suppose $m_M=1$ for notational convenience. Since Lemma \ref{boundary-Q} holds, for $u\in\mathcal A(Nq;z)$, if $F_1$ is the non-scaled functional given by \eqref{defF1}, we can estimate $F_1(u; [0,Nq])$ as 
\begin{eqnarray*}
F_1(u; [0,Nq])&=& \sum_{i=1}^{Nq} f(z_i)+2m_1\sum_{i=1}^{Nq}(z_i)^2+\sum_{i,j=0,\ |i-j|=M}^{Nq}(u_i-u_j)^2
\\
&\geq& \sum_{j=1}^{Nq}f_{2m_1}(z_j)+\frac{M}{Nq}\sum_{i\in M\mathbb Z\cap[M,Nq]}\Big(\sum_{j=i-M+1}^i z_j\Big)^2 +o(1)_{N\to+\infty}, 
\end{eqnarray*} 
where  and $z_i=u_i-u_{i-1}$. 

It is not restrictive to assume $Nq\in M\mathbb N$.  
For any $i\geq M$ we define 
\begin{eqnarray*}
J^{+}(i)&=&\{j\in\{i-M+1,\dots, i-1,i\}: z_j \geq z^\ast \}\\
J^{-}(i)&=&\{j\in\{i-M+1,\dots, i-1,i\}: z_j < z^\ast \}.  
\end{eqnarray*}
Moreover, for any $k=0,\dots, M$ we set
$$\mathcal I_k=\{i\in M\mathbb Z\cap[M,Nq]: \# J^+(i)=k\},$$ 
and we denote the cardinality of $\mathcal I_k$ by $I_k$.  
Note that 
\begin{equation}\label{teta}
M \sum_{k=0}^MI_k=Nq,  \ \ \sum_{k=0}^M(M-k)I_k=(1-\theta)Nq \ \ \hbox{ and } \ \ 
\sum_{k=0}^MkI_k=\theta Nq.
\end{equation}
Let $\psi_{-1}$ and $\psi_1$ 
denote the restrictions of $f_{2m_1}$ 
to $(-\infty,z^\ast)$ and $
[z^\ast, +\infty)$ respectively. Then, 
by separating the contributions in each $\mathcal I_k$, thanks to the convexity of $\psi_{-1}$, $\psi_1$ and of the square we have 
\begin{eqnarray}\label{disikk}\nonumber
&&\hspace{-5mm}\sum_{j=1}^{Nq}f_{2m_1}(z_j)+\frac{M}{Nq}\sum_{i\in M\mathbb Z\cap[M,Nq]}\Big(\sum_{j=i-M+1}^i z_j\Big)^2\\
&&=\sum_{k=0}^{M} \nonumber
\sum_{i\in \mathcal I_k}\Big(\sum_{j\in J^-(i)} \psi_{-1}(z_j)+\sum_{j\in J^{+}(i)} \psi_1(z_j) \Big) 
+ M \sum_{k=0}^{M}
\sum_{i\in \mathcal I_k}\Big( \sum_{j\in J^-(i)} z_j+
\sum_{j\in J^{+}(i)} z_j \Big)^2\Big)
\\
&&\geq
\sum_{k=0}^{M} 
I_k\Big((M-k)( \psi_{-1}(w_k^-)+k \psi_1(w_k^+)+ M \big((M-k)w_{k}^-+k w_k^+\big)^2\Big)
\end{eqnarray}
where $w_M^-=w_0^+=0$ and 
\begin{eqnarray*}
&&w_k^-=\frac{1}{(M-k) I_k}\sum_{i\in \mathcal I_k}\sum_{j\in J^-(i)}z_j, \ \ \ \ 
w_k^+=\frac{1}{k I_k}\sum_{i\in \mathcal I_k}\sum_{j\in J^+(i)}z_j
\end{eqnarray*} 
otherwise. 

We now may conclude the proof of the lower bound by applying Lemma \ref{alphaklemma} to \eqref{disikk}, regrouping the terms therein so as to compare that expression with $P^{M,n}$.
Noting that 
$$\sum_{k=0}^M(M-k)\alpha_k^n=\frac{(M\theta-n)(M-(n+1))}{M}Nq,$$ 
we get by convexity that  
\begin{eqnarray*}
&&\hspace{-15mm}\sum_{k=0}^{M} 
\alpha_k^n\Big((M-k)( \psi_{-1}(w_k^-)+k \psi_1(w_k^+)+ M \big((M-k)w_{k}^-+k w_k^+\big)^2\Big)\\
&\geq& \Big(\sum_{k=0}^{M} (M-k)\alpha_k^n\Big) \psi_{-1} (z_{n+1}^-)+
\Big(\sum_{k=0}^{M}k\alpha_k^n\Big) \psi_1 (z_{n+1}^+)\\
&&+M\Big(\sum_{k=0}^{M}\alpha_k^n\Big) 
\Big(\frac{\big(\sum_{k=0}^{M} (M-k)\alpha_k^n\big) z_{n+1}^-
+\big(\sum_{k=0}^{M} k\alpha_k^n\big) z_{n+1}^+}{\sum_{k=0}^{M}\alpha_k^n}\Big)^2\\
&\geq& \frac{(M\theta-n)}{M}Nq 
\Big((M-(n+1)) \psi_{-1} (z_{n+1}^-) + 
(n+1) \psi_1 (z_{n+1}^+)\\
&&+M \big((M-(n+1)) z_{n+1}^- + (n+1)z_{n+1}^+\big)^2\Big),
\end{eqnarray*}
where 
$$z_{n+1}^-=\frac{\sum_{k=0}^{M}(M-k) \alpha_k^n w_k^-}{\sum_{k=0}^{M} (M-k)\alpha_k^n}, 
\quad z_{n+1}^+=\frac{\sum_{k=0}^{M}k \alpha_k^n w_k^+}{\sum_{k=0}^{M} k\alpha_k^n}.$$
Hence, 
\begin{eqnarray*}
&&\hspace{-15mm}\sum_{k=0}^{M} 
\alpha_k^n\Big((M-k)( \psi_{-1}(w_k^-)+k \psi_1(w_k^+)+ M \big((M-k)w_{k}^-+k w_k^+\big)^2\Big)\\
&\geq& (M\theta-n)Nq P^{M,n+1}\Big(\Big(1-\frac{n+1}{M}\Big)z_{n+1}^-+\frac{n+1}{M}z_{n+1}^+\Big). 
\end{eqnarray*}
Correspondingly we obtain 
\begin{eqnarray*}
&&\hspace{-15mm}\sum_{k=0}^{M} 
(I_k-\alpha_k^n)\Big((M-k)( \psi_{-1}(w_k^-)+k \psi_1(w_k^+)+ M \big((M-k)w_{k}^-+k w_k^+\big)^2\Big)\\
&\geq& 
(n+1-M\theta)Nq P^{M,n}\Big(\Big(1-\frac{n}{M}\Big)z_n^-+\frac{n}{M} z_n^+\Big), 
\end{eqnarray*} 
where $$z_{n}^-=\frac{\sum_{k=0}^{M}(M-k) (I_k-\alpha_k^n) w_k^-}{\sum_{k=0}^{M} (M-k)(I_k-\alpha_k^n)}, 
\quad z_{n}^+=\frac{\sum_{k=0}^{M}k(I_k-\alpha_k^n) w_k^+}{\sum_{k=0}^{M} k(I_k-\alpha_k^n)}.$$
Noting that 
\begin{eqnarray*}
&&(n+1-M\theta)\Big((M-n)z_n^-+n z_n^+\Big)\\
&&\hspace{2cm}+(M\theta-n)\Big((M-(n+1))z_{n+1}^-+(n+1)z_{n+1}^+\Big)=
Mz, 
\end{eqnarray*}
for $\theta\in\big[\frac{n}{M},\frac{n+1}{M}\big]$ we then have, up to a negligible term,  
\begin{eqnarray*}
F_1(u; [0,Nq])&\geq&
\min\Big\{(n+1-M\theta) P^{M,n}(w_n)+(M\theta-n) P^{M,n+1}(w_{n+1}): \\ 
&&\hspace{1cm} (n+1-M\theta)w_n+(M\theta-n)w_{n+1} 
= z\Big\}
\end{eqnarray*}
which concludes the proof of the lower bound in \eqref{formula-n}. 

\bigskip 
 \begin{figure}[h!]
\centerline{\includegraphics[width=0.7\textwidth]{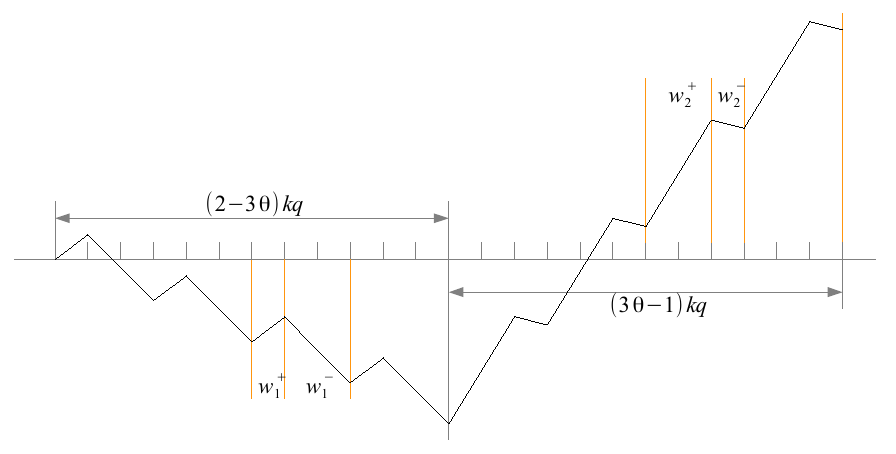}}
\caption{construction of the upper bound for $M=3$ and $n=1$.}
\label{figure_upperboundMneigh}
\end{figure} 
As for the upper bound, let $\theta=\frac{p}{q}\in[\frac{n}{M},\frac{n+1}{M}]$,  
$z\in\mathbb R$ be fixed and $(w_n,w_{n+1})$ be a minimizer of \eqref{formula-n}.
For all $k\geq 1$ we define a test function $u\colon[0,kMq]\cap\mathbb Z\to\mathbb R$ constructed as follows. 
Let $w_n^\pm$ be a minimizer of the problem defining $P^{M,n}(w_n)$ in \eqref{PMn}, and 
let $w_{n+1}^\pm$ be a minimizer of the corresponding problem defining $P^{M,n+1}(w_{n+1})$. 
We set $u_0=0$, and 
\begin{eqnarray*}
u_i-u_{i-1}&=&\begin{cases}
w_n^+ & \hbox{ if } i\in\{1,\dots, n\} \hbox{ mod } M \\
w_n^- & \hbox{ if } i\in\{n+1,\dots, M\} \hbox{ mod } M
\end{cases} \ \ \ \ \ \hbox{for }\ i\leq kMq (\theta_{n+1}-\theta)\\ 
u_i-u_{i-1}&=&\begin{cases}
w_{n+1}^+ & \hbox{ if } i\in\{1,\dots, n+1\} \hbox{ mod } M\\
w_{n+1}^- & \hbox{ if } i\in\{n+1,\dots, M\} \hbox{ mod } M
\end{cases} \ \ \, \hbox{for } i>kMq (\theta_{n+1}-\theta) 
\end{eqnarray*}
(see Fig.~\ref{figure_upperboundMneigh}). Note that 
$u(kMq)=kMqz$ and $u\in\mathcal V(kMq;\theta)$, so that $u$ is an admissible test function for the computation of $\widehat Q_{\bf m}f(\theta,z)$, and the upper bound follows. 
\end{proof}

\begin{remark}[Non-differentiability at locking states]\label{nondiff}\rm 
From formula \eqref{formula-n} we deduce that for all $z$ the function 
$\theta\mapsto Q_{\bf m}f(\theta,z)$ 
is differentiable at any $\theta\not\in\{\theta_1,\dots,\theta_{M-1}\}$, whereas instead 
$$\frac{\partial(Q_{\bf m}f)}{\partial\theta}(\theta_n^+,z)\neq \frac{\partial(Q_{\bf m}f)}{\partial\theta}(\theta_n^-,z)$$ except possibly for some critical values of $z$. Indeed, in the computation of the left-hand side derivative of $Q_{\bf m}f$ at $\theta=\theta_n$ we use $P^{M,n-1}$ while for the right-hand side we use $P^{M,n+1}$, whose values are generically different at the minimum points of \eqref{formula-n}.   
\end{remark}

\subsection{Computation of $Q_{\bf m}f$ for prototypical non-convex energies} 
We now apply Theorem \ref{structureqm} to some prototypical $f$; namely, truncated quadratic potential and double-well potential. 

\subsubsection{Truncated quadratic potential}\label{fracture1M} 
We consider a special case of the truncated convex potentials introduced in Example \ref{trconintro} with $\tilde f(z)=z^2$ and $z^\ast=1$; that is, let $f\colon\mathbb R\to\mathbb R$ be defined by 
\begin{equation}\label{frattura}
f(z)=\begin{cases} 
 z^2 & \hbox{ if } z\leq 1\\
1 & \hbox{ if } z > 1, 
 \end{cases}
 \end{equation}
and let $A=[1,+\infty)$. Note the growth assumption \eqref{crescitasotto} implies that $m_1>0$.

In this case, we have 
\begin{equation}\label{qfM-1} 
Q_{\bf m}f(z)=\begin{cases}
\displaystyle z^2& \hbox{\rm if } \ z\leq s^+_0\\
\displaystyle r^{M,n}(z)-2(m_1+m_M M^2)z^2 &  \hbox{\rm if } \ s_n^+ \leq z \leq s_{n+1}^-
\\
\displaystyle\frac{2m_1(1-\theta_n)}{2m_1+\theta_n}z^2+\theta_n& \hbox{\rm if } \ 
s_n^- \leq z \leq s_{n}^+
\\
\displaystyle 1 & \hbox{\rm if } \ s_M^- \leq z,   
\end{cases}
\end{equation} 
where the points $s_n^+$ and $s_n^-$ in Theorem \ref{structureqm} 
are  
\begin{equation}\label{tnsn}
s_n^{\pm}=s_n^{\pm}(m_1,m_M)
=\frac{2m_1+\theta_n}{\sqrt{2m_1(2m_1+1)}}\ \sqrt{\frac{m_1(2m_1+1)+m_M M^2(2m_1+\theta_{n})\pm m_M M}{m_1(2m_1+1)+m_M M^2(2m_1+\theta_{n})}} 
\end{equation}
and 
$r^{M,n}$ is the affine interpolating function in Remark \ref{rem-shapeM}. 
The formula for $Q_{\bf m}f$ is obtained by explicitly computing the functions $P^{M,n}(z)$ (see Appendix B). 

\begin{figure}[h!]
\centerline{\includegraphics[width=1\textwidth]{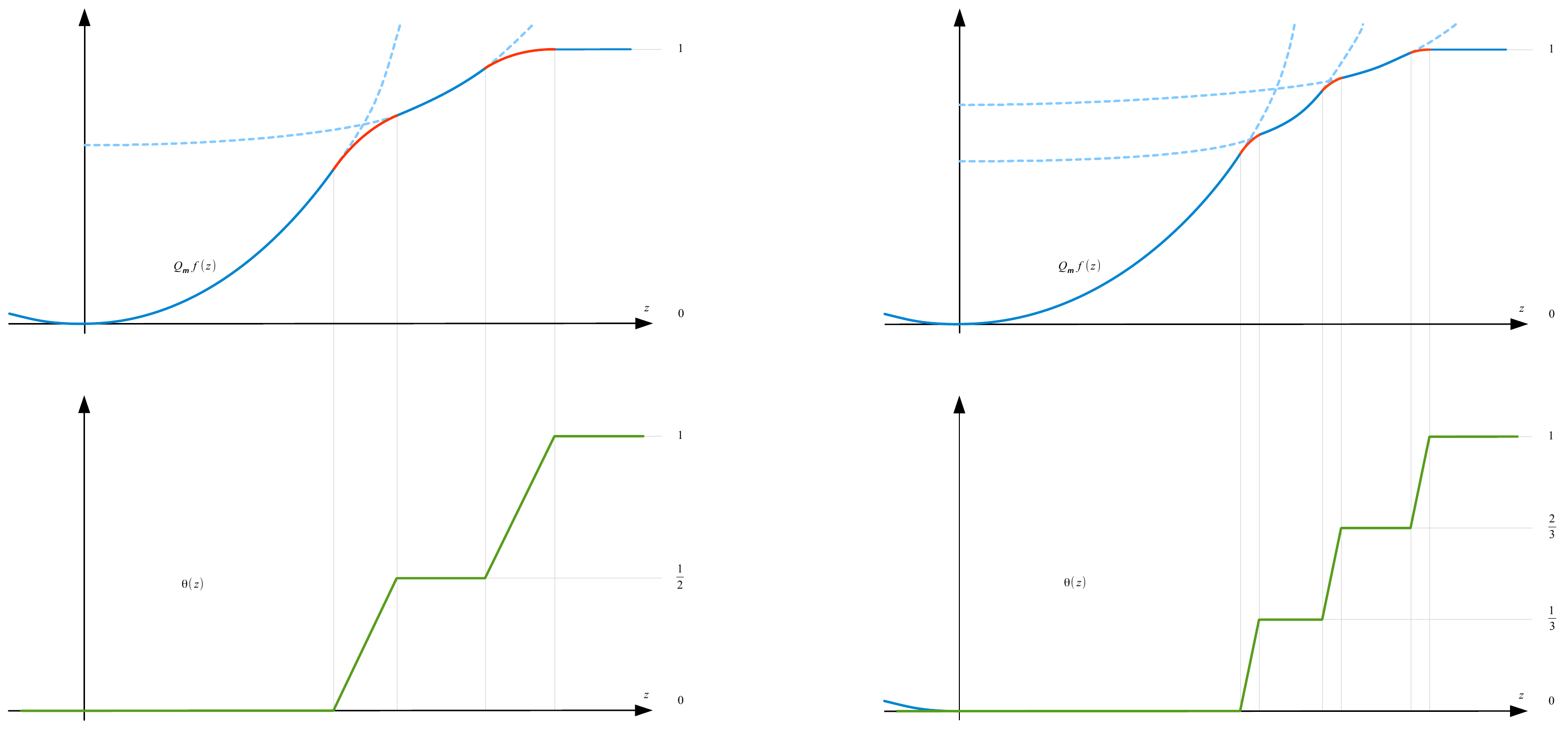}}
\caption{$Q_{\bf m}f(z)$ and $\theta(z)$ in the cases $M=2$ (a) and 
$M=3$ (b).}
\label{Q23frattura}
\end{figure} 

In Figure \ref{Q23frattura} (a)--(b), we show the structure of the functions  $Q_{\bf m}f(z)$ and $\theta(z)$ in the cases $M=2$ and $M=3$, respectively. Note that in the first case $\theta_1=\frac{1}{2}$ corresponds to periodic minimizers of period $2$ and in the second case  $\theta_1=\frac{1}{3}$ and $\theta_2=\frac{2}{3}$ correspond to the two possible periodic minimizers of period $3$. In the affine regions, we have mixtures of two periodic solutions, corresponding to neighbouring locking states. 

\begin{remark}[Degenerate case with $m_1$=0]\label{casem10}\rm 
The computation of $Q_{\bf m}f$ for the truncated quadratic potential $f$ can be performed also in the degenerate case where the growth hypothesis \eqref{crescitasotto} does not hold; that is, supposing $m_1=0$. Note that in this case there is no coercivity on the nearest-neighbour interactions. 

\begin{figure}[h!]
\centerline{\includegraphics[width=.5\textwidth]{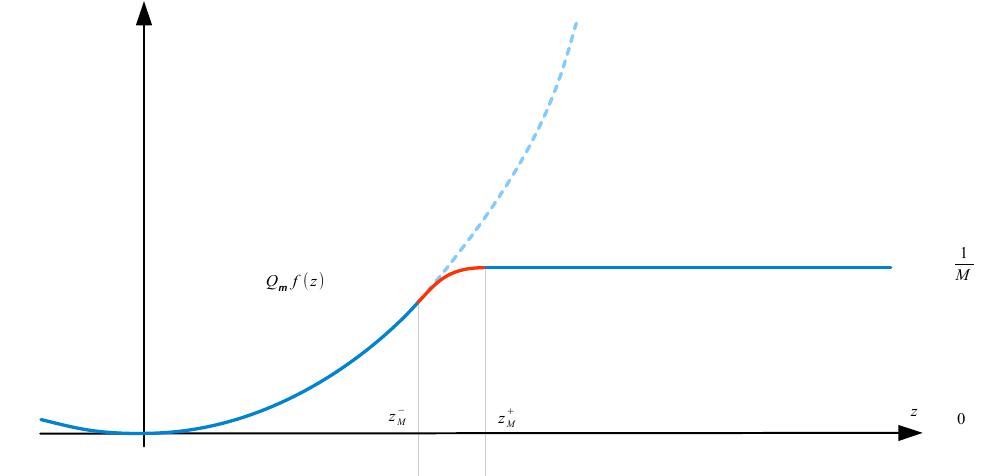}
\includegraphics[width=.5\textwidth]{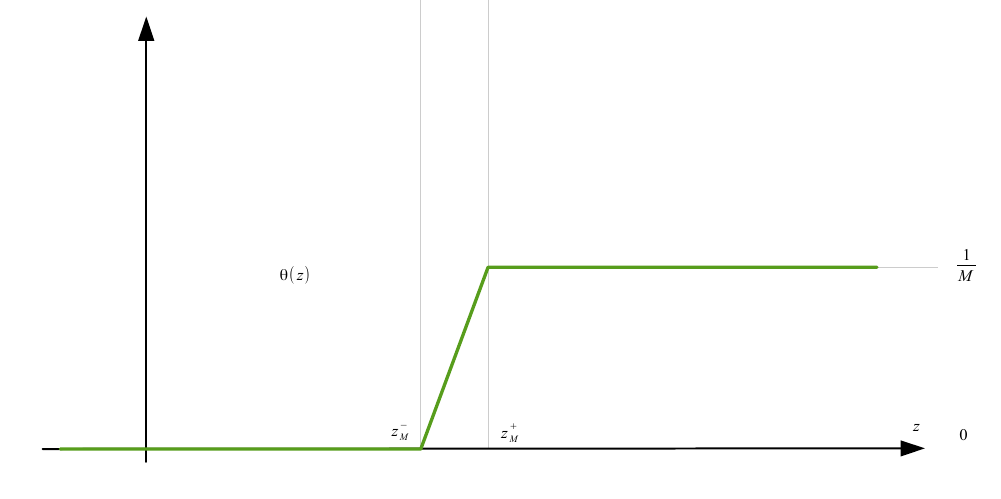}}
\caption{$Q_{\bf m}f$ and $\theta$ in a degenerate case.}
\label{casodeg}
\end{figure} 

The construction in Theorem \ref{structureqm} becomes degenerate, and 
we obtain the formula 
\begin{equation}\label{qfMm10} 
Q_{\bf m}f(z)=\begin{cases}
\displaystyle z^2&\displaystyle \hbox{\rm if } \ z\leq z_M^-
\\
\displaystyle 2\sqrt{2m_MM(1+2m_MM^2)}z-2m_MM -2m_MM^2 z^2 &\displaystyle  \hbox{\rm if } \ 
z_M^-\leq z \leq 
z_M^+
\\
\displaystyle\frac{1}{M}&\displaystyle \hbox{\rm if } \ 
z_M^+\leq z, 
\end{cases}
\end{equation} 
where 
$$z_M^-=\sqrt{\frac{2m_MM}{1+2m_MM^2}} \ \ \ \hbox{\rm and }\ \  z_M^+=\sqrt{\frac{1+2m_MM^2}{2m_MM^3}}.$$
The corresponding phase function is then given by $\theta(z)=0$ if $z\leq z_M^-$, $\theta(z)=\frac{1}{M}$ if $z\geq 
z_M^+$ and affine otherwise, so that the locking states are $\theta=0$ and $\theta=\frac{1}{M}$. 
Hence, $Q_{\bf m}f(z)$ is obtained as the convex envelope of the minimum of $P^{M,1}(z)$ and $P^{M,0}(z)$ only.

As for the description of $\theta$ as in \eqref{tetaM}, note that 
$$
\lim_{m_1\to 0}s_0^+(m_1,m_M)=z_M^-
,\ \ \  \lim_{m_1\to 0}s_1^-(m_1,m_M)=z_M^+,
$$
while we have that as $m_1\to 0$ then $s_n^+(m_1,m_M)\to +\infty$ for any $n\geq 1$ and $s_n^-(m_1,m_M)\to +\infty$ for any $n\geq 2$. 
This corresponds to the fact that the sets of $z$ where $\theta(z)>1/M$ tend to $+\infty$ as $m_1\to0$.
In Fig.~\ref{casodeg} we picture $Q_{\bf m}f$ and $\theta$.
\end{remark}

\begin{remark}[Asymptotic analysis as $M\to+\infty$]\label{mfracturerem}\rm 
In this remark we highlight the dependence of $\theta=\theta^M$ and $Q_{\bf m}f=Q^Mf$ on $M$.  
We show that the limit of the functions $\theta^M$ as $M\to+\infty$ 
is the phase function of $f$ when the only not vanishing coefficient is $m_1$, and correspondingly for $Q^Mf(z)$.  
\begin{figure}[h!]
\centerline{\includegraphics[width=1\textwidth]{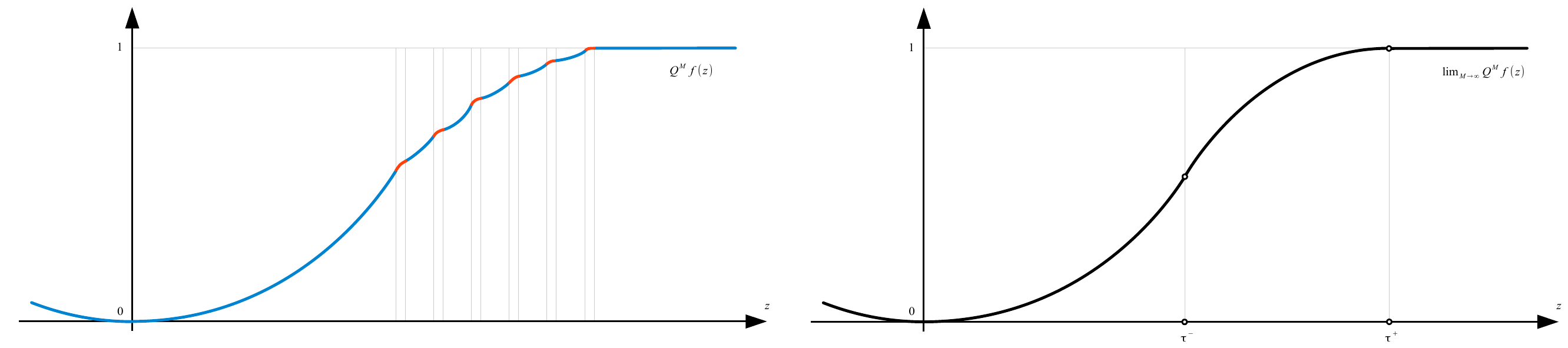}}
\caption{Graph of $Q^Mf(z)$ (for $M=6$) and of the limit function.}
\label{QMfrattura}
\end{figure} 

Indeed, the following estimates hold 
\begin{eqnarray*} 
&&s_n^+\leq \frac{n+2 m_1M+\frac{1}{2}}{\sqrt{2 m_1(1+2 m_1)} M}=:\widetilde s_n^+, \ \ \ 
s_n^-\geq \frac{n+2m_1M-1}{\sqrt{2m_1(1+2m_1)} M}=:\widetilde s_n^-, 
\end{eqnarray*}
so that we can define two piecewise-constant functions by setting
\begin{equation*}
\overline \theta^{M}(z)=
\begin{cases}
0 & \hbox{\rm if }\ z\leq  \widetilde s_0^+\\
\theta^{M}(s_{n-1}^+) & \hbox{\rm if }\ z\in (\widetilde s_{n-1}^+,\widetilde s_n^+]
\\
1 & \hbox{\rm if }\ \widetilde s_{M}^+< z 
\end{cases}\ \ \ \hbox{\rm and }\ \ \ 
\underline \theta^{M}(z)=\begin{cases}
0 & \hbox{\rm if }\ z\leq  \widetilde s_0^-\\
\theta^{M}(s_{n}^-) & \hbox{\rm if }\ z\in (\widetilde s_{n-1}^-,\widetilde s_n^-]
\\
1 & \hbox{\rm if }\ \widetilde s_{M}^-< z, 
\end{cases}
\end{equation*}
obtaining that $\underline \theta^{M}(z)\leq \theta^M(z)\leq\overline \theta^{M}(z)$. 
The claim follows noting that  
$$
\lim_{M\to+\infty}\overline \theta^{M}(z)=
\begin{cases} 
0 & \hbox{\rm if } \ z\leq \sqrt{\frac{2m_1}{1+2m_1}}
\\
 2m_1\Big(z
\sqrt{\frac{1+2m_1}{2m_1}}-1\Big)& \hbox{\rm if } \ 
\sqrt{\frac{2m_1}{1+2m_1}}
\leq z \leq \sqrt{\frac{1+2m_1}{2m_1}}
\\
1 &  \hbox{\rm if } \ 
\sqrt{\frac{1+2m_1}{2m_1}}
\leq z, 
\end{cases}$$ 
and the same for $\underline \theta^{M}(z)$. 
Correspondingly 
$$\lim_{M\to+\infty}Q^Mf(z)=\begin{cases} 
 z^2 & \hbox{\rm if } \ z\leq \sqrt{\frac{2m_1}{1+2m_1}}\\
-2m_1\Big(z^2-2z\sqrt{\frac{1+2m_1}{2m_1}}+1\Big)&\hbox{\rm if } \ 
\sqrt{\frac{2m_1}{1+2m_1}}\leq z \leq \sqrt{\frac{1+2m_1}{2m_1}}\\
 1 &\hbox{\rm if } \ \sqrt{\frac{1+2m_1}{2m_1}}\leq z  
\end{cases}$$
(see Figure \ref{QMfrattura}). In particular, we note that 
$$\lim_{M\to+\infty}Q^Mf(z)=(f_{2m_1})^{\ast\ast}(z)-2m_1z^2=Q_{\bf m^\prime}f(z),$$
where ${\bf m^\prime}=\{m_1, 0,\dots\}$. 
\end{remark}

\begin{figure}[h!]
\centerline{\includegraphics[width=0.6\textwidth]{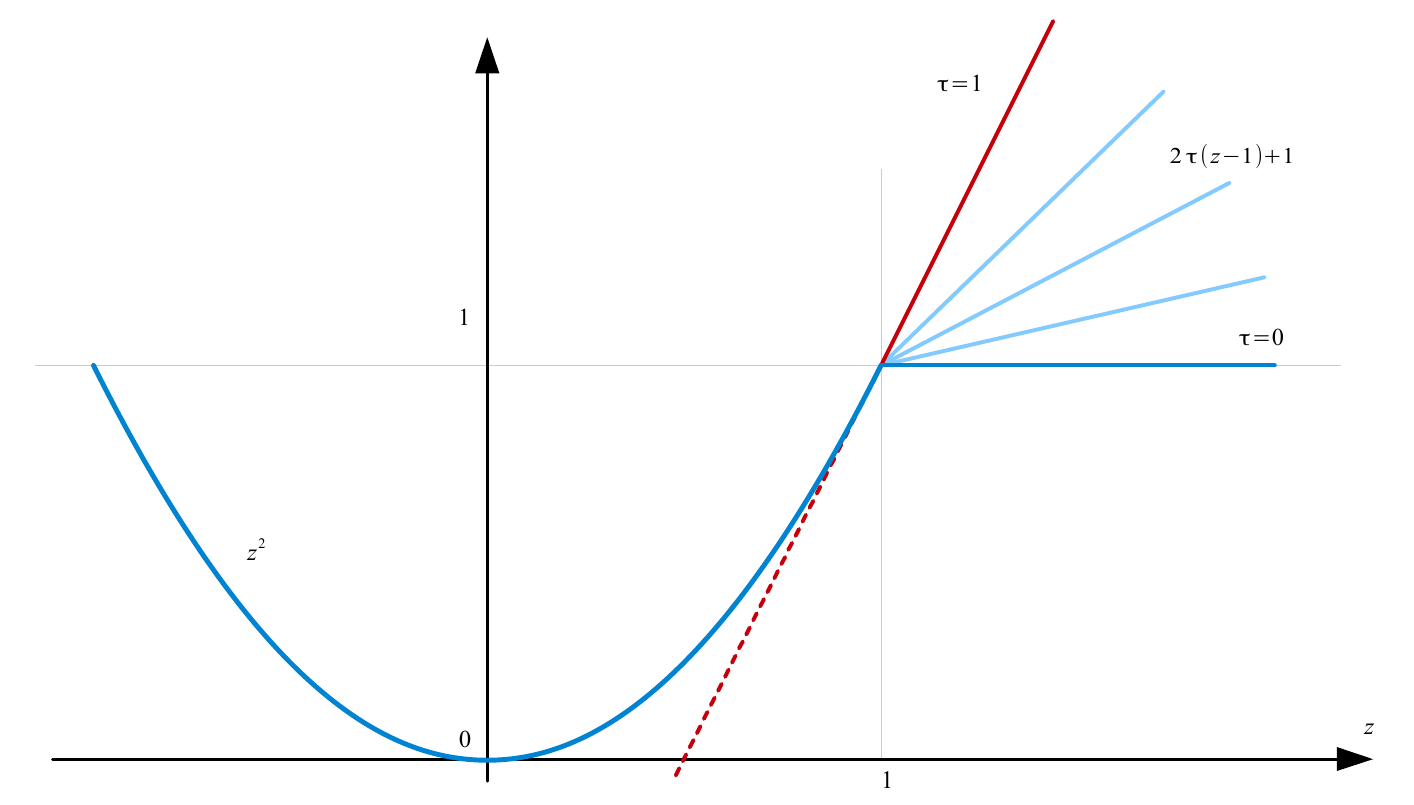}}
\caption{example of convex-affine non-convex potentials.}
\label{figure_convexaffine}
\end{figure} 
\begin{example}[convex-affine potentials as perturbations of truncated potentials]\label{plast-mneigh}\rm 
We con\-si\-der the functions $\ell^\tau$ introduced in \eqref{quadraticalfa} in the non-convex case $0\leq\tau<1$, as pictured in Figure \ref{figure_convexaffine}, with nearest and next-to-nearest neighbour interactions; that is, with $M=2$. To simplify the computations, we fix $m_1=\frac{1}{2}$ and $m_2=\frac{1}{4}$.  
The computation of $Q_{\bf m}\ell^\tau(z)$ involves the values $Q_{\bf m} \ell^\tau(\theta,z)$ in the three locking states $\theta_0=0,$ $\theta_1=\frac{1}{2}$ and $\theta_2=1$; more precisely, it is sufficient to consider  $Q_{\bf m}\ell^\tau(0,z)=\ell^\tau(z)$ for $z\leq 1$, $Q_{\bf m}\ell^\tau(1,z)=\ell^\tau(z)$ for $z\geq 1$ and 
 
$$
Q_{\bf m}\ell^\tau\Big(\frac{1}{2},z\Big)=\frac{1}{3}z^2+\frac{4\tau}{3}z+\frac{3-6\tau-\tau^2}{6}
$$
for $\frac{3}{4}\leq z\leq \frac{3}{2}$. 
Hence 
$$ 
Q_{\bf m}\ell^\tau(z)=\begin{cases}
Q_{\bf m}\ell^\tau(0,z) & \hbox{\rm if } z\leq s^{\tau,+}_0\\
r^\tau_{1}(z)-3z^2& \hbox{\rm if } s^{\tau,+}_0\leq z\leq s^{\tau,-}_1\\
Q_{\bf m}\ell^\tau(\frac{1}{2},z)& \hbox{\rm if } s^{\tau,-}_1\leq z\leq s^{\tau,+}_1\\
r^\tau_{2}(z)-3z^2& \hbox{\rm if } s^{\tau,+}_1\leq z\leq s^{\tau,-}_2\\
Q_{\bf m}\ell^\tau(1,z) & \hbox{\rm if } z\geq s^{\tau,-}_2
\end{cases} 
$$
where 
$r^\tau_1(z)$ is the common tangent (in $s^{\tau,+}_0$ and $s^{\tau,-}_1$) to the parabolas $\widehat Q_{\bf m}\ell^\tau(0,z)$ and $\widehat Q_{\bf m}\ell^\tau(\frac{1}{2},z)$, and correspondingly  
$r^\tau_2(z)$ is the common tangent (in $s^{\tau,+}_1$ and $s^{\tau,-}_2$) to the parabolas $\widehat Q_{\bf m}\ell^\tau(\frac{1}{2},z)$ and $\widehat Q_{\bf m}\ell^\tau(1,z)$. 
\begin{figure}[h!]
\centerline{\includegraphics[width=.9\textwidth]{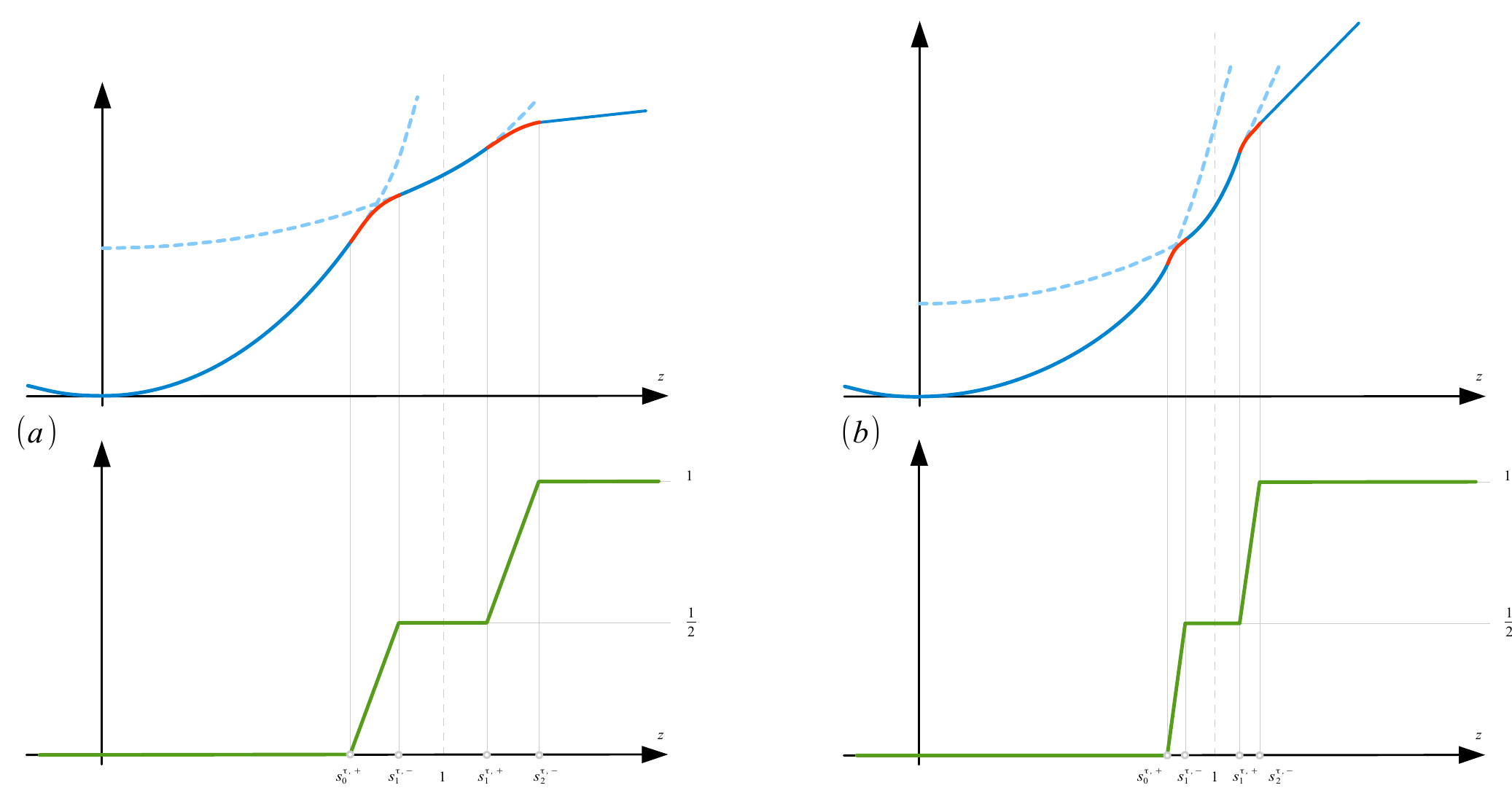}}
\caption{$Q_{\bf m}\ell^\tau$ and corresponding phase functions for increasing values of $\tau\in(0,1)$.}
\label{figure_non-convexaffine}
\end{figure} 

In Fig.~\ref{figure_non-convexaffine} we represent $Q_{\bf m}\ell^\tau$ for two different values of $\tau$, also showing the three energies $\widehat Q_{\bf m}\ell^\tau(\theta,z)$ when $\theta\in \{0,\frac12,1\}$, and the corresponding phase function $\theta$. The value of $\tau$ in (b) is larger than that in (a).   
Note in particular that if $\tau\to 1$ then $s_2^{\tau,-}-s_0^{\tau,+}\to 0$; that is, the locking state $\theta=\frac{1}{2}$ progressively disappears, and we recover the convex case (see Example \ref{plasticity-ex2}), while for $\tau=0$ we recover the case of the truncated quadratic potential with $M=2$. 
\end{example}

\subsubsection{Double-well bi-quadratic potential}\label{doublewell1M}
Let $f\colon\mathbb R\to\mathbb R$ be defined by $f(z)=(1-|z|)^2$,
and let $A=[0,+\infty)$. By explicitly computing the functions $P^{M,n}$ (see Appendix B), 
we obtain for $Q_{\bf m}f(z)$ the formula  
\begin{equation*} 
Q_{\bf m}f(z)=\begin{cases}
(1+z)^2& \hbox{\rm if } \ z\leq s_0^+\\
r^{M,n}(z)-2(m_1+m_M M^2) z^2 &  \hbox{\rm if } \ s_n^+ \leq z \leq s_{n+1}^-
\\
\displaystyle z^2+2(1-2\theta_n)z+1-\frac{4\theta_n(1-\theta_n)}{1+2m_1}& \hbox{\rm if } \ s_n^- \leq z \leq s_{n}^+
\\ 
(1-z)^2 & \hbox{\rm if } \ s_M^- \leq z, 
\end{cases}
\end{equation*} 
where 
$$
s_n^\pm=s_n^\pm(m_1,m_M)=\frac{2\theta_n-1}{1+2m_1}\pm 
\frac{2m_M M}{(1+2m_1)(1+2m_1+2m_M M^2)}
$$
and 
$r^{M,n}$ is the interpolating affine function given in Remark \ref{rem-shapeM}.

\smallskip

\begin{figure}[h!]
\centerline{\includegraphics[width=1\textwidth]{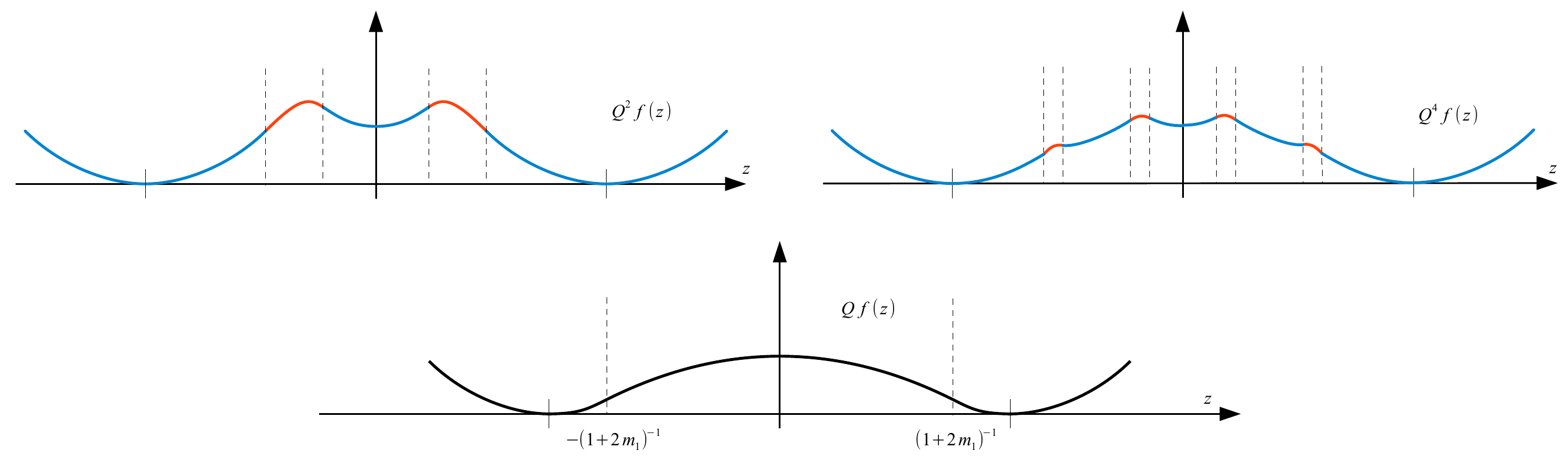}}
\caption{The function $z\mapsto Q^Mf(z)$ for different values of $M$ and the limit function.}
\label{QMphase}
\end{figure} 
\begin{remark}[Asymptotic analysis as $M\to+\infty$]\rm
As in Remark \ref{mfracturerem}, we highlight the dependence on $M$ by writing $\theta(z)=\theta^M(z)$ and $Q_{\bf m}f(z)=Q^Mf(z)$. 
We show that also in this case the limit of  $\theta^M(z)$ as $M\to+\infty$ 
is the phase function of $f$ when the only not vanishing coefficient is $m_1$, and correspondingly for  $Q^Mf(z)$. 
Indeed, since the distribution of $s_n^+$ and $s_n^-$ is uniform, 
we can directly deduce that 
$$\lim_{M\to+\infty}\theta^{M}(z)=\begin{cases} 
0 & 
\hbox{\rm if } \ z\leq -\frac{1}{1+2m_1} \\
\frac{(1+2m_1) z+1}{2}& \hbox{\rm if } \ 
|z| \leq 
\frac{1}{1+2m_1}\\
1 & \hbox{\rm if } \ z\geq \frac{1}{1+2m_1}.
\end{cases}$$
Correspondingly 
$$\lim_{M\to+\infty}Q^Mf(z)=\begin{cases} 
(1+z)^2 & \hbox{\rm if } \ z\leq -\frac{1}{1+2m_1}\\
-2m_1z^2+\frac{2m_1}{1+2m_1}& \hbox{\rm if } \ 
|z| \leq \frac{1}{1+2m_1}\\
(1-z)^2 & \hbox{\rm if } \ z\geq \frac{1}{1+2m_1}  
\end{cases}$$
(see Figure \ref{QMphase}). Again, we note that 
$\lim\limits_{M\to+\infty}Q^Mf(z)=Q_{\bf m^\prime}f(z)$, 
where ${\bf m^\prime}=\{m_1, 0,\dots\}$. 
\end{remark}

\subsubsection{Analysis of $Q_{\bf m}f(\theta,z)$} \label{regfrac}
\begin{figure}[h!]
\centerline{\includegraphics[width=1\textwidth]{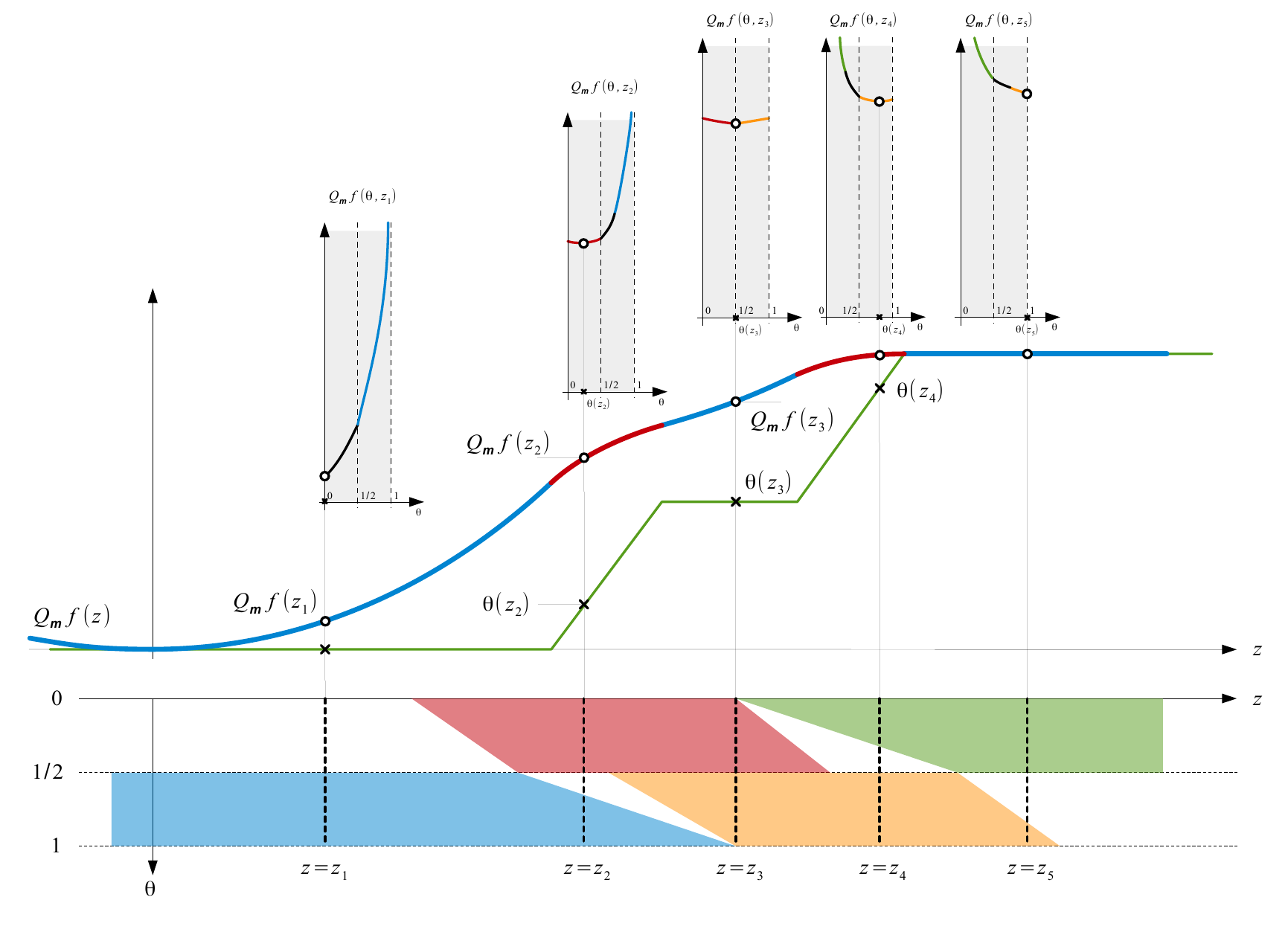}}
\caption{analysis of $\theta\mapsto Q_{\bf m}f(\theta,z)$ for different values of $z$ 
in the truncated quadratic case.}
\label{figure_domainfracture}
\end{figure} 
Examining \eqref{formula-n}, which gives the values of $Q_{\bf m}f(\theta,z)$ as interpolations between neighbouring locking states, we note that $Q_{\bf m}f$ is given by different formulas 
in different regions of the plane ($\theta, z$). 
We briefly examine some feature of this dependence in the simplest meaningful case $M=2$ (see also Fig. \ref{Q23frattura}(a) and Fig. \ref{QMphase} for a comparison).  

In Figures \ref{figure_domainfracture} (truncated quadratic potential) 
and \ref{dom-phase} (double-well potential),  
we highlight 
zones with qualitatively different behaviour, distinguished by colouring.  
In the same pictures, the graphs of $\theta\mapsto Q_{\bf m}f(\theta,z)$ are shown for some values of $z$ in the regions of qualitatively different behaviour. Note that for any fixed $z$ the function 
 $\theta\mapsto Q_{\bf m}f(\theta,z)$ 
is differentiable everywhere  (including the points  where there is a change of the analytical expression), 
except for the point corresponding to the locking state $\theta_1=\frac{1}{2}$, where the left and right derivative are not equal. 

\begin{figure}[h!]
\centerline{\includegraphics[width=1\textwidth]{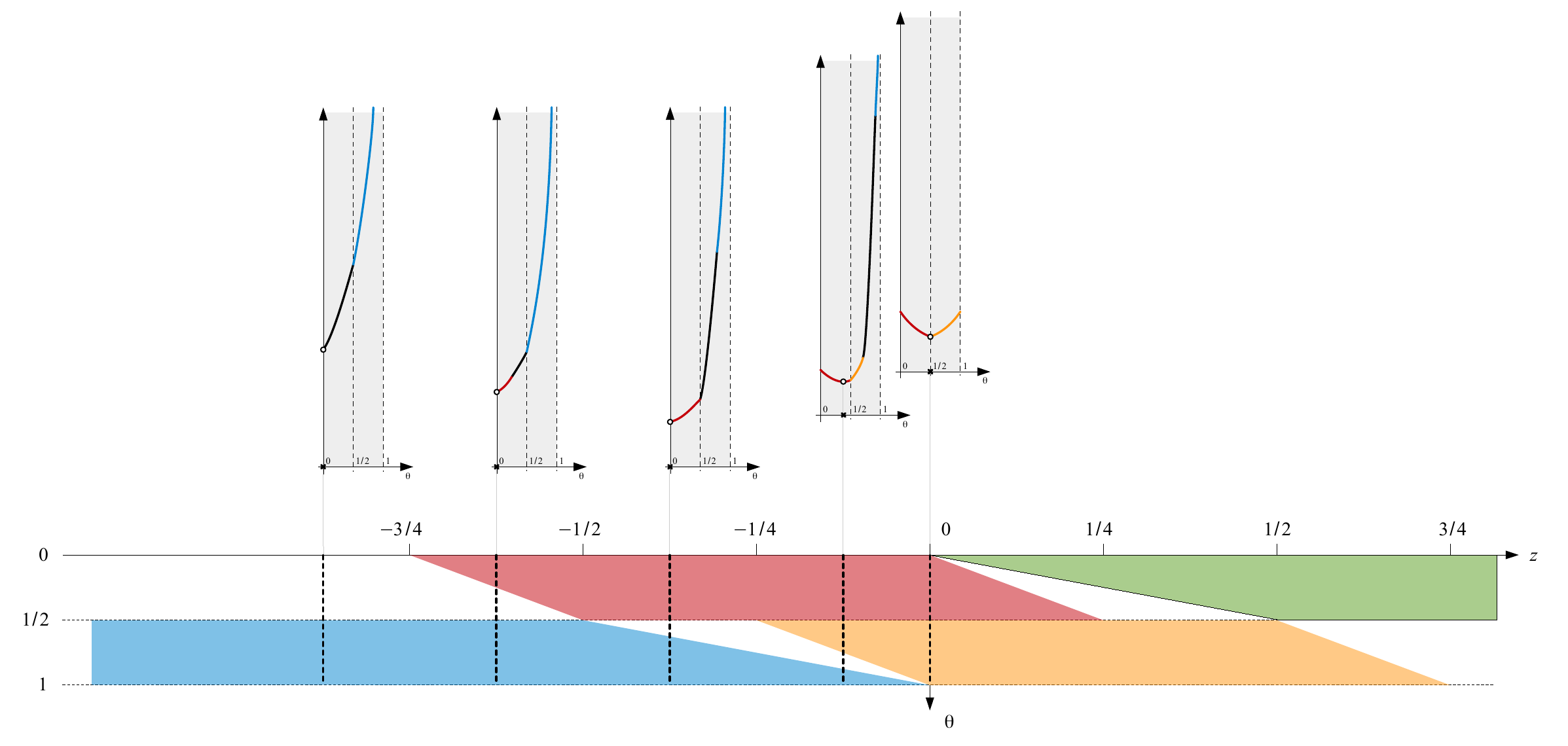}}
\caption{The function $\theta\mapsto Q_{\bf m}f(\theta,z)$ for different values of $z$ (double-well potential).}
\label{dom-phase}
\end{figure} 

For the reader's convenience, in the case of double-well potential we include 
an explicit formula which is particularly simple  
thanks to the symmetry of $Q_{\bf m}f(\theta,z)$ with respect to $(\frac{1}{2},0)$.  
We fix 
$m_1=\frac{1}{2}, m_2=\frac{1}{4}$, obtaining 
$$
Q_{\bf m}f(\theta,z)= 
\begin{cases} 
\frac{3z^2}{1-\theta}+2z+1&\hbox{\rm if }\ z\leq \theta-1 \\
\frac{2z^2}{1-\theta}+\theta&\hbox{\rm if }\ \theta-1<z\leq \frac{\theta-1}{2}\\
z^2-2(2\theta-1)z+\theta^2-\frac{\theta}{2}+\frac{1}{2}&\hbox{\rm if }\ \frac{\theta-1}{2}<z\leq \frac{2\theta+1}{4}\\
\frac{12z^2}{2\theta+1}-2z+1&\hbox{\rm if }\ \frac{2\theta+1}{4}<z. 
\end{cases}
$$

\subsubsection{Dependence on the scale parameter $\sigma$}\label{sigmafrac}
As in Remark \ref{sigmasing}, we introduce a dependence of the concentrated kernel $\bf m$ on the parameter $\sigma$ by setting 
$m_1^\sigma=\frac{m_1}{\sigma}$ and $m_M^\sigma=\frac{m_M}{\sigma}$, for which we have 
\begin{equation}\label{limitssigma}
\lim_{\sigma\to 0^+} Q_{\bf m^\sigma}f(z)=\overline f(z) \ \ \hbox{\rm and }\ \ \lim_{\sigma\to+\infty} Q_{\bf m^\sigma}f(z)=f^{\ast\ast}(z)
\end{equation}  
for any $f$. 

\begin{figure}[h!]
\centerline{\includegraphics[width=.5\textwidth]{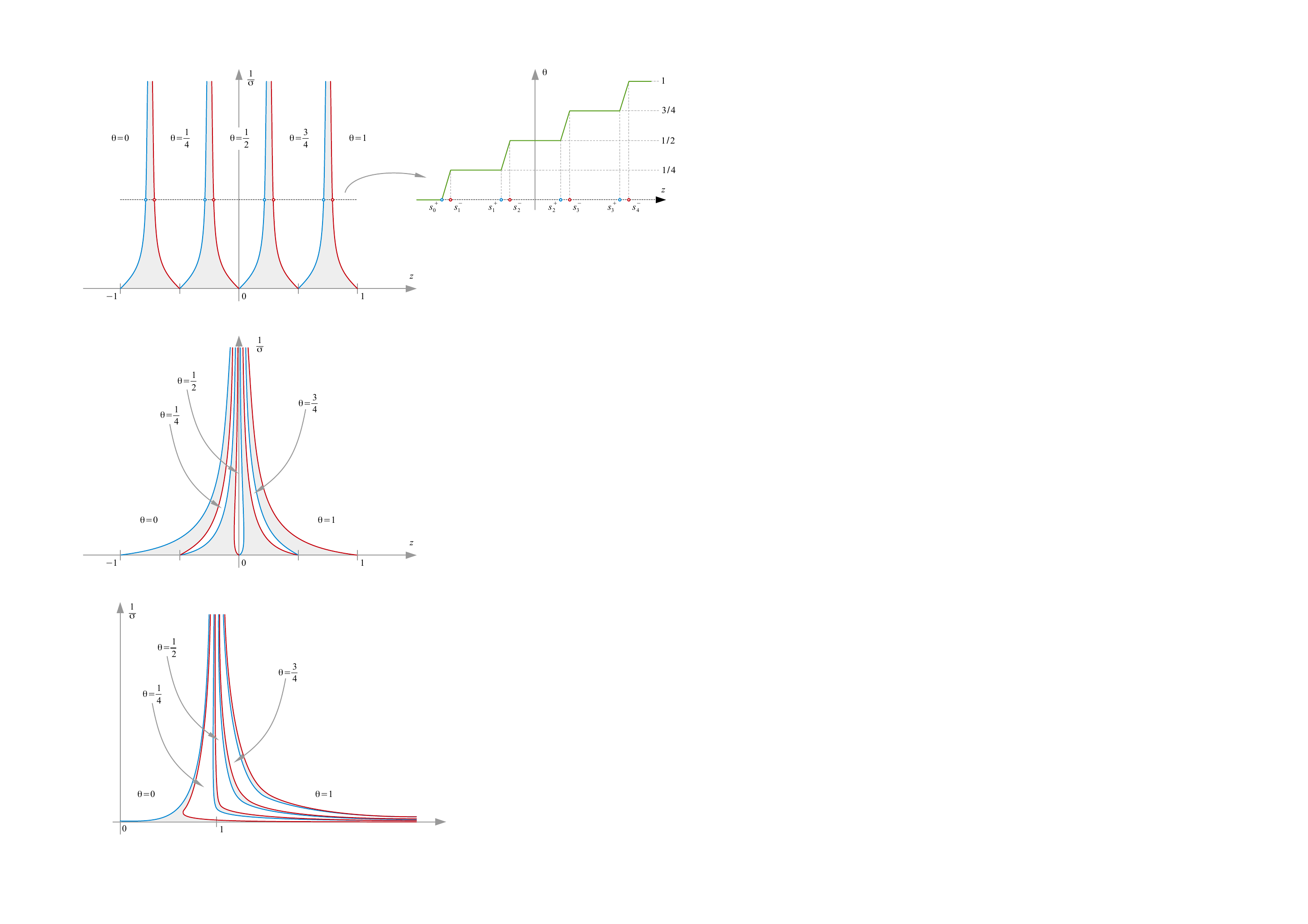}}
\caption{Representation of constancy sets of $\theta$ in the $z$-${1\over\sigma}$ plane.}
\label{trunc-tongue}
\end{figure}

In the case the truncated quadratic function $f$ defined by \eqref{frattura} and analyzed in Section 
\ref{fracture1M}, the first limit can be also checked directly noticing that 
$s_n^+(m_1^\sigma,m_M^\sigma)\to 1$ as ${1\over\sigma}\to+\infty$ for any $n$, where $s_n^+(\cdot, \cdot)$ is defined in \eqref{tnsn}. 
Note that if $\theta\in (0,1)$ then $Q_{\bf m^\sigma}f(\theta, z)\to+\infty$ as ${1\over\sigma}\to 0^+$.  
Moreover, for any $\theta\in(0,1)$ and for any $z$, 
$$\lim_{\sigma\to+\infty} Q_{{\bf m}^\sigma}f(\theta,z)=Q_{{\bf 0}}f(\theta,z)=\begin{cases} 
\theta+\frac{(z-\theta)^2}{1-\theta} & \hbox{\rm if }\ z\leq \theta\\
\theta & \hbox{\rm if }\ z\geq \theta.  
\end{cases}$$
In Fig.~\ref{trunc-tongue} we picture in the $z$-${1\over\sigma}$ plane the zones where $\theta(z)=\theta_n$ for some $n\in\{0,\ldots, M\}$ and those where $\theta(z)$ is affine for fixed $\sigma$ (in grey)  for $M=4$.

As for the double-well potential,  
if the coefficient $m_1$ does not vanish, then 
we re-obtain the first limit in \eqref{limitssigma} by noting that 
\begin{equation}\label{tnsndwsigma}
\lim_{\sigma\to 0^+}s_n^+(m_1^\sigma, m_M^\sigma)=\lim_{\sigma\to 0^+}s_n^-(m_1^\sigma, m_M^\sigma)=0,  
\end{equation} 
where $s_n^+$ and $s_n^-$ are defined in \eqref{tnsndw}. 
\begin{figure}[h!]
\centerline{\includegraphics[width=.55\textwidth]{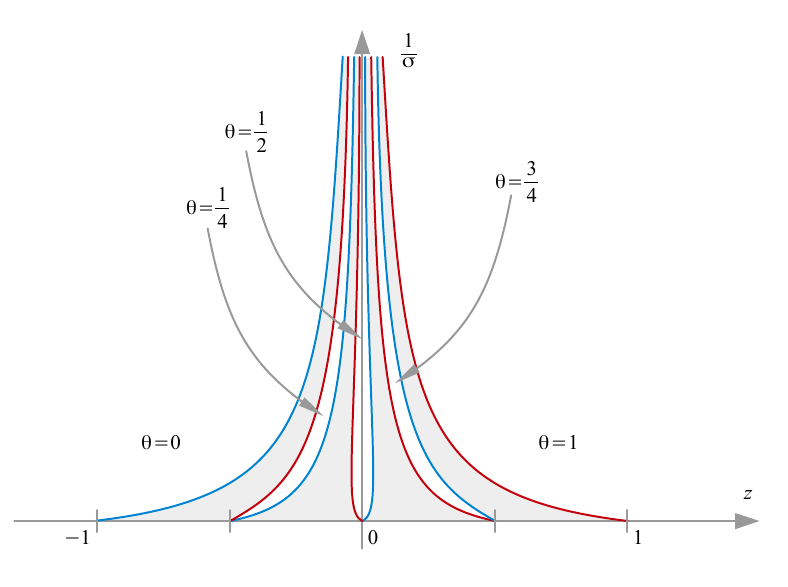}}
\caption{Representation of constancy sets of $\theta$ in the $z$-${1\over\sigma}$ plane}
\label{tongue-M1}
\end{figure} 

In Fig.~\ref{tongue-M1} we picture in the $z$-${1\over\sigma}$ plane the zones where $\theta(z)=\theta_n$ for some $n\in\{0,\ldots, M\}$ and those where $\theta(z)$ is affine for fixed $\sigma$ (in grey)  for $M=4$. 

\begin{remark}\label{casem10dw}\rm 
\begin{figure}[h!]
\centerline{\includegraphics[width=0.5\textwidth]{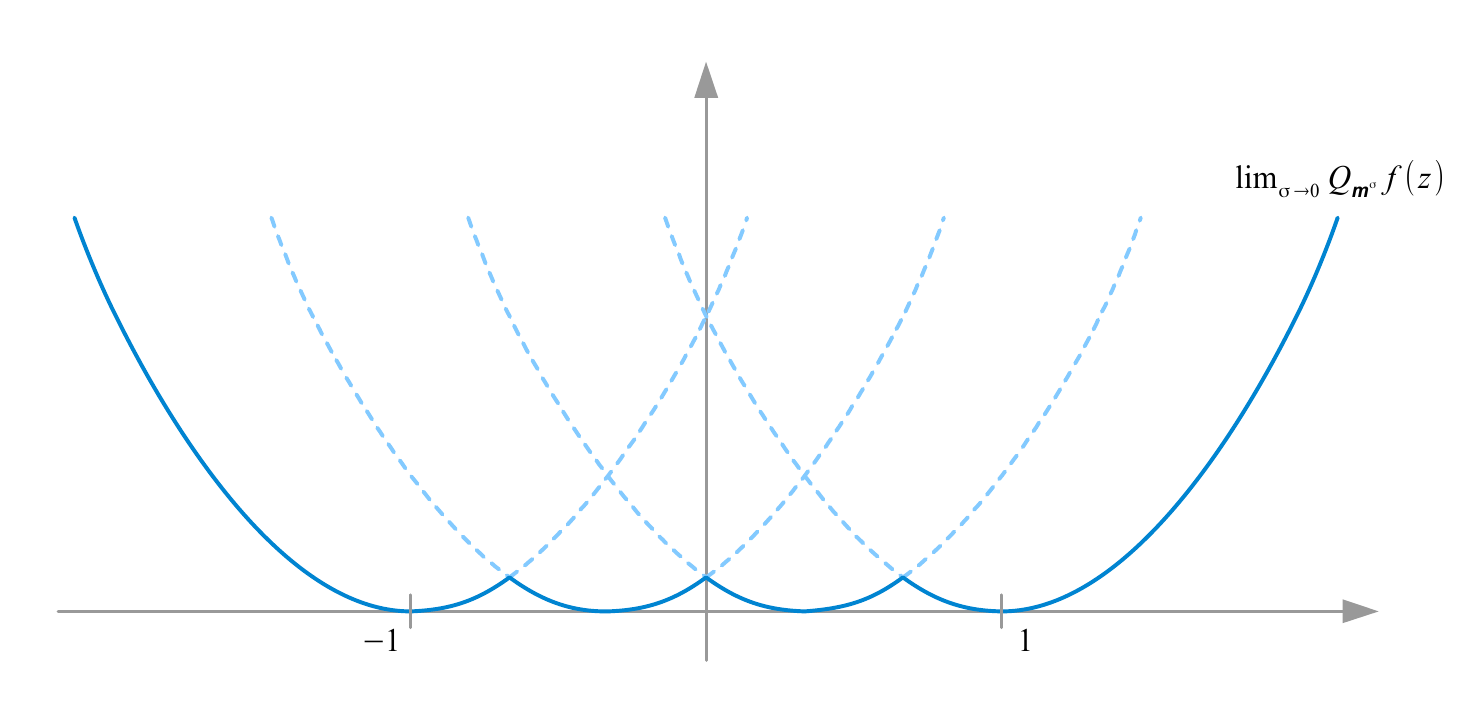}}
\caption{the limit of $Q_{{\bf m}^\sigma}f$ for $\sigma\to0$ in the case $m_1=0$.}
\label{lim-m10}
\end{figure} 

If $m_1=0$, Remark \ref{sigmasing} does not apply. 
Taking the limit for $\sigma\to 0^+$, in this case we obtain 
\begin{equation}\label{tnsndwsigma0}
\lim_{\sigma\to 0^+}s_n^+(m_1^\sigma, m_M^\sigma)=z_n, 
\ \ \ \hbox{\rm and }\ \ \ 
\lim_{\sigma\to 0^+}s_n^-(m_1^\sigma, m_M^\sigma)=z_{n-1}, 
\end{equation} 
where we set 
$$z_n=\frac{2n+1-M}{M}.$$
The limit function is then given by  
$$\lim_{\sigma\to 0^+}Q_{{\bf m}^\sigma}f(z)=
\begin{cases}
(1+z)^2& \displaystyle \hbox{\rm if } \ z\leq z_0\\
\displaystyle \big(z+(1-2\theta_n)\big)^2
& \displaystyle\hbox{\rm if } \ 
z_{n-1}\leq z \leq z_{n}\\
 (1-z)^2 & \displaystyle \hbox{\rm if } \ z_M\leq z, 
\end{cases}
$$
or, equivalently, 
\begin{equation*}
\lim_{\sigma\to 0^+}Q_{{\bf m}^\sigma}f(z)=\min_{0\leq n\leq M}
\big\{ \big(z+(1-2\theta_n)\big)^2\big\}
 =\min_{0\leq n\leq M}\{Q_{\bf m}f(\theta_n,z)\}. 
\end{equation*} 

\begin{figure}[h!]
\centerline{\includegraphics[width=.9\textwidth]{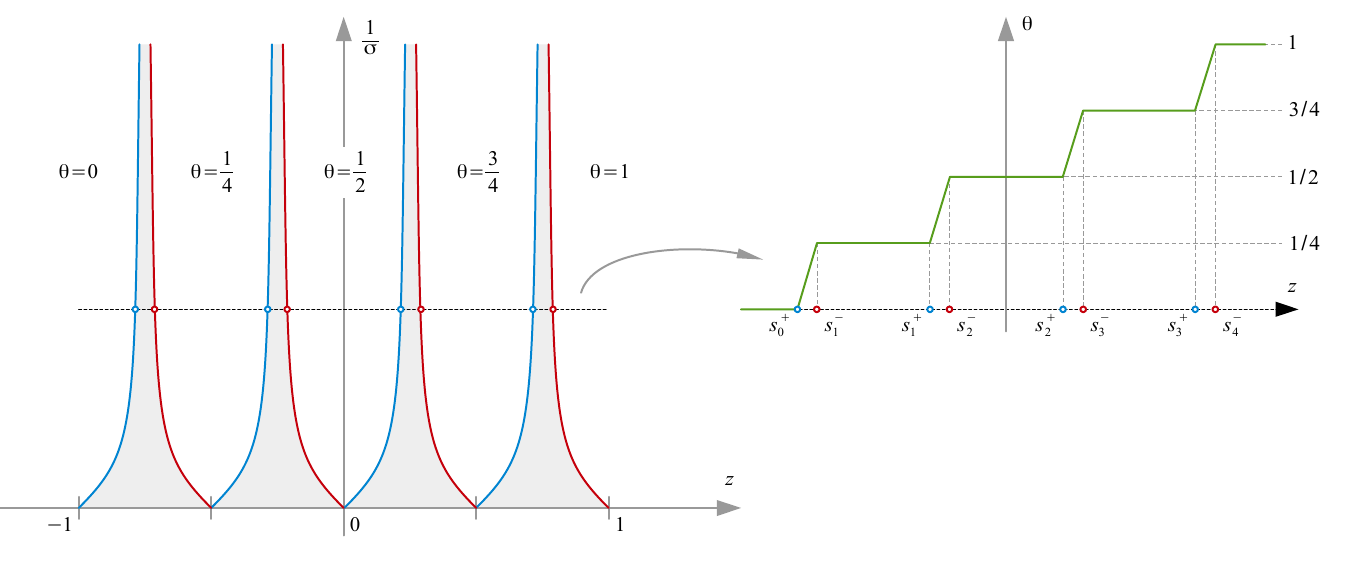}}
\caption{Representation of $\theta$ in the $z$-${1\over\sigma}$ plane for $M=4$ (case $m_1=0$)}
\label{tongue-M10}
\end{figure} 
Note that in this case the limit differs from $f$ but coincides with the minimum among $P^{M,n}(z)-2m_M M^2 z^2$ (see Fig.~\ref{lim-m10}), whose convexification still equals $f^{**}$. 

In Fig.~\ref{tongue-M10} we picture in the $z$-${1\over\sigma}$-plane the zones where $\theta(z)=\theta_n$ for some $n\in\{0,\ldots, M\}$ and those where $\theta(z)$ is affine for fixed $\sigma$ (in grey). \end{remark}

\section{Relaxation with exponential-kernel penalization}\label{exp-sec} 
The case of concentrated kernels studied in the previous section allowed us to highlight some properties of $Q_{\bf m}f$,
in particular  we were able to characterize the locking states using explicit formulas. 
Now, we analyze the effect of the superposition of spatially distributed long-range interactions,  
which bring additional complexity to the structure of $Q_{\bf m}f$.

In Section \ref{hidise} we sketch a method for obtaining bounds for a general kernel $\bf m$ via higher-dimensional embeddings. This method is optimal in the case when 
the non-local term 
 $\sum_{i,j}m_{|i-j|}(u_i-u_j)^2$ 
depending on the given kernel $\bf m$ 
can be obtained by integrating out the variable $v$ from the simplest 
additive energy depending on two variables $u$ and $v$; that is, 
 $a\sum_{i}(v_i-v_{i-1})^2+b\sum_{i}(u_i-v_i)^2$.  
To have this, 
we note that the kernel $\bf m$ must be exponential. Hence, the study of general exponential kernels will constitute the main goal of this section. 
The idea of rewriting the problems defining $\widehat Q_{\bf m}f$ as additive problems in terms of an auxiliary variable has been already used implicitly in the case of concentrated kernels. Indeed, in that case we introduced coarse-grained energies 
depending only on $M$-neighbour interactions $u_{i+M}-u_i$  
through the functions $P^{M,n}$. 

\subsection{Higher-dimensional embeddings for general $\bf m$} \label{hidise}
In this section we discuss the possibility of simplifying the quadratic penalty term in Definition \ref{defQcap} for an arbitrary kernel $\bf m$ by introducing auxiliary variables. 
This will be later applied to the exponential kernel defined in \eqref{defexpker}. 
The idea is to view the long-range interactions parameterized by an arbitrary $\bf m$ as a projection of short-range interactions operating in a higher-dimensional space. In other words, we now suppose that  the kernels $\bf m$  can be viewed as the Green's functions of some higher-dimensional local problems.  Note however that the locality of the corresponding higher-dimensional problem can be expected only for kernels $\bf m$ with sufficiently fast rate of decay. To highlight the ideas, we  discuss in detail  only the simplest class of projections, where the dimension of the extended  configurational space is doubled. As a result, the nonlocal \emph{scalar} problem is transformed into a local \emph{vector} problem. 

For each fixed $k\in\mathbb N$, we define a quadratic form depending on two variables as follows.   
Let $A$ be a $(k+1)\times (k+1)$ matrix and let $s\in\mathbb R$ be a scalar parameter.  
We set 
\begin{equation}\label{defjkm1} 
H^k[A,s](u,v)=
2s\langle Av,v\rangle+ 2s\langle u-v, u-v\rangle, 
\end{equation} 
where $u,v\colon \{0,\dots, k\}\to \mathbb Z$ and    $\langle \cdot ,\cdot \rangle$ denotes the scalar product in $\mathbb R^{k+1}$.  

The following result restates the definition of $\widehat Q_{\bf m}f$ as a minimum problem involving a quadratic form of type \eqref{defjkm1}. 

\begin{theorem}[higher-dimensional equivalent formulation]\label{teo-equivalence}
Let ${\bf m}$ satisfy \eqref{propm} and be such that 
the function $n\mapsto m_n$ is not increasing for $n$ large enough.
Then, there exist a $(k+1)\times (k+1)$-dimensional matrix $A^k_{\bf m}$ and a scalar $s_{\bf m}$ such that, setting 
$H^k_{\bf m}=H^k[A^k_{\bf m}, s_{\bf m}]$ in \eqref{defjkm1},
the following equality holds 
\begin{equation}\label{equiv-2v}
\widehat Q_{\bf m}f(z)=\lim_{k\to+\infty}\frac{1}{k}\min\Big\{\sum_{i=1}^kf(u_i-u_{i-1})+
H^k_{\bf m}(u,v): u,v\in
\mathcal A(k,z)\Big\} 
\end{equation}
for all $f\colon\mathbb R\to [0,+\infty)$ satisfying  growth conditions \eqref{crescita} and \eqref{crescitasotto}. 
\end{theorem}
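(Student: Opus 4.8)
The plan is to exhibit the matrix $A^k_{\bf m}$ and the scalar $s_{\bf m}$ so that \emph{partially minimizing} the local vector form $H^k[A^k_{\bf m},s_{\bf m}](u,v)$ over the auxiliary variable $v$ reproduces exactly the nonlocal quadratic form of Definition \ref{defQcap}. For fixed $k$, let $L^k_{\bf m}$ be the symmetric $(k{+}1)\times(k{+}1)$ matrix characterized by $\langle L^k_{\bf m}w,w\rangle=\sum_{i,j=0}^k m_{|i-j|}(w_i-w_j)^2$; it is a weighted graph Laplacian, hence positive semidefinite, and by the summability implied by \eqref{propm} its operator norm is bounded by $4\sum_{n\ge1}m_n$ uniformly in $k$. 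Fix once and for all $s_{\bf m}>2\sum_{n\ge1}m_n$, so that $M^k:=\frac{1}{2s_{\bf m}}L^k_{\bf m}$ has spectrum contained in $[0,1)$ and $I-M^k$ is invertible, and set $A^k_{\bf m}:=(I-M^k)^{-1}M^k$. Then $A^k_{\bf m}$ is symmetric, $A^k_{\bf m}\succeq0$, and $A^k_{\bf m}+I=(I-M^k)^{-1}\succ0$; a one-line quadratic minimization in $v$ (minimizer $v^\star=(A^k_{\bf m}+I)^{-1}u=(I-M^k)u$, and the identity $A^k_{\bf m}(A^k_{\bf m}+I)^{-1}=M^k$) gives, for every $u$,
\[
\min_{v\in\mathbb R^{k+1}}H^k[A^k_{\bf m},s_{\bf m}](u,v)=2s_{\bf m}\langle M^k u,u\rangle=\langle L^k_{\bf m}u,u\rangle .
\]

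With $H^k_{\bf m}:=H^k[A^k_{\bf m},s_{\bf m}]$, the two inequalities in \eqref{equiv-2v} follow. For the $\ge$ direction: for every $u\in\mathcal A(k,z)$ the minimum of $H^k_{\bf m}(u,\cdot)$ over the constrained set $\mathcal A(k,z)$ dominates the unconstrained minimum, so $\sum_i f(u_i-u_{i-1})+\min_{v\in\mathcal A(k,z)}H^k_{\bf m}(u,v)\ge\sum_i f(u_i-u_{i-1})+\langle L^k_{\bf m}u,u\rangle$; minimizing in $u$, dividing by $k$ and letting $k\to+\infty$ gives $\widehat Q_{\bf m}f(z)$ by Definition \ref{defQcap}. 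For the $\le$ direction, use the boundary-layer formula \eqref{limscala}: choose $u^k$ near-optimal for $\widehat Q_{\bf m}f(z)$ and affine, $u^k_i=iz$, on $\{i\le k^{1+\alpha}\}\cup\{i\ge k-k^{1+\alpha}\}$. Since $L^k_{\bf m}$ annihilates affine data away from the endpoints, the unconstrained optimal auxiliary field $v^\star=(I-M^k)u^k$ already satisfies $|v^\star_i-iz|\le\eta_k\to0$ throughout the interior of these layers; define $v^k$ to equal $v^\star$ in the bulk, to equal $iz$ near the two endpoints, and to interpolate on two thin transition zones. Then $v^k\in\mathcal A(k,z)$ and, using \eqref{crescita}, \eqref{crescitasotto} and $\sum_n n^2 m_n<+\infty$ to bound the cost of the modification, $H^k_{\bf m}(u^k,v^k)=\langle L^k_{\bf m}u^k,u^k\rangle+o(k)$, whence the right-hand side of \eqref{equiv-2v} is $\le\widehat Q_{\bf m}f(z)$. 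Equivalently, one may phrase this by checking that the vector functionals in \eqref{equiv-2v} satisfy the coercivity and growth hypotheses of \cite[Theorem 6.3]{AC} uniformly in $k$ (the convexity in $v$ coming from $A^k_{\bf m}\succeq0$) and that their $\Gamma$-limit has energy density $\widehat Q_{\bf m}f$ by the partial-minimization identity above.

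\emph{Main obstacle.} The algebra of the first paragraph and the comparison giving the lower bound are routine; the real work is the boundary matching in the upper bound, namely proving that forcing the affine boundary data on the auxiliary field $v$ costs only $o(k)$. This requires quantitative control of how deep into the bulk the endpoints "influence" $v^\star=(I-M^k)u^k$, i.e. of the off-diagonal decay of the resolvent $(I-M^k)^{-1}$ — equivalently of the convolution powers ${\bf m}*\cdots*{\bf m}$ entering its Neumann series — and it is precisely here that the decay rate in \eqref{propm} together with the eventual monotonicity of $n\mapsto m_n$ are invoked, to keep these iterated kernels summable and spatially localized.
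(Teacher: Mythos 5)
Your construction is correct but genuinely different from the paper's, and is worth contrasting. You build $A^k_{\bf m}=(I-M^k)^{-1}M^k$ from the graph Laplacian $L^k_{\bf m}$ (with $M^k=\frac{1}{2s_{\bf m}}L^k_{\bf m}$), and the algebra $A^k_{\bf m}+I=(I-M^k)^{-1}$, $A^k_{\bf m}(A^k_{\bf m}+I)^{-1}=M^k$ makes the partial minimization identity $\min_v H^k_{\bf m}(u,v)=\langle L^k_{\bf m}u,u\rangle=J^k_{\bf m}(u)$ \emph{exact} for every $k$ and every $u$. The paper instead takes $s_{\bf m}=m_0+2\sum_n m_n$ and $A^k_{\bf m}=s_{\bf m}D^k(M^k_{\bf m})^{-1}-I$, where $M^k_{\bf m}$ is the symmetric Toeplitz matrix with entries $m_{|i-j|}$ and $D^k$ carries corner corrections; the partial minimization then reproduces $J^k_{\bf m}(u)$ only up to an $o(1/k)\|u\|_k^2$ remainder (their Lemma~\ref{2v-lemma}). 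Your route is cleaner for the existence statement of Theorem~\ref{teo-equivalence} and avoids the error bookkeeping. The paper's route has a structural payoff you give up: by making $A^k_{\bf m}+I$ (essentially) proportional to $(M^k_{\bf m})^{-1}$, the exponential kernel produces a \emph{tridiagonal} $A^k_{\bf m}$, which is exactly what §5.2 needs to turn the nonlocal problem into a nearest-neighbour vector problem. Your $(I-M^k)^{-1}M^k$ does not inherit that sparsity.

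Two small points. First, your spectral bound $\|L^k_{\bf m}\|\le 4\sum_{n\ge1}m_n$ is off by a factor: since $\sum_{i,j}m_{|i-j|}(w_i-w_j)^2\le 2\sum_i w_i^2\sum_j m_{|i-j|}+2\sum_j w_j^2\sum_i m_{|i-j|}$, the correct uniform bound is $\|L^k_{\bf m}\|\le 8\sum_{n\ge1}m_n$ (for $m_0=0$); just take $s_{\bf m}$ larger. This does not affect anything. Second, the closing paragraph misdiagnoses the ``main obstacle'': the candidate optimal field is $v^\star=(I-M^k)u^k$, which is an application of $I-M^k$, \emph{not} of the resolvent $(I-M^k)^{-1}$, so no Neumann series or iterated-convolution decay is involved. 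The boundary mismatch $v^\star_i-iz$ at the endpoints is controlled directly by the decay of ${\bf m}$ itself (one gets $v^\star_0\to z\,t_{\bf m}/s_{\bf m}$ and a tail decay like $\sum_{n>i}nm_n$), and the cost of the repair is controlled by the first-order optimality $A^k_{\bf m}v^\star=u-v^\star$: with $\delta=v^k-v^\star$ one gets $H^k_{\bf m}(u,v^k)-H^k_{\bf m}(u,v^\star)=2s_{\bf m}\bigl(\langle A^k_{\bf m}\delta,\delta\rangle+\|\delta\|^2\bigr)\le 2s_{\bf m}(\|A^k_{\bf m}\|+1)\|\delta\|^2$, and $\|\delta\|^2$ is bounded uniformly in $k$, so the repair costs $O(1)=o(k)$ with only the operator-norm bound on $A^k_{\bf m}$. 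Off-diagonal resolvent decay is not needed.
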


The proof of Theorem \ref{teo-equivalence} is based on Lemma \ref{2v-lemma} which implies that asymptotically the quadratic part of the energies in  the definition of $\widehat Q_{\bf m} f$ can be viewed as projections of functions of the form \eqref{defjkm1}. 

To shorten the notation, we introduce the quadratic function 
\begin{equation}\label{defjkm}
J_{\bf m}^k(u)=\sum_{i,j=0}^{k} m_{|i-j|}(u_i-u_j)^2,
\end{equation} 
defined on $u\colon \{0,\dots, k\}\to \mathbb Z$. 

To quantify the relation between $J_{\bf m}^k$ and the corresponding $H^k_{\bf m}$, we introduce a notion of $L^2$ norm for   $u\colon \{0,\dots, k\}\to \mathbb Z$ by setting 
$$\|u\|_k^2=\frac{1}{k}\sum_{i=1}^{k}(u_i)^2,$$
which coincides with the $L^2$ norm of the piecewise-constant function $\tilde u\colon(0,1)\to\mathbb R$ defined by $\tilde u(t)=u_i$ in $(\frac{i-1}{k}\frac{i}{k}]$. 

\begin{lemma}[projection of the quadratic part of the energies]\label{2v-lemma}
Let ${\bf m}$ satisfy \eqref{propm} and be such that the function 
$n\mapsto m_n$ is not increasing for $n$ large enough. Let $J_{\bf m}^k$ be as in \eqref{defjkm}. 
Then, there exist a $(k+1)\times (k+1)$-dimensional matrix $A^k_{\bf m}$ and a scalar $s_{\bf m}$ such that 
\begin{equation}\label{minG}
\min\{H^k_{\bf m}(u,v): v\colon\{0,\dots,k\}\to\mathbb R\}=J_{\bf m}^k(u)+\|u\|_k^2\ o\Big(\frac{1}{k}\Big) 
\end{equation}
for all $u\colon\{0,\dots, k\}\to\mathbb R$, where $H_{\bf m}^k$ is defined in Theorem \ref{teo-equivalence}.
\end{lemma}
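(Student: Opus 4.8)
The plan is to realize the minimization over $v$ as a linear-algebra computation: for fixed $u$, the map $v\mapsto H^k_{\bf m}(u,v)=2s\langle Av,v\rangle+2s\langle u-v,u-v\rangle$ is a (convex, provided $A$ is chosen positive semidefinite) quadratic form in $v$, so its minimum is attained at a critical point solving the linear system $(A+I)v=u$, and substituting back gives
\begin{equation*}
\min_v H^k_{\bf m}(u,v)=2s\,\langle (I-(A+I)^{-1})u,u\rangle.
\end{equation*}
Thus the task reduces to choosing the scalar $s=s_{\bf m}$ and the matrix $A=A^k_{\bf m}$ so that the kernel of the quadratic form $2s\,(I-(A+I)^{-1})$ agrees — up to an error that is $o(1/k)$ relative to $\|u\|_k^2$ — with the kernel of $J^k_{\bf m}$. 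Here it is convenient to write $J^k_{\bf m}(u)=\langle L_{\bf m}u,u\rangle$ where $L_{\bf m}$ is the associated (generalized graph-Laplacian-type) matrix with entries $(L_{\bf m})_{ii}=2\sum_{j}m_{|i-j|}$ and $(L_{\bf m})_{ij}=-2m_{|i-j|}$ for $i\neq j$; one checks $\langle L_{\bf m}u,u\rangle=\sum_{i,j}m_{|i-j|}(u_i-u_j)^2$ by the standard expansion.

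First I would reduce to the exponential case. The hypothesis is that $n\mapsto m_n$ is eventually non-increasing; I would argue (as elsewhere in the paper, e.g. in the proof of Proposition~\ref{disug} and Lemma~\ref{ubquic}) that the finitely many `irregular' leading coefficients $m_1,\dots,m_{n_0}$ and the tail beyond any fixed range contribute only lower-order terms after division by $k$ — more precisely, modifying $L_{\bf m}$ in a band of fixed width changes $\langle L_{\bf m}u,u\rangle$ by at most $C\sum_{i=1}^k(u_i-u_{i-1})^2$, which is $O(1)\cdot\|u\|$-type and handled by the $o(1/k)$ slack together with the coercivity built into the statement. So the heart of the matter is: for a purely exponential kernel $m_n=c\,\rho^{n}$ with $\rho\in(0,1)$, exhibit $A$ and $s$ with $2s(I-(A+I)^{-1})=L_{\bf m}$ up to boundary corrections. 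Here I would take $A$ tridiagonal (a nearest-neighbour discrete Laplacian $v\mapsto a\sum_i(v_i-v_{i-1})^2$ corresponds to a tridiagonal matrix), because then $(A+I)^{-1}$ has entries decaying geometrically in $|i-j|$ — this is the classical fact that the Green's function of a one-dimensional second-order difference operator is exponential — and the decay rate is a continuous function of the tridiagonal parameter $a$. Matching the rate $\rho$ fixes $a$, and matching the prefactor $c$ fixes the remaining scalar $s$; the $(k+1)\times(k+1)$ size and the Dirichlet-type behaviour at $i=0,k$ produce only edge discrepancies supported near the endpoints, again absorbable in the error term.

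The key steps in order: (1) expand $J^k_{\bf m}(u)=\langle L_{\bf m}u,u\rangle$ and record $\min_v H^k_{\bf m}(u,v)=2s\langle(I-(A+I)^{-1})u,u\rangle$ from first-order optimality, noting $A\succeq0$ ensures the minimum is a genuine minimum and not merely a critical value; (2) for the infinite exponential kernel on $\mathbb Z$, solve $2s(I-(A+I)^{-1})=L_{\bf m}$ exactly via Fourier symbols on $\mathbb Z$ — the symbol of a tridiagonal $A$ is $\hat a(\xi)=a_0+2a_1\cos\xi$, that of $L_{\bf m}$ for $m_n=c\rho^{|n|}$ is computed by summing the geometric series, and one reads off $a_0,a_1,s$ by equating the two rational functions of $\cos\xi$; (3) pass from $\mathbb Z$ to $\{0,\dots,k\}$, controlling the difference between the finite Green's function and the restriction of the infinite one by the exponential decay away from the boundary; (4) invoke the eventual-monotonicity reduction to upgrade from a pure exponential to a general admissible $\bf m$ with exponential-type tail, showing the leftover is $\|u\|_k^2\,o(1/k)$. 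The main obstacle I anticipate is step (3)/(4): making the error genuinely $o(1/k)$ \emph{uniformly} in $u$ — one must phrase it as an operator-norm (or quadratic-form) estimate $|\langle(L_{\bf m}-2s(I-(A+I)^{-1}))u,u\rangle|\le \varepsilon_k\|u\|_k^2$ with $k\varepsilon_k\to0$, and the boundary terms naively scale like $\|u\|_\infty^2$ rather than $k\|u\|_k^2$, so one needs either the coercivity hypothesis \eqref{crescitasotto} (which bounds $\sum(u_i-u_{i-1})^2\le Ck$ on the relevant test functions, hence $\|u\|_\infty^2\le Ck$) or a separate Hardy-type interpolation, exactly the kind of estimate used in Remark~\ref{penaltyrem}. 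Once that uniform bound is in hand, Theorem~\ref{teo-equivalence} follows immediately by adding the common term $\sum_i f(u_i-u_{i-1})$ to both sides of \eqref{minG}, minimizing over $u$, and passing to the limit in $k$.
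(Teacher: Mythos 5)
Your first step — computing $\min_v H^k_{\bf m}(u,v)=2s\langle (I-(A+I)^{-1})u,u\rangle$ from first-order optimality, and identifying $J^k_{\bf m}(u)$ with a quadratic form in a Laplacian-type Toeplitz matrix — is correct and is where the paper also starts. But the plan then takes a wrong turn. You propose to reduce to exponential kernels and take $A$ tridiagonal so that $(A+I)^{-1}$ is a nearest-neighbour Green's function with geometric decay, matching $\rho$ and $c$. That strategy cannot establish the lemma as stated: the hypothesis on $\bf m$ is only $m_n=o(n^{-\beta})$, $\beta>3$, together with eventual monotonicity — it does not force $\bf m$ to be (close to) exponential. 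For a polynomially decaying kernel such as $m_n=(n+1)^{-4}$ there is no tridiagonal $A$ whose Green's function reproduces the off-diagonal entries of $M^k_{\bf m}$, and the discrepancy is not a localized band perturbation that a fixed-width smearing argument can absorb into $\|u\|_k^2\,o(1/k)$. Indeed, the paper explicitly flags (Remark~\ref{top1}) that for non-exponential $\bf m$ the matrix $(M^k)^{-1}$ is \emph{not} tridiagonal and $H^k$ is not a nearest-neighbour energy; tridiagonality is a special feature of the exponential case used later in Section~\ref{exp-sec}, not an input to Lemma~\ref{2v-lemma}.

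The missing idea is that you should solve the matching equation $2s(I-(A+I)^{-1})=L_{\bf m}$ \emph{exactly} rather than approximately. Since $J^k_{\bf m}(u)=2\sum_i s^i_{\bf m}u_i^2-2\langle M^k_{\bf m}u,u\rangle$ with $s^i_{\bf m}=\sum_j m_{|i-j|}$, and since $s^i_{\bf m}\to s_{\bf m}:=m_0+2\sum_{n\ge1}m_n$ away from the endpoints, one has $J^k_{\bf m}(u)\approx 2s_{\bf m}\langle u-\tfrac{1}{s_{\bf m}}M^k_{\bf m}u,u\rangle$ up to edge corrections. Choosing $A^k_{\bf m}=s_{\bf m}(M^k_{\bf m})^{-1}-I$ (plus two diagonal corrections at indices $0$ and $k$ to account for the finite sums $s^i_{\bf m}\neq s_{\bf m}$ near the boundary) gives $(A+I)^{-1}=\tfrac{1}{s_{\bf m}}M^k_{\bf m}$, so the match is an algebraic identity with no spectral or Fourier argument needed, and with $A$ a full matrix in general. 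The remaining work is then only to check that the diagonal corrections and the boundary layer (where test functions are made constant, Appendix~A) contribute $\|u\|^2_k\,o(1/k)$ using $m_n=o(n^{-\beta})$, $\beta>3$; this is where the decay hypothesis enters. As written, your proposal does not get that far because step (2) — the exponential/Fourier reduction — is not available for the class of kernels the lemma covers.
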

\begin{proof} 
We introduce the $(k+1)\times (k+1)$ matrix $M^k_{\bf m}=(m_{ij})$ given by $m_{ij}=m_{|i-j|}$, $i,j=0,\dots, k$. 
Note that the functional $J^k_{\bf m}$ is independent of the choice of $m_0$, so that we can choose the value of $m_0$ arbitrarily. We assume that this value is such that the matrix $M^{k}_{\bf m}$ is invertible. 

As a first step, we write the functional $J_{\bf m}^k$, up to an infinitesimal term, 
as the sum of a suitable quadratic form depending on 
the whole series of $m_n$ and a residual boundary term. 
By Lemma \ref{boundarycond} (see Appendix A), up to a change of variables with $L=1$ and $\e=1/k$,
we can suppose that $u$ is constant in $[0,k^\alpha]$ and in $[k-k^\alpha,k]$ with a fixed $\alpha\in(\frac{3}{\beta},1)$, where $\beta$ is the decay parameter of $\bf m$ given by \eqref{propm}.   
Up to translations, we can assume $u_0=0$ and hence $u_{i}=0$ for $i\leq k^{\alpha}$.
Setting  
$$
s_{\bf m}=m_0+2\sum_{n=1}^{+\infty}m_n\ \ \ \hbox{\rm and } \ \ \ 
s_{\bf m}^i=\sum_{j=0}^{k}m_{ij},
$$ 
we get 
$$s_{\bf m}^i-s_{\bf m}=-\sum_{n=i+1}^{+\infty}m_n-\sum_{n=k-i+1}^{+\infty}m_n,$$
so that, using the decay condition $m_n=o(n^{-\beta})$, we obtain 
\begin{eqnarray*}
J_{\bf m}^k(u)&=&2s_{\bf m}\langle u-\frac{1}{s_{\bf m}} M_{\bf m}^ku, u\rangle +2 \sum_{i=0}^{k} (s_{\bf m}^i-s_{\bf m})(u_i)^2\\
&=&2s_{\bf m}\langle u-\frac{1}{s_{\bf m}} M_{\bf m}^ku, u\rangle -2 t_{\bf m}(u_{k})^2+\sum_{i=0}^k(u_i)^2 \ o(k^{1-\alpha\beta})\\
&=&2s_{\bf m}\langle u-\frac{1}{s_{\bf m}} M_{\bf m}^ku, u\rangle -2 t_{\bf m}(u_{k})^2+\|u\|_k^2 \ o(k^{2-\alpha\beta}), 
\end{eqnarray*}
where 
$t_{\bf m}=\sum_{n=0}^{+\infty}n\,m_n$. Note that $2-\alpha\beta<-1$ since $\alpha>\frac{3}{\beta}$. 

The matrix $A^k_{\bf m}$ will be obtained by modifying the matrix $s_{\bf m} \big(M_{\bf m}^k\big)^{-1}-I$, which gives a minimum  for $H^k_{\bf m}$ in $v=\frac{1}{s_{\bf m}}M_{\bf m}^ku$, so as to take into account the boundary contribution. This is done by changing the values $(A^k_{\bf m})_{11}$ and $(A^k_{\bf m})_{kk}$ in such a way that they compensate the boundary terms. 
We set  
\begin{equation}\label{formulaA}
A^k_{\bf m}=s_{\bf m}
\begin{pmatrix} c_{\bf m} & 0 & \dots & \dots & 0\\
0& 1 & 0 & \dots & \dots \\
\dots & \dots & \dots & \dots & \dots\\
\dots & \dots  & 0 & 1 & 0\\
\dots & \dots  & 0 & 0 & c_{\bf m} 
\end{pmatrix}\big(M_{\bf m}^k\big)^{-1}-I,\ \ \hbox{\rm with }\ \ c_{\bf m}=\frac{s_{\bf m}+m_0}{2t_{\bf m}+s_{\bf m}+m_0}.\end{equation}
We can write 
$$A^k_{\bf m}=s_{\bf m}\big(M_{\bf m}^k\big)^{-1}-I-\frac{2t_{\bf m} s_{\bf m}}{2t_{\bf m}+s_{\bf m}+m_0}(e_0\otimes e_0+e_{k}\otimes e_{k}) \big(M_{\bf m}^k\big)^{-1},$$ and we prove that  
the minimum of $H^k_{\bf m}(u,v)$ coincides, up to an infinitesimal term, with $J_{\bf m}^k(u)$. 
This minimum is attained for $v^{k,\rm min}$ given by 
\begin{equation}\label{vsol}
v^{k, \rm min}=(A^k_{\bf m}+I)^{-1}u=
\frac{1}{s_{\bf m}}M_{\bf m}^ku+\frac{2t_{\bf m}}{s_{\bf m}(s_{\bf m}+m_0)}u_{k}\begin{pmatrix}m_{k}\\
m_{k-1}\\
\dots\\
m_0
\end{pmatrix}.
\end{equation}
Then, recalling the decay assumption on $m_n$, we get 
\begin{eqnarray}\nonumber
H^{k}_{\bf m}(u,v^{k, \rm min})&=&
2s_{\bf m}\langle u - v^{k,\rm min},u\rangle\\ \nonumber
&=&2s_{\bf m}\langle u - \frac{1}{s_{\bf m}}M_{\bf m}^ku,u\rangle
-2 (u_{k})^2 t_{\bf m} 
+ 
|u_k|\sqrt{k}\|u\|_k
\ o(k^{-\alpha\beta})\\ \label{accab}
&=&2s_{\bf m}\langle u - \frac{1}{s_{\bf m}}M_{\bf m}^ku,u\rangle
-2 (u_{k})^2 t_{\bf m} 
+ 
\|u\|_k^2\ o(k^{\frac{1}{2}-\alpha\beta}), 
\end{eqnarray}
concluding the proof of \eqref{minG} since $\alpha>\frac{3}{\beta}$. 
\end{proof}

\begin{remark}\label{ossv}\rm
Let $u^k$ be constant on $[0, k^\alpha]$ and $[k-k^\alpha, k]$. Then the corresponding $v^{k,\min}$ given by \eqref{vsol} 
satisfies $|v^{k,\min}_0-u^k_0|+|v^{k,\min}_k-u^k_k|= o(k^{{1\over 2}-\alpha\beta})\|u^k\|_k$. Hence it 
can be modified so as to obtain $\widehat v^{k}$ equal to $u^k$ in $0$ and $k$ and 
$|v^{k,\min}_i-v^k_i|=o(k^{{1\over 2}-\alpha\beta})\|u^k\|_k$ for all $i$. 
By \eqref{accab} we can estimate 
$$
H^{k}_{\bf m}(u,\widehat v^k)\le H^{k}_{\bf m}(u,v^{k, \rm min})+ 
\|u^k\|^2_k\ o(k^{1-\alpha\beta}).
$$
If $\|u^k\|_k$ are equibounded, then the last term is $o(\frac{1}{k})$ since $\alpha>{3\over\beta}$.
Note that we may also construct $\widehat v^{k}$ so that $\widehat v^{k}_i=u^k_0$  for $i\le k^{\alpha^\prime}$ and $\widehat v^{k}_i=u^k_k $ if $i\ge k-k^{\alpha^\prime}$ with $\alpha^\prime<\alpha$. 
\end{remark}

\begin{proof}[Proof of Theorem {\rm\ref{teo-equivalence}}] 
We write 
\begin{equation}\label{equiv-2v-1}
\widehat Q_{\bf m}f(z)=\lim_{k\to+\infty}\frac{1}{k}\min\Big\{\sum_{i=1}^kf(u_i-u_{i-1})+J_{\bf m}^k(u):
u\in
\mathcal A(k,z)\Big\}\,.
\end{equation}
Let $u^k$ denote a minimizer of the problem above, and note that $\|u^k\|_k$ are equibounded in view of the growth condition on $f(z)+m_1z^2$. Note that thanks to Lemma \ref{boundarycond} we may suppose that the function $u^k$ is constant on $[0, k^\alpha]$ and $[k-k^\alpha, k]$. Then, applying Lemma \ref{2v-lemma} and Remark \ref{ossv}, we obtain the desired result. \end{proof}

In general, the advantage of the rewriting in Theorem \ref{teo-equivalence} is not clear. However, thanks to the two-variable formulation, we 
can obtain some general lower bound in suitable hypotheses. 
In the next section, we 
will see that for exponential kernels functionals $H^{k}_{\bf m}$ can be rewritten as nearest-neighbour energies, which will allow to make these bounds sharp.
\smallskip 

\begin{remark}[lower bounds with additive vector energies]\label{remlowadd}\rm 
Suppose that there exists $C>0$ such that for all $v\in \mathcal A(k;z)$
\begin{equation}\label{stiCA}
\langle A^k_{\bf m} v,v\rangle \ge C\sum_{i=1}^k (v_i-v_{i-1})^2 +\|v\|_k^2 \ o(1)_{k\to+\infty}.
\end{equation}
Then, by \eqref{equiv-2v}, we can bound $\widehat Q_{\bf m} f(z)$ from below with limits of scaled minimum problems for energies of the form 
$$
\sum_{i=1}^k f(u_i-u_{i-1})+  2s_{\bf m} C\sum_{i=1}^k (v_i-v_{i-1})^2+2s_{\bf m}\sum_{i=1}^k (u_i-v_i)^2.
$$ 
We will see in the next section that this holds with some particular choices of the kernel $\bf m$; namely, the exponential kernels.
\end{remark} 

In view of Remark \ref{remlowadd}, we now focus on bounds for problems involving energies of the form
$$
E(u,v; [0,k])=\sum_{i=1}^k f(u_i-u_{i-1})+  a\sum_{i=1}^k (v_i-v_{i-1})^2+b\sum_{i=1}^k (u_i-v_i)^2
$$
with $a,b>0$.

We suppose that there exist $z^*$ and $\eta$ such that $f$ is convex for $z\le z^*$ and $f(z)\ge \eta$  for $z>z^*$. 
For any $N\geq 1$ we define
\begin{eqnarray}\label{gNgen}\nonumber && g_N(z)=
{1\over N}\Bigl(\min\Bigl\{\sum_{i=2}^{N} f(u_i-u_{i-1})+ a \sum_{i=1}^N  (v_i-v_{i-1})^2+b\sum_{i=1}^N (u_i-v_i)^2\\
&& \hskip3cm
v_0=0, v_N=Nz, u_i-u_{i-1}\le z^* \hbox{ for } i\ge 2\Bigr\}+ \eta\Bigr),
\end{eqnarray}
where we limit the interactions  $v_i-v_j$ only to nearest neighbours, and we allow  $u_i-u_{i-1}> z^* $ only for $i=1$. Note that if $N=1$ then $g_1(z)= a z^2+\eta$. 

We also set $g_\infty(z)= f(z)+ a z^2$ with domain $z\le z^*$, which corresponds to minimal states with $u_i-u_{i-1}\le z^* $ for all $i$.

\begin{proposition}[lower bound with nearest-neighbour energies]\label{lobaloba}
We have
\begin{equation}\label{lobbi}
\lim_{k\to+\infty}{1\over k}\min\Bigr\{E(u,v; [0,k]): u_k-u_0=v_k-v_0=kz\Bigl\} \ge \Bigl(\inf_N g_N(z)\Bigr)^{**}.
\end{equation}
\end{proposition}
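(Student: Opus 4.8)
The plan is to establish the lower bound \eqref{lobbi} by a cutting-and-pasting argument that decomposes an arbitrary minimizer into ``blocks'' according to the position of the nearest-neighbour differences $u_i-u_{i-1}$ relative to the threshold $z^\ast$, and to compare each block with one of the model energies $g_N$. First I would fix $k$ and a minimizer $(u,v)$ for the problem on the left-hand side; since $f(z)+az^2$ grows superlinearly (indeed quadratically, via \eqref{crescitasotto} applied with the coercivity on nearest neighbours), the quantities $\frac1k\sum_i(u_i-u_{i-1})^2$ and $\frac1k\sum_i(v_i-v_{i-1})^2$ are equibounded, so we may also assume (by Remark~\ref{boundary-affine}, or rather its analogue for the two-variable energy) that $u_i=v_i=iz$ in a boundary layer.

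The core step is the block decomposition. I would look at the set $S=\{i\ge 2: u_i-u_{i-1}>z^\ast\}$ of indices where the nearest-neighbour term is in the non-convex regime; say $S=\{i_1<i_2<\dots<i_m\}$. This partitions $\{0,\dots,k\}$ into consecutive blocks, where each block $[i_{r},i_{r+1})$ has the property that $u_i-u_{i-1}\le z^\ast$ for all its interior indices and exactly one ``bad'' difference at its left endpoint. A block of length $N$ carries an energy that, after optimizing the $v$-variable over the block with the boundary values of $v$ frozen at the block endpoints, is bounded below by $N\,g_N(\bar z_r)$, where $\bar z_r$ is the average slope on the block — this is precisely the content of the definition \eqref{gNgen}, where the $+\eta$ term absorbs the single bad $f$-difference via $f(z)\ge\eta$ for $z>z^\ast$, and the $a\sum(v_i-v_{i-1})^2$ term is left as-is since we only kept nearest-neighbour $v$-interactions. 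Blocks with no bad difference at all are handled by $g_\infty(z)=f(z)+az^2$, which dominates $\inf_N g_N$ on $z\le z^\ast$ anyway (it is the $N\to\infty$ limit in spirit). Summing over blocks and using $\sum_r N_r=k$, $\sum_r N_r\bar z_r=kz$, together with the convexity of $(\inf_N g_N)^{\ast\ast}$ and Jensen's inequality, yields
\begin{equation*}
\frac1k E(u,v;[0,k]) \ge \frac1k\sum_r N_r\,g_{N_r}(\bar z_r)\ge \frac1k\sum_r N_r (\inf_N g_N)^{\ast\ast}(\bar z_r)\ge (\inf_N g_N)^{\ast\ast}(z),
\end{equation*}
and letting $k\to+\infty$ gives the claim.

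The subtle point — and the one I expect to be the main obstacle — is the treatment of the $v$-coupling term $b\sum(u_i-v_i)^2$ and the $a\sum(v_i-v_{i-1})^2$ term across block boundaries: these are not additive over the blocks in an obvious way, because the optimal $v$ on one block depends on the boundary values $v_{i_r},v_{i_{r+1}}$, which are shared with neighbouring blocks. The resolution is that, given the partition, one is free to restrict the competition for $v$ by fixing $v_{i_r}=i_r z$ (or more precisely, by re-optimizing the \emph{whole} $v$ with the $u$ held fixed, which only lowers energy, and then using that the restricted problem ``$v$ affine on the skeleton $\{i_r\}$, free in between'' is still a valid upper competitor whose value decomposes additively) — so one gets a lower bound by replacing the true $v$ with a block-additive competitor, which is legitimate since we are proving a \emph{lower} bound on a \emph{minimum}. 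One must check that imposing the affine condition on the coarse skeleton $\{i_r\}$ costs only a vanishing amount, which follows because the blocks containing the boundary layer are handled separately and the bulk estimate is unaffected by an $o(1)$ perturbation of finitely many $v_i$'s. A second technical nuisance is that the block lengths $N_r$ may be unbounded; but $g_N(z)\ge g_\infty(z)$ for $z\le z^\ast$ and in general $\inf_N g_N \le g_N$ for every $N$, so long blocks are harmless. Finally, the passage from $\frac1k\sum_r N_r g_{N_r}(\bar z_r)$ to the convexification uses only that $g_N\ge(\inf_N g_N)^{\ast\ast}$ for each $N$ together with convexity, so no further regularity of the $g_N$ is needed.
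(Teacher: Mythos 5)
Your high-level structure matches the paper's proof: partition $\{1,\dots,k\}$ into consecutive blocks so that each block has exactly one ``broken'' bond ($u_i-u_{i-1}>z^\ast$) at its left end, compare each block's energy with $N_j g_{N_j}(z_j)$, and finish with Jensen on the average slopes. But the ``subtle point'' you raise about the $v$-terms, and your proposed resolution, is where the argument goes wrong. The energy $E(u,v;[0,k])$ is \emph{exactly} additive over the block partition: all three sums -- $\sum_i f(u_i-u_{i-1})$, $a\sum_i(v_i-v_{i-1})^2$, $b\sum_i(u_i-v_i)^2$ -- run over indices $i\in\{1,\dots,k\}$, and the partition into blocks $\{k_{j-1}+1,\dots,k_j\}$ splits these index sets disjointly. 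There is no non-additivity to resolve. One then compares the \emph{actual} block energy of the fixed minimizer $(u,v)$ with $g_{N_j}$: after translating $v$ so it starts at $0$ on the block, the pair $(u,v)$ restricted to the block is an admissible competitor for the minimum defining $g_{N_j}(z_j)$ with $z_j=(v_{k_j}-v_{k_{j-1}})/N_j$, and the first (broken) $f$-term is bounded below by $\eta$, giving $\hbox{(block energy)}\ge N_j g_{N_j}(z_j)$ directly. No re-optimization of $v$ and no skeleton constraint are involved.

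Your ``resolution'' -- re-optimize $v$ with $u$ fixed, then pass to the restricted problem ``$v$ affine on the skeleton $\{i_r\}$'' and use that it decomposes additively -- has the inequality direction backwards. Imposing $v_{i_r}=i_r z$ at the skeleton points is a \emph{constraint}, so the restricted minimum is an \emph{upper} bound on $\min_v E(u,v)$, which is useless for a lower bound; likewise ``replacing the true $v$ with a block-additive competitor'' would change the value of $E(u,v)$ in an uncontrolled direction unless you first argue the replacement has smaller energy, which you don't. Fortunately none of this machinery is needed, for the reason above. Two smaller remarks: the preliminary modification to a boundary layer where $u_i=v_i=iz$ is unnecessary here and is not used in the paper's proof; and when you write $\bar z_r$ as ``the average slope on the block,'' you should take the average slope of $v$, since the boundary condition in the definition of $g_N$ is on $v$, not $u$ (both give $\sum_j(N_j/k)z_j=z$, but only the $v$-slope makes the restriction directly admissible for $g_{N_j}$).
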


\begin{proof}
The proof is obtained giving a lower bound for the minima
$$
{1\over k}\min\Bigl\{\sum_{i=1}^{k} f_{\eta}(u_i-u_{i-1})+  a\sum_{i=1}^k  (v_i-v_{i-1})^2+ b\sum_{i=1}^k (u_i-v_i)^2: u_k-u_0=v_k-v_0=kz\Bigr\},
$$
where 
$$
f_{\eta}(z)=\begin{cases} f(z) & \hbox{ if } z\le z^*\\
\eta & \hbox{ if } z> z^*.
\end{cases}
$$
Consider a minimizer $u$ for such problem. 
If $u_i-u_{i-1}\le z^*$ for all $i$ then by the convexity of $f$ this minimum equals the value $g_0(z)$.
If otherwise $u_i-u_{i-1}>z^*$ for some $i$, note that we can always suppose that this holds for $i=1$, by splitting the discrete interval $\{0,\ldots, k\}$ into subsets $\{i_{k_{j-1}}\ldots, i_{k_{j}}\}$, $j=1,\dots, r$, in which $u_i-u_{i-1}>z^*$ only for $i=i_{k_{j-1}}+1$, we obtain a lower estimate with 
$$
\sum_{j=1}^r {N_j\over k} g_{N_j} (z_j)
$$
where $N_j= k_j-k_{j-1}$ and $z_j= {u_{k_j}- u_{k_{j-1}}\over N_j}$, so that we have the convex combination
$$
\sum_{j=1}^r {N_j\over k}z_j = z\,.
$$
From this estimate \eqref{lobbi} follows.
\end{proof} 

We will prove general properties of the functions $g_N$ in Section \ref{truco:sect}, which will allow to describe the structure of their convex envelope and their optimality in computing $\widehat Q_{\bf m}f$.

\subsection{Reduction to a local problem for the exponential kernel}  
We now introduce some notation for the exponential kernels. We define 
\begin{equation}\label{defexpker}
\mathbf m =\mathbf m^\sigma=\{m^\sigma_n\}=\{e^{-\sigma n}\},
\end{equation}
where 
 $\sigma>0$ is a given constant. 
Highlighting the dependence on the parameter $\sigma$, we set 
 \begin{equation}\label{def-tsigmahat} 
\left. \begin{array}{ll}
\hspace{-3mm}\displaystyle \widehat Q_\sigma f(z)=\!\lim_{k\to+\infty}\frac{1}{k}\inf\Big\{\sum_{i=1}^k f(u_i-u_{i-1})+
\sum_{i,j=0}^k e^{-|i-j|\sigma}(u_i-u_j)^2: 
 u \in \mathcal A(k; z)\Big\}, 
\end{array}
\right.
\end{equation} 
and introduce the corresponding $\bf m^\sigma$-transform of $f$
 \begin{equation}\label{def-tsigma} Q_\sigma f(z) = \widehat Q_\sigma f(z)- a_{{\bf m}^\sigma}z^2
 =\widehat Q_\sigma f(z)-\frac{2e^{-\sigma}(1+e^{-\sigma})}{(1-e^{-\sigma})^3}z^2.
 \end{equation} 

Let $F_\e^\sigma$ denote the non-local functionals of the type defined in \eqref{ } with exponential kernel $m_n=e^{-\sigma n}$; that is, 
\begin{equation}\label{def-fe-sigma}
F^\sigma_\e(u;I)=\e\sum_{i\in \mathcal I^\ast_\e(I)}f\Big(\frac{u_i-u_{i-1}}{\e}\Big)+
\e \sum_{i,j\in \mathcal I_\e(I)}
e^{-\sigma|i-j|}\Big(\frac{u_i-u_j}{\e}\Big)^2, 
\end{equation} 
where   
$\mathcal I_\e(I)=\{i\in\mathbb Z: \e i\in I\}$, $\mathcal I^\ast_\e(I)=\{i\in\mathbb Z: \e i,\e(i-1)\in I\}$ and the function $u$ belongs to $\mathcal A_\e(I)=\{u\colon \e \mathcal I_\e(I)\to\mathbb R\}$ as defined in \eqref{def-ind}. 
Following the general approach formulated in Section \ref{hidise}, given 
$a,b>0$ we define the local two-variable energies  
\begin{equation}\label{def-2var}
E_\e(u,v;I)=\e\!\!\sum_{i\in \mathcal I^\ast_\e(I)}\!\!f\Big(\frac{u_i-u_{i-1}}{\e}\Big)+
\frac{a}{\e}\sum_{i\in \mathcal I^\ast_\e(I)}\!\!(v_i-v_{i-1})^2+
\frac{b}{\e}\sum_{i\in \mathcal I^\ast_\e(I)}\!\!(u_i-v_i)^2 
\end{equation}
for $u,v\in\mathcal A_\e(I)$. 
We will prove an asymptotic equivalence result between $F_\e^\sigma$ and $E_\e$; more precisely, that the $\Gamma$-limits 
of the two sequences are the same for a suitable choice of $a=a_\sigma$ and $b=b_\sigma$.  The $\Gamma$-limit of $E_\e$ is computed with respect to the convergence $u^\e,v^\e\to u$ defined as the convergence in $L^2(I)$ of the piecewise-constant extensions of $u^\e$ and $v^\e$ to the function $u\in H^1(I)$. 
The result is obtained, in the spirit of Section \ref{hidise}, by explicitly integrating out the variable $v$. 
\begin{theorem}[asymptotic equivalence]\label{teo-equivalence-exp}
Let 
\begin{equation}\label{aeb}
a_\sigma=a_{{\bf m}^\sigma}=\frac{2(1+e^{-\sigma}) e^{-\sigma}}{(1-e^{-\sigma})^3}, \quad \quad 
b_\sigma=
\frac{2(1+e^{-\sigma})}{(1-e^{-\sigma})}, 
\end{equation} 
and set $E^\sigma_\e=E_\e$ as defined in \eqref{def-2var} with $a=a_\sigma$ and $b=b_\sigma$. 
Then the sequence $E^\sigma_\e$ $\Gamma$-converges  to the same $\Gamma$-limit as the sequence $F^\sigma_\e$.  
\end{theorem}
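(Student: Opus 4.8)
The plan is to prove Theorem \ref{teo-equivalence-exp} by combining the higher-dimensional embedding machinery of Section \ref{hidise} with an explicit identification of the minimizing auxiliary variable for the exponential kernel. The starting point is Theorem \ref{teo-equivalence}, which tells us that $\widehat Q_\sigma f$ (the $\Gamma$-limit of $F^\sigma_\e$, after the usual rescaling $\e_k = 1/k$) equals the limit of scaled minimum problems for the two-variable energy $\sum_i f(u_i-u_{i-1}) + H^k_{\bf m}(u,v)$ over $u,v\in\mathcal A(k,z)$. So the bulk of the proof reduces to showing that, for the exponential kernel $m_n = e^{-\sigma n}$, the quadratic form $2 s_{\bf m}\langle A^k_{\bf m}v,v\rangle + 2s_{\bf m}\langle u-v,u-v\rangle$ coincides, asymptotically and after the boundary-layer corrections already handled in Lemma \ref{2v-lemma}, with the nearest-neighbour form $a_\sigma \sum_i (v_i-v_{i-1})^2 + b_\sigma\sum_i(u_i-v_i)^2$ appearing in $E^\sigma_\e$ — i.e. that $s_{\bf m}A^k_{\bf m}$ is (up to boundary terms and the normalization) the discrete second-difference operator scaled by $a_\sigma$, and $2s_{\bf m} = b_\sigma$.

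The concrete computation I would carry out first is the inversion of the Toeplitz matrix $M^k_{\bf m} = (e^{-\sigma|i-j|})_{i,j}$. The key classical fact is that for a suitable choice of the free diagonal entry $m_0$ (which, as noted in the proof of Lemma \ref{2v-lemma}, does not affect $J^k_{\bf m}$), the matrix $e^{-\sigma|i-j|}$ has a tridiagonal inverse: up to boundary rows, $(M^k_{\bf m})^{-1}$ is proportional to $\mathrm{tridiag}(-e^{-\sigma},\,1+e^{-2\sigma},\,-e^{-\sigma})$, which is an affine combination of the identity and the discrete Laplacian $-(v_{i+1}-2v_i+v_{i-1})$. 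Plugging this into the formula $A^k_{\bf m} = s_{\bf m}(\text{modified }M^k_{\bf m})^{-1} - I$ from \eqref{formulaA}, the identity parts cancel and what survives is exactly a constant times the discrete Laplacian, i.e. $\langle A^k_{\bf m}v,v\rangle = (\text{const})\sum_i(v_i-v_{i-1})^2$ modulo boundary terms. Matching constants: one has $s_{\bf m} = m_0 + 2\sum_{n\ge1}e^{-\sigma n} = m_0 + 2e^{-\sigma}/(1-e^{-\sigma})$, and a short calculation (choosing $m_0$ to make $M^k_{\bf m}$ invertible with tridiagonal inverse, e.g. $m_0$ such that the natural value is $1$) yields $2s_{\bf m}\cdot(\text{Laplacian coefficient}) = a_\sigma$ and $2 s_{\bf m} = b_\sigma$ with the stated values \eqref{aeb}; I would verify these against $a_{{\bf m}^\sigma} = 2\sum_{n\ge1} e^{-\sigma n} n^2 = 2e^{-\sigma}(1+e^{-\sigma})/(1-e^{-\sigma})^3$ from \eqref{defam}, which is already recorded in \eqref{def-tsigma}.

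Once the quadratic forms are identified, the argument closes as follows. For the inequality $\Gamma\text{-}\lim E^\sigma_\e \ge \Gamma\text{-}\lim F^\sigma_\e$: given a sequence realizing the minimum of the $E^\sigma_\e$-problem (with both $u^\e$ and $v^\e$ satisfying the affine boundary data), we use that minimizing $H^k_{\bf m}(u,v)$ over $v$ gives exactly $J^k_{\bf m}(u)$ up to $o(1/k)$ by Lemma \ref{2v-lemma}, and that $E^\sigma_\e$ is bounded below by the $H^k_{\bf m}$-energy (they agree up to boundary terms), so $u^\e$ alone is an admissible competitor for the $F^\sigma_\e$-problem with asymptotically no larger energy. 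For the reverse inequality: given a near-minimizer $u^\e$ for $F^\sigma_\e$, which by \eqref{crescitasotto} has $\|u^\e\|_k$ equibounded and (via Lemma \ref{boundarycond}) may be taken constant in thin boundary layers, we take $v = \widehat v^k$ the modified optimal auxiliary variable from Remark \ref{ossv}, which agrees with $u^\e$ at the endpoints (hence is admissible for the $E^\sigma_\e$-problem) and satisfies $H^k_{\bf m}(u^\e,\widehat v^k) = J^k_{\bf m}(u^\e) + \|u^\e\|_k^2\, o(1/k)$; since the $E^\sigma_\e$-energy of $(u^\e,\widehat v^k)$ differs from this by boundary terms that are $o(1)$ after scaling, we get the matching upper bound. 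Both $\Gamma$-limits are then integral functionals $\int_I \phi(u')\,dt$ for the same limit density (each being of the form covered by \cite[Theorem 6.3]{AC} / Remark \ref{queffgamma}), and equality of the scaled minima over all boundary data $z$ forces equality of the densities, hence of the $\Gamma$-limits.

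The main obstacle I anticipate is bookkeeping the boundary contributions precisely enough. The interior identification of $(M^k_{\bf m})^{-1}$ with a tridiagonal matrix is exact only away from the first and last rows; the correction in \eqref{formulaA} (the $c_{\bf m}$ entries) is designed to absorb the leftover boundary term $-2t_{\bf m}(u_k)^2$ appearing in the expansion of $J^k_{\bf m}$ in the proof of Lemma \ref{2v-lemma}, and one must check that with the exponential kernel this correction is precisely the one that turns the constrained inverse into $a_\sigma$ times the genuine discrete Laplacian with the right boundary behaviour, so that no spurious $O(1)$ (unscaled) boundary energy is created or destroyed when passing between $H^k_{\bf m}$ and the explicit nearest-neighbour form of $E^\sigma_\e$. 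Since $t_{\bf m} = \sum_n n e^{-\sigma n} = e^{-\sigma}/(1-e^{-\sigma})^2$ is finite and all tails decay exponentially (far faster than the $n^{-\beta}$, $\beta>3$, hypothesis of \eqref{propm}), the error estimates in Lemma \ref{2v-lemma} and Remark \ref{ossv} apply with ample room, and the boundary terms are genuinely $o(1/k)$ after scaling; nonetheless, writing this cleanly — and making sure the two $\Gamma$-limits are compared with respect to the same notion of convergence (the $L^2$-convergence of piecewise-constant extensions, with $v^\e$ slaved to $u^\e$ near the boundary as in the last sentence of Remark \ref{ossv}) — is where the care is needed.
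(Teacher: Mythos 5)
Your proposal follows essentially the same route as the paper: it rests on the tridiagonal inverse of the exponential Toeplitz matrix (which makes $A^\e_\sigma$ a multiple of the discrete Laplacian and gives exactly the identity $H^{k_\e}_\sigma(u,v)=H_\e(u,v)$ exploited in Lemma \ref{lemma-equivalence}), derives $a_\sigma$ and $b_\sigma$ from the same computation $s_\sigma=\frac{1+e^{-\sigma}}{1-e^{-\sigma}}$ and $\langle A^\e_\sigma v,v\rangle=\frac{e^{-\sigma}}{(1-e^{-\sigma})^2}\sum_i(v_i-v_{i-1})^2$, and closes with the same two-sided $\Gamma$-comparison via Lemma \ref{boundarycond} and the minimization-in-$v$ mechanism of Lemma \ref{2v-lemma} and Remark \ref{ossv}. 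One small imprecision: in the lower estimate you should run the argument for an arbitrary $L^2$-converging pair $(u^\e,v^\e)$ of bounded energy, not just for minimizers — but the mechanism you describe handles that case without change, and the final appeal to equality of scaled minima over all $z$ is a valid (if redundant) way to conclude once both $\Gamma$-limits are known to be homogeneous integral functionals.
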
 

\begin{remark}[asymptotic behaviour controlled by $\sigma$]\rm
We can interpret the extremal regimes of strong and weak additivity in terms of the parameters of the two-parameter energies \eqref{def-2var}. Let $a_\sigma, b_\sigma$ be given by \eqref{aeb}. As $\sigma\to 0$ we have both $a_\sigma\to +\infty$ and $b_\sigma\to +\infty$, with an increasing strength of the effect of the term involving the distance of $u$ from the affine function $zi$. Conversely, when $\sigma\to+\infty$ we have $a_\sigma\to 0$, and the role of this distance term gradually diminishes. 
\end{remark}

\begin{remark}[equivalence with arbitrary coefficients]\label{absigma}\rm 
The equivalence result in Theorem \ref{teo-equivalence-exp} can be extended to arbitrary pairs $a,b>0$ up to considering 
the non-local functionals with kernel $m_n=\varrho e^{-\sigma n}$; that is, the functionals given by 
$$
F^{\varrho,\sigma}_\e(u;I)=\e\sum_{i,i-1\in \mathcal I_\e(I)}f\Big(\frac{u_i-u_{i-1}}{\e}\Big)+
\e \, \varrho\sum_{i,j\in \mathcal I_\e(I)}e^{-\sigma |i-j|}
\Big(\frac{u_i-u_j}{\e}\Big)^2, 
$$
with the choices 
\begin{equation}\label{absigmarho}
\sigma=\sigma_{a,b}=2\sinh^{-1}\Big(\frac{1}{2}\sqrt{\frac{b}{a}}\Big) \quad \hbox{ and } \quad \varrho=\varrho_{a,b}=\frac{b^2}{4a\sinh(\sigma_{a,b})}.
\end{equation}
Indeed, with this definition we get 
\begin{eqnarray*}
\frac{a}{\varrho_{a,b}}=\frac{2(1+e^{-\sigma_{a,b}})e^{-\sigma_{a,b}}}{(1-e^{-\sigma_{a,b}})^3}
=a_\sigma
\ \ \hbox{ and }\ \ \frac{b}{\varrho_{a,b}}=\frac{2(1+e^{-\sigma_{a,b}})}{1-e^{-\sigma_{a,b}}}
=b_\sigma, 
\end{eqnarray*}
so that we can apply Theorem \ref{teo-equivalence-exp} obtaining the equivalence between $\frac{1}{\varrho}F_\e^{\varrho,\sigma}$ and $\frac{1}{\varrho}E^{\sigma}_\e$.  
The corresponding (trivial) generalization of  $Q_\sigma f$ in \eqref{def-tsigma} 
can be obtained by defining
\begin{equation}\label{def-tsigmar} 
\widehat Q_{\sigma,\varrho} f(z)=\!\lim_{k\to+\infty}\frac{1}{k}\inf\Big\{\sum_{i=1}^k f(u_i-u_{i-1})+
\varrho\sum_{i,j=0}^k e^{-|i-j|\sigma}(u_i-u_j)^2: 
 u \in \mathcal A(k; z)\Big\}, 
 \end{equation} \
and setting $Q_{\sigma,\varrho} f(z)=\widehat Q_{\sigma,\varrho} f(z)-a_\sigma \varrho z^2$, with $a_\sigma$ as in \eqref{aeb}.  \end{remark}

The proof of Theorem \ref{teo-equivalence-exp} is based on the following lemma, which 
allows to integrate out the variable $v$ by applying the general result of 
Lemma \ref{2v-lemma} to the case of exponential kernels.

\begin{lemma}\label{lemma-equivalence} 
Let $L>0$ and $k_\e=\lfloor\frac{L}{\e}\rfloor$. We fix $\alpha\in(0,1)$ and set $n_\e=\lfloor (k_\e)^\alpha \rfloor$.
Let $F^\sigma_\e$ be given by \eqref{def-fe-sigma} and $E^\sigma_\e$ be given by \eqref{def-2var} 
with $a_\sigma,b_\sigma$ as in \eqref{aeb} and $I=[0,L]$.
Then, if $u^\e\in\mathcal A_\e=\mathcal A_\e([0,L])$ satisfies $u^\e_i=u^\e_0$ for $i\leq n_\e$, $u^\e_i=u^\e_{k_\e}$ for $i\geq k_\e-n_\e$, we have
\begin{equation}\label{formula-lemma}
\min\{E^\sigma_\e(u^\e,v;[0,L])\!:  v\in 
\mathcal A_\e^{\#}(u^\e)\}=F^\sigma_\e(u^\e; [0,L])+\|u^\e\|^2_{L^2}\ o(1)_{\e\to 0}
\end{equation}
where $\mathcal A_\e^{\#}(u^\e)=\{v\in 
\mathcal A_\e: v_0=v_1=u^\e_0, \ v_{k_\e}=v_{k_\e-1}=u^\e_{k_\e}\}$. 
\end{lemma}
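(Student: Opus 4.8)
\textbf{Proof plan for Lemma \ref{lemma-equivalence}.}

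The plan is to exploit the fact that the matrix $A^k_{\bf m}$ constructed in Lemma \ref{2v-lemma} becomes, for the exponential kernel $m_n=e^{-\sigma n}$, an honest tridiagonal (nearest-neighbour) matrix, so that the quadratic form $2s_{\bf m}\langle A^k_{\bf m}v,v\rangle + 2s_{\bf m}\langle u-v,u-v\rangle$ is exactly of the additive form $a_\sigma\sum_i(v_i-v_{i-1})^2 + b_\sigma\sum_i(u_i-v_i)^2$ up to boundary corrections. Concretely, I would first compute the inverse of the Toeplitz matrix $M^k_{\bf m}=(e^{-\sigma|i-j|})_{i,j}$ with the free parameter $m_0$ chosen suitably (the natural choice being $m_0=1$, or an adjusted value making the inverse cleanly tridiagonal): it is classical that the inverse of such an exponential-decay Toeplitz matrix is tridiagonal, with constant diagonal and off-diagonal entries determined by $e^{-\sigma}$, plus modified corner entries. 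Substituting this into formula \eqref{formulaA} and using $s_{\bf m}=m_0+2\sum_{n\ge1}e^{-\sigma n}=m_0+\frac{2e^{-\sigma}}{1-e^{-\sigma}}$ and $t_{\bf m}=\sum_{n\ge0}ne^{-\sigma n}=\frac{e^{-\sigma}}{(1-e^{-\sigma})^2}$, I would identify $2s_{\bf m}A^k_{\bf m}$ (away from the corners) with the nearest-neighbour difference operator scaled by exactly $a_\sigma$ from \eqref{aeb}, and the penalty term $2s_{\bf m}\langle u-v,u-v\rangle$ with $b_\sigma\sum_i(u_i-v_i)^2$. This is the main computational obstacle — it is the linear-algebra heart of the lemma — but it is an explicit finite computation with geometric series, not a conceptual difficulty.

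Next I would invoke Lemma \ref{2v-lemma} directly: it already gives $\min_v H^k_{\bf m}(u,v)=J^k_{\bf m}(u)+\|u\|_k^2\,o(1/k)$, where $J^k_{\bf m}(u)=\sum_{i,j}e^{-\sigma|i-j|}(u_i-u_j)^2$ is precisely the (unscaled) nonlocal term appearing in $F^\sigma_\e$ after the change of variables $\e=1/k$, $L=1$ (up to the trivial rescaling to a general interval $[0,L]$ with $k_\e=\lfloor L/\e\rfloor$). Thus once I have written $H^k_{\bf m}$ in the additive form, the minimum over the unconstrained $v$ equals $F^\sigma_\e(u^\e;[0,L])$ up to the stated infinitesimal error. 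Here the hypothesis that $u^\e$ is constant on the two boundary layers $[0,n_\e]$ and $[k_\e-n_\e,k_\e]$ with $n_\e=\lfloor k_\e^\alpha\rfloor$ is exactly what is needed to control the corner-term discrepancies $-2t_{\bf m}(u_k)^2$ and the $o(k^{1/2-\alpha\beta})$ remainders in \eqref{accab}; for the exponential kernel $\beta$ can be taken arbitrarily large, so any $\alpha\in(0,1)$ works, consistent with the statement.

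Finally I must upgrade the unconstrained minimum to the constrained minimum over $\mathcal A^\#_\e(u^\e)=\{v: v_0=v_1=u^\e_0,\ v_{k_\e}=v_{k_\e-1}=u^\e_{k_\e}\}$. For this I would use Remark \ref{ossv}: the unconstrained minimizer $v^{k,\min}$ given by \eqref{vsol} already satisfies $|v^{k,\min}_0-u^\e_0|+|v^{k,\min}_{k}-u^\e_{k}|=o(k^{1/2-\alpha\beta})\|u^\e\|_k$ because $u^\e$ vanishes (after translation) on the boundary layer and $m_k,m_{k-1},\dots$ decay exponentially; hence a local modification near the two endpoints produces $\widehat v^k\in\mathcal A^\#_\e(u^\e)$ with $H^k_{\bf m}(u^\e,\widehat v^k)\le H^k_{\bf m}(u^\e,v^{k,\min})+\|u^\e\|_k^2\,o(1/k)$, while the reverse inequality is trivial since $\mathcal A^\#_\e(u^\e)$-admissible $v$ are in particular admissible for the unconstrained problem. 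Combining the three steps and translating back from the $\|\cdot\|_k$ normalization to $\|u^\e\|^2_{L^2}$ (they differ by the factor $\e$ absorbed in the scaling of $E^\sigma_\e$) yields \eqref{formula-lemma}. The only real work, to reiterate, is Step 1: verifying that the exponential kernel is the precise case in which $A^k_{\bf m}$ is tridiagonal and reading off that the resulting coefficients are exactly $a_\sigma$ and $b_\sigma$.
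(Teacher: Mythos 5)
Your proposal is correct and follows essentially the same route as the paper: compute the tridiagonal inverse of the exponential Toeplitz matrix $M^\e_\sigma$, substitute into formula \eqref{formulaA} to show that $H^{k_\e}_{\bf m}$ coincides exactly with the additive two-variable form $H_\e(u,v)=\frac{a_\sigma}{\e}\sum(v_i-v_{i-1})^2+\frac{b_\sigma}{\e}\sum(u_i-v_i)^2$ (using that $2s_\sigma=b_\sigma$ and that the row sums of $A^\e_\sigma$ vanish), invoke Lemma \ref{2v-lemma} for the unconstrained minimum, and use Remark \ref{ossv} to pass to the constrained class $\mathcal A^\#_\e(u^\e)$, exploiting that the exponential decay lets $\beta$ be arbitrarily large so any $\alpha\in(0,1)$ is admissible. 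The only residual work you leave implicit — the explicit identity showing $2s_\sigma\langle A^\e_\sigma v,v\rangle=\frac{a_\sigma}{\e}\sum(v_i-v_{i-1})^2$ via $\sum_j(A^\e_\sigma)_{ij}=0$ — is exactly the paper's display \eqref{hh}.
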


\begin{proof}
For $u,v\in \mathcal A_\e$, we set 
\begin{eqnarray} 
&&\hskip-1.2cm 
H_\e(u,v)=\frac{a_\sigma}{\e}\sum_{i=1}^{k_\e}(v_i-v_{i-1})^2+
\frac{b_\sigma}{\e}\sum_{i=1}^{k_\e}(u_i-v_i)^2
=E_\e^\sigma(u,v;[0,L])-\e\sum_{i=1}^{k_\e}f(u_i-u_{i-1})\nonumber\\
&&\hskip-1.2cm 
J_\e(u)=\frac{1}{\e}\sum_{i,j=0}^{k_\e} e^{-\sigma |i-j|}(u_i-u_j)^2
=F_\e^\sigma(u,v;[0,L])-\e\sum_{i=1}^{k_\e}f(u_i-u_{i-1}).\nonumber
\end{eqnarray}
Up to translations, we can assume $u^\e_0=0$ (and hence $u^\e_{i}=0$ for $i\leq L\e^{-\alpha}$).
We introduce the $(k_\e+1)\times (k_\e+1)$ matrix $M^\e_\sigma=(m_{ij})$ 
given by $m_{ij}=m^\sigma_{|i-j|}=e^{-\sigma |i-j|}$, $i,j=0,\dots, k_\e$. 
Note that $m_n^\sigma=e^{-\sigma n}$ satisfies $m_n^\sigma=o(n^{-\beta})$ for any $\beta$ and in particular for $\beta>\frac{3}{\alpha}$.  
In order to apply Lemma \ref{2v-lemma}, 
we compute $s_\sigma=s_{\bf m^\sigma}$ and the matrix $A_\sigma^\e=A^{k_\e}_{\bf m}$ given by formula \eqref{formulaA}, obtaining   
\begin{equation}\label{formulaAs}
s_{\sigma}=m^\sigma_0+2\sum_{n=1}^{+\infty}m^\sigma_n=\frac{1+e^{-\sigma}}{1-e^{-\sigma}}
\ \ \ 
\hbox{\rm and } \ \ \ A_{\sigma}^\e=D^\e_\sigma (M^\e_\sigma)^{-1}-I, 
\end{equation}
where $D^\e_\sigma$ is the $(k_\e+1)\times(k_\e+1) $ diagonal matrix with diagonal $\{1+e^{-\sigma}, s_\sigma, \dots, s_\sigma, 1+e^{-\sigma}\}$. 
Moreover, in this case we can compute the inverse of the matrix $M^\e_\sigma$, which is the tridiagonal $(k_\e+1)\times (k_\e+1)$ matrix 
given by 
\begin{equation}\label{inversa}
(M^\e_\sigma)^{-1}=
\frac{1}{1-e^{-2\sigma}}\begin{pmatrix}
1&-e^{-\sigma}&0&\dots&&0\\
-e^{-\sigma}&1+e^{-2\sigma}&-e^{-\sigma}&0&\dots&0\\
0&-e^{-\sigma}&1+e^{-2\sigma}&-e^{-\sigma}&0&\dots\\
\dots&\dots&\dots&\dots&\dots&\dots\\
0&\dots&\dots&0&-e^{-\sigma}&1
\end{pmatrix}.\end{equation} 

Now, to each $u\in \mathcal A_\e$ we associate the 
corresponding function defined on $\{0,\dots, k^\e\}$ by $i\mapsto u(\e i)$; with a slight abuse of notation, we still denote this function by $u$. 
Setting  
$$H^{k_\e}_\sigma(u,v)=
\frac{2s_{\sigma}}{\e}\langle A^\e_{\sigma} v,v\rangle+\frac{2s_{\sigma}}{\e}\langle u-v, u-v\rangle$$
for $u,v\colon \{0,\dots, k^\e\}\to\mathbb R$, 
we can then apply Lemma \ref{2v-lemma} 
with $k=k^\e$, obtaining  
\begin{eqnarray}\min\{H^{k_\e}_\sigma(u^\e,v):\  v\colon\{0,\dots,k_\e\}\to\mathbb R\}
=J_\e(u^\e)+\|u^\e\|_{L^2}^2\ o(1)_{\e\to 0}. \label{Jmin}
\end{eqnarray}
We conclude by proving that, up to an infinitesimal term, the minimum of $H_\sigma^{k_\e}(\tilde u^\e,\cdot)$ 
on $\mathcal A_\e$ coincides with the minimum of $H_\e(u^\e,\cdot)$ 
on $\mathcal A^{\#}_\e$. 
Indeed, given $u,v\in\mathcal A_\e$ we can write 
\begin{eqnarray}\label{hh}
H_\sigma^{k_\e}(u,v)
&=&-\frac{s_\sigma}{\e} \sum_{i,j=0}^{k_\e}(A_\sigma^\e)_{ij}(v_i-v_j)^2 
+\frac{2s_\sigma}{\e}\sum_{i=0}^{k_\e}\Big(\sum_{j=0}^{k_\e}(A_\sigma^\e)_{ij}\Big)v_i^2 +
\frac{2s_\sigma}{\e}\sum_{i=0}^{k_\e}(u_i-v_i)^2\nonumber \\
&=&\frac{2(1+e^{-\sigma}) e^{-\sigma}}{\e (1-e^{-\sigma})^3} \sum_{i=1}^{k_\e}(v_i-v_{i-1})^2 
+
\frac{2(1+e^{-\sigma})}{\e(1-e^{-\sigma})}\sum_{i=0}^{k_\e}(u_i-v_i)^2\nonumber\\ 
&=& H_\e(u,v),  
\end{eqnarray} 
since $\sum_{j=0}^{k_\e}(A_\sigma^\e)_{ij}=0$ for any $i$ 
by \eqref{formulaAs} and \eqref{inversa}.  
This formula in particular implies 
\begin{equation*}
\langle A^\e_{\sigma} v,v\rangle
=\frac{e^{-\sigma}}{(1-e^{-\sigma})^2} \sum_{i=1}^{k_\e}(v_i-v_{i-1})^2;  
\end{equation*} 
that is, estimate \eqref{stiCA} with $C=\frac{e^{-\sigma}}{(1-e^{-\sigma})^2}$, which in this case is an equality.  

Finally, recalling Remark \ref{ossv} we obtain  
\begin{equation*}
\min\{H^{k_\e}_{\sigma}(u^\e,v): v\colon\{0,\dots, k_\e\}\to\mathbb R\}=\min \{H_\e(u^\e,v): v\in \mathcal A_\e^{\#}(u^\e)\}+\|u^\e\|^2_{L^2}\ o(1)_{\e\to 0} 
\end{equation*}   
and the claim follows by \eqref{Jmin}.  
\end{proof}

\begin{proof}[Proof of Theorem {\rm\ref{teo-equivalence-exp}}] 

\medskip 

\noindent {\it Upper estimate.}  
Let $F^\sigma(u;[0,L])$ be the $\Gamma$-limit of the sequence $F_\e^\sigma$. 
Let $u\in L^2(0,L)$ be such that $F^\sigma(u;[0,L])<+\infty$ and let $u^\e\in \mathcal A_\e$ be a recovery sequence for the $\Gamma$-limit $F^\sigma(u;[0,L])$. 
Let $\hat u^\e$ be the sequence given by Lemma \ref{boundarycond} and $v^{\e, {\rm min}}$ be obtained by minimization of the minimum problem in \eqref{formula-lemma} with $u^\e=\hat u^\e$.  
Recalling Lemma \ref{lemma-equivalence}, we get
\begin{eqnarray*} 
\limsup_{\e\to 0} E^\sigma_\e(\hat u^{\e}, v^{\e, {\rm min}};[0,L])
&\leq&
\limsup_{\e\to 0}F^\sigma_\e(\hat u^\e,[0,L])\\
&\leq& \limsup_{\e\to 0} F^\sigma_\e(u^{\e};[0,L]).  
\end{eqnarray*}
This gives the upper estimate for the $\Gamma$-limit of $E^\sigma_\e$. 

\medskip 

\noindent {\it Lower estimate.} 
Let $u\in H^1(0,L)$ and let $u^\e, v^\e$ converge to $u$ in $L^2(0,L)$ and be such that 
$\sup E^\sigma_\e(u^\e,v^\e; [0,L])\leq S<+\infty.$ Let $\hat u^\e, \hat v^\e$ be the sequences given by Lemma \ref{boundarycond}(B).
Hence 
\begin{equation}\label{limsup-G-costanti}
\liminf_{\e\to 0}E^\sigma_\e(\hat u^\e,\hat v^\e; (0,L))\leq \liminf_{\e\to 0}E^\sigma_\e(u^\e,v^\e; (0,L)).
\end{equation}
Applying Lemma \ref{lemma-equivalence} we obtain 
\begin{eqnarray*}
\liminf_{\e\to 0}E^\sigma_\e(u^\e,v^\e; [0,L])&\geq& \liminf_{\e\to 0}E^\sigma_\e(\hat u^\e,v^{\e,{\rm min}}(\hat u^\e); [0,L])\\
&\geq& \liminf_{\e\to 0}F^\sigma_\e(\hat u^\e; [0,L]).
\end{eqnarray*}
This concludes the proof. 
\end{proof}

By the results in Section \ref{hidise} we can use the equivalence above to give a useful characterization of  $\widehat Q_\sigma f$.

\begin{remark}[representation of  $\widehat Q_\sigma f$ in terms of local functionals]\label{localQ}\rm
Formula \eqref{equiv-2v} in Theorem \ref{teo-equivalence} 
and equality \eqref{hh} prove the following formula for the function $\widehat Q_{\sigma}f$ defined in \eqref{def-tsigmahat}:  
\begin{equation}\label{trs}
\widehat Q_{\sigma}f(z)=\lim_{N\to+\infty}\frac{1}{N}\min\{E_1^\sigma(u,v;[0,N]): u_0=v_0=0, u_N=v_N=Nz\},
\end{equation}
where $E_1^\sigma$ is defined by \eqref{def-2var} with $\e=1$ and $a=a_\sigma,b=b_\sigma$ satisfying \eqref{aeb}.  
\end{remark}

\begin{remark}[representation of the constrained relaxation in terms of local functionals]\label{polig}
\rm 
Formula \eqref{trs} can be extended to constrained problems; namely, we have 
\begin{equation}\label{trs-con} 
\widehat Q_{\sigma}f\Big(\frac{p}{q},z\Big)=
\liminf_{k\to+\infty}\frac{1}{kq}\min\Big\{E_1^\sigma(u,v;[0,kq]): u, v\in\mathcal A(kq;z), u\in \mathcal V\Big(kq;\frac{p}{q}\Big)\Big\},
\end{equation}
where, accordingly with the notation above, $\widehat Q_\sigma f(\theta,z)$ denotes the constrained relaxation 
 $\widehat Q_{{\bf m}_\sigma} f(\theta,z)$, and 
$\mathcal V(kq;\frac{p}{q})$ is the set of admissible constrained functions defined in \eqref{vtheta}. 
Indeed, we note that Theorem \ref{teo-equivalence} also holds for constrained relaxation, 
since we can apply Lemma \ref{2v-lemma} to $u$ satisfying a volume constraint  
(see Lemma \ref{boundary-Q}).  
\end{remark}

\begin{remark}[non-exponential kernels]\label{top1}\rm  For a general kernel $\bf m$ the matrix  
$M^k=(m_{ij})_{i,j=0}^k$ is a symmetric {\em Toeplitz matrix}. Under decay conditions on $m_n$ we can apply the arguments in Section \ref{hidise}. However, since $(M^k)^{-1}$ now is not of the form \eqref{inversa} (for some insight on the problem of the inversion of a general symmetric Toeplitz matrix we refer, e.g., to \cite{BG}), the resulting functional $H^k_\e$ does not depend on nearest neighbours only and the argument showing the optimality of the bounds can not be completed as above. 
However, for particular  classes of kernels $\bf m$ the resulting functionals $H^k_\e$  may be still amenable to  analysis, even if they involve next-to-nearest-neighbour interactions and beyond. The analytical transparency  of such functionals   will  then allow one to extract useful  information on the form of the corresponding $\widehat Q_{\bf m}f$.
\end{remark}

\subsection{Truncated convex potential}\label{truco:sect}
In this section we show some properties of 
$\widehat Q_\sigma f$ and of the corresponding phase function $\theta$ if $f$ is a general truncated convex function; that is,  
\begin{equation}\label{f-frattura}
f(z)=\begin{cases}
\tilde f(z) & \hbox{\rm if } z\leq z^\ast\\
\tilde f(z^\ast) & \hbox{\rm if } z>z^\ast,
\end{cases}
\end{equation} 
where $z^\ast>0$ and $\tilde f\colon\mathbb R\to [0,+\infty)$ is strictly convex and such that $\tilde f(0)=0$.
Note that we can suppose that $\tilde f$ satisfies the growth condition 
$$\tilde f(z)\geq c_1z^2-c_2 $$
in $[0,+\infty)$ 
for some $c_1,c_2>0$. Using the notation of Section \ref{constraint-sec}, we set $A=[z^\ast,+\infty)$.   

\begin{remark}[more general $f$]\label{generalcond}\rm
Note that the condition $\tilde f(0)=0$ can be substituted by the hypothesis that $\tilde f$ has a minimum point $z_{\rm min}<z^*$, since affine changes of variables are compatible with the definition of $\widehat Q_{\bf m}f$ by Remark \ref{properties}.
\end{remark}

\subsubsection{Characterization of $\widehat Q_\sigma f$ in terms of periodic arrangements}
Given  the local form of the problem \eqref{trs} formulated in terms of the two-variable functional $E_1^\sigma(u,v;[0,N])$, the relaxed energy  $\widehat Q_{\sigma} f$ can be obtained by optimizing the location of `broken bonds'; that is, of indices $i$ such that $u_i-u_{i-1}\in A$, similarly to what done in the case of concentrated kernels. 
The fact that these bonds can be always considered as either isolated 
or organized in a `broken island' makes the structure of oscillations 
(microstructure) compatible with the lattice. 
This makes the problem analytically tractable.

Note first that on the complement of the broken bonds  the energy coincides with its `convex part', defined as follows. 
Given $a,b>0$, for a bounded interval $I$ and $u,v\in \mathcal A_\e(I)$ 
we introduce the functional $\tilde E_\e$ given by 
\begin{equation}\label{def-etilden}
\tilde E_\e (u,v;I)=\e\sum_{i\in \mathcal I^\ast_\e(I)}\tilde f\big(\frac{u_i-u_{i-1}}{\e}\big)+
\frac{a}{\e}\!\sum_{i\in \mathcal I^\ast_\e(I)}\!(v_i-v_{i-1})^2
+\frac{b}{\e}\!\sum_{i\in \mathcal I_\e(I)}\!(u_i-v_i)^2,
\end{equation}
where we recall that $\mathcal I_\e^\ast=\{i\in\mathbb Z: \e i,\e(i-1)\in I\}$. 
Note that, since these energies will be used to compute minimum problems with Dirichlet boundary conditions, we consider the last term of the sum in the whole $\mathcal I_\e(I)=\{i\in\mathbb Z: \e i\in I\}$.  

In view of Section \ref{hidise}, for all $N\geq 2$ we can write the functions $g_N$ introduced in \eqref{gNgen} 
with $\eta$ replaced by $\tilde f(z^\ast)$ as  
\begin{equation}\label{def-gm} 
g_N^{a,b}(z)=g_N(z)
=\frac{1}{N}\big(\tilde f(z^\ast)+
\min\big\{av_1^2+\tilde E_1(u,v;[1,N]): v_N=Nz\big\}\big).
\end{equation}
They represent  the minimal energy of an array of $N$ bonds, of which the first one is broken, with given average gradient. 
By uniformity of notation, we also set 
\begin{equation}\label{g1ginfty}
g_1(z)=\tilde f(z^\ast)+az^2\quad \hbox{\rm and} \quad g_\infty(z)=\tilde f(z)+az^2.
\end{equation}
If $a=a_\sigma$ and $b=b_\sigma$ are given by \eqref{aeb}, then we set 
$$g_{N}^\sigma(z)=g_{N}(z) \ \ \ \hbox{\rm and }\ \ \ \tilde E_\e^\sigma(u,v;I)=\tilde E_\e(u,v;I).$$ 
Note that, by using $u_i=v_i=zi$ as test function in the definition of $g^\sigma_N(z)$, we get 
$$\lim_{N\to+\infty}g^\sigma_N(z)\leq\tilde f(z)+a_\sigma z^2.$$

In the following proposition, based on the analysis of the distribution of broken bonds in minimizers, 
we show that $\widehat Q_{\sigma}f(z)$, considered as the infimum of the corresponding constrained functions, can be described by only using the values $\theta=\frac{1}{N}$, which will be proved to be the locking states. The full description of this structure 
will be given in Proposition \ref{enwellfract}, after a delicate analysis of the general properties of $g_N$.  

\begin{proposition}[characterization of $\widehat Q_\sigma f$ in terms of periodic arrangements]\label{celle} 
Fixed $\sigma>0$, let $a=a_\sigma$ and $b=b_\sigma$ be given by \eqref{aeb}.  
If $f$ is a truncated convex potential as in \eqref{f-frattura}, then 
\begin{equation}
\widehat Q_{\sigma}f(z)=\Big(\inf_{N\in \mathbb N} \{g^\sigma_N\}\Big)^{\ast\ast}(z)
\,.
\end{equation} 
\end{proposition}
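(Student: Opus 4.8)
The plan is to prove the two inequalities separately, using the local two-variable representation of $\widehat Q_\sigma f$ from Remark \ref{localQ} (formula \eqref{trs}) together with the structural decomposition underlying Proposition \ref{lobaloba}.

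\textbf{Lower bound.} First I would combine Theorem \ref{teo-equivalence-exp} (or equivalently Remark \ref{localQ}) with Proposition \ref{lobaloba}. By \eqref{trs}, $\widehat Q_\sigma f(z)$ is the limit of the scaled minima of $E_1^\sigma(u,v;[0,N])$ subject to $u_0=v_0=0$, $u_N=v_N=Nz$. Since $E_1^\sigma$ has exactly the form $E(u,v;[0,k])$ with $a=a_\sigma$, $b=b_\sigma$ appearing in Proposition \ref{lobaloba}, and since for a truncated convex potential $f$ one may take $z^\ast$ as in \eqref{f-frattura} and $\eta=\tilde f(z^\ast)$ (so that $f$ is convex for $z\le z^\ast$ and $f(z)\ge\eta$ for $z>z^\ast$), Proposition \ref{lobaloba} gives directly
$$\widehat Q_\sigma f(z)\ge\Big(\inf_N g_N^\sigma(z)\Big)^{\ast\ast},$$
because with $\eta$ replaced by $\tilde f(z^\ast)$ the functions $g_N$ of \eqref{gNgen} coincide with the $g_N^\sigma$ of \eqref{def-gm}–\eqref{g1ginfty}. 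One small point to check here is that the Dirichlet constraint in \eqref{trs} is the \emph{two-point} constraint $u_N=v_N=Nz$ rather than the single constraint $v_N=Nz$; but since the first term of $E_1^\sigma$ still controls $u$ through $f$ plus the coupling term $b_\sigma(u_i-v_i)^2$, the splitting argument of Proposition \ref{lobaloba} goes through verbatim, possibly after discarding a boundary-layer contribution of size $o(N)$ as in Remark \ref{boundary-affine}. This direction should be essentially immediate.

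\textbf{Upper bound.} For the reverse inequality I would construct, for each $N$, a periodic test configuration realizing (asymptotically) $g_N^\sigma(z)$, and also the laminate mixtures needed to realize the convex envelope. Fix $z$ and choose $N$ and values $z_1,\dots,z_r$, $N_1,\dots,N_r$ with $\sum N_j=N'$, $\sum N_j z_j=N'z$, achieving a convex combination $\big(\inf_N g_N^\sigma\big)^{\ast\ast}(z)=\sum_j\frac{N_j}{N'}g_{N_j}^\sigma(z_j)+o(1)$. For each block of length $N_j$ take the minimizer $(u,v)$ of the problem defining $g_{N_j}^\sigma(z_j)$ in \eqref{def-gm} — one broken bond, the rest convex — and concatenate these blocks, repeating the whole pattern periodically to fill $[0,kN']$; then pass to the nonlocal functional via Theorem \ref{teo-equivalence-exp} (or use the $E_1^\sigma$ formulation of \eqref{trs} directly, which is cleaner). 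The key bookkeeping item is that the coupling term $\frac{b_\sigma}\e\sum(u_i-v_i)^2$ and the $v$-gradient term are \emph{local} (nearest-neighbour), so concatenation produces only $O(1)$ interface errors per block junction, hence $o(N)$ total; the boundary conditions $u_0=v_0=0$, $u_{kN'}=v_{kN'}=kN'z$ are then arranged by a cutoff/boundary-layer modification as in Remark \ref{boundary-affine}, which is admissible because the modification only affects an asymptotically negligible fraction of bonds. Dividing by $kN'$ and letting $k\to+\infty$ gives $\widehat Q_\sigma f(z)\le\sum_j\frac{N_j}{N'}g_{N_j}^\sigma(z_j)+o(1)$, and then $N'\to\infty$ (or refining the convex combination) yields $\widehat Q_\sigma f(z)\le\big(\inf_N g_N^\sigma\big)^{\ast\ast}(z)$. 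Convexity of $\widehat Q_\sigma f$ (Proposition \ref{propT}) is what lets us pass from the value $\inf_N g_N^\sigma$ at the pure states to its convex envelope.

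\textbf{Main obstacle.} The genuinely delicate point is justifying that in a \emph{minimizer} of $E_1^\sigma$ the broken bonds ($u_i-u_{i-1}>z^\ast$) can be taken isolated — i.e. that one never gains by clustering several broken bonds, so that each maximal run of consecutive bonds between two broken bonds is governed by exactly the $g_N$ with a single broken bond at its left end. This is precisely the content built into the definition \eqref{gNgen}, and Proposition \ref{lobaloba} already encapsulates the splitting that makes it work, so for the present statement it can be quoted; but if one wanted a self-contained argument this "one crack per period" structural fact (the analog of the single-`crack' phenomenon advertised in the introduction) is where the real work lies, and it hinges on the convexity of $\tilde f$ together with the exact form of the nearest-neighbour coupling inherited from the exponential kernel. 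Given the tools already in the paper — \eqref{trs}, Proposition \ref{lobaloba}, Proposition \ref{propT}, and the boundary-modification lemmas — the proof reduces to the two test-function/decomposition arguments sketched above.
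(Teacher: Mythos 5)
Your outline matches the paper's proof: the lower bound is Proposition \ref{lobaloba} applied to the local representation \eqref{trs}, and the upper bound is obtained by tiling a single near-optimal block realizing $g^\sigma_{\overline n}(z)$ periodically and then invoking the convexity of $\widehat Q_\sigma f$ (Proposition \ref{propT}) to pass from $\inf_n g^\sigma_n$ to its convex envelope, which is exactly the streamlined route you note at the end — so the explicit laminate construction in the middle of your argument is unnecessary. One small correction to your closing paragraph: nothing in this proposition requires a structural statement about \emph{minimizers} — the lower bound of Proposition \ref{lobaloba} is a decomposition estimate valid for every test configuration, with clustered broken bonds simply producing consecutive blocks of length one — so the ``one crack per period'' fact is not where the work lies here; that characterization is established separately in the remark immediately following the proposition and is used only later.
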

\begin{remark}
\rm Note that, recalling Remark \ref{absigma}, Proposition \ref{celle} holds  for any $a,b>0$ with $g^{a,b}_N$ in place of  $g^\sigma_N$ and $a$ in place of $a_\sigma$, up to substituting $\widehat Q_{\sigma}f$ with 
$\widehat Q_{\sigma_{a,b}, \varrho_{a,b}}f$ as defined in 
\eqref{def-tsigmar}, with $\sigma_{a,b}$ and $\varrho_{a,b}$ given by \eqref{absigmarho}.     
\end{remark} 

\begin{proof}[Proof of Proposition {\rm\ref{celle}}]
The lower bound is a consequence of Proposition \ref{lobaloba}.
To conclude the proof we show that 
$\widehat Q_\sigma f(z)\leq(\inf_{n\in \mathbb N} \{g^\sigma_n\})^{\ast\ast}(z)$. Since $\widehat Q_\sigma f$ is convex, it is sufficient to prove that $\widehat Q_\sigma f(z)\leq\inf_{n\in \mathbb N} \{g^\sigma_n(z)\}$. 

We fix $\delta>0$. 
For $z\in \mathbb R$ there exists $\overline n\in \mathbb N$ such that 
$g^\sigma_{\overline n}(z)\leq \inf_{n\in \mathbb N} \{g^\sigma_n(z)\}+\delta$. 
If $\overline n=1$, then 
we can take as test functions $u,v$ given by $u_i=v_i=iz$. 
For any $N\geq 1$ we get  
\begin{equation*}
\frac{1}{N}E^\sigma_1(u, v; [0,N])\leq \frac{1}{N}\tilde E^\sigma_1(u, v; [0,N])=\tilde f(z)+a_\sigma z^2
= g^\sigma_1(z)
\end{equation*}
and the result follows by taking the limit for $N\to+\infty$.
Otherwise, 
let $\overline u,\overline v\in\mathcal A_1([1,\overline n])$ be such that $\overline v_{\overline n}=\overline n z$ and  
$$\tilde f(z^\ast)+a_\sigma\overline v_1^2+\tilde E^\sigma_1(\overline u,\overline v;[1,\overline n])=\overline n\, g^\sigma_{\overline n}(z).$$
We extend $\overline u$ and $\overline v$ in $0$ by setting $\overline u_0=\overline n z-\overline u_{\overline n}$ and $\overline v_0=0$. It follows that    
\begin{eqnarray*}
E^\sigma_1(\overline u,\overline v;[0,\overline n])&\leq& E^\sigma_1(\overline u,\overline v; [1,\overline n])+b_\sigma(\overline u_1-\overline v_1)^2+a_\sigma\overline v_1^2+\tilde f(z^\ast)\\
&\leq&\tilde E^\sigma_1(\overline u,\overline v; [1,\overline n])+a_\sigma\overline \lambda^2+\tilde f(z^\ast)\\
&=& \overline n \,g^\sigma_{\overline n}(z). 
\end{eqnarray*}
For any $N\geq 1$ we choose $u^N$ and $v^N$ as test functions in $[0,\overline n N]$ 
 defined by setting $u^N_i$ equal to $(j-1)\overline n z+\overline u_{i-(j-1)\overline n}$ in each $[(j-1)\overline n, j\overline n)$, $j\in\{1,\dots, N-1\}$ and in $[(N-1)\overline n, N\overline n]$ and correspondingly $v^N_i$.  
We get 
\begin{eqnarray*}
\frac{1}{\overline n N}E^\sigma_1(u^N,v^N; [0,\overline n N])=\frac{1}{\overline n N} N E^\sigma_1(\overline u,\overline v;[0,\overline n])
\leq g^\sigma_{\overline n}(z)\leq \inf_{n\in \mathbb N} \{g^\sigma_n(z)\}+\delta.
\end{eqnarray*}
Letting $N\to+\infty$ the claim follows by the representation formula for $\widehat Q_\sigma f$ given in  \eqref{trs}.  
\end{proof}

\begin{figure}[h!]
\centerline{\includegraphics[width=0.3\textwidth]{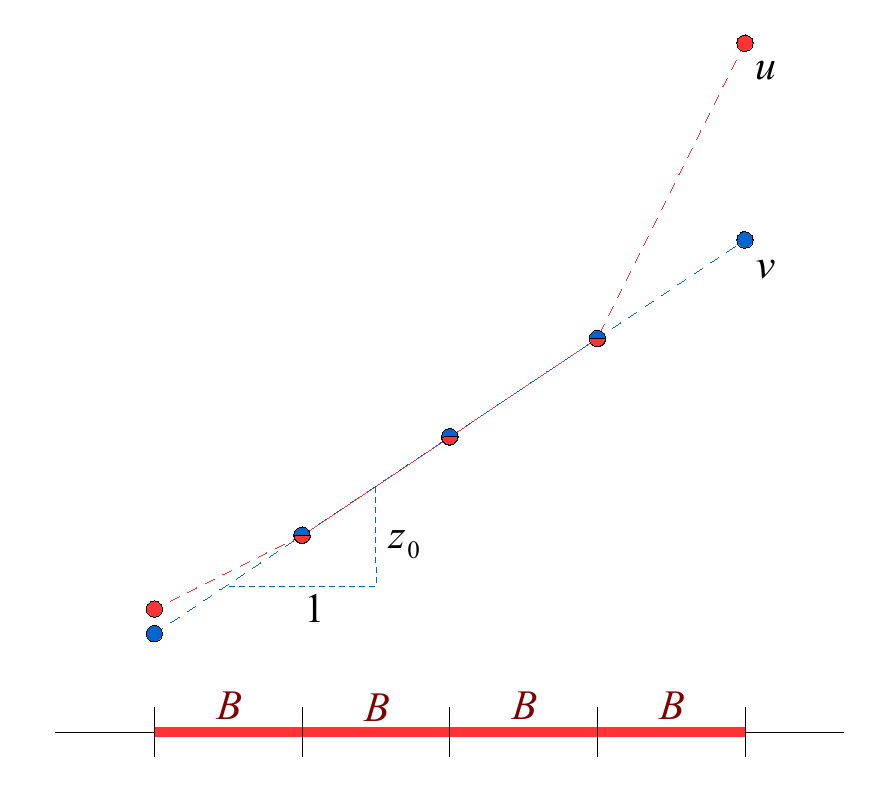}}
\caption{Shape of a minimizer of (\ref{min-e1}) in a `broken island'.}
\label{affine_equal}
\end{figure} 

\begin{remark}[simplification of the minimal configurations]\rm
Given $u\in \mathcal A_1([0,N])$, we
say that $i\in\{1,\dots, N\}$ belongs to $\mathcal B(u)$ (the set of {\it broken} indices of $u$) 
if $u_i-u_{i-1}>z^\ast$. 

For future reference we show that the solutions of 
\begin{equation}\label{min-e1}
\min\big\{E^\sigma_1(u,v; [0,N])\!: 
v_0\!=\!0, v_N\!=\!Nz, \#\mathcal B(u)=n\big\}
\end{equation} 
can be regrouped and rearranged.
Let $(u,v)$ solve (\ref{min-e1}). Note that in the union of the non-isolated `broken intervals' we can assume that $u$ and $v$ are affine and equal. More precisely, the convexity of the square and a translation argument allow to prove that there exists $z_0$ such that if $\underline i +k+1\in \mathcal B(u)$
for $k\in\{0,\dots, \underline k\}$, with $\underline k\geq 1$ then 
\begin{eqnarray*}
&&v(\underline i+k)=v(\underline i)+z_0 k \ \hbox{ for } \ k=0, \dots, ..., \underline k+1  \\
&&v(\underline i+k)=u(\underline i+k) \ \hbox{ for } \ k=1, \dots, \underline k
\end{eqnarray*}
(see Figure \ref{affine_equal}). 
As a second step, we show that if $(u,v)$ solves \eqref{min-e1}     
we can assume that there is at most one `broken zone' for $u$ with length greater than $1$. 
To this end, we extend $u$ and $v$ by periodicity by setting $u(N+j)=u(j)+Nz$ and $u(-j)=u(N-j)-Nz$
for $j=1,\dots, N$,  and correspondingly for $v$. 

\begin{figure}[h!]
\centerline{\includegraphics[width=0.9\textwidth]{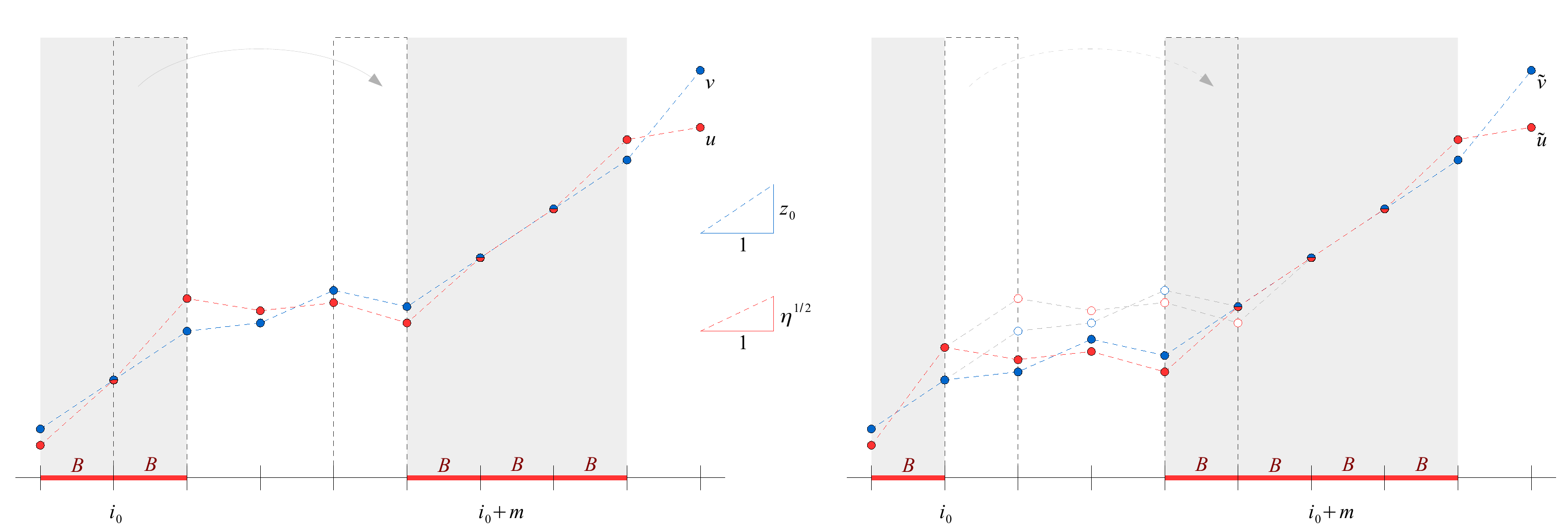}}
\caption{Construction of $(u,v)$ with isolated broken bonds.}
\label{nonbroken_islands}
\end{figure} 
Now we show that the minimum is attained at $(u,v)$ such 
that if $i\in \mathcal B(u)$, then $i-1\not\in \mathcal B(u)$ and $i+1\not\in \mathcal B(u)$, or
$j\in \mathcal B(u)$ for all $j\in \{i+1,\dots, N\}$. 
To show this, we suppose that $i_0,i_0+1$, $i_0+k$ and $i_0+k+1$ belong to $\mathcal B(u)$
for some $i_0\geq 1$, $k\geq 2$ and $i_0+k\leq N$, while $i_0+j+1\not\in \mathcal B(u)$ for $j\in\{1,\dots, k-2\}$. 

We modify $u$ and $v$ by setting for $j=0,\dots, k-1$
\begin{equation*}
\tilde u(i_0+j)=u(i_0+j+1)-z_0 \ \hbox{ and }\ \tilde v(i_0+j)=v(i_0+j+1)-z_0 
\end{equation*}
(see Figure \ref{nonbroken_islands}).
With this definition 
$$
E^\sigma_1(\tilde u,\tilde v; [0,N])\leq E^\sigma_1(u, v; [0,N]).
$$
\begin{figure}[h!]
\centerline{\includegraphics[width=0.9\textwidth]{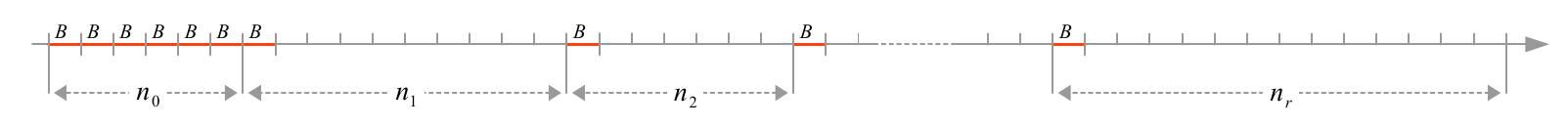}}
\caption{Distribution of broken bonds. }
\label{islands}
\end{figure} 
Thanks to the periodic extension of $u$ and $v$, 
this proves 
that in minimum problem~(\ref{min-e1}) 
we can assume that there exist $n_0,n_1,..., n_r\in \mathbb N$ with 
$n_{l}>1$ for any $l\in\{1,\dots, r\}$, $r+n_0=n(N,z)$ and $n_0=N-\sum_{l=1}^r n_l$,
such that
\begin{equation}\label{rotti-bene}
\displaystyle i\in \mathcal B(u) \ \hbox{ for all } \ i \in\{1, \dots, n_0\}\ 
\hbox{ and } \sum_{l=1}^jn_l+1\in \mathcal B(u) \hbox{ for all }\ j\in\{1,\dots, r\} 
\end{equation}
 (see Figure \ref{islands}).

This reduces the problem of the computation of the minimum value (\ref{min-e1}) to the solution of the minimum problem on each (translated) island $[0,n_j],$ $j\in\{1,\dots, r\}$,
$$\min\Big\{E^\sigma_1(u,v; [0,n_j]): v_0\!=\!0, v_{n_j}\!=\!z_jn_j,\ \mathcal B(u)=\{1\}\Big\}
$$
and in the broken island $[0,n_0]$, where  
\begin{equation}\label{m0}
\min\Big\{E^\sigma_1(u,v; [0,n_0]): v_0\!=\!0, v_{n_0}\!=\!z_0n_0,\ \#\mathcal B(u)=n_0\Big\}=n_0g_1(z_0), 
\end{equation}
with suitable boundary conditions $z_j$ satisfying $\sum_{j=0}^r n_jz_j=Nz$.

Since $E^\sigma_1(u,v;[0,n-1])=\tilde E^\sigma_1(u,v;[0,n-1])-b_\sigma(u_0-v_0)^2$ if $\#\mathcal B(u)=0$, for $n>1$ and $z\in\mathbb R$ we have 
\begin{eqnarray*}
&&\hspace{-1cm}\min\Big\{E^\sigma_1(u,v; [0,n]) : v_0\!=\!0, v_{n}\!=\!nz,\ \mathcal B(u)=\{1\}\Big\}\\
&& \geq \min_{w\in\mathbb R}
\Big\{\min\Big\{\tilde E^\sigma_1(u,v; [1,n]): v_1=w, v_{n}=nz\Big\}+ a_\sigma w^2+\tilde f(z^\ast)\Big\}
\\
&&=n g^\sigma_n(z).
\end{eqnarray*}
\end{remark}

\subsubsection{General properties of the periodic bounds $g_N$} 
In order to relate the constrained relaxation $\widehat Q_\sigma f(\theta, z)$ to $g^\sigma_N(z)$ 
and to characterize the locking states of $f$, we analyze the properties of $g^\sigma_N(z)$ in dependence on both $N$ and $z$. Note that in the following results we may consider general values of $a,b>0$ and not limit to $a_\sigma,b_\sigma$, so that the results of this section hold for a general $g_N$ as defined in \eqref{gNgen}.

\begin{proposition}[convexity of $g_N$]\label{convinN}
The functions $g_N$ are uniformly strictly convex. More precisely, we have
 \begin{equation}\label{unifconv}
 {1\over2} g_N(z)+{1\over2} g_N(z')\ge g_N\Bigl({z+z'\over 2}\Bigr)+a\Bigl({z-z'\over 2}\Bigr)^2
 \end{equation}
 for all $z,z'\in\mathbb R$ and $N\in\mathbb N$.
 \end{proposition}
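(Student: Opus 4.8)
The statement asserts uniform strict convexity of each $g_N$ with a modulus controlled by the ``spring'' constant $a$, uniformly in $N$. The natural strategy is to exploit the fact that, by \eqref{def-gm}--\eqref{g1ginfty}, $g_N(z)$ is (up to the constant $\tilde f(z^\ast)/N$ or, for $N=1,\infty$, directly) the infimum over admissible pairs $(u,v)$ of a functional that is \emph{jointly convex in $(u,v)$} and, crucially, whose only term coupling $v$ to the prescribed boundary datum $Nz$ is the purely quadratic spring energy $\frac aN\sum_i (v_i-v_{i-1})^2$ (together with $\frac aN v_1^2$ in the case $2\le N<\infty$). The point is that $\tilde f$ is convex, the penalty $\frac bN\sum_i(u_i-v_i)^2$ is convex, and the constraint $u_i-u_{i-1}\le z^\ast$ (for $i\ge 2$) is a convex constraint; the only place where $z$ enters is through $v_N=Nz$ (and, for $N=\infty$ or $N=1$, through $u$, but there $g_\infty(z)=\tilde f(z)+az^2$ and $g_1(z)=\tilde f(z^\ast)+az^2$ manifestly satisfy \eqref{unifconv} since $\tilde f$ is convex). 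So the real content is the parallelogram-type inequality for a minimization over a convex feasible set of a functional of the form (convex in $(u,v)$) $+$ ($a\times$ discrete Dirichlet energy of $v$).

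First I would reduce to the following clean statement: if $\Psi(u,v)$ is convex in $(u,v)$ over a convex set $\mathcal C$ (here $\mathcal C$ encodes $v_0=0$ and $u_i-u_{i-1}\le z^\ast$ for $i\ge 2$), and if for each $z$ we set
\begin{equation*}
h(z)=\inf\Bigl\{\Psi(u,v)+\tfrac aN\textstyle\sum_{i=1}^N (v_i-v_{i-1})^2 + \tfrac aN v_1^2:\ (u,v)\in\mathcal C,\ v_N=Nz\Bigr\},
\end{equation*}
then $\tfrac12 h(z)+\tfrac12 h(z')\ge h\bigl(\tfrac{z+z'}2\bigr)+a\bigl(\tfrac{z-z'}2\bigr)^2$. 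To prove this, take near-minimizers $(u,v)$ for $z$ and $(u',v')$ for $z'$, form the midpoints $\bar u=\tfrac12(u+u')$, $\bar v=\tfrac12(v+v')$, which lie in $\mathcal C$ (convexity of $\mathcal C$) and satisfy $\bar v_N=N\tfrac{z+z'}2$, hence are admissible for $h\bigl(\tfrac{z+z'}2\bigr)$. By convexity of $\Psi$, $\Psi(\bar u,\bar v)\le\tfrac12\Psi(u,v)+\tfrac12\Psi(u',v')$. For the quadratic spring term I would use the exact parallelogram identity applied coordinatewise to the increments $w_i:=v_i-v_{i-1}$ and $w_i':=v_i'-v_{i-1}'$ (and to $v_1,v_1'$):
\begin{equation*}
\tfrac12 w_i^2+\tfrac12 (w_i')^2=\Bigl(\tfrac{w_i+w_i'}2\Bigr)^2+\Bigl(\tfrac{w_i-w_i'}2\Bigr)^2,
\end{equation*}
summing over $i$ (including the $v_1$ term) gives $\tfrac aN\bigl(\sum(v_i-v_{i-1})^2+v_1^2\bigr)$-energies related by the midpoint energy plus $\tfrac aN\sum_{i}\bigl(\tfrac{w_i-w_i'}2\bigr)^2 + \tfrac aN\bigl(\tfrac{v_1-v_1'}2\bigr)^2$. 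Now the telescoping/Cauchy--Schwarz step: since $\sum_{i=1}^N (w_i-w_i')=(v_N-v_0)-(v_N'-v_0')=N(z-z')$ and $v_0=v_0'=0$, by the discrete Cauchy--Schwarz (equivalently Jensen) inequality $\tfrac aN\sum_{i=1}^N\bigl(\tfrac{w_i-w_i'}2\bigr)^2\ge \tfrac aN\cdot N\bigl(\tfrac1N\cdot N\tfrac{z-z'}2\bigr)^2=a\bigl(\tfrac{z-z'}2\bigr)^2$ — wait, more carefully: $\sum (w_i-w_i')^2\ge \tfrac1N\bigl(\sum(w_i-w_i')\bigr)^2=\tfrac1N N^2(z-z')^2=N(z-z')^2$, so $\tfrac aN\sum\bigl(\tfrac{w_i-w_i'}2\bigr)^2\ge \tfrac a{4N}\cdot N(z-z')^2=a\bigl(\tfrac{z-z'}2\bigr)^2$. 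Combining these three estimates and letting the near-minimizers approach the infima yields \eqref{unifconv}.

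I expect the only genuinely delicate point to be the treatment of the constrained case $2\le N<\infty$ — specifically, verifying that the midpoint pair $(\bar u,\bar v)$ still satisfies the constraint $\bar u_i-\bar u_{i-1}\le z^\ast$ for $i\ge 2$ (immediate, since this is a convex constraint preserved under averaging) and that the decomposition of the minimal configuration into a single broken island, established in the preceding Remark, is \emph{not} needed here: for \eqref{unifconv} it suffices to work with the functional form \eqref{def-gm} directly, since every term there is either convex in $(u,v)$ or the spring quadratic. A secondary technical subtlety is making sure that the infima defining $g_N$ are attained or that one works cleanly with near-minimizers and a limiting argument; the coercivity furnished by $\tilde f(z)\ge c_1 z^2-c_2$ together with the $a_\sigma,b_\sigma>0$ spring and penalty terms guarantees that minimizers exist, so attainment can be invoked and the $\varepsilon$-minimizer bookkeeping avoided. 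The endpoint cases $N=1$ and $N=\infty$ are handled separately and trivially, as noted, since there $g_N(z)-az^2$ equals $\tilde f(z^\ast)$ or $\tilde f(z)$, both convex (indeed constant, resp. convex), giving \eqref{unifconv} with room to spare.
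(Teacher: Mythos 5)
Your proof is correct and follows essentially the same route as the paper's: take minimizers for $z$ and $z'$, average them to obtain a test configuration for $\frac{z+z'}{2}$, use convexity of $\tilde f$ (and of the penalty and constraint set) together with the exact parallelogram identity for the spring term, then Jensen/Cauchy--Schwarz on the increments of $v$ to extract $a\big(\frac{z-z'}{2}\big)^2$. The extra care you take with the convex constraint $\bar u_i-\bar u_{i-1}\le z^\ast$ and with the endpoint cases $N=1,\infty$ is harmless detail the paper leaves implicit; the only slip is that your formula for $h$ counts the $v_1^2$ boundary term twice, which doesn't affect the argument since it is itself convex.
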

\begin{proof}
If $u,v$ and $u',v'$ are minimizers for $g_N(z)$ and $g_N(z')$ we can use the functions ${1\over2}(u+u'),{1\over2}(v+v')$ as test functions for $g_N({1\over2}(z+z'))$. Using the convexity of $\tilde f$ and the quadraticity of the other terms;
 more precisely, that  for all $i$ we have (after setting $v_0=0$)
 $$
 a(v_i-v_{i-1})^2+a(v'_i-v'_{i-1})^2
 ={a\over 2}((v_i+v'_i)-(v_{i-1}+v'_{i-1}))^2+{a\over 2}((v_i-v_{i-1})-(v'_i-v'_{i-1}))^2,
 $$
  we get
 \begin{eqnarray*}
 {1\over2} g_N(z)+{1\over2} g_N(z') &\ge&  g_N\Bigl({z+z'\over 2}\Bigr)+{1\over N}{a\over 4} \sum_{i=1}^N((v_i-v_{i-1})-(v'_i-v'_{i-1}))^2\\
 &\ge&  g_N\Bigl({z+z'\over 2}\Bigr)+a \Bigl({1\over 2}{1\over N}\sum_{i=1}^N((v_i-v_{i-1})-(v'_i-v'_{i-1}))\Bigr)^2
 \\
 &=&  g_N\Bigl({z+z'\over 2}\Bigr)+a \Bigl({z-z'\over 2}\Bigr)^2\,,
  \end{eqnarray*}
 as desired. 
\end{proof}

\begin{remark}\label{min-a}\rm
From the previous proposition we deduce that $g_N''(z)\ge 2a$ at all $z$ where $g_N$ is twice differentiable.
In particular, we obtain that $g_N(z)\ge {\tilde f(z^\ast)\over N} + az^2$ for all $N\ge 1$.
\end{remark}

\begin{remark}[symmetry of solutions]\label{gnprop}\rm
The solutions $u,v$ of the minimum problem
\begin{equation}\label{pro-si}
\min\bigg\{\sum_{i=2}^N\tilde f ({u_i-u_{i-1}} )+
a\sum_{i=2}^N(v_i-v_{i-1})^2
+b\sum_{i=1}^N(u_i-v_i)^2
: v_1=v^1, v_N=v^N\bigg\}
\end{equation}
are symmetric with respect to the centre of the interval, in the sense that
\begin{equation}\label{symm-1}
v_{j+1}-v_j= v_{N-j+1}- v_{N-j}, \qquad u_{j+1}-u_j= u_{N-j+1}- u_{N-j}
\end{equation}
for $1\le j\le N-1$. Furthermore, if $N=2M+1$ is odd then
\begin{equation}\label{symm-2}
v_{M+1}=u_{M+1}= {v^N+v^1\over 2}
\end{equation}
while, if $N=2M$ is even then 
\begin{equation}\label{symm-3}
{v_{M+1}+v_{M}\over 2}={u_{M+1}+u_{M}\over 2}= {v^N+v^1\over 2}.
\end{equation}

Indeed, first note that we may state the boundary condition equivalently as $v_N-v_1= V:=v^N-v^1$.
Then, condition \eqref{symm-1} is a direct consequence of the strict convexity of the energy and is obtained using
\begin{equation}\label{symm-4}
\overline v_i= {v_i-v_{N+1-i}\over 2},\qquad \overline u_i= {u_i-u_{N+1-i}\over 2}
\end{equation}
as test functions. 
To check, e.g., \eqref{symm-2}, note that from \eqref{symm-1}
\begin{eqnarray*}
v_{M+1}&= &v_1+\sum_{j=1}^{M}(v_{j+1}-v_j)=v_1+\sum_{j=1}^{M}(v_{N-j+1}- v_{N-j})\\
&=&v_1+\sum_{k=M+1}^{N-1}(v_{k+1}- v_{k}) = v_N-v_{M+1}+v_1,
\end{eqnarray*}
from which the first equality in \eqref{symm-2} follows. 
To check the second one, note that from \eqref{symm-1} we obtain 
$v_i+v_{2M+2-i}-2v_{M+1}=u_i+u_{2M+2-i}-2u_{M+1}=0$ for all $i$,
from which 
\begin{equation}\label{symm-5}
{1\over N}\sum_{i=1}^Nu_i= u_{M+1},\qquad {1\over N}\sum_{i=1}^Nv_i= v_{M+1}.
\end{equation}
Now, considering in place of $u_i$ the function
$$
\overline u_i= u_i+{v^1+v^N\over 2}-u_{M+1},
$$
as test functions, the only change in the problem in \eqref{pro-si} is in the last sum,
for which, using \eqref{symm-5} and the already proved equality in \eqref{symm-2} for $v$, we have
\begin{eqnarray*}
\sum_{i=1}^N(\overline u_i-v_i)^2=
 \sum_{i=1}^N(u_i-v_i)^2-N\Bigl({v^1+v^N\over 2}-u_{M+1}\Bigr)^2,
\end{eqnarray*}
which contradicts the minimality of $u,v$ if the second equality in \eqref{symm-2} does not hold.
The proof of \eqref{symm-3} follows the same line with minor modifications.
\end{remark}

\begin{proposition}[convexity properties with respect to $N$ with given parity]\label{gin1n2}
For all $N_1,N_2\ge1$ such that $N_1+N_2$ is even and $N_1\neq N_2$, for all $z_1,z_2\in\mathbb R\setminus \{0\}$ we have
\begin{equation}\label{conf-gn}
{N_1\over N_1+N_2}g_{N_1}(z_1)+ {N_2\over N_1+N_2}g_{N_2}(z_2)\ > \ g_N(z),
\end{equation}
where 
$N=\frac{N_1+N_2}{2} \ \ \hbox{\rm and }\ z=\frac{N_1z_1+N_2z_2}{N_1+N_2}$. 
 In particular, we have the convexity property in $N$
 \begin{equation}\label{conf-gnz}
{N_1\over N_1+N_2}g_{N_1}(z)+ {N_2\over N_1+N_2}g_{N_2}(z)\ > \ g_N(z),\hbox{ where }N={N_1+N_2\over2} \hbox{ and } z\neq 0.
\end{equation}
\end{proposition}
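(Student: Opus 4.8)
The approach I would take is to reuse the \emph{concatenation and island--rearrangement} structure already exploited in the proof of Proposition~\ref{celle}: I will glue an optimal $N_1$--cell to an optimal $N_2$--cell, view the result as a periodic configuration carrying exactly two broken bonds, and then re--cut it into two blocks of the common length $N=(N_1+N_2)/2$ (this is where the parity hypothesis is used). Assume without loss of generality $N_1<N_2$; same parity gives $N_1<N<N_2$ and $N\in\mathbb N$. Let $(u^1,v^1)$, $(u^2,v^2)$ realize $N_1g_{N_1}(z_1)$, $N_2g_{N_2}(z_2)$ (each a ``cell'' whose first bond is broken), and let $\Phi$ be the configuration on $\{0,\dots,N_1+N_2\}=\{0,\dots,2N\}$ obtained by putting $(u^1,v^1)$ on bonds $1,\dots,N_1$ and a translate of $(u^2,v^2)$ on bonds $N_1+1,\dots,2N$. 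Then $\Phi$ has broken bonds exactly at positions $1$ and $N_1+1$, total $v$--increment $N_1z_1+N_2z_2=2Nz$, and one--period energy $N_1g_{N_1}(z_1)+N_2g_{N_2}(z_2)$; moreover $\Phi$ extends $2N$--periodically (up to the linear drift $2Nz$), and the energy is invariant under this period shift.

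The next step is the re--cut. I would set $B_1=\Phi|_{\{N_1,\dots,N_1+N\}}$ and $B_2=\Phi|_{\{N_1+N,\dots,N_1+2N\}}$, each a string of $N$ consecutive bonds. A direct check using $N_1<N<N_2$ shows that $B_1$ contains exactly one broken bond (its first one, $N_1+1$) and $B_2$ contains exactly one broken bond (an interior one). Since the bond, pairwise and site terms are local, the energy splits additively across the cuts, so $\mathrm{en}(B_1)+\mathrm{en}(B_2)=\mathrm{en}(\Phi|_{\text{one period}})=N_1g_{N_1}(z_1)+N_2g_{N_2}(z_2)$, while the $v$--increments $w_1,w_2$ of $B_1,B_2$ satisfy $w_1+w_2=2z$. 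By the rearrangement argument of the preceding remarks (a broken bond may be moved to the left endpoint of its island without changing the energy), each $B_j$ is an admissible competitor for the cell problem defining $Ng_N(w_j)$, hence $\mathrm{en}(B_j)\ge Ng_N(w_j)$. Finally the uniform strict convexity of $g_N$ from Proposition~\ref{convinN} gives $Ng_N(w_1)+Ng_N(w_2)\ge 2Ng_N(z)$; dividing by $N_1+N_2=2N$ yields
\[
\frac{N_1}{N_1+N_2}\,g_{N_1}(z_1)+\frac{N_2}{N_1+N_2}\,g_{N_2}(z_2)\ \ge\ g_N(z),
\]
and the ``in particular'' statement \eqref{conf-gnz} follows by taking $z_1=z_2=z$.

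The main obstacle is upgrading this to the strict inequality \eqref{conf-gn}. If equality held everywhere, then the quantitative strict convexity in Proposition~\ref{convinN} would force $w_1=w_2=z$, and each $B_j$ (after left--aligning its broken bond) would be a minimizer of the $N$--cell problem at $z$; this minimizer is unique, the objective being strictly convex on the admissible set (strict convexity of $\tilde f$ together with the quadratic $a$-- and $b$--terms). One then has to derive a contradiction from the fact that $B_1$ is built from the \emph{first} $N$ bonds of an optimal $N_2$--cell while $B_2$ contains the entire optimal $N_1$--cell: combining the reflection symmetry of the convex parts of optimal cells (Remark~\ref{gnprop}) with this uniqueness, such a coincidence would propagate $N$--periodicity to $\Phi$, hence force the two broken bonds to be spaced $N$ apart, i.e.\ $N_1\equiv 0\pmod N$, which is incompatible with $1\le N_1<N$. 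This commensurability/uniqueness argument is the technical heart of the proof, and it is precisely where the hypotheses $N_1\neq N_2$ and $z_1,z_2\neq 0$ are genuinely used: for $z_1=z_2=0$ one has $g_N(0)=\tilde f(z^\ast)/N$ for every $N$, so the right--hand side degenerates and \eqref{conf-gn} becomes an equality. A little extra care is also needed for the borderline case $N_1=1$ (the first island reduces to a single broken bond), which is handled by the same rearrangement.
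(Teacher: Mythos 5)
Your plan is genuinely different from the paper's, and as written it contains a gap precisely at the step you flag as ``rearranging'' the broken bond. The block $B_2$ carries its unique broken bond at the interior position $m=N-N_1+1>1$, while $g_N$ is by definition the minimum over $N$-bond arrays whose broken bond is pinned at position $1$ (so that $u_1-u_0$ is free, $u_0$ is irrelevant, and $u_i-u_{i-1}\le z^\ast$ for every $i\ge 2$). Hence $B_2$ is not an admissible competitor for $Ng_N(w_2)$, and the remark on simplification of minimal configurations does not produce one: that remark merges contiguous broken zones of a minimizer of a periodic problem; it does not transport an isolated interior broken bond to the left endpoint of a finite block, and such a transport is generically not energy-neutral, since carrying the jump by translating $u$ on part of the block changes the coupling $b\sum_i(u_i-v_i)^2$ when $v$ is pinned at both ends. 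In fact the one comparison that is easy to make goes the wrong way: if one defines $h_m(w)$ as the minimum of the $N$-block problem with the broken bond at interior position $m$ (and $u_0$ absorbed by setting $u_0=u_1$), then $h_m$ is a relaxation of the $g_N$ problem (one nonnegative $\tilde f$-term removed from the sum and one constraint dropped), so $h_m(w)\le g_N(w)$; the lower bound you actually get from $B_2$ is $Nh_m(w_2)$, which does not dominate $Ng_N(w_2)$. So the inequality $\mathrm{en}(B_2)\ge Ng_N(w_2)$, on which your whole convexity step relies, is unsupported.

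The paper avoids the recut altogether. Using Remark \ref{gnprop} — in particular the midpoint identity $u_{(N_j+1)/2}=v_{(N_j+1)/2}=\tfrac12\,(v^j_{N_j}+v^j_1)$, which is where the parity hypothesis on $N_1,N_2$ enters — it builds a \emph{single test function} for $g_N(z)$ by taking the left half of the $g_{N_1}(z_1)$-minimizer up to its centre of symmetry and continuing with the right half of the $g_{N_2}(z_2)$-minimizer from its own centre, suitably shifted. The glue is seamless because at the centre $u=v$ for both minimizers; the broken bond of the glued array is at position $1$ as $g_N$ requires; and, by the reflection identities, the energy of the glued array is at most the convex combination $\tfrac{N_1}{N_1+N_2}g_{N_1}(z_1)+\tfrac{N_2}{N_1+N_2}g_{N_2}(z_2)$ (the only cross term is the boundary contribution $a\overline v_1^2$, which convexity of the square bounds in the right direction). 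Strictness is then the observation that, when $N_1\ne N_2$ and $z_1,z_2\ne 0$, the glued array cannot be the unique symmetric minimizer of $g_N(z)$. You correctly isolated the symmetry of Remark \ref{gnprop} as the key ingredient for the strictness discussion, but the point you are missing is that the same symmetry is what makes the basic $\ge$ inequality available in the first place, via a glued test function rather than a recut of a concatenation.
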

\begin{proof} We consider the case of $N_1$ and $N_2$ odd, the case of $N_1$ and $N_2$ even following the same line with minor modifications. 
\begin{figure}[h!]
\centerline{\includegraphics[width=0.9\textwidth]{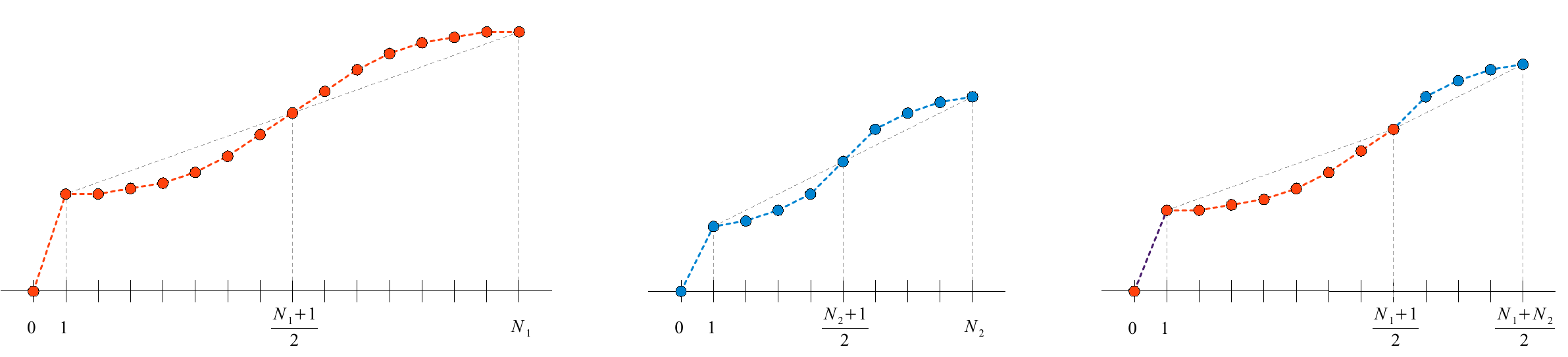}}
\caption{construction of the test function $\overline v$.}
\label{figure_regroupconv}
\end{figure} 
Let $u^1,v^1$ be minimizers for $g_{N_1}(z_1)$ and let $u^2,v^2$ be minimizers for $g_{N_2}(z_2)$. 
We  define $\overline u,\overline v$ by setting 
$$
\overline v_i=\begin{cases} v^1_i +{v^2_1-v^1_1\over 2}&\hbox{ if } 1\le i\le {N_1+1\over 2}\\
v^2_{i+\frac{N_2-N_1}{2}}+\frac{1}{2}(N_1z_1+N_2z_2) &\hbox{ if } i\ge {N_1+1\over 2},\end{cases}
$$
$$
\overline u_i=\begin{cases} u^1_i +{v^2_1-v^1_1\over 2}&\hbox{ if } 1\le i\le {N_1+1\over 2}\\
u^2_{i+\frac{N_2-N_1}{2}}+\frac{1}{2}(N_1z_1+N_2z_2) &\hbox{ if } i\ge {N_1+1\over 2}
\end{cases}
$$
(see Fig.~\ref{figure_regroupconv}). 
Thanks to Remark \ref{gnprop} this is a good definition, $\overline v_{(N_1+1)/ 2}=\overline u_{(N_1+1)/ 2}$, and we have $\overline v_N=\frac{1}{2}(N_1z_1+N_2z_2)$, so that
these are test functions for $g_N(z)$.
Again, by the symmetry properties of $v^1$ and $v^2$ in Remark \ref{gnprop} we obtain \eqref{conf-gn}. Note the strict inequality, which is proved by noting that $
\overline v_i,
\overline u_i$ do not satisfy the properties of minimizers in Remark \ref{gnprop}. 
\end{proof}

From Proposition \ref{gin1n2} we deduce a general convexity property which holds also if $N_1$ and $N_2$ have different parity. Note that this implies that fractures will be equidistributed up to oscillations of a unit, due to incommensurability phenomena.  
\begin{corollary}[convexity properties with respect to arbitrary $N$]\label{conv-disp} 
Let $k, N\geq 2$ be integers, and $w_k,w_0\in\mathbb R\setminus \{0\}$. Then  
\begin{equation}\label{equispace}
(N+k)g_{N+k}(w_k)+Ng_{N}(w_0)>(N+k-1)g_{N+k-1}(w_{k-1})+(N+1)g_{N+1}(w_1)
\end{equation}
for some $w_{k-1}, w_1$ such that $ (N+1)w_{1}+(N+k-1)w_{k-1}=Nw_{0}+(N+k)w_k$. 
Moreover $w_{k-1}, w_1$ belong to the interval with endpoints $w_0$ and $w_k$.
\end{corollary}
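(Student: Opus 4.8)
\textbf{Proof plan for Corollary \ref{conv-disp}.}

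The plan is to reduce the case of mixed parity ($N+k$ and $N$ having arbitrary relative parity, so that $N+k-1$ and $N+1$ are the "nearest" indices of the correct parities) to the already-established Proposition \ref{gin1n2}, which handles pairs of equal parity. The key observation is that $(N+k)$ and $(N+k-2)$ have the same parity, and likewise $(N)$ and $(N+2)$; more to the point, $(N+k-1)$ and $(N+1)$ have the same parity as each other (both differ from $N+k$ and $N$ by an odd shift, or one checks directly that $(N+k-1)+(N+1)=2N+k$ is even exactly when... ) — so the cleanest route is: given the left-hand side with block lengths $N+k$ and $N$ and average gradients $w_k,w_0$, first split one of the two blocks. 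Concretely, I would take the block of length $N+k$ (with gradient $w_k$) and the block of length $N$ (with gradient $w_0$), view them as a single configuration of total length $2N+k$ with the two prescribed averages, and then re-partition the total length $2N+k$ into two pieces of lengths $N+k-1$ and $N+1$. Since $(N+k-1)+(N+1)=2N+k=(N+k)+N$, the total length is preserved, and the total "mass" $N w_0 + (N+k) w_k$ is preserved; I then choose $w_{k-1},w_1$ so that $(N+k-1)w_{k-1}+(N+1)w_1 = N w_0 + (N+k) w_k$, with $w_{k-1},w_1$ in the interval spanned by $w_0,w_k$ — this is just a convex-combination bookkeeping step, and the constraint that they lie in $[\min\{w_0,w_k\},\max\{w_0,w_k\}]$ is automatic from the averaging.

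The substance is then to produce a test configuration for the pair $(N+k-1,w_{k-1})$, $(N+1,w_1)$ out of the minimizers for $(N+k,w_k)$ and $(N,w_0)$, and to estimate its energy strictly below the left-hand side. Here I would mimic the construction in the proof of Proposition \ref{gin1n2}: take minimizers $u^{(k)},v^{(k)}$ for $g_{N+k}(w_k)$ and $u^{(0)},v^{(0)}$ for $g_{N}(w_0)$, and form two new configurations by cutting at an appropriate interior index and splicing — using the symmetry of solutions established in Remark \ref{gnprop} to guarantee that the gradients match up at the splice points (so no spurious energy is created there) and that the broken bond structure is respected (each new block of length $>1$ retains exactly one broken bond, so it is admissible for $g$). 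One builds a block of length $N+k-1$ from (most of) the first minimizer plus a short piece of the second, and a block of length $N+1$ from the complement; the parity arithmetic is arranged precisely so that the splice falls at a symmetry point. The energy of the spliced configuration equals the sum of the original energies up to the boundary/interface terms, which — because of the convexity used in Remark \ref{gnprop} and because the new configurations are genuinely not of the optimal symmetric form — can be shown to be \emph{strictly} smaller, giving \eqref{equispace} with strict inequality. Summation by $g_N$ here means: $(N+k-1)g_{N+k-1}(w_{k-1})+(N+1)g_{N+1}(w_1)$ is at most the energy of the spliced configuration, which is strictly less than $(N+k)g_{N+k}(w_k)+Ng_{N}(w_0)$.

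Alternatively — and this may be the shorter write-up — one can obtain \eqref{equispace} by a short induction/iteration purely from \eqref{conf-gn}: repeatedly applying Proposition \ref{gin1n2} to move two indices at a time toward each other within a fixed parity class, one brings $N+k$ and $N$ to the common ``centre'' and then splits off the unit discrepancy. The hardest point, and the one I expect to absorb most of the work, is the bookkeeping that keeps the gradients $w_{k-1},w_1$ inside the interval with endpoints $w_0,w_k$ while simultaneously preserving both the total length and the total mass at every step — the symmetry lemma (Remark \ref{gnprop}) is the tool that makes the splice energetically clean, but tracking the admissibility of the intermediate configurations (one broken bond per long block, boundary conditions at the correct nodes) and verifying that the inequality stays strict (rather than merely non-strict) when $w_0\ne w_k$ and $N_1\ne N_2$ requires some care. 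Once that is in place, the statement about $w_{k-1},w_1$ lying between $w_0$ and $w_k$ follows because they are explicit convex combinations of $w_0$ and $w_k$ determined by the splice location.
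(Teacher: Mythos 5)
Your first approach (the single splice at ``symmetry points'') has a genuine gap. For the splice to be energetically clean one must cut each of the two minimizers at its own centre of symmetry, as in the proof of Proposition~\ref{gin1n2}. Cutting the block of length $N+k$ at its centre (index $\approx (N+k)/2$) and the block of length $N$ at its centre (index $\approx N/2$), and reassembling, produces two new blocks of lengths $\approx \frac{N+k}{2}+\frac{N}{2} = N+\frac{k}{2}$ each. One does \emph{not} thereby obtain a block of length $N+k-1$ and a block of length $N+1$; the latter requires the cut displacement $p_1-p_2$ to equal $k-1$, while centres give $p_1-p_2\approx k/2$. These coincide only for $k=2$, i.e.\ only when the statement already follows from Proposition~\ref{gin1n2} directly. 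For $k\ge 3$ you would be forced to splice off-centre, and then Remark~\ref{gnprop} no longer controls the interface; the symmetry mechanism that makes the splice ``free'' disappears, and so does the strict inequality. So the construction you sketch in the first paragraph does not prove the corollary as stated.

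Your second (alternative) route is the right one, and is essentially what the paper does, but the description ``move two indices at a time toward each other and split off the unit discrepancy'' misses the actual mechanism. The paper does \emph{not} iteratively shrink the pair $(N+k,N)$; instead it introduces the whole intermediate chain $w_1,\dots,w_{k-1}$ at once, defined by the linear system $(N+h)w_h+(N+h-2)w_{h-2}=2(N+h-1)w_{h-1}$, i.e.\ by requiring $(N+h)w_h$ to be affine in $h$. Proposition~\ref{gin1n2}, applied to each consecutive pair $(N_1,N_2)=(N+h,N+h-2)$ with $z_1=w_h$, $z_2=w_{h-2}$, gives the strict second-difference inequality $(N+h)g_{N+h}(w_h)+(N+h-2)g_{N+h-2}(w_{h-2})>2(N+h-1)g_{N+h-1}(w_{h-1})$, i.e.\ strict discrete convexity of the sequence $A_h:=(N+h)g_{N+h}(w_h)$. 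The desired inequality \eqref{equispace} is then just the convexity estimate $A_k-A_{k-1}>A_1-A_0$. The interval containment for $w_1,w_{k-1}$ is then automatic and elementary: since $(N+h)w_h$ is affine in $h$, one computes $w_h=\bigl(N(1-t)w_0+(N+k)tw_k\bigr)/(N+tk)$ with $t=h/k$, a convex combination of $w_0,w_k$ with nonnegative weights. Without the specific choice of the $w_h$ via the linear system, there is no reason for this last point to hold, so the ``bookkeeping'' you correctly flag as delicate is resolved precisely by that construction, not by a generic averaging argument.
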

\begin{proof}
Let $(w_1,\dots,w_{k-2})$ be the solution of the linear system given by 
the equations 
$$(N+h)w_h+(N+h-2)w_{h-2}=2(N+h-1)w_{h-1}$$ 
for $h=2,\dots, k$. 
We can repeat the application of 
\eqref{conf-gn} to each pair $N_1=N+h$, $N_2=N+h-2$ with $h=2,\dots, k$, by fixing at each step $z_1=w_{h},$ $z_2=w_{h-2}$, obtaining 
\begin{eqnarray*}
&&(N+k)g_{N+k}(w_k)-(N+k-1) g_{N+k-1}(w_{k-1})\\ 
&&\hspace{3cm} > \ (N+k-1) g_{N+k-1}(w_{k-1})-(N+k-2)g_{N+k-2}(w_{k-2})\\
&&\hspace{3cm} > \ (N+1) g_{N+1}(w_1)-Ng_{N}(w_{0}).
\end{eqnarray*}
The last part of the claim follow by induction.
\end{proof}
Now we can show an ordering property of the functions $g_N$ which allows to describe the structure of $\widehat Q_\sigma f$ in terms of the locking states. 

\begin{remark}\label{monniN}\rm
If we define the auxiliary functions $\widetilde g_N(z)= g_N(z)-{\eta\over N}$, then we have $\widetilde g_N(z)<\widetilde g_{N+1}(z)$ for all $N\ge1$ and $z>0$. This is proved by induction using Proposition \ref{gin1n2} with $N_1=N-1$, $N_2=N+1$ and $z_1=z_2=z$, after noting that for $N=1$ the inequality $\widetilde g_1(z)<\widetilde g_{2}(z)$ is implied by Proposition \ref{convinN} since $\widetilde g_1(z)=az^2$.
\end{remark}

\subsubsection{Characterization of locking states}
The convexity properties of $g_N(z)$ allow to characterize the locking states of the function $f$ and to give a description of $Q_\sigma f(z)$. 

\begin{theorem}[locking states of $Q_\sigma f$]\label{enwellfract}
Let $f$ be as in \eqref{f-frattura} and let $m_n^\sigma=e^{-\sigma n}$. Then the set of locking states of $Q_\sigma f$ is
given by 
$$\Big\{\frac{1}{N}: N\in \mathbb N, N\geq 1\Big\}\cup\{0\}.$$
\end{theorem}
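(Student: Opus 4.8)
The plan is to establish the two inclusions separately, using the characterization $\widehat Q_\sigma f(z)=(\inf_N g^\sigma_N)^{\ast\ast}(z)$ from Proposition \ref{celle} together with the convexity properties of $g_N$ proved in Propositions \ref{convinN}, \ref{gin1n2} and Corollary \ref{conv-disp}. First I would show that each $\theta=\frac1N$ (and $\theta=0$) is a locking state; then I would show no other $\theta\in(0,1)$ can be a locking state by proving that for $\theta\notin\{\frac1N\}$ the function $z\mapsto Q_\sigma f(\theta,z)$ is strictly above $Q_\sigma f(z)$ except at isolated points.

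\textbf{Step 1: the values $\frac1N$ are locking states.} By Remark \ref{polig} (formula \eqref{trs-con}) the constrained relaxation $\widehat Q_\sigma f(\frac1N,z)$ is computed by the local two-variable functional $E_1^\sigma$ with exactly a fraction $\frac1N$ of broken bonds; by the rearrangement analysis in the Remark following Proposition \ref{celle} (reduction of minimal configurations to isolated broken bonds plus one broken island), for $\theta=\frac1N$ the optimal configuration consists of isolated broken bonds spaced exactly $N$ apart, so $\widehat Q_\sigma f(\frac1N,z)=g^\sigma_N(z)$ for all $z$ in the range where a single broken bond per period is optimal. Thus $\theta=\frac1N$ is an energy well in the sense of Definition \ref{energiuel}. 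I would then invoke Proposition \ref{celle} together with Remark \ref{monniN} (the strict ordering $\widetilde g_N<\widetilde g_{N+1}$) to see that $g^\sigma_N$ is essential in the envelope $(\inf_M g^\sigma_M)^{\ast\ast}$ on a genuine interval: since the $g^\sigma_N$ are uniformly strictly convex (Proposition \ref{convinN}) and the family $\{g^\sigma_N\}$ is strictly ordered after subtracting $\eta/N$, the lower envelope $\inf_N g^\sigma_N$ touches $g^\sigma_N$ on a non-degenerate $z$-interval; on that interval $Q_\sigma f(\frac1N,z)=Q_\sigma f(z)$, so $\frac1N$ is a locking state. The value $\theta=0$ corresponds to $g^\sigma_\infty(z)=\tilde f(z)+a_\sigma z^2$, which realizes the minimum for all $z\le z^\ast$ small enough (no broken bonds), hence is a locking state by the same reasoning; this also follows from case (b) of the degenerate analysis in Section \ref{sec-degene}.

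\textbf{Step 2: no other $\theta$ is a locking state.} Suppose $\theta=\frac pq\in(0,1)$ is not of the form $\frac1N$. Using \eqref{trs-con} and the rearrangement of minimizers into islands, I would show $\widehat Q_\sigma f(\theta,z)$ is given by a convex combination $\sum_j \frac{N_j}{\sum N_j} g^\sigma_{N_j}(z_j)$ with $\sum \frac{N_j}{\sum N_j} z_j=z$ and the $N_j$ not all equal (since $\theta$ is not a unit fraction, at least two distinct period lengths $N_j$ must appear, or one island of length $>1$ which by \eqref{m0} is $g_1$). By the strict convexity-in-$N$ property \eqref{conf-gnz} and Corollary \ref{conv-disp}, any such mixture with distinct $N_j$'s can be strictly improved by replacing it with a mixture of two \emph{consecutive} integers $N$ and $N+1$, and iterating drives the cost strictly below unless the fractions concentrate; the upshot is that $Q_\sigma f(\theta,z)>Q_\sigma f(z)$ strictly whenever $\theta$ is strictly between two consecutive unit fractions, except possibly at the single crossover value of $z$ where the envelope switches from $g^\sigma_N$ to $g^\sigma_{N+1}$. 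Hence the set $\{z:Q_\sigma f(\theta,z)=Q_\sigma f(z)\}$ is at most a single point, never an open interval, so $\theta$ is not a locking state.

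\textbf{Main obstacle.} The delicate point is Step 2: controlling the constrained minimizers for a general rational $\theta$, i.e.\ proving that the optimal arrangement of broken bonds for an arbitrary volume fraction genuinely reduces to a mixture of the periodic arrangements with unit-fraction densities. This is exactly where the rearrangement lemmas (the island decomposition and the convexity-in-$N$ estimates \eqref{conf-gn}--\eqref{equispace}) do the work, but assembling them into a clean statement that the constrained energy $\widehat Q_\sigma f(\theta,z)$ equals the convex interpolation between $g^\sigma_N$ and $g^\sigma_{N+1}$ for $\frac1{N+1}\le\theta\le\frac1N$ — analogous to Lemma \ref{formula-n-prop} in the concentrated-kernel case — requires care with the boundary-layer terms and the fact that the two-variable reformulation is only asymptotically exact (Lemma \ref{lemma-equivalence}). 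I expect the proof to proceed by first establishing this interpolation formula as a separate lemma and then reading off the locking states from it, exactly as Theorem \ref{structureqm} is deduced from Lemma \ref{formula-n-prop}.
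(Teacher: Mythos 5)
Your strategy is essentially the paper's: use Proposition \ref{celle} and the convexity properties of $g_N$ to read off the structure of $\inf_N g_N$. But Step 1 has a genuine gap.

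You justify that each $g^\sigma_N$ touches the envelope on a non-degenerate interval by appealing to the ordering $\widetilde g_N<\widetilde g_{N+1}$ of Remark \ref{monniN} together with uniform strict convexity. This is not enough: the strict ordering of the $\widetilde g_N=g_N-\eta/N$ is compatible with a single $g_N$ lying \emph{entirely above} the convex envelope of $\min(g_{N-1},g_{N+1})$ and hence contributing nothing (think of a family $c_N z^2+\eta/N$ where $c_N$ makes a big jump between $N=2$ and $N=3$: $g_3$ can be above the common tangent of $g_2$ and $g_4$ everywhere). The ingredient you actually need in Step 1 is inequality \eqref{conf-gn}/\eqref{conf-gnz} from Proposition \ref{gin1n2}: for $N_1=N-1$, $N_2=N+1$ it gives, with the weights $\frac{N-1}{2N}$ and $\frac{N+1}{2N}$, that $g_N$ lies strictly below any convex combination of $g_{N-1}$ and $g_{N+1}$ on an open interval, which is precisely the essentiality you need. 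You cite this inequality in your preamble and in Step 2, but in Step 1 you replace it by the weaker ordering of Remark \ref{monniN}, and the argument does not close. You are also missing an anchoring step: to start the inductive construction of the constancy intervals you must know that for large $z$ the sequence $N\mapsto g_N(z)$ is eventually increasing so that $g_1$ wins; the paper does this separately (its Step 2, a direct growth estimate showing $g_2(z)\ge g_1(z)$ for $z$ large, combined with the induction of its Step 1), and Corollary \ref{conv-disp} is then used to show that level crossings $g_k(z)=g_\ell(z)$ only occur for $|k-\ell|\le1$, which is what lets you build the decreasing sequence $z_1>z_2>\cdots$ delimiting the locking intervals. Once this structure is in place, your Step 2 (non-unit-fraction $\theta$ have at most a one-point level set) follows, but the paper actually obtains it as a by-product of the same envelope construction rather than via the interpolation formula \eqref{aubrygen}, which it proves afterwards and independently; your anticipated route through an \eqref{aubrygen}-type lemma would also work but is more than is needed for this theorem.
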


\begin{proof} \ 

\noindent
{\em Step $1$}.
We prove by induction the monotonicity of the sequence $g_N(z)$ for $z$ large enough. 
By Proposition \ref{gin1n2} we obtain that if $g_N(z)\ge g_{N-1}(z)$ then 
 \begin{equation*}
{N-1\over 2N}g_{N}(z)+ {N+1\over 2N}g_{N+1}(z)\ \ge \ 
{N-1\over 2N}g_{N-1}(z)+ {N+1\over 2N}g_{N+1}(z)\ > \ g_N(z);  
\end{equation*}
hence,
 \begin{equation*}
 {N+1\over 2N}g_{N+1}(z)\ > \ \Big(1-{N-1\over 2N}\Big)g_{N}(z) \ = \ {N+1\over 2N}g_{N}(z)\,.
\end{equation*}
Hence, iterating this argument, we get that the sequence $k\mapsto g_k(z)$ is not decreasing for $k\ge N-1$ and  strictly increasing for $k\ge N$.

\smallskip
\noindent
{\em Step $2$}.
Now we show that for $z$ large enough then $g_2(z)\geq g_1(z)$. 
By the growth hypothesis $\tilde f(z)\geq c_1z^2-c_2$ we get 
\begin{eqnarray*}
g_2(z)&\geq& \frac{\tilde f(z^\ast)}{2}-\frac{c_2}{2}+\frac{1}{2}\min\Big\{c_1(u_2-u_1)^2+a(2z-v_1)^2+a(v_1)^2\\
&&\hspace{1cm}+b(u_2-2z)^2+b(u_1-v_1)^2: u_1, u_2, v_1\in\mathbb R\Big\}.
\end{eqnarray*}
By computing the minimum, we obtain 
\begin{eqnarray*}
g_2(z)&\geq& \frac{\tilde f(z^\ast)}{2}-\frac{c_2}{2}+\frac{1}{2} 
\Big(\frac{a(2c_1+b)+bc_1}{2c_1+b}(2z-v_1)^2+a(v_1)^2\Big)
\end{eqnarray*}
with 
$$v_1=\frac{a(4c_1+2b)+2bc_1}{a(4c_1+2b)+bc_1}z.$$
Hence for $z$ large enough
\begin{eqnarray*}
g_2(z)&\geq& \frac{\tilde f(z^\ast)}{2}-\frac{c_2}{2}+a 
\Big( 1+\frac{bc_1(a(4c_1+2b)+bc_1)}{(a(4c_1+2b)+bc_1)^2} \Big)z^2\\
&>& \tilde f(z^\ast)+az^2 \ =\ g_1(z).  
\end{eqnarray*}
From this property and Remark \ref{min-a} we deduce that there exists a unique $z_1$ such that $g_2(z_1)=g_1(z_1)$, and hence $g_1(z)=\min_N g_N(z)$ in $[z_1,+\infty)$ by Step 1.

\smallskip
\noindent
{\em Step $3$}. By Step 1 $g_N(z_1)>g_2(z_1)=g_1(z_1)$ for all $N\ge 3$. Let $[z_2,z_1]$ be the maximal interval containing $z_1$ where $g_2(z)=\min_{N\ge 1} g_N(z)= \min_{N\ge 2} g_N(z)$. Since in particular $g_3> g_2$ in the interval $(z_2,z_1]$
by Remark \ref{monniN}, we have $g_N>g_4> g_3$ for all $N>4$ in the closed interval $[z_2,z_1]$ always by Step 1. This implies that $g_3(z_2)=g_2(z_2)$. Moreover, note that $g_4(z_2)>g_2(z_2)$, since otherwise we would have $g_3(z_2)<g_2(z_2)$ by \eqref{conf-gnz} with $z=z_2$, $N_1=2$ and $N_2=4$. 

\smallskip
\noindent
{\em Step $4$}. We define $z_3=\max\{z: g_4(z)\le\min\{g_3(z), g_2(z), g_1(z)\}$. This is well defined since $g_4(0)<\min\{g_3(0), g_2(0), g_1(0)\}$ and we have $z_3<z_2$. Note that in $(z_4,z_3)$ we have $\min\{ g_N(z): z\in\mathbb N\}\in\{ g_2(z),g_3(z)\}$. We then define iteratively $z_n=\max\{z: g_{n+1}(z)\le\min\{g_k(z): k\le n\}$. Again, this is a good definition and $z_n<z_{n-1}$. In $(z_n,z_{n-1})$ we have that $\min\{ g_N(z): N\in\mathbb N\}\in\{ g_n(z),g_{n-1}(z)\}$. Indeed, by Corollary \ref{conv-disp} if $g_k(z)=g_\ell(z)$ at some $z$ then $|k-\ell|\le 1$. Since $\min\{ g_N(z_{n-1}): N\in\mathbb N\}=g_n(z_{n-1})$ and we cannot have $g_n(z)=g_{n+1}(z)$ if $z\in (z_n,z_{n-1})$, the claim follows.

\smallskip

\noindent
{\em Step $5$}. Inequality \eqref{conf-gnz} shows that the graph of $g_{N}$ lies below the graph of the convex envelope of  the minimum between $g_{N-1}$ and $g_{N+1}$ in an open interval. By Proposition \ref{celle} this proves that $1\over N$ is a locking state. 
\end{proof}

\begin{figure}[h!]
\centerline{\includegraphics[width=0.9\textwidth]{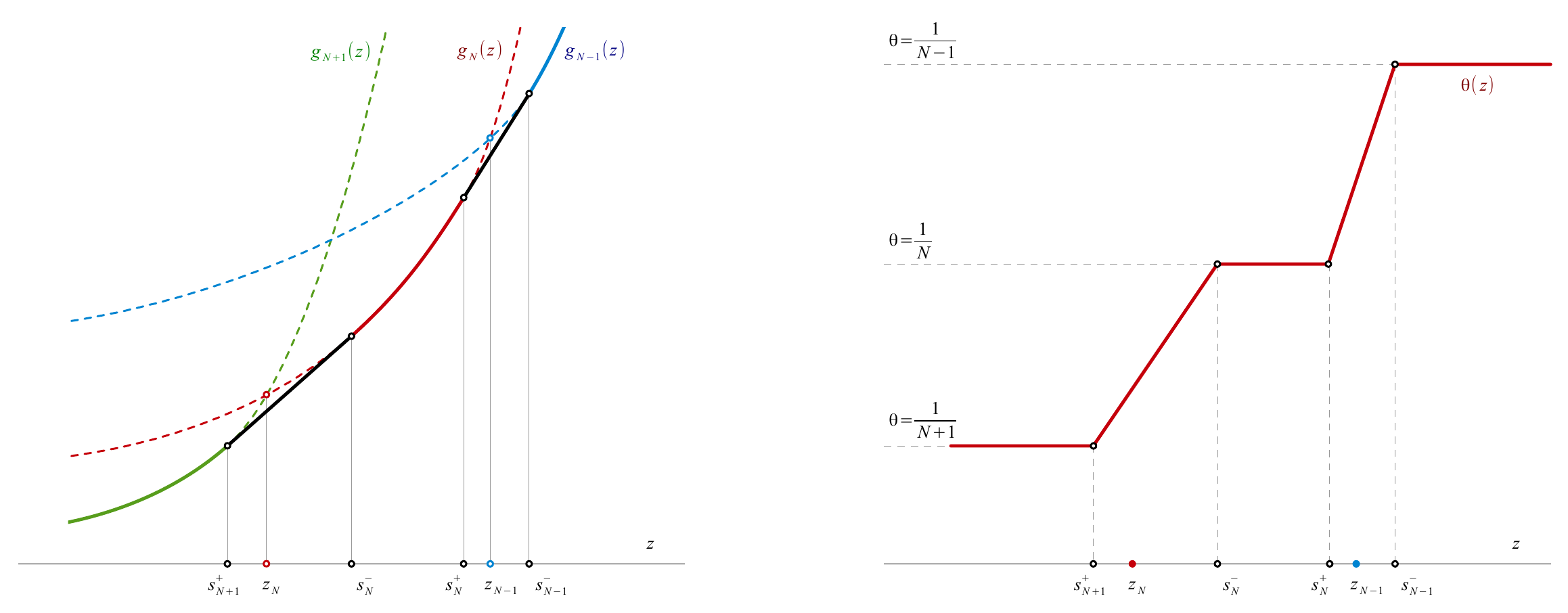}}
\caption{pictorial description of Theorem \ref{enwellfract} for a single choice of $\sigma$ (shape of $\widehat Q_\sigma f$ and $\theta$, not to scale)}
\label{figure_inf_fracture1}
\end{figure}

In order to highlight the dependence on $\sigma$, for any $\sigma>0$ and for any $N\geq 1$, in the sequel $z_N(\sigma)$ will denote the corresponding value $z_N$ given by Theorem \ref{enwellfract}.  Moreover, for any $\sigma$ we set $z_0(\sigma)=+\infty$.  

\begin{figure}[h!]
\centerline{\includegraphics[width=0.9\textwidth]{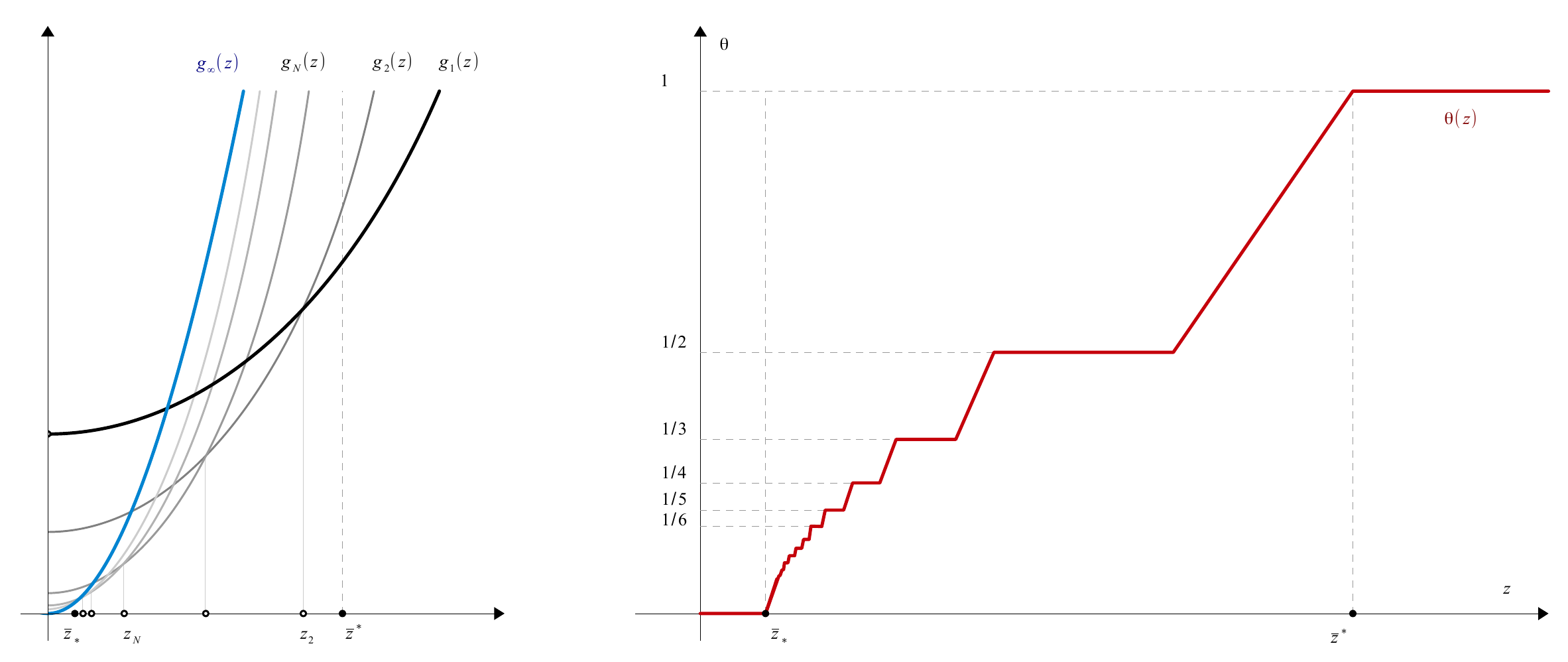}}
\caption{relative behaviour of $g^\sigma_N$ and the final resulting $\theta$.}
\label{figure_inf_fracture2}
\end{figure} 

\begin{remark}[shape of $Q_\sigma f(z)$ and $\theta(z)$]\rm 
The graph of the function $Q_\sigma f(z)$ possesses infinitely many 
concave parabolic arcs, corresponding to the intervals where $\widehat Q_\sigma f(z)$ is affine, which accumulates in  $\overline z_\ast(\sigma)=\inf_N z_N(\sigma)>0$. Correspondingly, the phase function 
$\theta(z)$ is affine, interpolating between consecutive values $1/N$ (see Fig.~\ref{figure_inf_fracture2}). 
 
Summarizing, the behaviour of the penalized energy $Q_\sigma f(z)$ in terms of the macroscopic gradient $z$ has the following features:   

$\bullet$ (`unfractured zone') for $z\le \overline z_\ast(\sigma)$ optimal sequences take into account only the convex part of $f$; i.e., there are no broken bonds;

$\bullet$ (`completely microfractured zone') there exists $\overline z^\ast(\sigma)=s_1^-(\sigma)>z_1(\sigma)$ such that for $z\ge \overline z^\ast(\sigma)$ (that is, in $I_1(\sigma)=[\overline z^\ast(\sigma),+\infty)$) the part of the energy involving the function $f$ is identically $\tilde f(z^\ast)$; i.e., we have broken bonds for all values of the index $i$;

$\bullet$ (increasingly segmented  behavior of the relaxed energy) for values of the macroscopic gradient between $\overline z_\ast(\sigma)$ and $\overline z^\ast(\sigma)$ the energy  $\widehat Q_{\sigma}f$ behaves as a superposition of infinitely many `damaged materials' indexed by the parameter $N$ representing the microscopic optimal spacing of broken bonds. For the values $z$ where $\widehat Q_{\sigma}f(z)$ is affine, optimal sequences mix the damaged materials parameterized by $N$ and $N-1$. The point $\overline z_\ast(\sigma)$ is an accumulation point for the different behaviors as $N\to+\infty$. 
\end{remark}

\begin{remark}[limit behaviours of the damaged zones]\rm 
By Proposition \ref{interpolation-sigma}, highlighting the dependence on the parameter $\sigma$, 
we deduce that 
\smallskip 

{\rm (i)} $\displaystyle\lim_{\sigma\to 0} \overline z_\ast(\sigma)=\lim_{\sigma\to 0} \overline z^\ast(\sigma)=z^\ast$,  
corresponding to the extreme non-additivity case, 

\smallskip 

{\rm (ii)} $\displaystyle\lim_{\sigma\to +\infty} \overline z_\ast(\sigma)=0$ and $\displaystyle\lim_{\sigma\to +\infty} \overline z^\ast(\sigma)=+\infty$, corresponding to full additivity.  
\end{remark}

\begin{remark}[Generic non differentiability]\label{nondiff-exp}\rm 
Note the generic non differentiability of $\widehat Q_\sigma f(\theta,z)$ with respect to $\theta$ at the locking states. This is due to the different definitions of this function in left and right neighbourhoods of each locking state $\frac{1}{N}$. 
Indeed, the definition of $\widehat Q_\sigma f(\theta,z)$ uses 
$g^\sigma_N(z), g^\sigma_{N+1}(z)$ in a left neighbourhood and $g^\sigma_{N-1}(z), g^\sigma_{N}(z)$ in a right neighbourhood of $\theta=\frac{1}{N}$, respectively, in analogy with the case of concentrated kernels, as seen in Section \ref{mneigh-sec} (see Remark \ref{nondiff}). 
\end{remark}

\subsection{Properties of optimal microstructures}
In the previous section we have shown that $\theta$ of the form $1\over N$ with $N\in\mathbb N$ are locking states. We now show that such values correspond to energy wells, and characterize all $\widehat Q_\sigma f(\theta,\cdot)$.

\subsubsection{Microstructures as interpolations of energy meta-wells} 
The following proposition reinterprets $g^\sigma_N$  as the energy of periodic minimizers for $\theta_N= 1/N$. 

\begin{proposition}[$g^\sigma_N$ as an energy meta-well]\label{gequalphi} 
Let $g_N^\sigma$ be defined as in \eqref{def-gm} with $a=a_\sigma$ and $b=b_\sigma$ satisfying \eqref{aeb}. 
The following equality holds for any $N\in\mathbb N$ and $z\in\mathbb R$:  
$$\Phi^N_{\bf m}f\Big(\frac{1}{N},z\Big)=g^\sigma_N(z)$$
where $\Phi^{N}_{\bf m}f$ is defined in \eqref{deffin} with $A=[z^\ast, +\infty)$, $f_{-1}=\tilde f$, 
$f_1=\tilde f(z^\ast)$ if $z\in A$ and $+\infty$ otherwise, and $m_n=e^{-\sigma n}$. 
\end{proposition}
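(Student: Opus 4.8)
\textbf{Proof plan for Proposition \ref{gequalphi}.}

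The plan is to show the two inequalities $\Phi^N_{\bf m}f(\tfrac1N,z)\le g^\sigma_N(z)$ and $\Phi^N_{\bf m}f(\tfrac1N,z)\ge g^\sigma_N(z)$ separately, using the higher-dimensional equivalence established in Theorem \ref{teo-equivalence-exp} and Lemma \ref{lemma-equivalence} to translate between the non-local scalar functional defining $\Phi^N_{\bf m}f$ and the local two-variable functional $E_1^\sigma$ that defines $g^\sigma_N$. First I would unwind the definitions: by \eqref{deffin} and \eqref{phiminus}, $\Phi^N_{\bf m}f(\tfrac1N,z)$ is the infimum of $\frac1N F^\#(u,\underline s;[0,N])$ over $N$-periodic $u_i-zi$ and over spin vectors $\underline s\in\mathcal S_N(\tfrac1N)$, i.e.\ exactly one component of $\underline s$ equals $1$. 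Since $f_1\equiv\tilde f(z^\ast)$ on $A$ and $f_{-1}=\tilde f$, fixing the unique broken index and using periodicity this is (up to relabelling the broken bond to be the first) a minimisation over a single period $[0,N]$ of $\tilde f(z^\ast)+\sum_{i=2}^N\tilde f(u_i-u_{i-1})$ plus the full periodic non-local quadratic tail $\sum_{i=1}^N\sum_{j\in\mathbb Z}e^{-\sigma|i-j|}(u_i-u_j)^2$, with $u_N-u_0=Nz$ and $u_i-u_{i-1}\le z^\ast$ for $i\ge2$.

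For the inequality $\Phi^N_{\bf m}f(\tfrac1N,z)\ge g^\sigma_N(z)$ I would take a periodic competitor $u$ for $\Phi^N_{\bf m}f$ and, applying the computation in Lemma \ref{2v-lemma} / Lemma \ref{lemma-equivalence} (equality \eqref{hh}, which in the periodic case is exact, with no boundary error since periodicity eliminates the boundary terms $t_{\bf m}$, $c_{\bf m}$), produce the optimal auxiliary variable $v^{\rm min}=\frac1{s_\sigma}M^\sigma u$ so that the periodic quadratic tail equals $a_\sigma\sum(v_i-v_{i-1})^2+b_\sigma\sum(u_i-v_i)^2$ over one period. This rewrites $\frac1N F^\#$ as $\frac1N$ times an energy of the form $\tilde f(z^\ast)+\tilde E_1(u,v;[1,N])+a_\sigma v_1^2$ (after translating so the broken bond sits at index $1$ and using the periodic boundary data $v_0=0$, $v_N=Nz$), which is bounded below by $g^\sigma_N(z)$ by its very definition \eqref{def-gm}. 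Here I must be careful that the minimisation defining $g^\sigma_N$ only constrains $v_N=Nz$ and imposes $\#\mathcal B(u)=1$ but does \emph{not} impose periodicity; since the periodic competitor in particular satisfies $v_N-v_0=Nz$, it is admissible after translation, so this direction is clean.

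For the reverse inequality $\Phi^N_{\bf m}f(\tfrac1N,z)\le g^\sigma_N(z)$ I would take a near-optimal pair $(\bar u,\bar v)$ for $g^\sigma_N(z)$ on $[1,N]$, extend it to all of $\mathbb Z$ by the periodicity $\bar u_{i+N}=\bar u_i+Nz$ (and similarly for $\bar v$), and check that this periodic $\bar u$ is admissible for $\Phi^N_{\bf m}f(\tfrac1N,z)$: it has exactly one broken bond per period by construction, and $\bar u_i-zi$ is $N$-periodic. Then I would evaluate $\frac1N F^\#(\bar u,\underline s;[0,N])$, using again \eqref{hh} in the periodic setting to pass from the $E_1^\sigma$-form back to the non-local form; the periodic tail of the non-local energy equals $a_\sigma\sum(\bar v_i-\bar v_{i-1})^2+b_\sigma\sum(\bar u_i-\bar v_i)^2$ exactly, which recovers (up to the $\tilde f(z^\ast)$ term and the $a_\sigma v_1^2$ boundary term, both accounted for in $g^\sigma_N$) the value $N g^\sigma_N(z)$. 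The main obstacle I anticipate is bookkeeping around the two endpoints of the period when matching the ``broken island'' normalisation of $g^\sigma_N$ in \eqref{def-gm} (where the sum $\tilde E_1(u,v;[1,N])$ starts at $i=1$ and the term $a_\sigma v_1^2$ plays the role of the $v$-bond across the broken bond) with the genuinely periodic setup of $\Phi^N_{\bf m}f$ — one has to verify that in a periodic minimiser the auxiliary variable $v$ is itself periodic (which follows from uniqueness of $v^{\rm min}=\frac1{s_\sigma}M^\sigma u$ and periodicity of $M^\sigma$ acting on periodic $u$), and that the single broken bond can be placed at any prescribed location without changing the energy by translation invariance of the periodic problem. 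Once these identifications are made the two inequalities close and the equality follows; I would also remark, as the proposition's statement invites, that this exhibits explicit $N$-periodic minimisers for $\widehat Q_\sigma f(\tfrac1N,\cdot)=\Phi_{\bf m}f(\tfrac1N,\cdot)$ (Proposition \ref{equalityoffandphi}), so that $\tfrac1N$ is an energy well in the sense of Definition \ref{energiuel}.
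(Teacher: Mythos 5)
Your plan is correct in substance and follows the same key idea as the paper: interpret $\Phi^N_{\bf m}f(\tfrac1N,z)=\widehat R^N_{\bf m}f(\underline s_N,z)$ with $\underline s_N=(1,-1,\dots,-1)$ fixed up to translation, convert the periodic non-local quadratic tail into the local two-variable form via the identity \eqref{hh}, and then match the one-period periodic problem with the definition \eqref{def-gm} of $g^\sigma_N$. The one genuine difference in route: you propose working directly in the periodic infinite-lattice setting and asserting that the $u$--$v$ equivalence is \emph{exact} there (``periodicity eliminates the boundary terms $t_{\bf m}$, $c_{\bf m}$''), whereas the paper avoids ever handling the periodic problem head-on — it first passes to the large interval $[0,kN]$, applies Lemma \ref{2v-lemma} (which carries an $o(1/k)$ error term), and takes $k\to+\infty$ so the error vanishes while periodicity collapses the energy back to one period. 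The paper's route buys you the ability to reuse the already-proved finite-interval lemma verbatim; your route is conceptually cleaner but requires you to actually prove the periodic exactness you invoke, which amounts to redoing the matrix-inversion computation of Lemma \ref{2v-lemma} on the discrete torus (or showing that $(M^\e_\sigma)^{-1}$ becomes exactly tridiagonal with no corner corrections when indices wrap around). That is not difficult but it is not free, and you should flag it as a step.

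One small imprecision worth fixing in the upper-bound direction: you write that the periodic non-local tail ``equals $a_\sigma\sum(\bar v_i-\bar v_{i-1})^2+b_\sigma\sum(\bar u_i-\bar v_i)^2$ exactly'' for the near-optimal $(\bar u,\bar v)$ of $g^\sigma_N$. That equality would require $\bar v$ to be the optimal auxiliary variable $v^{\rm min}$ for the non-local form of $\bar u$, which it need not be. What you actually have, and all you need, is the inequality
\[
\sum_{i=1}^N\sum_{j\in\mathbb Z}e^{-\sigma|i-j|}(\bar u_i-\bar u_j)^2 \;=\; \min_v H_{\rm per}(\bar u,v)\;\le\; H_{\rm per}(\bar u,\bar v),
\]
which points the right way and closes the bound $\Phi^N_{\bf m}f(\tfrac1N,z)\le g^\sigma_N(z)$. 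With that correction, and with the periodic-exactness step supplied, your two-inequality version of the argument reproduces the paper's chain-of-equalities proof.
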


\begin{proof}
We first observe  that $\Phi^N_{\bf m}f(\frac{1}{N},z)=\widehat R^N_{\bf m}f(\underline s_N, z)$ where 
$\underline s_N=(1,-1,-1,\ldots,-1)$ and $\widehat R^N_{\bf m}f$ is defined in 
\eqref{phiminus} with
$$ 
F^\#(u,\underline s; [0,N])=\sum_{i=1}^Nf_{s_i}(u_i-u_{i-1})+\sum_{i=1}^N\sum_{j\in\mathbb Z}e^{-\sigma|i-j|}(u_i-u_j)^2.$$ 
By extending $\underline s_N$ by $N$-periodicity, we have 
\begin{eqnarray*}
\widehat R^N_{\bf m}f(\underline s_N, z)&=& \frac{1}{N}\min\{ F^\#(u,\underline s_N; [0,N]): u_i-zi 
\ N\hbox{-periodic}\}\\
&=& \lim_{k\to+\infty}\frac{1}{kN}\min\{ F^\#(u,\underline s_{N}; [0,kN]): u_i-zi 
\ N\hbox{-periodic}\}\\
&=& \lim_{k\to+\infty}\frac{1}{kN}\min\Big\{ 
\tilde f(z^\ast) k+\sum_{r=1}^k\sum_{l=2}^N \tilde f(u_{N(r-1)+l}-u_{N(r-1)+l-1})\\
&&+
a_\sigma\sum_{i=1}^{kN}(v_i-v_{i-1})^2+b_\sigma\sum_{i=1}^{kN}(u_i-v_i)^2: u_i-zi, v_i-zi  
\ N\hbox{-periodic}\Big\},
\end{eqnarray*}
the last equality being a consequence of \eqref{minG}, the equivalence result of Lemma \ref{lemma-equivalence} and the characterization of the minima given by \eqref{vsol}, which ensures that also the minimizing $v$ can be chosen periodic. 
Hence by the periodicity we get 
\begin{eqnarray*}
&&\Phi^N_{\bf m}f\Big(\frac{1}{N},z\Big)= \frac{1}{N}\min\Big\{ 
\tilde f(z^\ast)+\sum_{i=2}^N \tilde f(u_{i}-u_{i-1})+
a_\sigma\sum_{i=1}^{N}(v_i-v_{i-1})^2+b_\sigma\sum_{i=1}^{N}(u_i-v_i)^2:  \\
&&\hspace{7cm} u_i-zi, v_i-zi  
\ N\hbox{-periodic}\Big\}.
\end{eqnarray*}
Finally, noting that we can remove the periodicity condition on $u$ and that we can rewrite 
the condition on $v$ as a boundary condition, we get the claim. 
\end{proof}

Let $I_N=I_N(\sigma)=\{z\in \mathbb R: \widehat Q_\sigma f(z)=g^\sigma_N(z)\}$.
Note that Remark \ref{polig} implies that $\widehat Q_\sigma f(\theta,\cdot)$ can be described in terms of the convex 
combination of the functions $g^\sigma_N(z)$. In particular, by the convexity of $g^\sigma_N(z)$
with respect to $N$, 
we have 
\begin{equation}\label{qsigmagn}
\widehat Q_\sigma f\big(\frac{1}{N}, z\big)=g^\sigma_N(z)
\end{equation}
in the whole $\mathbb R$.

We are now in the position to 
characterize $\widehat Q_\sigma f(\theta,\cdot)$ as an interpolation between consecutive energy meta-wells (corresponding to the locking states), as in Lemma \ref{formula-n-prop} for the concentrated kernels. 
 
\begin{proposition}[interpolation between energy wells] 
Given  $\sigma>0$, suppose that  $a=a_\sigma$ and $b=b_\sigma$ are as in  \eqref{aeb}. 
Then, for any $\theta\in\mathbb Q\cap(0,1)$ and for any $z\in \mathbb R$ the following equality holds: 
\begin{equation}\label{aubrygen}
\widehat Q_{\sigma}f(\theta,z)= \min\Big\{  
t(\theta) g^\sigma_{N_\theta}(z^\prime)+
(1-t(\theta)) g^\sigma_{N_\theta+1}(z^{\prime\prime}): 
  t(\theta)z^\prime+(1-t(\theta))z^{\prime\prime}=z\Big\}, 
\end{equation}
where 
\begin{equation}\label{nthetattheta}
N_\theta=\Big\lfloor \frac{1}{\theta}\Big\rfloor \quad \hbox{\rm and} \quad 
t(\theta)=N_\theta\Big(\theta (N_\theta+1)-1\Big). 
\end{equation} 
\end{proposition}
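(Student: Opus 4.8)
The plan is to prove \eqref{aubrygen} by the same two-sided strategy used for Lemma \ref{formula-n-prop} in the concentrated-kernel case, exploiting the local (two-variable) reformulation from Remark \ref{polig}, the characterization of $\widehat Q_\sigma f(1/N,z)=g^\sigma_N(z)$ in \eqref{qsigmagn}, and the convexity properties of the functions $g^\sigma_N$ proved in Section \ref{truco:sect} (Proposition \ref{convinN}, Proposition \ref{gin1n2}, Corollary \ref{conv-disp}). The bookkeeping identity to keep in mind throughout is that if $\theta\in[\frac{1}{N_\theta+1},\frac{1}{N_\theta}]$ then writing $\theta=t(\theta)\cdot\frac1{N_\theta}+(1-t(\theta))\cdot\frac1{N_\theta+1}$ forces exactly $t(\theta)=N_\theta(\theta(N_\theta+1)-1)$ as in \eqref{nthetattheta}, so the claim is simply that $\widehat Q_\sigma f(\theta,\cdot)$ is, at fixed $z$, obtained by the optimal "mixture" of the two energy meta-wells $g^\sigma_{N_\theta}$ and $g^\sigma_{N_\theta+1}$ adjacent to $\theta$.

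For the upper bound I would construct an explicit periodic test pair $(u,v)$. Fix a rational $\theta=p/q\in(\frac1{N_\theta+1},\frac1{N_\theta})$, and let $z',z''$ realize the minimum on the right-hand side of \eqref{aubrygen}. Take minimizers $(\bar u^{N_\theta},\bar v^{N_\theta})$ and $(\bar u^{N_\theta+1},\bar v^{N_\theta+1})$ for $g^\sigma_{N_\theta}(z')$ and $g^\sigma_{N_\theta+1}(z'')$ respectively — each being a "broken island" of the appropriate length with a single broken bond (by the structure analysis in the remark after Proposition \ref{celle}). On an interval of length $kq$ I would juxtapose $\approx t(\theta)kq/N_\theta$ copies of the first block and $\approx (1-t(\theta))kq/(N_\theta+1)$ copies of the second, translated so the $u$- and $v$-values match at the junctions (possible by the symmetry properties in Remark \ref{gnprop}), arrange the broken bonds so that $u\in\mathcal V(kq;\theta)$, and fix the global boundary conditions $u_{kq}=v_{kq}=kqz$. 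Using Remark \ref{polig} and the negligibility of boundary interactions (the kernel is exponential, so $m_n=o(n^{-\beta})$ for every $\beta$), the scaled energy converges to the right-hand side of \eqref{aubrygen}, giving $\widehat Q_\sigma f(\theta,z)\le$ RHS.

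For the lower bound I would take, for $\theta=p/q$, a minimizer $(u,v)$ of the local constrained problem \eqref{trs-con} on $[0,kq]$ (with the compatible boundary modification of Lemma \ref{boundary-Q} / Remark \ref{polig}), and appeal to the "broken island" decomposition: up to an asymptotically negligible modification one may assume the broken bonds are organized into isolated broken bonds (islands of length $N_j\ge 2$ each containing exactly one broken bond) plus at most one fully-broken island, and on each island the energy is bounded below by $N_j\,g^\sigma_{N_j}(z_j)$ where $z_j$ is the island's average slope and $\sum N_j = kq$, $\sum N_j z_j = kqz$, while the number of islands is $\theta kq$ up to $o(k)$. Thus the scaled energy is at least a convex combination $\sum \frac{N_j}{kq} g^\sigma_{N_j}(z_j)$ with the constraints $\sum\frac{N_j}{kq}=\theta$ (counting islands) and $\sum\frac{N_j}{kq}z_j=z$. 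The key point is then purely about the functions $g^\sigma_N$: among all such combinations, by the strict convexity-in-$N$ inequalities \eqref{conf-gn}, \eqref{conf-gnz} and especially Corollary \ref{conv-disp} (which handles the different-parity case and shows fractures equidistribute up to an oscillation of a unit), the infimum is attained using only $N\in\{N_\theta,N_\theta+1\}$; a regrouping argument à la Lemma \ref{alphaklemma} then collapses it exactly to the right-hand side of \eqref{aubrygen}. Passing to irrational $\theta$ by continuity (the functions on both sides are convex in $\theta$, by Proposition \ref{convex-q} and an explicit check of the RHS) finishes the proof.

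The main obstacle I expect is the lower-bound combinatorial step: showing rigorously that the optimal distribution of island-lengths concentrates on the two values $N_\theta$ and $N_\theta+1$ under the two simultaneous affine constraints (number of islands $=\theta\cdot$total, and average slope $=z$). This is the analogue of Lemma \ref{alphaklemma}, and the delicate part is that the two constraints are "transversal" (one fixes $\sum N_j$ weighted trivially, the other fixes $\sum N_j z_j$), so one cannot simply invoke one-dimensional convexity of $N\mapsto N g^\sigma_N(z)$ at fixed $z$; one must genuinely use the joint strict convexity \eqref{conf-gn} in both $N$ and $z$, together with the parity-bridging Corollary \ref{conv-disp}, to perform the reduction. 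I would isolate this as a separate interpolation lemma mirroring Lemma \ref{formula-n-prop}, and the remaining identifications (boundary terms, the explicit form of $t(\theta)$, continuity in $\theta$) are then routine.
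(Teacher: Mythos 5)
Your overall plan is correct and is essentially the paper's: both proofs rely on the broken-island decomposition, on $\Phi^N_{\bf m}f(1/N,\cdot)=g^\sigma_N$ via Proposition \ref{gequalphi}, and on Propositions \ref{gin1n2} and \ref{conv-disp} to force the island lengths to concentrate on two adjacent values. But there are two places where the paper proceeds differently and more economically than you anticipate, and they are worth noting.

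For the upper bound, you propose an explicit juxtaposition of minimizer blocks (a gluing construction compatible with the constraint). This works, but it is heavier than what the paper does: the paper first proves (as Step~1) the equality $\widehat Q_\sigma f(1/N,z)=g^\sigma_N(z)$ for all $z$, and then gets the upper bound for free from the joint convexity of $(\theta,z)\mapsto \widehat Q_\sigma f(\theta,z)$ (Proposition \ref{convex-q}), using the bookkeeping identity $\tfrac{t(\theta)}{N_\theta}+\tfrac{1-t(\theta)}{N_\theta+1}=\theta$. No recovery-sequence construction or boundary matching is needed. For the lower bound, you correctly identify the key combinatorial step but over-estimate its difficulty: you suggest an allocation lemma mirroring Lemma \ref{alphaklemma} would be needed because the two constraints (island count and average slope) are ``transversal''. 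In fact no such lemma is required. Proposition \ref{gin1n2} collapses all even island lengths to a single $N_{\rm e}$ and all odd ones to a single $N_{\rm o}$; Corollary \ref{conv-disp} then forces $|N_{\rm e}-N_{\rm o}|=1$; and at that point the two affine constraints $k_1+k_2=kp$, $k_1N+k_2(N+1)=kq$ have at most one admissible integer solution with $k_1,k_2\ge 0$, which immediately pins down $N=\lfloor q/p\rfloor=N_\theta$ by elementary arithmetic. So the constraints simplify the reduction rather than complicating it, and the analogue of Lemma \ref{alphaklemma} is unnecessary. The remark about passing to irrational $\theta$ by continuity is also superfluous, since the statement is only claimed for $\theta\in\mathbb{Q}\cap(0,1)$.
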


\begin{proof} 
We divide the proof in two steps. 

\smallskip 
\noindent{\em Step $1$: $\theta=\frac{1}{N}$.} 
In this case, the claim becomes \eqref{qsigmagn} for all $z\in\mathbb R$. We note that for each $N$ the formula is proved for $z\in I_N$. Moreover, for arbitrary $z$  it can be further simplified as follows. 
Let $k\in\mathbb N$ be fixed and let $(u,v)$ be a minimizer in \eqref{trs-con} with $p=1$ and $q=N$.
Since $\widehat Q_\sigma f(\frac{1}{N},z)$ can be expressed as in \eqref{trs-con}, it is sufficient to  
show that for all $k$ 
\begin{equation*}
\frac{1}{kN}E^\sigma_1(u,v;[0,kN])\geq g^\sigma_N(z).
\end{equation*}
It is not restrictive to suppose that $u_1-u_0\geq z^\ast$. 
By grouping the interactions, we estimate 
$$E^\sigma_1(u,v;[0,kN])\geq \sum_{j=1}^k N_j g^\sigma_{N_j}(z_j)$$
where 
$\sum_{j=1}^kN_j=kN$ and $\sum_{j=1}^kN_j z_j=kN z$.  
By Proposition \ref{gin1n2}, we infer that all even $N_j$ are equal to some $N_{\rm e}$, 
and the corresponding $z_j$ coincide with some $z_{\rm e}$,  
and the same holds for odd $N_j$ with $N_{\rm o}$ and corresponding $z_j$ with $z_{\rm o}$, so that there exist integers $k_{\rm e}$ and $k_{\rm o}$ such that 
$$E^\sigma_1(u,v;[0,kN])\geq k_{\rm e} N_{\rm e} g^\sigma_{N_{\rm e}}(z_{\rm e})+
k_{\rm o} N_{\rm o} g^\sigma_{N_{\rm o}}(z_{\rm o})$$
where 
$$k_{\rm e}N_{\rm e}+k_{\rm o}N_{\rm o}=kN \quad \hbox{\rm and } 
\quad k_{\rm e}N_{\rm e}z_{\rm e}+k_{\rm o}N_{\rm o}z_{\rm o}=kN z.$$ 
Since $u\in \mathcal V(kN,\frac{1}{N})$, we also have 
$k_{\rm e}+k_{\rm o}=k$.  
By \eqref{equispace} we deduce that $|N_{\rm e}-N_{\rm o}|=1$, and this is only possible if either $k_{\rm e}$ or $k_{\rm o}$ vanishes, from which we conclude. 

 \smallskip 

\noindent{\em Step $2$: general case.} 
We fix $\theta=\frac{p}{q}$ with $p$ and $q$ coprime integers satisfying $1<p<q$. 
Let $k\in\mathbb N$ be fixed and let $(u,v)$ be a minimizer in \eqref{trs-con}.
By grouping the interactions as in the case $\theta=\frac{1}{N}$, thanks to \eqref{equispace} we obtain that there exists $N\in\mathbb N$ such that 
\begin{equation}\label{mincondN}
k_1+k_2=kp, \quad k_1N+k_2(N+1)=kq 
\end{equation}
for some $k_1,k_2\in\mathbb N$, and  
$$
E^\sigma_1(u,v;[0,kq])\geq k_1 N g^\sigma_{N}(z^\prime)+
k_2 (N+1) g^\sigma_{N+1}(z^{\prime\prime})
$$ 
where $z^\prime,z^{\prime\prime}$ satisfy $k_1N z^\prime+k_2(N+1)z^{\prime\prime}=kqz$.
Since \eqref{mincondN} implies 
$\frac{q}{p}\geq N>\frac{q}{p}-1$,  
we deduce that $N=N_\theta$ is the unique integer solution of the equation (with 
$k_1=k(p(N_\theta+1)-q)>0$ and $k_2=k(q-pN_\theta)>0$).  
Hence 
\begin{equation}\label{minE1}
E^\sigma_1(u,v;[0,kq])\geq k_1 N_\theta g^\sigma_{N_\theta}(z^\prime)+
k_2 (N_\theta+1) g^\sigma_{N_\theta+1}(z^{\prime\prime}).
\end{equation}
Noting that 
$$\frac{k_1 N_\theta}{kq}=t(\theta) \quad 
\hbox{\rm and } \quad \frac{k_2 (N_\theta+1)}{kq}=1-t(\theta),$$ 
since $\widehat Qf_\sigma(\theta,z)$ can be expressed as in \eqref{trs-con}  
we obtain, by using \eqref{minE1},   
\begin{eqnarray*}
\widehat Q_{\sigma}f(\theta,z)\geq \min\Big\{  
t(\theta) g^\sigma_{N_\theta}(z^\prime)+
(1-t(\theta)) g^\sigma_{N_\theta+1}(z^{\prime\prime}): 
  t(\theta)z^\prime+(1-t(\theta))z^{\prime}=z\Big\}.
\end{eqnarray*}

\smallskip 

The opposite inequality follows by the equality $g^\sigma_N(z)=\widehat Q_{\sigma}f(\frac{1}{N},z)$ proved in the case $\theta=\frac{1}{N}$ and by the convexity of $\widehat Q_{\sigma}f(\theta,z)$. Indeed, 
noting that 
$$\frac{t(\theta)}{N_\theta}+\frac{1-t(\theta)}{N_\theta+1}=\theta,$$
for all pairs $(z^\prime,z^{\prime\prime})$ such that 
$t(\theta)z^\prime+(1-t(\theta))z^{\prime\prime}=z$, 
we have 
\begin{eqnarray*}
t(\theta) g^\sigma_{N_\theta}(z^\prime)+
(1-t(\theta)) g^\sigma_{N_\theta+1}(z^{\prime\prime})&=&
t(\theta) \widehat Q_\sigma f\Big(\frac{1}{N_\theta},z^\prime\Big)+
(1-t(\theta)) \widehat Q_\sigma f\Big(\frac{1}{N_\theta+1},z^{\prime\prime}\Big)\\
&\geq&
 \widehat Q_\sigma f\Big(\frac{t(\theta)}{N_\theta}+\frac{1-t(\theta)}{N_\theta+1},t(\theta)z^\prime+
 (1-t(\theta))z^{\prime\prime}\Big)\\
 &\geq&
 \widehat Q_\sigma f(\theta,z)  
\end{eqnarray*}
as desired. \end{proof}

\subsubsection{A canonical optimal microstructure uniform at all scales}\label{aubryrem} 
The description of $\widehat Q_\sigma f$ that we have obtained in terms of $g^\sigma_N$ highlights a number of equivalent minimizers. However, in this class we can define a set of canonical ground states.  These states are characterized by the corresponding distribution of spins, or, equivalently, the distribution of broken bonds. 
Similar sets have independently appeared  in the study  of related dynamical systems  \cite{aubry,Mather}.

In order to describe this optimal distribution of broken bonds, 
for a given $\theta\in [0,1]$ we define the set of integers 
$$A(\theta)=\{k\in \mathbb Z: \lfloor k\theta\rfloor \neq \lfloor (k+1)\theta\rfloor\}.$$ 
A characteristic property of the set $A(\theta)$ is its `uniformity at all scales'; that is, the property that for each $M\in \mathbb N$ each interval of length $M$ contains either $\lfloor M\theta\rfloor$ or  $\lfloor M\theta\rfloor+1$ elements of $A(\theta)$. The set $A(\theta)$ can be described as the most uniformly distributed among sets with such property (up to translations).
Note, for instance,  that if $\frac{1}{N+1}<\theta<\frac{1}{N}$ then the difference between two consecutive elements of $A(\theta)$ is either $N$ or $N+1$. The set $A(\theta)$ is periodic if and only if $\theta$ is rational; otherwise it follows a pattern reminiscent of quasiperiodic functions (see e.g.~\cite{Besicovitch,LZ}). 

The following proposition states that in the computation of $\widehat Q_\sigma f(z)$ we can consider the corresponding minimum problems only on functions $u$ whose broken sites coincide with $A(\theta(z))$.  
\begin{proposition}[optimality of $A(\theta)$] 
Let $f$ be as in \eqref{f-frattura}. Then, for any $\sigma>0$ and $z\in\mathbb R$, the following equality holds: 
\begin{equation*}
\widehat Q_\sigma f(z)=\liminf_{\substack{k\to+\infty\\k\in A(\theta(z))}}
\frac{1}{k}\min\{E^\sigma_1(u,v;[0,k]): 
v_0=0,v_{k}=zk, u_i-u_{i-1}\geq z^\ast \Leftrightarrow i\in A(\theta(z))\}.
\end{equation*}
\end{proposition}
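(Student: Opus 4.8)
The plan is to prove the claimed equality by combining the characterization of $\widehat Q_\sigma f$ in terms of the periodic bounds $g^\sigma_N$ (Proposition \ref{celle} and Theorem \ref{enwellfract}) with the optimality of the set $A(\theta)$ as the most uniformly distributed arrangement of broken bonds. The inequality ``$\le$'' is trivial, since the right-hand side is a constrained version of the minimum problem defining $\widehat Q_\sigma f(z)$ (via the local representation \eqref{trs}): every competitor on the right is admissible for $\widehat Q_\sigma f(z)$, so $\widehat Q_\sigma f(z)$ is bounded above by the liminf on the right. Hence the whole content is the reverse inequality: one must show that imposing the \emph{specific} fracture pattern $A(\theta(z))$ does not raise the energy asymptotically.

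First I would fix $z$ and set $\theta=\theta(z)$. If $\theta=1/N$ for some integer $N$ (a locking state), then by \eqref{qsigmagn} and Proposition \ref{gequalphi} the value $\widehat Q_\sigma f(z)=g^\sigma_N(z)$ is attained on $N$-periodic minimizers of the form \eqref{rotti-bene} with a single broken bond per period; translating so that the broken bond sits at the sites $\{kN+1\}$ exactly reproduces $A(1/N)=N\mathbb Z+1$ up to translation, which gives the claim in this case. If $\theta$ is \emph{not} a locking state, then by Theorem \ref{structureqm}-type structure (here Theorem \ref{enwellfract} and the interpolation formula \eqref{aubrygen}) $\widehat Q_\sigma f$ is affine near $z$ and $\theta$ lies strictly between two consecutive locking states $1/(N+1)$ and $1/N$ with $N=N_\theta=\lfloor 1/\theta\rfloor$, and the energy is realized by mixing the two periodic arrangements $g^\sigma_N$ and $g^\sigma_{N+1}$ with weights $t(\theta)$ and $1-t(\theta)$ as in \eqref{aubrygen}. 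The key point is that the differences of two consecutive elements of $A(\theta)$ take only the two values $N$ and $N+1$; so a function $u$ with broken sites exactly $A(\theta)$ is precisely a sequence of ``islands'' of lengths $N$ and $N+1$, each with one broken bond at its left endpoint, in the proportions dictated by $\theta$. By the reduction in the ``simplification of minimal configurations'' remark (isolated broken bonds, affine behaviour on the unbroken stretches), the energy of such a $u$, minimized over the $v$ variable and over the values of the average gradient on each island subject to the global constraint $\sum N_j z_j = kz$, is exactly $\min\{t(\theta)g^\sigma_{N}(z') + (1-t(\theta))g^\sigma_{N+1}(z''): t(\theta)z'+(1-t(\theta))z''=z\}$, because all islands of the same length can be taken to carry the same average gradient by the strict convexity of $g^\sigma_N$ in $z$ (Proposition \ref{convinN}) — Jensen forces equidistribution of the gradients among islands of equal length. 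This equals $\widehat Q_\sigma f(\theta,z)$ by \eqref{aubrygen}, which in turn equals $\widehat Q_\sigma f(z)$ since $\theta=\theta(z)$. Carrying out this computation on a window $[0,k]$ with $k\in A(\theta(z))$ (so that the window starts and ends cleanly between islands) and letting $k\to+\infty$ yields the matching lower bound.

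The technical care is needed at two places. First, one must control the cross-interaction terms between distinct islands and the boundary effects at the two ends of $[0,k]$: this is handled exactly as in the proof of Proposition \ref{celle} and Lemma \ref{boundary-Q} — the exponential decay of the kernel (equivalently, the fact that in the two-variable local formulation \eqref{trs} the coupling is nearest-neighbour only) makes the interaction across an island essentially decouple, so the total energy splits, up to an $o(k)$ error, into the sum of the island energies $\sum_j N_j g^\sigma_{N_j}(z_j)$. Second, one must verify that the constrained minimum over configurations whose broken set is forced to be $A(\theta(z))$ is no larger than the free minimum over configurations merely having volume fraction $\theta(z)$: this is where the ``uniformity at all scales'' of $A(\theta)$ is essential, because Proposition \ref{gin1n2} and Corollary \ref{conv-disp} say the optimal mixture uses only two consecutive island lengths $N$ and $N+1$, and among all such mixtures with the right proportions the one realized by $A(\theta)$ is admissible (it has exactly $\lfloor M\theta\rfloor$ or $\lfloor M\theta\rfloor+1$ broken bonds in every window of length $M$, matching what a genuine volume-fraction-$\theta$ competitor can do up to a unit oscillation) and, by strict convexity in $z$, loses nothing asymptotically. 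I expect the main obstacle to be this second point — precisely, showing that forcing the broken sites to be \emph{the} canonical Sturmian set $A(\theta)$, rather than merely some set of the correct density with differences in $\{N,N+1\}$, does not cost extra energy; one must argue that any admissible competitor can be rearranged (island by island, using translation invariance modulo the affine background and the convexity/symmetry properties in Remark \ref{gnprop}) into one supported on $A(\theta(z))$ with the same or lower energy, up to an $o(k)$ term coming from the endpoints and from the incommensurability-induced unit oscillations. Once that rearrangement is in place, the liminf on the right-hand side is squeezed between $\widehat Q_\sigma f(z)$ from below (it is a constrained problem) and $\widehat Q_\sigma f(z)$ from above (the rearranged competitors realize it), giving equality.
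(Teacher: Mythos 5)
Your approach is essentially the paper's: both decompose the $A(\theta(z))$-constrained problem into island energies via the local two-variable representation, identify the islands with the periodic bounds $g^\sigma_N$, and exploit the convexity structure together with the phase-function relation $\widehat Q_\sigma f(\theta(z),z)=\widehat Q_\sigma f(z)$. Two small slips are worth pointing out. First, the direction $\widehat Q_\sigma f(z)\le\liminf_k \frac{1}{k}\min\{\cdots\}$ is not quite a pure constraint-inclusion argument: in \eqref{trs} the competitors carry boundary data on \emph{both} $u$ and $v$, whereas the admissible class in the statement only prescribes $v_0$ and $v_k$, so there is no literal inclusion of admissible sets. The paper proves this bound instead via the chain $\frac{1}{k}\min\ge\frac{1}{k}\sum_jN_jg^\sigma_{N_j}(z_j)\ge\frac{1}{k}\sum_jN_j\widehat Q_\sigma f(z_j)\ge\widehat Q_\sigma f(z)$, using only $g^\sigma_n\ge\widehat Q_\sigma f$ (a consequence of Proposition \ref{celle}) together with the convexity of $\widehat Q_\sigma f$, which sidesteps the boundary-condition mismatch entirely. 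Second, the ``main obstacle'' you worry about at the end --- having to rearrange an arbitrary admissible competitor onto $A(\theta(z))$ --- is not actually part of the argument: once you have the island decomposition and the Jensen equidistribution of gradients over islands of equal length, the $A(\theta(z))$-constrained minimum is explicitly asymptotic to the two-point interpolation appearing in \eqref{aubrygen}, which already equals $\widehat Q_\sigma f(\theta(z),z)=\widehat Q_\sigma f(z)$, so no comparison with competing fracture patterns is required. For the upper bound the paper simply tests with the minimizers of $g^\sigma_N(z_k^-)$ and $g^\sigma_{N+1}(z_k^+)$ for $z_k^\mp$ chosen so as to converge to the tangency endpoints $s_N^-$, $s_{N+1}^+$ of the locking intervals, then invokes \eqref{qeg}; this is your equidistribution upper bound in a cleaner, more explicit form.
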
 
\begin{proof}
For each $N$, we can suppose that the set where $\widehat Q_\sigma f(z)=g^\sigma_N(z)$ is an interval $I_N=[s_N^-, s_N^+]$. Let $z\in (s_{N+1}^+,s_N^-)$. Then, writing 
\begin{equation}\label{defztN}
z=t s_N^-+(1-t)s_{N+1}^+,
\end{equation} we have that 
\begin{equation}\label{qeg}
\widehat Q_\sigma f(z)=r^\sigma_{N+1}(z)=t g^\sigma_N(s_N^-)+(1-t)g^\sigma_{N+1}(s_{N+1}^+).
\end{equation}
Recalling the definition of the phase function $\theta(z)$ (see Definition \ref{phasefunctiondef}) and the fact that $\theta(z)$ is affine in each open interval where $\widehat Q_\sigma f$ is affine, as stated in Proposition
\ref{phasefunctionaffine}, we deduce 
$$\widehat Q_\sigma f(z)=\widehat Q_\sigma f(\theta(z),z) \ \ \hbox{and } \ \ \theta(z)=t \frac{1}{N}+(1-t)\frac{1}{N+1},$$ 
where the link between $z,t$ and $N$ is given by \eqref{defztN}. 
Hence, using the local representation given by \eqref{trs-con}, for all $k\in A(\theta(z))$ we can split the minimum
$$\min\{E^\sigma_1(u,v;[0,k]): 
v_0=0,v_{k}=zk, u_i-u_{i-1}\geq z^\ast \Leftrightarrow i\in A(\theta(z))\}$$
into the sum of the minima 
$$M_j=\min\{E_1(u,v;[i_{j-1},i_j]): 
v_{i_{j-1}}=0,v_{i_j}= (i_j-i_{j-1}) z_j, 
u_i-u_{i-1}\geq z^\ast \Leftrightarrow i=i_j\},$$
where $A(\theta(z))\cap [0,k]=\{i_0, i_1,\dots, i_{n_k}\}$ with $0=i_0<\dots <i_{n_k}=k$, 
and $z_j$ are such that $\sum_{j=1}^{n_k}(i_j-i_{j-1})z_j=kz$.  

Furthermore, noting that $M_j=(i_j-i_{j-1})g^\sigma_{i_j-i_{j-1}}(z_j)$, we 
obtain by convexity \begin{eqnarray}\label{minimaubry}
&&\nonumber\frac{1}{k}\min\{E_1(u,v;[0,k]): 
v_0=0,v_{k}=zk, u_i-u_{i-1}\geq z^\ast \Leftrightarrow i\in A(\theta(z))\} \\
&&\hspace{1cm}\geq \frac{1}{k}\sum_{j=1}^{n_k} (i_j-i_{j-1})g^\sigma_{i_j-i_{j-1}}(z_j)
\geq \frac{1}{k}\sum_{j=1}^{n_k} (i_j-i_{j-1})\widehat Q_\sigma f(z_j)\geq 
\widehat Q_\sigma f(z). 
\end{eqnarray}

Conversely, fixed $k\in A(\theta(z))$, let $\mathcal I_N=\{j\leq n_k: i_j-i_{j-1}=N\}$ and 
$\mathcal I_{N+1}=\{j\leq n_k: i_j-i_{j-1}=N+1\}$ and $z_k^\pm$ be such that 
$$N\#\mathcal I_N z_k^-+(N+1)\#\mathcal I_{N+1} z_k^+=kz$$ 
and $z_k^-\to s_N^-, z_k^+\to s_{N+1}^+$ as $k\to+\infty$. 
Then, using the minimizers of $g^\sigma_N(z_k^-)$ and of $g^\sigma_{N+1}(z_k^+)$ to test 
the minimum problem in \eqref{minimaubry}, we get the upper bound  
$$N\#\mathcal I_N g^\sigma_N(z_k^-)+(N+1)\#\mathcal I_{N+1} g^\sigma_{N+1}(z_k^+).$$
Taking the limit as $k\to+\infty$, by 
\eqref{qeg} 
we obtain 
the claim. 
\end{proof} 

\begin{remark}[optimality of $A(\theta)$ for the constrained relaxation]\rm 
The same proof shows that for any $\theta$ 
\begin{eqnarray*}
\widehat Q_\sigma f(\theta, z)=\liminf_{\substack{k\to+\infty\\k\in A(\theta)}}
\frac{1}{k}\min\{E^\sigma_1(u,v;[0,k]): 
v_0=0,v_{k}=zk, u_i-u_{i-1}\geq z^\ast \Leftrightarrow i\in A(\theta)\}.
\end{eqnarray*} 
\end{remark}

\begin{figure}[h!]
\centerline{\includegraphics[width=0.8\textwidth]{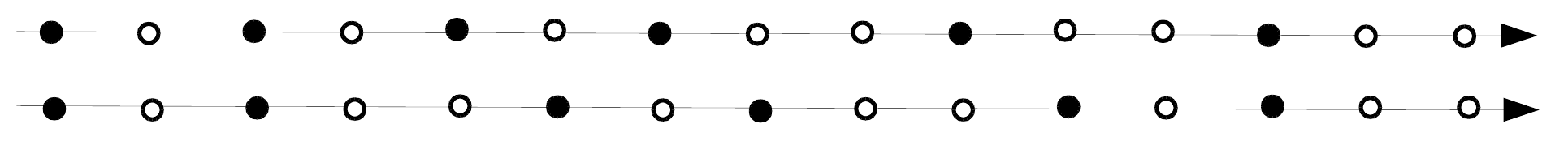}}
\caption{representation of two periodic minimizers}
\label{Canonical_minimizers_1}
\end{figure}
For the sake of illustration, in Fig.~\ref{Canonical_minimizers_1} we represent two periodic minimizers (the black dots representing broken bonds) for $\theta=2/5$. In the first case we have a 15-periodic minimizers, the second array is the `canonical' one, alternating broken bonds at distance two and three.

 \begin{remark}[the $M$-th neighbour case]\rm
In the case of $M$-th only interactions, we  focus first on $\theta=\theta_k={k\over M}$, with $k\in\{0,\ldots, M\}$ , that is, on  locking states, or, equivalently, on energy wells.  The construction in Proposition \ref{formula-n-prop} shows that all periodic spin configurations  with period a submultiple of $M$ compatible with $\theta_k$,  correspond to optimal laminates. Indeed, the only requirement on minimizers is that for all intervals of length $M$ we have an equal number of spins of either type (which is trivially true). Note in particular that  we may choose minimizers with $u_i-u_{i-1}>z^*$ exactly for $i\in A(\theta)$ since this set is $M$-periodic. Now if  $\theta$ is not of the form $k/M$, we do not have periodic optimal minimizers. This is in contrast to the exponential case, where we do have periodic minimizers for all $\theta\in \mathbb Q$.
\end{remark}

\begin{figure}[h!]
\centerline{\includegraphics[width=0.8\textwidth]{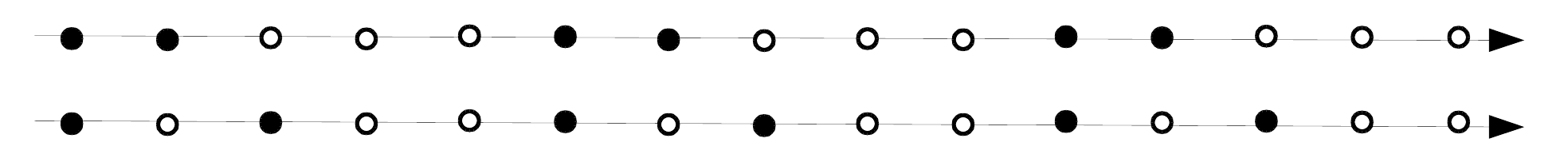}}
\caption{representation of two periodic minimizers}
\label{Canonical_minimizers_2}
\end{figure}
In Fig.~\ref{Canonical_minimizers_2} we represent two 5-periodic minimizers (the black dots representing the elongations larger than $z^*$) for $M=5$ and $\theta=2/5$. The second array is the `canonical' one, alternating broken bonds at distance two and three.

\bigskip
We note that in some of our examples illustrating periodic minimizers with `global' properties,   the canonical periodic microstructures,  epitomizing a generalized Cauchy-Born (GCB) states,  are unique.   This is true, for instance,  in  the case of the exponential kernel ${\bf m}$.  Instead,  for concentrated 
kernels we may have more than one minimal (GCB-type) microstructure.  Note also that in the case of exponential kernels,  outside the  special regimes where the minimizers are periodic,  we can mix GCB states and, since different GCB states do not interact, the mixing process is bringing arbitrariness. In particular, GCB states could be mixed canonically, even though  in the examples of interest in this paper this does not bring any advantages. However, this is not the general case and when different GCB states interact, their mixtures can become suboptimal, as in the case of concentrated kernels. We argue that in such `strongly non-additive' cases the non-periodic GCB states with the properties of our canonical microstructures can  become the preferred ones if interaction happens at all scales (which is not the case for concentrated kernels). 

\subsection{Explicit constructions}\label{esp:co}
In this section we explicitly compute $Q_{\sigma}f$ in a meaningful case, using the general results of the previous section. This also allows us to treat some classes of energies more general than truncated potentials. 

\subsubsection{The Novak-Truskinovsky model}\label{truquapo-exa}
Let $f$ be the truncated quadratic potential  
defined as in \eqref{f-frattura} with $\tilde f(z)=z^2$; that is, 
\begin{equation}\label{fNT}
f(z)=
\begin{cases}
z^2 & \hbox{\rm if }\ z\leq \sqrt{\eta}\\
\eta & \hbox{\rm if }\ z\geq \sqrt{\eta}\\
\end{cases}
\end{equation}
with $\eta>0$ fixed. 
By using the computations in \cite{NT2016} and the results of this section, we obtain an explicit formula for $g^\sigma_N(z)$, and hence $Q_{\sigma}f(z)$. 

\begin{remark}[explicit computation of minima]\rm 
Let $\tilde E_1$ be defined as in \eqref{def-etilden} with $\tilde f(z)=z^2$ and $a,b>0$. 
Then, by the computations in \cite[Sec.~3]{NT2016} we get 
\begin{equation*}
\min\{\tilde E_1(u,v;[0,N]): v_0=0, v_N=N\}=\frac{N^2a(a+1)}{Na+\tanh((N+1)\zeta)\coth(\zeta)-1} 
\end{equation*} 
where 
\begin{equation}\label{def-zeta}
\zeta=2\sinh^{-1}\!\Big(\frac{1}{2}\sqrt{\frac{b(a+1)}{a}}\Big).
\end{equation}
By using \eqref{def-gm}, we obtain   
\begin{equation}\label{def-fdhom}
g_N(z)=c_Nz^2+\frac{\eta}{N},
\end{equation}
where 
\begin{equation}\label{def-cn} 
c_N=\frac{Na(a+1)}{Na+\tanh(N\zeta)\coth(\zeta)}
\end{equation}
and $\zeta$ as in \eqref{def-zeta}. 
\end{remark}
Since we are interested in the analysis of $Q_{\sigma}f$, if $a=a_\sigma$ and $b=b_\sigma$ 
satisfy \eqref{aeb} we write $g^\sigma_N, c^\sigma_N$ and $\zeta_\sigma$ in place of $g_N,c_N$ and $\zeta$, respectively. 
The interval where $\widehat Q_\sigma f(z)=g^\sigma_N(z)$ is given by $I_N(\sigma)=[s_N^-,s_N^+]$,  
where  
\begin{equation}\label{tNsN}
\left.
\begin{array}{ll}
\displaystyle s_N^+=s_N^+(\sigma)=\displaystyle \sqrt{\frac{\eta}{N(N-1)(c^\sigma_N-c^\sigma_{N-1})}}\sqrt{\frac{c^\sigma_{N-1}}{c^\sigma_N}} \ \ \hbox{\rm if } \ N\geq 2; 
\ s_1^+=s_1^+(\sigma)=+\infty\\
\displaystyle s_N^-=\displaystyle s_N^-(\sigma)=\displaystyle \sqrt{\frac{\eta}{(N+1)N(c^\sigma_{N+1}-c^\sigma_N)}}\sqrt{\frac{c^\sigma_{N+1}}{c^\sigma_N}} \ \ \hbox{\rm if } \ N\geq 1. 
\end{array}
\right.
\end{equation}
Hence,  
\begin{equation}\label{zestasta}
\overline z_\ast(\sigma)=\lim_{N\to+\infty}s_N^\pm=\sqrt{\frac{a_\sigma\eta}{(a_\sigma+1)\coth(\zeta_\sigma)}}
 \ \ \hbox{\rm and } \  \overline z^\ast(\sigma)=s_1^-=\sqrt{\frac{\eta(2a_\sigma+b_\sigma(a_\sigma+1))}{a_\sigma b_\sigma}}.
 \end{equation} 
Note that $\overline z_\ast(\sigma)>\sqrt{a_\sigma\eta}
$.
Concluding, we have 
\begin{equation}\label{quatsigmaquad}
Q_\sigma f(z)=\begin{cases}
z^2
& \hbox{if } z\leq \overline z_\ast(\sigma)\\
g^\sigma_N(z)-a_\sigma z^2 & \hbox{if } s_N^-\leq z\leq s_N^+ \ \hbox{ for some $N\geq 2$}\\
r^\sigma_{N+1}(z)-a_\sigma z^2 & \hbox{if } s_{N+1}^+\leq z\leq s_{N}^- \ \hbox{ for some $N$}\\
\eta 
& \hbox{if } z\geq \overline z^\ast(\sigma), 
\end{cases}\end{equation}
where $r^\sigma_{N+1}(z)$ is the common tangent to $g^\sigma_{N+1}(z)$ and  $g^\sigma_{N}(z)$. 

\begin{figure}[h!]
\centerline{\includegraphics[width=1\textwidth]{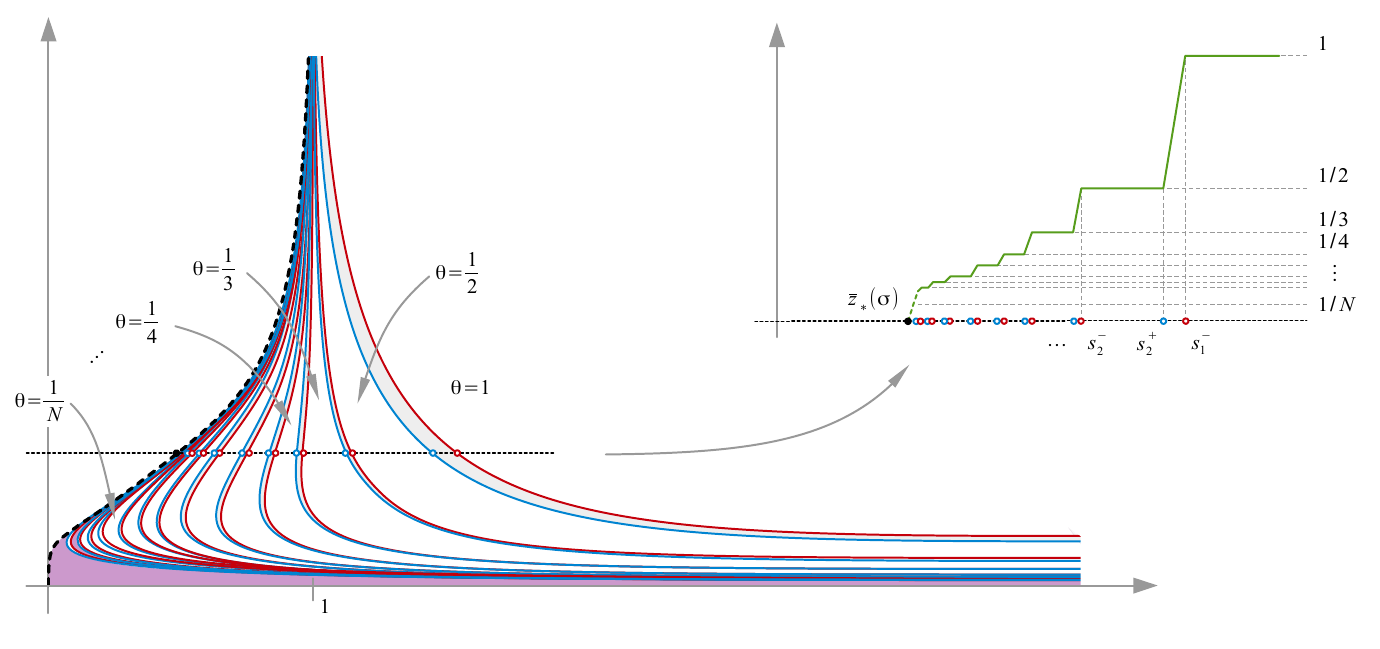}}
\caption{representation of $\theta$ in the $z$-${1\over\sigma}$ plane and a cross section at fixed $\sigma$.}
\label{tonguesexp}
\end{figure} 

The phase function $\theta$ corresponding to this example is pictured in Fig.~\ref{tonguesexp}, where the grey zones between pair of curves denote the pairs in the $z$-${1\over\sigma}$ plane in which $\theta$ is affine for fixed $\sigma$ between consecutive value of the form ${1\over N}$.

\subsubsection{Interpolation between varying degrees of non convexity} 
In this setting it is also of interest to consider a broader class of non convex convex-affine functions $f$ which includes  the convex-constant functions as particular cases. More specifically, consider the functions $
\ell_f^\tau$ defined by
\begin{equation}\label{effealfa}
\ell_f^\tau(z)=\begin{cases}
f(z) & \hbox{\rm if }\ z\leq z^*\\
f(z^*)+ \tau f'(z^*)(z-z^*)& \hbox{\rm if }\ z>z^* 
\end{cases}\end{equation}
with $0<\tau<1$. In this way we construct  an interpolation between the constrained relaxation of the truncated-convex potential and of the convex potential which is obtained if beyond  $z^*$ we smoothly extend $f$ in an affine way.  Accordingly, in \eqref{effealfa} we have  the truncated-convex potential as above at  $\tau=0$, while at $\tau=1$ the function $\ell_f^1$ is convex.

We can write $\ell_f^\tau(z)= \Phi^\tau(z)+ \Gamma^\tau(z)$, where
$$
 \Gamma^\tau(z)= f(z^*)+\tau f'(z^*)(z-z^*)
 $$ and 
$$
\Phi^\tau(z)=\begin{cases}
f(z)-\tau f'(z^*)(z-z^*) & \hbox{\rm if }\ z\leq z^*\\
f(z^*)& \hbox{\rm if }\ z>z^*. \end{cases}
$$
The function $\Phi^\tau$ is a truncated convex potential to which we can apply the results above, while, by Remark \ref{properties}(iii) we have $$Q_\sigma \ell_f^\tau= Q_\sigma(\Phi^\tau+ \Gamma^\tau)=Q_\sigma(\Phi^\tau)+ \Gamma^\tau.$$
 
\smallskip
We can carry on this computation for the quadratic-affine functions $
\ell^\tau$ defined in \eqref{quadraticalfa}; that is, $\ell_f^\tau$ with $f(z)=z^2$ and $z^\ast=\sqrt\eta$.  
Note that we can equivalently rewrite
$\ell^\tau(z)=  \widetilde\Phi^\tau(z)+ \widetilde\Gamma^\tau(z)$, where $\widetilde \Gamma^\tau(z)= 2\tau z-\tau^2$ and 
$$
\widetilde \Phi^\tau(z)=\begin{cases}
(z-\tau)^2 & \hbox{\rm if }\ z\leq 1\\
(1-\tau)^2& \hbox{\rm if }\ z>1,
\end{cases}
$$
which can be seen as a translation by $\tau$ of the function $\Psi^\tau$ given by
$$
\Psi^\tau(z)=\begin{cases}
z^2 & \hbox{\rm if }\ z\leq 1-\tau\\
(1-\tau)^2& \hbox{\rm if }\ z>1-\tau.
\end{cases}
$$
The latter is exactly of the form considered in Example \ref{truquapo-exa} with $\eta=\eta^\tau=(1-\tau)^2$.
Its constrained relaxation is then described in \eqref{quatsigmaquad}, and we eventually have
$$
Q_\sigma \ell^\tau(z)= (Q_\sigma\Psi^\tau)(z-\tau)+ 2\tau z-\tau^2.
$$
Note that by \eqref{zestasta} the endpoints of the interval where the corresponding $\theta(z)$ is not $0$ or $1$ are
$$
\overline z_{\ast,\tau}(\sigma)=\tau+(1-\tau)\sqrt{\frac{a_\sigma}{(a_\sigma+1)\coth(\zeta_\sigma)}}
 \ \ \hbox{\rm and } \  \overline z^{\ast,\tau}(\sigma)=\tau+(1-\tau)\sqrt{\frac{2a_\sigma+b_\sigma(a_\sigma+1)}{a_\sigma b_\sigma}},
$$
with $a_\sigma,b_\sigma, \zeta_\sigma$ as in Example \ref{truquapo-exa}.  Note that $\overline z_{\ast,\tau}(\sigma)<1<\overline z^{\ast,\tau}(\sigma)$, and
$\lim\limits_{\tau\to 1^-}\overline z_{\ast,\tau}(\sigma)=\lim\limits_{\tau\to 1^-} \overline z^{\ast,\tau}(\sigma)=1$.

\section{Asymptotically equivalent continuum models}\label{cont-cons}
The  goal of the relaxation of the  discrete problems discussed in this paper was to obtain a homogenized continuum model. We have seen that generically   the presence of  nonlocal interactions prevents even the simplest non-convex  1D   problem from being   fully characterized by a bulk continuum energy. It follows from our analysis that the  exceptions,  when the `local' description also has `global' features and the generalized Cauchy-Born rule is applicable, are extremely rare. Then the question arises regarding  the very nature of the continuum model which could be considered as asymptotically equivalent to   a discrete model carrying both non-convexity and incompatibility induced by nonlocal interactions. In this section we present an explicit example showing  that the answer to this question may be nontrivial.  While our analysis here will not be exhaustive, it points towards  a new class of hybrid discrete-continuum variational problems which may be of a considerable  interest {\em per se}.

In the interest of analytical transparency  we focus  on  the specific homogenization problem for energies $E_\e$ with the truncated quadratic potential $f$ given by \eqref{fNT}; that is, the NT model analyzed in Example \ref{truquapo-exa}.   
Our goal will be to find  a  continuum analog of this problem allowing one to approximate both the minimal energy and the optimal microstructure. More specifically we search for the continuum problem which will be  asymptotically $\Gamma$-equivalent to $E_\e$ in the sense of \cite{BT}.  In other words,  the challenge is to construct a quasi-continuum problem still carrying some elements of the `lost'  discreteness of the original problem.  
 
 To show that the task of constructing  such a problem is nontrivial we first present a naive approach to `continualization'  in this setting which has been proposed phenomenologically and studied extensively  in applications \cite{BBMM}.  We show the shortcomings of such an approach  and  then correct it to match the exact solution of the discrete problem  presented  in Section~\ref{esp:co}.

\subsection{Naive construction}

We recall that the original problem is defined on a bounded interval $I$ and involves two  functions  $u,v\in \mathcal A_\e(I)$. We can write 
the corresponding energy function  in the form of a sum
\begin{eqnarray}\label{def-etilden1}
E_\e (u,v;I)=E_\e ^*(u;I)+E_\e^{**} (u,v;I)
\end{eqnarray}
where 
\begin{eqnarray}\label{def-etilden2}
E_\e^*(u;I)=\e\sum_{i\in \mathcal I^\ast_\e(I)}f\big(\frac{u_i-u_{i-1}}{\e}\big)
\end{eqnarray}
with  $\mathcal I_\e^\ast=\{i\in\mathbb Z: \e i,\e(i-1)\in I\}$    and 
\begin{eqnarray}\label{def-etilden3}
E_\e^{**}(u,v;I)=\frac{a}{\e}\!\sum_{i\in \mathcal I^\ast_\e(I)}\!(v_i-v_{i-1})^2
+\frac{b}{\e}\!\sum_{i\in \mathcal I_\e(I)}\!(u_i-v_i)^2,
\end{eqnarray}
Assuming now that $I$ is a bounded interval and $\e>0$, we can construct for each of the entries in the sum \eqref{def-etilden1}, viewed   \emph{independently}, the   asymptotically $\Gamma$-equivalent functionals, defined, respectively,  for $u\in SBV(I)$ and $v\in H^1(I)$.  This equivalence can be interpreted as a uniform (with respect to boundary data)  approximation  up to order $\e$ of problems with fixed boundary data for $E_\e ^*$ and $E_\e ^{**}$ by the corresponding problems for some functionals $G_\e ^*$ and $G_\e ^{**}$, respectively. 
 
A natural  choice for such \emph{independently } equivalent  functionals (see \cite{BT} for details) is 
\begin{equation}\label{def-fe-1d}
G^{*}_\e(u;I)=\int_I\gamma(u^\prime)^2 \, dt+\eta\e \#S(u),
\end{equation}
and 
\begin{equation}\label{def-fe-1d1}
G^{**}_\e(u,v;I)=\int_I\Big( \alpha(v^\prime)^2+\beta\big(\frac{u-v}{\e}\big)^2\Big)\, dt 
\end{equation}
for suitable $\alpha,\beta, \gamma, \eta>0$.
We recall that here   $u$ is a piecewise-Sobolev function with jump set denoted by $S(u)$. Given \eqref{def-fe-1d} and  \eqref{def-fe-1d1}  it seems natural to assume that the functional 
\begin{equation}\label{def-fe-1d2}
G_\e(u,v;I)=\int_I\Big(\gamma(u^\prime)^2+\alpha(v^\prime)^2+\beta\big(\frac{u-v}{\e}\big)^2\Big)\, dt+ \eta\e \#S(u)
\end{equation}
represents the desired (quasi) continuum analog of the original problem. 

We recall the convergence result proved in \cite{BCS2017}. 
\begin{remark}[asymptotic behaviour of the energies $G_\e$]\label{BCS-remark}\rm 
The $\Gamma$-limit of $G_\e$ with respect to the convergence $u_\e,v_\e\to v$ in $L^2(I)$ is given by 
$$G_{\rm hom}(v)=\int_I g_{\rm hom}(v^\prime)\, dt.$$ 
The integrand $g_{\rm hom}$ is characterized as 
\begin{equation}\label{def-gchom}
g_{\rm hom}(z)=\inf_{S>0} \Big\{\lambda_S z^2+\frac{\eta}{S}\Big\}
\end{equation}
where 
\begin{equation}\label{def-lambda}
\lambda_S=\frac{(\alpha+\gamma)\frac{\omega S}{2}}{\frac{\omega S}{2}+\frac{\gamma}{\alpha} \tanh(\frac{\omega S}{2})} \quad \hbox{ and } \quad \omega^2=\frac{(\alpha+\gamma)\beta}{\alpha\gamma}. 
\end{equation}
The function $g_{\rm hom}(z)$ is strictly convex, and the following properties hold: 
\smallskip 

{\rm (i)} $\displaystyle g_{\rm hom}(z)=(\alpha+\gamma) z^2$  in $ [0,z_c]$, 
where $z_c=\sqrt \frac{2\eta\omega \alpha}{4\gamma(\alpha+\gamma)}$;

\smallskip 

{\rm (ii)} 
$\displaystyle g_{\rm hom}(z)\sim 
\alpha z^2 +Cz^{2/3}$ as $z\to+\infty$,
where $C>0$ depends only on $\alpha,\beta, \gamma,\eta$.
\end{remark}

\subsection{Lattice induced interdependence of $E_\e ^*(u;I)$ and $E_\e^{**} (u,v;I)$}
Now we show that using the above approach, we obtain   the   discontinuous function $u$  which provides  only  formal approximations  for the  `jump sets' of the original discrete problems.

\begin{figure}[h!]
\centerline{\includegraphics[width=0.7\textwidth]{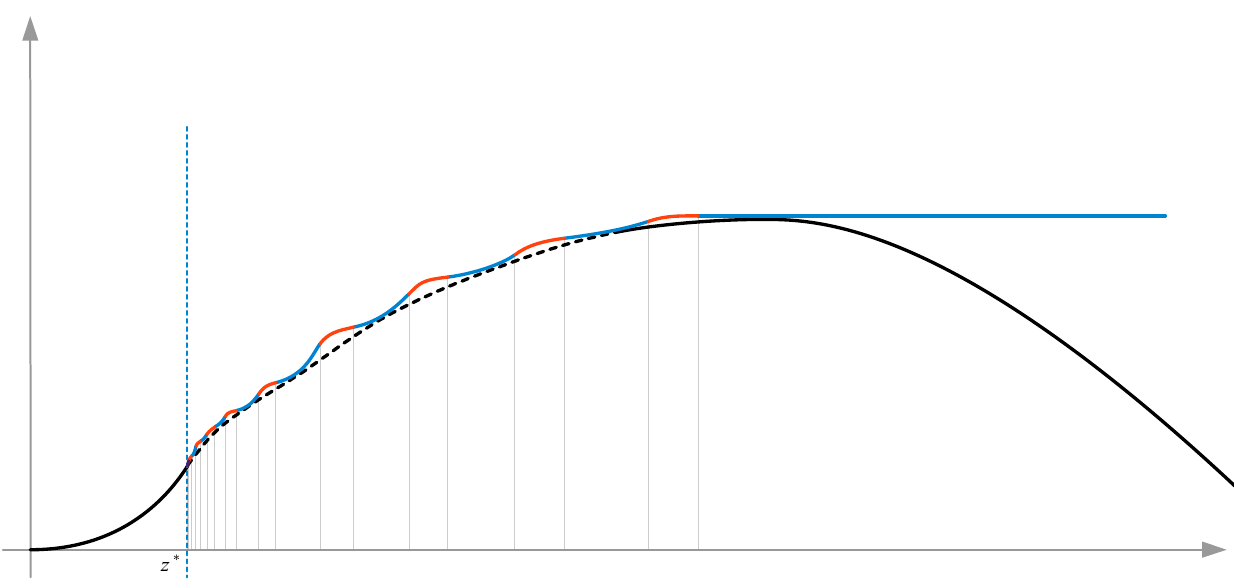}}
\caption{comparison between the graph of the function $g_{\rm hom}$ (below) and that of $Q_{\bf m}f$ after subtraction of the quadratic part}
\label{comparison}
\end{figure} 
\begin{remark}[non-equivalent scaling behavior]\rm 
Note first that the critical value $\overline z_\ast$ in the NT discrete model, defined in \eqref{zestasta}, is different from the corresponding critical value in the continuum problem discussed above.
Indeed, if we choose  $\gamma=1$ as in the discrete case, in order for the discrete and continuous energies to be equivalent up to $\overline z_\ast$ 
we need to `correct' the continuum fracture energy by substituting $\eta$ with 
an {\it effective fracture toughness} $\frac{\eta}{\cosh\zeta}$ with $\zeta$ given by \eqref{def-zeta}. 
However, such a correction will not extend the equality of the energy functions beyond the threshold. In particular, note  the different scaling behavior of the two models as $z$ diverges, see Fig. \ref{comparison}.
\end{remark}

It is clear that the proposed  \emph{lattice-independent } approximation of $E_\e ^*(u;I)$ and $E_\e^{**} (u,v;I)$ fails  because  in general separate uniform approximations of minima for two functionals does not provide a uniform approximation for the minimum of the sum. More specifically, in our case functionals $G^{*}_\e$ favor the onset of (at most) one jump point of $u$, while functionals $G^{**}_\e$, not involving jump sets, allow for an unbounded number of jumps. While in the correspondingly tailored  regimes we can have good \emph{separate}  approximations, the sum of the two energies in $E_\e$  optimizes the number and location of jumps accounting for the lattice induced  interaction between $E_\e ^*(u;I)$ and $E_\e^{**} (u,v;I)$ and therefore in a different way than $G_\e$ which does not account for such lattice induced interaction. 

Note that while in the discrete case we have interaction constrained by the lattice discreteness, in the naive continuum problem such interaction is lattice-unconstrained, which allows in principle for a richer class of microstructures. That is why we can obtain in this way at most a lower bound.

\subsection{A lattice-compatible  construction}

As we have seen above,  the limit of the energies defined in \eqref{def-fe-1d} when $\e\to0$ has different properties from those of its discrete counterpart and the failure of this approach is related to the discrete-to-continuum transition-induced loss of the constraint on the location of the jumps.

To construct the asymptotically equivalent \cite{BT} continuum theory the approach should be more subtle because  the corresponding  relaxation procedure should involve  a delicate interplay between continuum limit  and discrete energy minimization, which are tightly coupled.  

Indeed, as we have seen above decoupling discrete-to-continuum transition from the relaxation of a non-convex energy  gives rise to a quantitatively and qualitatively incorrect  asymptotic behavior. Apparently the  discrete-to-continuum limit  and   the  incompatibility-constrained non-convex minimization do not commute and by performing the former   independently of the latter  we at best underestimate the relaxed energy. In other words,  by neglecting the discrete constraint we may be  able to construct lower bounds (using the  naive approximation). We do not systematically analyze this issue here.

To get an insight on how to fix the problem, it is instructive to compare  \eqref{def-gchom}  with formulas \eqref{def-fdhom} and \eqref{def-cn}.  Note, in particular, that in the latter the parameter $N$ is discrete while in the former the parameter $S$ is continuous. This highlights that the discreteness, fundamental in the construction of the ${\bf m}$-relaxation in the original problem, is underestimated  in the computation of $g_{\rm hom}$. In other words, the internal physical scale and the lattice scale tend to zero simultaneously but the value of their ratio is not remembered  in the limit.

With this remark in mind, we now look for a modification of the `naive'  continuum energies which corrects the  non-equivalent behavior, while maintaining the relevant features associated with the discreteness in the original functional $E_\e$. Since the energies defined in \eqref{def-fe-1d} cannot be equivalent to $E_\e$ mainly because of the discrete location of the jump points, it is natural to add the constraint that the jump set $S(u)$ be contained in $\e\mathbb Z$. 

As we  show below,  this simple modification is indeed sufficient to obtain equivalence.  Here we imply  that the  energies depending on three parameters $\alpha$, $\beta$ and $\gamma$ (instead of $a$, $b$ and $1$, respectively), can  be tuned appropriately  to  construct  the correct limiting energy.

More specifically, for any $\e>0$ we define for $u\in SBV(I)$ and $v\in H^1(I)$ the functional  
\begin{equation}\label{def-fc-1d}
G^{\mathbb Z}_\e(u,v;I)=\begin{cases}
G_\e(u,v;I) & \hbox{ if } S(u)\subset \e\mathbb Z\cr 
+\infty & \hbox{ otherwise.}
\end{cases}
\end{equation}
By the general homogenization theorem \cite[Th.~3]{BCS2017} we get the following 
$\Gamma$-convergence result.
\begin{proposition}\label{salti-interi}
The sequence $G^{\mathbb Z}_\e(u,v;I)$ $\Gamma$-converges with respect to the convergence 
$u_\e,v_\e\to v$ in $L^2(I)$ to 
\begin{equation}
G^{\mathbb Z}_{\rm hom}(v)=\int_I g^{\mathbb Z}_{\rm hom}(v^\prime)\, dt
\end{equation}
where 
\begin{equation}\label{def-echom} 
g^{\mathbb Z}_{\rm hom}(z)=
\displaystyle\lim_{N\to+\infty}
\frac{1}{N}\inf\{G_1^{\mathbb Z}(u,v; (0,N))\!: \ \! u(0)=v(0)=0, \ \! u(N)=v(N)=Nz\}.
\end{equation}
\end{proposition}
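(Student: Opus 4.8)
The plan is to invoke the general homogenization theorem for functionals of the form $G_\e^{\mathbb Z}$, which is Theorem~3 of \cite{BCS2017}, after checking that $G_\e^{\mathbb Z}$ satisfies its hypotheses. First I would observe that $G_\e^{\mathbb Z}$ is obtained from $G_\e$ — a sum of a bulk quadratic term in $u'$, a bulk quadratic term in $v'$, a bulk quadratic coupling term $\beta\bigl((u-v)/\e\bigr)^2$, and a surface term $\eta\e\#S(u)$ — by imposing the additional closed constraint $S(u)\subset\e\mathbb Z$. The key point is that this constraint is exactly of the type handled by the lattice-compatible homogenization results: the energy densities are $1$-periodic (indeed independent of the space variable) once one rescales, and the constraint $S(u)\subset\e\mathbb Z$ makes the admissible class `$\e$-lattice compatible'. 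Thus the abstract theorem applies and yields $\Gamma$-convergence, with respect to the $L^2(I)$ convergence of both $u_\e$ and $v_\e$ to a common limit $v\in H^1(I)$, to an integral functional $\int_I g^{\mathbb Z}_{\rm hom}(v')\,dt$ whose integrand is given by the asymptotic cell-problem formula \eqref{def-echom}.

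The steps, in order, would be: (i) rewrite $G_1^{\mathbb Z}(u,v;(0,N))$ explicitly as the discrete-plus-continuum energy on $(0,N)$ with the jump set of $u$ constrained to integers, making the link with the scaled functionals $G_\e^{\mathbb Z}$ transparent via the change of variables $t\mapsto \e t$, $\e=1/N$; (ii) verify the coercivity and growth bounds needed by \cite[Th.~3]{BCS2017} — here the quadratic bulk terms in $u'$ and $v'$ together with the coupling term give the requisite quadratic control, while the surface term is bounded below by $0$ and contributes the $\eta/N$ correction; (iii) verify that the constraint $S(u)\subset\e\mathbb Z$ is admissible in the sense required by that theorem (it is a closed constraint stable under the relevant convergence, and it is compatible with the lattice structure so that the cell-problem formula remains well posed); (iv) conclude that the $\Gamma$-limit is $\int_I g^{\mathbb Z}_{\rm hom}(v')\,dt$ with $g^{\mathbb Z}_{\rm hom}$ given by \eqref{def-echom}; and (v) note convexity of $g^{\mathbb Z}_{\rm hom}$, which follows automatically from the lower semicontinuity of the $\Gamma$-limit under weak $H^1$ (equivalently $L^2$ of the pair with bounded energy) convergence, exactly as in Proposition~\ref{propT} and Remark~\ref{queffgamma}.

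The main obstacle I anticipate is technical rather than conceptual: making precise that imposing $S(u)\subset\e\mathbb Z$ does not destroy the recovery-sequence construction. In the unconstrained case one is free to place jumps of $u$ anywhere, and the $\Gamma$-$\limsup$ inequality is built by an explicit one-jump (or periodic) competitor; with the constraint one must relocate each jump to the nearest point of $\e\mathbb Z$ and show the energy cost of this relocation is $o(1)$ as $\e\to0$. This is where the coupling term $\beta\bigl((u-v)/\e\bigr)^2$ requires care, since a displacement of a jump by $O(\e)$ changes $u-v$ by $O(\e)$ on an interval of length $O(\e)$, contributing $O(\e)$ to the rescaled energy — hence negligible — but one must track this uniformly over the (possibly many) jumps counted by $\#S(u)$. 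Once the relocation estimate is in place, the lower bound is inherited from the unconstrained problem restricted to the constrained class (the constraint only raises the infimum), and together with the matching upper bound via lattice-compatible competitors one obtains the cell formula \eqref{def-echom}. I would then simply cite \cite[Th.~3]{BCS2017} for the general statement, reducing the proof to the verification of hypotheses sketched above, and remark that the resulting $g^{\mathbb Z}_{\rm hom}$ coincides, after subtracting the quadratic part, with the discrete relaxation $Q_\sigma f$ computed in Section~\ref{esp:co}, which is the asymptotic equivalence asserted in the companion statements.
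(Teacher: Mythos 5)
Your overall strategy --- invoke \cite[Theorem~3]{BCS2017} and adapt it to the lattice constraint $S(u)\subset\e\mathbb Z$ --- is the one the paper follows. Where you differ is exactly at the point you flag as the main obstacle, namely the recovery-sequence construction. You propose to relocate each jump of an unconstrained recovery sequence to the nearest point of $\e\mathbb Z$ and to bound the resulting energy increment (coming mainly from the coupling term $\beta\bigl((u-v)/\e\bigr)^2$) by $o(1)$; that estimate would work, but the paper avoids it entirely. For the $\Gamma$-$\limsup$ it first reduces, by density, to piecewise-affine target functions $v$ with $S(v')\subset\mathbb Q$, so that the recovery sequence produced by the cell construction of \cite{BCS2017}, after the rescaling $t\mapsto\e t$, automatically has $S(u_\e)\subset\e\mathbb Z$ --- no relocation is ever needed. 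For the $\Gamma$-$\liminf$ the paper simply observes that the blow-up step in the proof of \cite[Theorem~3]{BCS2017} does not alter the jump set of $u_\e$, so the constraint survives that argument unchanged. In short, your route is correct and close to the paper's; the density-plus-scaling trick simply spares you the jump-relocation estimate you were bracing for.
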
 
The proof of Proposition \ref{salti-interi} can be obtained by following the steps of the proof of \cite[Theorem 3]{BCS2017}. Indeed, in the blow-up procedure the jump set $S(u_\e)$ is not modified, and the $\liminf$ inequality follows.  
Concerning the upper estimate, by density we can consider a piecewise-affine target function $v$ such that $S(v^\prime)\subset \mathbb Q$; then, the construction of the recovery sequence can be done by following the same steps as in the the proof of \cite[Theorem 3]{BCS2017}, and the scaling argument gives $u_\e$ such that $S(u_\e)\subset \e\mathbb Z$.  
Note that the function $g_{\rm hom}^{\mathbb Z}$ is convex. 
\bigskip

Now we will show that the sequence $G^{\mathbb Z}_\e(u,v;I)$ has the same $\Gamma$-limit as the discrete sequence $E_\e$ 
for a suitable choice of the parameters $\alpha,\beta, \gamma$.
We define  
\begin{equation}\label{def-psi}
g(N,z)=\displaystyle\frac{1}{N}\min\{\tilde G_1(u,v; (0,N))\!: \ u,v\in H^1(0,N), \ v(0)=0, \ v(N)=Nz\},  
\end{equation}
where, in analogy with (\ref{def-etilden}), we denote by $\tilde G_1$ the (non scaled) functional given by 
\begin{equation*}
\tilde G_1(u,v;I)=\int_I\Big(\gamma(u^\prime)^2+\alpha(v^\prime)^2+\beta(u-v)^2\Big)\, dt.  
\end{equation*}
By solving the Euler-Lagrange equations for $\tilde G_1$ and minimizing on the boundary values of $u$, 
it follows that 
\begin{equation}\label{euler}
g(N,z)=\displaystyle\lambda_N z^2
\end{equation}
with $\lambda_N$ defined in (\ref{def-lambda}). 
Note that the (unique) solution $(u_N,v_N)$ of the minimum problem defining $g(N,z)$ satisfies the symmetry property 
$u(\frac{N}{2})=v(\frac{N}{2})=\frac{N}{2}z$. 
\begin{proposition}
For any $z\in\mathbb R$ the following equality holds: 
\begin{equation}\label{uguali-discreto}
g^{\mathbb Z}_{\rm hom}(z)=\Big(\inf_{N\in\mathbb N}\Big\{\lambda_N z^2+\frac{\eta}{N}\Big\}\Big)^{\ast\ast}.
\end{equation}
\end{proposition}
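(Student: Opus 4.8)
The goal is to identify the continuum-homogenized density $g^{\mathbb Z}_{\rm hom}(z)$ given by the cell-problem formula \eqref{def-echom} with the convex envelope of $N\mapsto \lambda_N z^2+\eta/N$. The plan is to mimic exactly the structure of the proof of Proposition \ref{celle}: first establish the lower bound by a regrouping (`broken island') argument, then establish the matching upper bound by an explicit periodic construction, and finally invoke convexity of $g^{\mathbb Z}_{\rm hom}$ to pass to the convex envelope on the right-hand side.

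\smallskip
First I would set up the lower bound. Fix $z$ and let $(u,v)$ be a minimizer of $G_1^{\mathbb Z}(\cdot,\cdot;(0,k))$ with the prescribed affine boundary data; recall that $G_1^{\mathbb Z}$ is finite only when $S(u)\subset\mathbb Z$, so the jump set partitions $\{0,\dots,k\}$ into integer subintervals $[i_{j-1},i_j]$ on each of which $u\in H^1$ has no interior jump (the jump sitting at the left endpoint $i_{j-1}$, save for the first interval). On each such subinterval of length $N_j$ with average slope $z_j$, the energy is bounded below by $N_j\bigl(g(N_j,z_j)+\eta/N_j\bigr)=N_j\bigl(\lambda_{N_j}z_j^2+\eta/N_j\bigr)$ using \eqref{euler} and the fact that each internal jump contributes the fracture energy $\eta\e$-scaled, i.e.\ one unit $\eta$ per island (together with the Euler--Lagrange minimization on the interior boundary value of $u$ as in \eqref{def-psi}). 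Since $\sum_j N_j = k$ and $\sum_j N_j z_j = kz$, dividing by $k$ and passing to the $\liminf$ as $k\to+\infty$ gives $g^{\mathbb Z}_{\rm hom}(z)\ge\bigl(\inf_N\{\lambda_N z^2+\eta/N\}\bigr)^{**}$ after recognizing the convex combination. One must also handle the degenerate `all-broken' possibility (every bond in $S(u)$) separately, but this corresponds to the $N=1$ term $g(1,z)=\lambda_1 z^2+\eta$ and is already included.

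\smallskip
For the upper bound, fix $z$ and choose $\overline N$ nearly attaining $\inf_N\{\lambda_N z^2+\eta/N\}$. Take the minimizer $(u_{\overline N},v_{\overline N})$ of the cell problem \eqref{def-psi} on $(0,\overline N)$ with slope $z$, which by \eqref{euler} has energy $\overline N\lambda_{\overline N}z^2$; adjoin one jump of $u$ at the endpoint carrying fracture cost $\eta$, and tile $[0,\overline N K]$ by $K$ translated copies. The symmetry property $u_{\overline N}(\overline N/2)=v_{\overline N}(\overline N/2)=\tfrac{\overline N}{2}z$ (or the analogous matching of endpoint values) guarantees the pieces glue together into an admissible pair with $S(u)\subset\mathbb Z$ and the right boundary data, with total energy per unit length equal to $\lambda_{\overline N}z^2+\eta/\overline N$. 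Letting $K\to+\infty$ and using formula \eqref{def-echom} yields $g^{\mathbb Z}_{\rm hom}(z)\le\lambda_{\overline N}z^2+\eta/\overline N$, hence $g^{\mathbb Z}_{\rm hom}(z)\le\inf_N\{\lambda_N z^2+\eta/N\}$; since $g^{\mathbb Z}_{\rm hom}$ is convex (as noted after Proposition \ref{salti-interi}), it is bounded above by the convex envelope of this infimum, and combined with the lower bound we get equality.

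\smallskip
The step I expect to be the main obstacle is the lower-bound regrouping: making rigorous the reduction of an arbitrary admissible competitor to a disjoint union of `islands' each of which is exactly of the cell-problem form $g(N_j,z_j)$. One has to argue that interior jumps of $u$ can always be relocated so that the non-singular $H^1$-pieces are clean, and that the coupling term $\beta(u-v)^2$ and the slope $v'$ do not create cross-island interactions that would spoil additivity of the lower bound — this is exactly where the constraint $S(u)\subset\e\mathbb Z$ is essential, since it forbids fractional-location jumps that would let the microstructure evade the lattice, and it is the analogue of the `broken island' simplification carried out in detail in the Remark following Proposition \ref{celle}. The continuum nature of $u,v$ on each island (versus the discrete setting there) requires a minor variational argument via the Euler--Lagrange equations rather than a finite convexity inequality, but no genuinely new idea.
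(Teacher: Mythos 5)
Your overall architecture matches the paper's: lower bound by decomposing the domain by the jump set $S(u)\subset\mathbb Z$ into integer islands and applying the cell-problem bound $g(n_i,z_i)=\lambda_{n_i}z_i^2$ on each, then Carath\'eodory plus convexity; upper bound by a periodic construction; and finally convexity of $g^{\mathbb Z}_{\rm hom}$. The lower-bound part is essentially correct, and in fact your worry about it is misplaced: unlike the discrete problem with long-range kernel, here the bulk energy $\tilde G_1$ is a \emph{local} integral, so it is exactly additive over the sub-intervals cut by the jump points — there is no ``cross-island interaction'' to control, and the only discrepancy is the single missing $\eta$ from the first island (no jump at $t=0$), which is $O(1/N)$ and vanishes.

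The genuine gap is in the upper bound. Plain translation-tiling of the minimizer $(u_{\overline N},v_{\overline N})$ of \eqref{def-psi}, with one jump at each tile boundary, does \emph{not} produce an admissible competitor for the cell problem \eqref{def-echom}, because \eqref{def-echom} imposes Dirichlet conditions $u(0)=0$, $u(N)=Nz$ on \emph{both} $u$ and $v$, whereas in \eqref{def-psi} the boundary values of $u$ are free (natural conditions $u'(0)=u'(\overline N)=0$). Generically $u_{\overline N}(0)\neq 0$ and $u_{\overline N}(\overline N)\neq \overline N z$, so the translated tiling fails the boundary data for $u$. The midpoint symmetry $u_{\overline N}(\overline N/2)=v_{\overline N}(\overline N/2)=\tfrac{\overline N}{2}z$ that you invoke does not by itself repair this: it tells you $u_{\overline N}(0)+u_{\overline N}(\overline N)=\overline N z$, not that $u_{\overline N}(0)=0$. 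The paper's device is precisely designed to exploit this symmetry: it builds a \emph{rescaled, reflected} tile on $(0,2\overline N)$ via \eqref{estensione}, placing the jump at the interior integer $\overline N$, which yields $\tilde u_{\overline N}(0)=2u_{\overline N}(\overline N/2)-\overline N z=0$ and $\tilde u_{\overline N}(2\overline N)=2\overline N z$ so that the tiled function satisfies the right boundary data and has $S\subset\mathbb Z$. (An alternative fix is to modify your construction in an $O(1)$ neighbourhood of the endpoints, paying a vanishing energy per unit length, but you would have to say so explicitly.) As written, your upper bound is not established.
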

\begin{proof}
We fix $z\in\mathbb R$ and $N\in\mathbb N$; let $(u_N, v_N)$ be the solution of the minimum problem defining $\psi(N,z)$. 
We define $\tilde u_N\in SBV(0,2N)$ by setting 
\begin{equation}\label{estensione}
\tilde u_N(t)=\left\{ 
\begin{array}{ll} 
2u_N(\frac{t+N}{2})-Nz & \hbox{ if } \ t\in(0,N)\\
2u_N(\frac{t-N}{2})+Nz & \hbox{ if } \ t\in(N,2N)
\end{array}
\right.  
\end{equation}
and correspondingly $\tilde v_N\in H^1(0,2N)$. Since $u_N(\frac{N}{2})=v_N(\frac{N}{2})=\frac{N}{2}z$, 
then $S(\tilde u_N)=\{N\}$, $\tilde u_N(0)=\tilde v_N(0)=0$ and $\tilde u_N(2N)=\tilde v_N(2N)=2Nz$; by construction 
$$\frac{1}{2N} \tilde G_1(\tilde u_N,\tilde v_N;(0,2N))=
\frac{1}{N}\tilde G_1(u_N,v_N;(0,N))=\displaystyle
\lambda_N z^2.$$ 
Let $k\in\mathbb N$. We define $\tilde u$ in $(0,2kN)$ by setting  
\begin{equation*}
\tilde u(t)=
 \tilde u_N(t-2jN)+2jNz \quad \hbox{ in } \quad (2jN, 2(j+1)N), \ \ \  j=0,\dots, k-1 
\end{equation*}
and in the same way we define $\tilde v$. 
By construction, $S(\tilde u)\subset \mathbb N$ and $\# S(\tilde u)=k-1$; hence, 
since the boundary conditions for $\tilde u$ and $\tilde v$ hold, we have
\begin{eqnarray*} 
\displaystyle
\lambda_N z^2+\frac{\eta}{N}&=&
\frac{1}{2kN} \tilde G_1(\tilde u,\tilde v;(0,2kN))+\frac{\eta (k-1)}{kN}+\frac{\eta}{kN}\\
&=&\frac{1}{2kN} G_1^{\mathbb Z}(\tilde u,\tilde v;(0,2kN))+\frac{\eta}{kN}\\
&\geq&\frac{1}{2kN} \inf\Bigl\{G_1^{\mathbb Z}(u,v;(0,2kN)): \\
&& \hspace{1cm} u(0)=v(0)=0, 
u(kN)=v(kN)=2kNz \Bigr\}+\frac{\eta}{ k N}, 
\end{eqnarray*}
and, by taking the 
limit as $k\to+\infty$, 
\begin{eqnarray*}
\displaystyle
\lambda_N z^2+\frac{\eta}{N}\geq g_{\rm hom}^{\mathbb Z}(z).
\end{eqnarray*}
Hence, since $g_{\rm hom}^{\mathbb Z}$ is convex, 
\begin{eqnarray*}
\Big(\inf_{N\in\mathbb N}\Big\{\displaystyle
\lambda_N z^2+\frac{\eta}{N}\Big\}\Big)^{\ast\ast}\geq g_{\rm hom}^{\mathbb Z}(z).
\end{eqnarray*}
Next we need to  prove  the opposite inequality. 
Let $u\in SBV(0,N)$ and $v\in H^1(0,N)$ be such that the boundary conditions 
$u(0)=v(0)=0$, $u(N)=v(N)=Nz$ 
hold and $S(u)\subset \mathbb N$. We denote the jump points of $u$ by $N_i$, $i=1,\dots k$, with 
$N_i<N_{i+1}$ for any $i=1,\dots, k-1$.  
Setting $N_0=0$ and $N_{k+1}=N$, we define 
$$n_i=N_i-N_{i-1} \quad \hbox{ and } \quad z_i=\frac{v(N_i)-v(N_{i-1})}{n_i}$$ 
for $i=1,\dots k+1$.
We then have 
$$\frac{1}{n_i}\tilde G_1(u,v; (N_{i-1}, N_{i}))\geq g(n_i,z_i)=
\lambda_{n_i}z_i^2$$ 
 for any $i$, so that 
\begin{eqnarray*}
\frac{1}{N}G_1^{\mathbb Z}(u,v; (0,N))&\geq&\sum_{i=1}^{k+1} 
\frac{n_i}{N} \lambda_{n_i}z_i^2 +
\frac{\eta k}{N}
\ = \ \sum_{i=1}^{k+1} 
\frac{n_i}{N} \big(\lambda_{n_i}z_i^2 +
\frac{\eta }{n_i}\big)\\
&\geq& \sum_{i=1}^{k+1} 
\frac{n_i}{N} \inf_{n\in\mathbb N}\Big\{
\lambda_{n}z_i^2 +
\frac{\eta }{n}\Big\}
\end{eqnarray*}
Since $\sum_{i=1}^{k+1}n_i=N$ and $\sum_{i=1}^{k+1}n_iz_i=Nz$, an application of Carath\'eodory's Theorem 
gives  
\begin{eqnarray*}
\frac{1}{N}G_1^{\mathbb Z}(u,v; (0,N))
&\geq& \Big( \inf_{n\in\mathbb N}\Big\{\lambda_{n}z^2 +
\frac{\eta }{n}\Big\}\Big)^{\ast\ast}.
\end{eqnarray*}
Taking the $\inf$ over the admissible functions and the limit for $N\to+\infty$ we get the inequality 
\begin{eqnarray*}
g_{\rm hom}^{\mathbb Z}(z)
&\geq& \Big( \inf_{n\in\mathbb N}\Big\{\lambda_{n}z^2 +
\frac{\eta }{n}\Big\}\Big)^{\ast\ast}
\end{eqnarray*}
concluding the proof.
\end{proof}

Now, if we choose 
\begin{equation}\label{parametri}
\displaystyle\alpha=\displaystyle\frac{a(a+1)}{a+\zeta\coth(\zeta)}, \quad
\displaystyle\beta=\displaystyle\frac{4a(a+1)\zeta^3\coth(\zeta)}{(a+\zeta\coth(\zeta))^2}, \quad 
\displaystyle\gamma=\displaystyle\frac{(a+1)\zeta\coth(\zeta)}{a+\zeta\coth(\zeta)}
\end{equation}
it follows that $\omega=2\zeta,$ 
where $\zeta$ is defined in \eqref{def-zeta}, and for any $N$ the following equality holds 
$$
\lambda_N=c_N=\frac{N(a+1)a}{aN+\tanh(N\zeta)\coth(\zeta)}.
$$
We can then state the following equivalence result, whose proof follows from the equivalence between $E_\e$ and $F_\e$ (Theorem \ref{teo-equivalence-exp} and Remark \ref{absigma}) and the results above. 

\begin{theorem}[equivalence with the Novak-Truskinovsky model] 
Choosing the coefficients as in \eqref{parametri}, the sequence $G_\e^{\mathbb Z}$ defined in \eqref{def-fc-1d} 
$\Gamma$-converges with respect to the $L^2$-convergence to the same $\Gamma$-limit of the sequence of discrete functionals $E_\e$ in the truncated quadratic case.  
\end{theorem}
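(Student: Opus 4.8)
The plan is to identify both $\Gamma$-limits with one and the same explicit integral functional, $v\mapsto\int_I h(v')\,dt$ with $h(z)=(\inf_{N\in\mathbb N}\{c_Nz^2+\eta/N\})^{\ast\ast}$ and $c_N$ the discrete stiffness of \eqref{def-cn}. All the ingredients are already available: the discrete side is reduced to this formula through the asymptotic equivalence $E_\e\leftrightarrow F_\e$ of Theorem \ref{teo-equivalence-exp}, Remark \ref{absigma} and the analysis of Section \ref{truco:sect}; the continuum side is reduced to the analogous formula by Proposition \ref{salti-interi} and the homogenization formula \eqref{uguali-discreto}; and the two formulas are matched by the algebraic identity that the choice \eqref{parametri} inverts exactly the relations defining the discrete coefficients.

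First I would pin down the $\Gamma$-limit of the discrete sequence $E_\e$ for the truncated quadratic $f$ of \eqref{fNT} and given coefficients $a,b>0$. By Remark \ref{absigma}, $E_\e$ is asymptotically equivalent -- hence has the same $\Gamma$-limit with respect to $L^2$-convergence -- to the non-local functional with kernel $m_n=\varrho\,e^{-\sigma n}$, where $\sigma=\sigma_{a,b}$ and $\varrho=\varrho_{a,b}$ are given by \eqref{absigmarho}; since this kernel is $o(n^{-\beta})$ for every $\beta$, Remark \ref{queffgamma} identifies the common $\Gamma$-limit with $\int_I\widehat Q_{\sigma,\varrho}f(v')\,dt$ on $H^1(I)$, with $\widehat Q_{\sigma,\varrho}f$ as in \eqref{def-tsigmar}. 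The version of Proposition \ref{celle} valid for arbitrary $a,b$ (see the remark after Proposition \ref{celle}) then gives $\widehat Q_{\sigma,\varrho}f(z)=(\inf_N g^{a,b}_N(z))^{\ast\ast}$, and the explicit computation of Section \ref{truquapo-exa} -- which only uses the strict convexity of $\tilde f(z)=z^2$ and the closed form of the minima of $\tilde E_1$ -- yields $g^{a,b}_N(z)=c_Nz^2+\eta/N$ with $c_N$ and $\zeta$ given by \eqref{def-cn}--\eqref{def-zeta}. Hence the $\Gamma$-limit of $E_\e$ is $\int_I h(v')\,dt$ with $h(z)=(\inf_N\{c_Nz^2+\eta/N\})^{\ast\ast}$.

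Next I would record that, by Proposition \ref{salti-interi}, $G^{\mathbb Z}_\e$ $\Gamma$-converges for $L^2$-convergence to $\int_I g^{\mathbb Z}_{\rm hom}(v')\,dt$, and that the proposition establishing \eqref{uguali-discreto} gives $g^{\mathbb Z}_{\rm hom}(z)=(\inf_N\{\lambda_Nz^2+\eta/N\})^{\ast\ast}$, with $\lambda_N$ the value of $\lambda_S$ in \eqref{def-lambda} at $S=N$. Both $h$ and $g^{\mathbb Z}_{\rm hom}$ are convex and have quadratic growth (for $h$ by \eqref{quatsigmaquad}--\eqref{zestasta}, for $g^{\mathbb Z}_{\rm hom}$ by Remark \ref{BCS-remark}(ii)), so the two $\Gamma$-limits are genuine integral functionals on $H^1(I)$ and it is enough to prove $h=g^{\mathbb Z}_{\rm hom}$, that is, $\lambda_N=c_N$ for every $N$. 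This is a finite computation: substituting \eqref{parametri} into $\omega^2=(\alpha+\gamma)\beta/(\alpha\gamma)$ gives $\omega^2=4\zeta^2$, hence $\omega=2\zeta$, and inserting $\omega=2\zeta$, $S=N$ into the formula for $\lambda_S$ in \eqref{def-lambda} produces $\lambda_N=\frac{N(a+1)a}{aN+\tanh(N\zeta)\coth\zeta}=c_N$ for all $N$ -- precisely the identity recorded just before the statement. Therefore $g^{\mathbb Z}_{\rm hom}=h$, so $G^{\mathbb Z}_\e$ and $E_\e$ have the same $\Gamma$-limit, which is the claim.

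The genuinely delicate part is not any single step but the bookkeeping that glues them: one must check that the three parametrizations -- the discrete stiffnesses $(a,b)$ of $E_\e$, the intermediate exponential data $(\sigma_{a,b},\varrho_{a,b})$, and the continuum triple $(\alpha,\beta,\gamma)$ of \eqref{parametri} -- are consistently matched at each passage. In particular one has to see why it is harmless that the continuum nearest-neighbour stiffness $\gamma$ of \eqref{parametri} differs from the bare discrete value $1$ carried by $f$: this is because $g^{\mathbb Z}_{\rm hom}$, like $\widehat Q_\sigma f$, is produced only after the optimization over the micro-fracture spacing $N$ (in \eqref{def-echom}/\eqref{uguali-discreto} and in Proposition \ref{celle} respectively), which reabsorbs the discrepancy; it is exactly the failure of the naive construction -- where that optimization is replaced by an unconstrained continuum one -- that the constraint $S(u)\subset\e\mathbb Z$ in \eqref{def-fc-1d} repairs.
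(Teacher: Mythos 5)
Your proposal is correct and follows essentially the same route as the paper: reduce the $\Gamma$-limit of $E_\e$ to $\int(\inf_N\{c_Nz^2+\eta/N\})^{\ast\ast}(v')\,dt$ via Theorem \ref{teo-equivalence-exp}, Remark \ref{absigma}, Proposition \ref{celle} and the explicit formula \eqref{def-cn}; reduce the $\Gamma$-limit of $G^{\mathbb Z}_\e$ to $\int(\inf_N\{\lambda_Nz^2+\eta/N\})^{\ast\ast}(v')\,dt$ via Proposition \ref{salti-interi} and \eqref{uguali-discreto}; and verify the algebraic identity $\lambda_N=c_N$ under \eqref{parametri}, which is exactly the bookkeeping the paper leaves implicit in the sentence ``whose proof follows from the equivalence between $E_\e$ and $F_\e$ \dots\ and the results above.'' Your closing remark correctly identifies why the mismatch between the continuum stiffness $\gamma$ and the bare discrete value $1$ is immaterial after the optimization over $N$.
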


\noindent We reiterate that in general, the above result  can be viewed as a cautionary tale, showing that relaxation and homogenization (discrete-to-continuum limit) do not always commute.

\section{Conclusions}
\label{conclusions}

In this paper, we systematically explored   the possibility of  using some auxiliary  `local' considerations to obtain minimizers with `global' features for  nonlocal  variational  boundary-value problems on lattices. Having in mind  some known cases when asymptotically (i.e. in continuum limit) such boundary-value problems exhibit   periodic minimizers, we associated  the   possibility of `local' description with  applicability of the GCB rule and posed the  question of the pertinence of such a rule for a generic variational problems in our class.  It is clear that the GCB rule is  not applicable in general, for instance,  it clearly fails  in the case of minimization with  concentrations, appearing in non-coercive problems of fracture mechanics.    Here  we extended the known class  of non-GCB problems  by incorporating into the analysis some general  non-convex energy densities  with quadratic growth. 

More specifically, we used the simplest examples of functionals with quadratically penalized non-convexity, we demonstrated  various facets  of  frustration and incompatibility in one-dimensional  discrete variational problems computed on an increasing and diverging number of nodes. In the chosen class of non-convex lattice problems with energy density $f$, linear long-range interactions were introduced  through an infinite matrix $\bf m$. We studied relaxation of such problems with given boundary conditions on intervals with a large number of nodes. This operation can be interpreted as a discrete-to-continuum $\bf m$-transform of the function $f$ and we studied the  dependence of such a transform on the  parameter $z$ describing boundary conditions. 

We addressed  the  question   whether the minimizers for a given functional are close to functions with `global' properties, for instance, to periodic functions, where closeness can be understood as having the same energy up to an asymptotically negligible quantity as the number of nodes diverges.  The answer is in general negative, for example, this is not true in the case of  minimizers  describing transitions between two energy wells, when the parameter $z$ lies in some intervals.  Still, we were able to identify interesting cases when  the knowledge of the minimizers, that are asymptotically of a `global' form, are sufficient to determine the whole $\bf m$-transform of the function $f$ through some form of convexification. 

Outside our general considerations, we mostly focused on potentials $f$ with a bi-convex form; i.e., which have a convex restriction to two complementary phase sets. For boundary-value problems involving such potentials  and prescribed $z$ it is natural to   define   phase functions $\theta(z)$. We have shown that  of particular interest are values of $\theta$ for which the set $\{z: \theta(z)=\theta\}$ contains a non-degenerate interval (locking states). We studied the main properties of both, the functions $\theta(z)$ and of locking states, and showed that for  some combinations of $f$ and $\bf m$ the  minimizers representing the  locking states are periodic and hence of a `global' (or GCB)  nature in the sense that they  determine the whole $\bf m$-transform of the function $f$. We also showed that the optimal periodic minimizers whose  structure may depend delicately   on $f$ and $\bf m$  are not necessarily   unique. Among different optimal minimizers we identified universal periodic microstructures, which exist for all values of $\theta$ and have fascinating analogs  in the  theory of dynamical systems.

 The concept of  $\bf m$-transform, introduced in this paper for the first time,  was shown to be  rather rich. The complexity of the ensuing transformations  suggests that even in   scalar one-dimensional problems,  the interplay  of long-range interactions, non-convexity and discreteness  can be highly nontrivial. We presented several examples where the $\bf m$-transform of a given non-convex function could be either computed explicitly or narrowly bounded. Some of the  obtained  $\bf m$-transforms were shown to be singular  exhibiting   the  `devilish' features with  locking on some but not all \emph{rational} microstructures. 

The analytical accessibility of the $\bf m$-transforms in the presented examples,  as well as the associated  non-unique\-ness of the optimal micro-structures, hint towards a certain \emph{degeneracy} of the chosen problems.  We can  associate such a degeneracy with the absence of `strong'  geometrical frustration representing some fundamental incommensuration  between the non-convexity, the long range interactions  and the discreteness. It is clear that  more complex optimal minimizing sequences, not reducible to periodic states or combinations of periodic states,  can be expected  in cases when such  incommensuration is present. 

The `strong'  frustration of this type may be driven, for instance,  by the  competing interactions inside  the kernel {\bf m}, for instance, by the combination of ferromagnetic and antiferromagnetic interactions acting on incommensurate scales.  The frustration can be also  `strong' even in the apparently simple case   when   different scales are `favored' by antiferromagnetic   interaction involving  the  first  and the third nearest neighbors.  `Strong'  frustration may also be brought by the structure of the non-convex function $f$   carrying the  `characteristic strain' which is  incompatible with the strain emerging through the interplay between the loading and the long-range interaction  kernel, see for instance \cite{NT2016} where a  `complete devil staircase'  emerges in a   problem  involving  a  non-degenerate bi-quadratic potential and an exponential kernel.  

In a separate paper  we will show  that the presence of `strong' frustration may  eliminate   the degeneracy and bring  the uniqueness to the problem of finding  the optimal microstructure. More generally,  our preliminary   analysis of  problems  with  `strong' frustration reveals  an even  deeper  link between    lattice variational problem and the  discrete nonlinear mappings  where the analog of  constructing the $\bf m$-transform turns out to be  the problem of classifying  all quasi-periodic  trajectories.

\subsection*{Acknowledgments}
AC and MS acknowledge the projects `Fondo di Ateneo per la Ricerca 2019' and `Fondo di Ateneo per la Ricerca 2020', funded by the University of Sassari. This work has been supported by PRIN 2017  `Variational methods for stationary and evolution problems with singularities and interfaces'. AB and MS are members of GNAMPA, INdAM, AC is member of GNSAGA, INdAM. The authors acknowledge the MIUR Excellence Department Project awarded to the Department of Mathematics, University of Rome Tor Vergata, CUP E83C18000100006. The work of LT was supported by   the grant ANR-10-IDEX-0001-02 PSL.

\goodbreak
\appendix

\section{Appendix: variations of boundary data} 
\nobreak
In this appendix we state and prove some technical results which allow the modification of boundary values of test functions for the minimum problems used in various characterization of $Q_{\bf m} f$. In particular, these results allow to assume that test functions be constant close to the endpoints of the domain.
\smallskip

Let ${\bf m}=\{m_n\}_n$ be such that $m_n\geq 0$ for any $n$, and there exists $\overline n$ such that 
$m_n$ is not increasing for $n\geq \overline n$. Moreover, we assume the decay condition 
$m_n=o(n^{-\beta})_{n\to+\infty}$ for some $\beta>2$. 

Let $F_\e$ be defined as in \eqref{def-fe-prima}; that is, 
$$F_\e(u;I)=\sum_{\e i,\e(i-1)\in  I}\e\, f\Big(\frac{u_{i}-u_{i-1}}{\e}\Big)+
\sum_{\e i,\e j\in  I}\e\, m_{|i-j|} \Big(\frac{u_{i}-u_{j}}{\e}\Big)^2 
$$
for $I$ interval and $u\in\mathcal A_\e(I)$.   

\begin{lemma}\label{boundarycond} 
Let $L>0$ and $N_\e=\lfloor\frac{L}{\e}\rfloor$. 
Let $\alpha\in(\frac{2}{\beta},1)$. 
Assume that $u\in L^2(0,L)$ and $u^\e\in \mathcal A_\e=\mathcal A_\e(0,L)$ be such that (the piecewise-affine extension of) 
the sequence $u^\e$ converges to $u$ in $L^2(0,L)$, and $\sup_\e (F_\e(u^\e;[0,L])+\|u^\e\|^2_{L^2})=S<+\infty$. Then, 
there exists $\hat u^{\e}\in \mathcal A_\e$ converging to $u$ such that 
\begin{enumerate}
\item[{\rm(i)}] $\hat u^\e_i=\hat u^\e_0$ for $i\leq \e^{-\alpha}$,  $\hat u^\e_i=\hat u^\e_{N_\e}$ 
for $i\geq N_\e-\e^{-\alpha}$; 
\item[{\rm(ii)}] $F_\e(\hat u^\e;[0,L])\leq F_\e(u^\e;[0,L])+r(\e)$, where the remainder $r$ depends only on $S$ and $f(0)$, and 
$r(\e)\to 0$ as $\e\to 0$.  
\end{enumerate}
\end{lemma}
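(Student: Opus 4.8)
The statement is a standard ``cut-off near the boundary'' lemma for discrete functionals with long-range quadratic interactions, and the plan is to modify $u^\e$ on two boundary layers of width $\e^{-\alpha}$ (measured in number of indices, i.e.\ physical width $\e^{1-\alpha}\to 0$) so that the modified function is constant there, while controlling the extra energy produced. First I would set up the construction: fix a boundary layer of indices $I_\e = \{0,\dots, 2\lceil\e^{-\alpha}\rceil\}$ near $0$ (and symmetrically near $N_\e$), and use an averaged-slope argument — since $\sup_\e F_\e(u^\e;[0,L])\le S$ and $m_1\ge 0$ (plus the lower-bound growth \eqref{crescitasotto} is \emph{not} assumed here, so I must be careful and instead use only that $\e\sum_i(u^\e_i-u^\e_{i-1})^2$-type quantities enter the nonlocal part) one shows there is a good index $i_\e\in\{\lceil\e^{-\alpha}\rceil,\dots,2\lceil\e^{-\alpha}\rceil\}$ at which the local increments are small on average. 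Concretely, I would define $\hat u^\e$ to equal the constant $u^\e_{i_\e}$ for $i\le \lceil\e^{-\alpha}\rceil$, to equal $u^\e_i$ for $i\ge 2\lceil\e^{-\alpha}\rceil$, and to interpolate (say affinely, or by a telescoping rearrangement of the existing increments) on the transition zone $\lceil\e^{-\alpha}\rceil<i<2\lceil\e^{-\alpha}\rceil$; do the mirror construction at the right endpoint. Convergence $\hat u^\e\to u$ in $L^2$ is immediate because the modification affects a set of physical measure $O(\e^{1-\alpha})\to 0$ and the values stay within the range of $u^\e$ on the layer.

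The energy estimate (ii) is the technical heart. I would split $F_\e(\hat u^\e;[0,L])-F_\e(u^\e;[0,L])$ into (a) the change in the nearest-neighbour part $\e\sum_i f((\hat u^\e_i-\hat u^\e_{i-1})/\e)$, and (b) the change in the nonlocal part $\e\sum_{i,j} m_{|i-j|}((\hat u^\e_i-\hat u^\e_j)/\e)^2$. For (a): on the constant stretch the argument of $f$ is $0$, contributing $\e\,f(0)\cdot O(\e^{-\alpha}) = O(\e^{1-\alpha})f(0)\to 0$; on the transition stretch, if I rearrange the \emph{same} increments that $u^\e$ already had between $i_\e$ and $2\lceil\e^{-\alpha}\rceil$, the sum $\sum f(\cdot)$ over that block is unchanged, so no loss at all — this is why a rearrangement is cleaner than affine interpolation, which would need a convexity/growth bound on $f$ not available here. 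The only genuine new $f$-cost is the single block of zero increments, already estimated. For (b): the nonlocal energy of $\hat u^\e$ differs from that of $u^\e$ only through pairs $(i,j)$ with at least one index in the boundary layer; I would bound this by $2\sum_{i\in I_\e}\sum_{j=0}^{N_\e} m_{|i-j|}(\hat u^\e_i-\hat u^\e_j)^2/\e$ (physical scaling included), use $(\hat u^\e_i-\hat u^\e_j)^2\le 2(|i-j|)\sum_{\ell}(\text{increments})^2$ together with $\|u^\e\|_{L^2}^2\le S$ to get a crude bound $(\hat u^\e_i-\hat u^\e_j)^2/\e^2\le C(S)|i-j|^2/\e\cdot$(something controllable), and then exploit the decay $m_n=o(n^{-\beta})$ with $\beta>2$ and the fact that $i$ ranges over only $O(\e^{-\alpha})$ indices: the resulting double sum is of order $\e^{-\alpha}\sum_{n\ge1} n^2 m_n \cdot$(power of $\e$), and since $\alpha>\frac2\beta$ the product of $\e$-powers is a negative power of $\e$ overall only if one is not careful — so the key algebraic check is that $\alpha\beta>2$ makes the tail $\sum_{n> \e^{-\alpha}} n^2 m_n = o(\e^{\alpha\beta-\alpha\cdot0}\cdot\dots)$ small enough to kill the $\e^{-\alpha}$ prefactor. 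I would carry this bookkeeping out to confirm $r(\e)\to0$ with $r$ depending only on $S$ and $f(0)$.

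\textbf{Main obstacle.} The delicate point is precisely the interaction between the \emph{width} $\e^{-\alpha}$ of the layer and the \emph{decay rate} $\beta$ of $\bf m$ in estimating (b): one needs the hypothesis $\alpha>\frac2\beta$ (equivalently $\alpha\beta>2$) exactly so that the nonlocal energy picked up across the boundary layer — which involves summing $m_n$ against $n^2$ over distances up to $O(1/\e)$ but with the ``source'' index confined to $O(\e^{-\alpha})$ positions — vanishes in the limit. Getting the correct powers of $\e$ here, keeping track of the $\|u^\e\|_{L^2}$-dependence so that $r$ depends only on $S$ and $f(0)$ and not on $u^\e$ itself, and making sure the crude pointwise bound $(\hat u^\e_i-\hat u^\e_j)^2$ in terms of sums of squared increments is compatible with merely $F_\e\le S$ (rather than an a priori $L^\infty$ or $H^1$ bound), is where the real care is needed; everything else is routine. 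A secondary subtlety is that \eqref{crescitasotto} is not assumed in this appendix lemma, so I must avoid any step that implicitly uses coercivity of the nearest-neighbour part — hence the rearrangement trick for the $f$-term rather than affine interpolation.
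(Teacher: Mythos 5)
Your outline --- make $u^\e$ constant on boundary layers, select the cut by an averaging argument, and control the excess through the decay of $\bf m$ --- is the right shape, and you correctly note that \eqref{crescitasotto} is unavailable and that affine interpolation on a transition zone would therefore be unsafe. However, the pigeonhole you propose is on the wrong quantity and over too few cells. You look for $i_\e$ ``at which the local increments are small on average,'' but small nearest-neighbour increments near the cut do not control the \emph{long-range} cross-cut interactions $m_{|i-j|}(u^\e_i-u^\e_j)^2$ with $i<i_\e\le j$, which are precisely what the truncation rewires. The paper instead fixes a second exponent $\alpha'\in(0,1-\alpha)$, takes a layer of physical width $\lambda_\e=\e^{\alpha'}$ (so $\e^{\alpha'-1}\gg\e^{-\alpha}$ indices), divides it into $M_\e\approx\e^{\alpha+\alpha'-1}\to\infty$ blocks of about $\e^{-\alpha}$ indices each, and pigeonholes the sum over consecutive-block nonlocal interactions --- bounded by $F_\e(u^\e;[0,L])\le S$ --- to find a block across which the cross-interaction is $\le S/M_\e\to 0$. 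Your two-block layer gives a pigeonhole constant $S/2$ that does not vanish; the second scale $\alpha'$ is essential to produce the $C/M_\e$ term in the final remainder.

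The energy estimate also does not close as written. The crude pointwise bound $(\hat u^\e_i-\hat u^\e_j)^2\le|i-j|\sum_\ell(\text{increments})^2$ produces, after summing over $O(\e^{-\alpha})$ boundary indices, an error of order $\e^{-\alpha}\sum_n n\,m_n$, which diverges as $\e\to 0$. The paper's accounting is finer: the far-field contribution (index-distance $\ge\e^{-\alpha}$) is bounded by $\frac{C}{\e^2}m_{\lfloor\e^{-\alpha}\rfloor}\|u^\e\|_{L^2}^2$, which vanishes because $m_n=o(n^{-\beta})$ and $\alpha\beta>2$, and the near-field contribution is handled by the pigeonhole above; your estimate recovers neither. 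A further gap is that the transition-zone rearrangement you add is not free: it leaves the $f$-part invariant, as you say, but the nonlocal part depends on pairwise differences $u_i-u_j$ and not on the multiset of increments, so rearranging does change it and you would have to estimate that change as well. The paper sidesteps this entirely by taking the straight truncation $\hat u^\e_i=u^\e_{j^-_\e}$ for $i\le j^-_\e$ and $\hat u^\e_i=u^\e_i$ above, which already matches at $j^-_\e$ and requires no bridge.
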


\begin{proof}We choose $\alpha^\prime\in(0,1-\alpha)$ and define 
$\lambda_\e=\e^{\alpha^\prime}$ and $M_\e=\lfloor \e^{\alpha+\alpha^\prime-1}\rfloor-1$. 
For $\e$ small enough we divide $(0,\lambda_\e]$ and $[L-\lambda_\e,L)$ in $M_\e+1$ intervals by setting 
$$I^k_\e=\Big(\frac{k\lambda_\e}{M_\e+1},\frac{(k+1)\lambda_\e}{M_\e+1}\Big], \ \ 
J^k_\e=\Big[L-\frac{(k+1)\lambda_\e}{M_\e+1},L-\frac{k\lambda_\e}{M_\e+1}\Big), \ \ k\in\{0,\dots, M_\e\}.$$

Since 
$$\frac{1}{\e}\sum_{k=1}^{M_\e} \sum_{\e i\in I_\e^k, \e j\in I_\e^{k-1}} m_{|i-j|}(u^\e_i-u^\e_j)^2\leq F_\e(u^\e;[0,L])\leq S,$$
then there exists $k_\e^-\in\{1,\dots, M_\e\}$ such that 
\begin{equation}\label{stimaM1}
\frac{1}{\e} \sum_{\e i\in I_\e^{k_\e^-}, \e j\in I_\e^{k_\e^--1}} m_{|i-j|}(u^\e_i-u^\e_j)^2
\leq \frac{S}{M_\e}.
\end{equation}
The same argument allows to find $k_\e^+\in\{1,\dots, M_\e\}$ 
such that 
the same inequality holds for $\e i\in J_\e^{k_\e^+}, \e j\in J_\e^{k_\e^+-1}$.
Setting $j^-_\e=\min\{j: \e j\in I_\e^{k_\e^-}\}$ and $j^+_\e=\max\{j: \e j\in J_\e^{k_\e^+}\}$, we define $\hat u^{\e}$ by setting 
\begin{equation}\label{def-costante}
\hat u^{\e}_i=\left\{ \begin{array}{ll}
u^\e_{j^-_\e} & \hbox{ if } i\leq  j^-_\e \\
u^\e_i &  \hbox{ if } j^-_\e \leq i\leq j^+_\e  \\
u^\e_{j^+_\e} &  \hbox{ if } i\geq j^+_\e.
\end{array}
\right. 
\end{equation}
Since $j_\e^{-}\geq L\e^{-\alpha}$ and $j_\e^{+}\leq N_\e-L\e^{-\alpha}$, then $\hat u^\e$ satisfies claim (i). Moreover, $\hat u^\e\to u$ as $\e\to 0$. To prove this, for simplicity we suppose that $m_n$ is not increasing for $n\geq 1$. Then,  
\begin{eqnarray*}\e\sum_{i=1}^{j^-_\e}(u^\e_i-\hat u^\e_{i})^2&=&\e\sum_{i=1}^{j^-_\e}(u^\e_i-u^\e_{j^-_\e})^2\leq \e\sum_{i=1}^{j^-_\e} j^-_\e \!\!\sum_{j=i+1}^{j^-_\e}\!(u^\e_i-u^\e_{i-1})^2\\
&\leq& \frac{S}{m_1}\e^2 (j^-_\e)^2\leq \frac{S}{m_1}\lambda_\e^2,\end{eqnarray*}
and correspondingly $\e\sum_{i=j^+_\e}^{\lfloor L/\e\rfloor}(u^\e_i-\hat u^\e_{i})^2\leq \frac{S}{m_1}\lambda_\e^2$.
Setting, $n_\e=\lfloor\frac{\lambda_\e}{\e (M_\e+1)}\rfloor$, since 
$$\sum_{|i-j|\geq n_\e} m_{|i-j|}(u^{\e}_i-u^{\e}_j)^2 \leq 
\frac{2}{\e}  m_{n_\e} \|u^\e\|^2_{L_2}\leq 
\frac{2}{\e}  m_{\lfloor\e^{-\alpha}\rfloor} \|u^\e\|^2_{L_2},$$
and recalling  \eqref{stimaM1}, we obtain 
\begin{eqnarray*} 
F_\e(\hat u^{\e};[0,L])\leq F_\e(u^{\e};[0,L])+ 2\lambda_\e f(0) + \frac{C}{\e^2} 
m_{\lfloor\e^{-\alpha}\rfloor}+\frac{C}{M_\e},
\end{eqnarray*}
where 
$C$ denotes a constant depending only on $\sup_\e F_\e(u^\e;[0,L])$ and $\sup_\e\|u^\e\|_{L^2}$.  
Setting 
$$r(t)=2f(0) t^{\alpha^\prime}+Ct^{\alpha\beta-2}+C t^{1-\alpha-\alpha^\prime},$$
we conclude the proof since $m_n=o(n^{-\beta})$ and $\alpha>\frac{2}{\beta}$.  
\end{proof}

Let $a,b>0$. We define the functional $E_\e(u,v;I)$ by setting 
\begin{equation}\label{def-ee-gen} 
E_\e(u,v;I)=\sum_{\e i,\e(i-i)\in I}\e\, f\Big(\frac{u_i-u_{i-1}}{\e}\Big)+
\frac{a}{2}\sum_{\e i,\e(i-i)\in I}\e\, \Big(\frac{v_{i}-v_{i-1}}{\e}\Big)^2+\frac{b}{2\e}\sum_{\e i\in I}(u_i-v_i)^2 
\end{equation} 
for $I$ interval and $u,v\in\mathcal A_\e(I)$.   
\begin{lemma}\label{boundarycond2} 
Let $L>0$ and $N_\e=\lfloor\frac{L}{\e}\rfloor$. 
Let $\alpha\in(\frac{2}{\beta},1)$. 
Assume that $u^\e, v^\e\in \mathcal A_\e$ be such that (the piecewise-affine extensions of) 
$u^\e$ and $v^\e$ converge to $u$ in $L^2(0,L)$ and $\sup_\e (E_\e(u^\e;[0,L])+\|u^\e\|^2_{L^2})=S<+\infty$. Then there exist $\hat u^{\e}, \hat v^\e \in \mathcal A_\e$ converging to $u$ such that 
\begin{enumerate}
\item[{\rm(i)}] $\hat u^\e_i=\hat v^\e_i=\hat u^\e_0$ for $i\leq \e^{-\alpha}$,  $\hat u^\e_i=\hat v^\e_i=\hat u^\e_{N_\e}$ 
for $i\geq N_\e-\e^{-\alpha}$; 
\item[{\rm(ii)}] $E_\e(\hat u^\e, \hat v^\e;[0,L])\leq E_\e(u^\e, v^\e;[0,L])+r(\e)$, where the remainder $r$ depends only on $S$ and $f(0)$, and $r(\e)\to 0$ as $\e\to 0$.  
\end{enumerate}
\end{lemma}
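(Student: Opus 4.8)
The strategy is to mimic the proof of Lemma \ref{boundarycond}, treating the pair $(u^\e,v^\e)$ essentially as a single vector-valued function. The key structural difference is that in \eqref{def-ee-gen} the long-range quadratic coupling $m_{|i-j|}(u_i-u_j)^2$ of Lemma \ref{boundarycond} is replaced by two \emph{local} terms: the nearest-neighbour Dirichlet form $\sum_i(v_i-v_{i-1})^2$ and the penalty $\sum_i(u_i-v_i)^2$. This locality is actually favourable, so the argument will be a simplified version of the one in Lemma \ref{boundarycond}, not a harder one.

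First I would set up the same dyadic decomposition of the two boundary strips $(0,\lambda_\e]$ and $[L-\lambda_\e,L)$ into $M_\e+1$ subintervals $I^k_\e$, $J^k_\e$, with $\lambda_\e=\e^{\alpha'}$ and $M_\e\sim\e^{\alpha+\alpha'-1}$ for a suitable $\alpha'\in(0,1-\alpha)$. Since
$$\frac{a}{2\e}\sum_{k=1}^{M_\e}\sum_{\e i\in I^k_\e}(v^\e_i-v^\e_{i-1})^2\le E_\e(u^\e,v^\e;[0,L])\le S,$$
there exists an index $k^-_\e$ for which the Dirichlet energy of $v^\e$ on $I^{k^-_\e}_\e$ is at most $S/M_\e$; symmetrically one finds $k^+_\e$ near the right endpoint. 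I would then pick a single node $j^-_\e$ in $I^{k^-_\e}_\e$ (respectively $j^+_\e$ in $J^{k^+_\e}_\e$) and define $\hat u^\e$ and $\hat v^\e$ both equal to the common value $u^\e_{j^-_\e}$ for $i\le j^-_\e$ and both equal to $u^\e_{j^+_\e}$ for $i\ge j^+_\e$, leaving them unchanged in between. Freezing \emph{both} $u$ and $v$ to the same constant on the outer strip automatically makes the penalty term $\frac b{2\e}(u_i-v_i)^2$ vanish there, so only the $f$-term and the $v$-Dirichlet term need to be controlled on the modified region.

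The energy estimate then has three contributions on the truncated strips: (a) the $f$-term, bounded by $2\lambda_\e f(0)$ as in Lemma \ref{boundarycond}, since after the modification all nearest-neighbour increments of $u$ on the strip are zero; (b) the $v$-Dirichlet term, which on $I^{k^-_\e}_\e$ is $\le S/M_\e$ by the choice of $k^-_\e$, and which vanishes on the rest of the strip because $\hat v^\e$ is constant there; (c) there is no surviving penalty term on the strip, and no long-range term at all, so — in contrast to Lemma \ref{boundarycond} — there is no $\e^{-2}m_{\lfloor\e^{-\alpha}\rfloor}$ contribution to worry about. Hence one obtains (ii) with a remainder of the form $r(\e)=2f(0)\e^{\alpha'}+C\e^{1-\alpha-\alpha'}\to0$. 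Property (i) holds because $j^-_\e\ge L\e^{-\alpha}$ and $j^+_\e\le N_\e-L\e^{-\alpha}$ by construction, and convergence $\hat u^\e,\hat v^\e\to u$ in $L^2$ follows from the telescoping/convexity bound $\e\sum_{i\le j^-_\e}(v^\e_i-v^\e_{j^-_\e})^2\le C\lambda_\e^2\to0$ exactly as in Lemma \ref{boundarycond}, together with the analogous bound for $u^\e$ obtained by combining the penalty bound $\frac b{2\e}\sum_i(u^\e_i-v^\e_i)^2\le S$ with the $v$-increment control.

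The only genuinely delicate point — and the one I would be most careful about — is matching the boundary \emph{values} of $u$ and of $v$ at the freezing nodes: one must choose the \emph{same} node $j^-_\e$ (and $j^+_\e$) for both functions and freeze both to the \emph{same} value there, which is what kills the penalty term; if instead one froze $u$ at its own value and $v$ at $v^\e_{j^-_\e}$, the term $\frac b{2\e}(u^\e_{j^-_\e}-v^\e_{j^-_\e})^2$ need not be small pointwise (only its $\e$-weighted sum is). The remedy is to observe that replacing both by, say, the common value $\tfrac12(u^\e_{j^-_\e}+v^\e_{j^-_\e})$ changes the energy near that node by at most a multiple of $(u^\e_{j^-_\e}-v^\e_{j^-_\e})^2$, which after averaging over the $\sim M_\e$ candidate indices $k$ can again be made $\le CS/M_\e$ by a pigeonhole choice of $k^-_\e$ (now asking the selected strip to have small penalty energy \emph{as well as} small $v$-Dirichlet energy). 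With this joint selection the construction goes through and the proof concludes.
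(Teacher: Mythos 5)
Your proposal, including the self-correction in the last paragraph, is essentially the paper's proof. The paper also uses the dyadic decomposition of the two boundary strips into $M_\e+1$ pieces, performs a \emph{joint} pigeonhole choice of a subinterval on which both the $v$-Dirichlet energy and the penalty $(u-v)^2$ energy are $\le S/M_\e$, freezes $\hat u^\e$ and $\hat v^\e$ to the common value $u^\e_{j^\pm_\e}$ on the outer strips (so the penalty vanishes and the $f$-term contributes $2\lambda_\e f(0)$), and bounds the single transition term $\frac{a}{2\e}(\hat v^\e_{j^-_\e+1}-\hat v^\e_{j^-_\e})^2$ by splitting $v^\e_{j^-_\e+1}-u^\e_{j^-_\e}=(v^\e_{j^-_\e+1}-v^\e_{j^-_\e})+(v^\e_{j^-_\e}-u^\e_{j^-_\e})$ and using the joint pigeonhole bound, exactly as you describe. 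The only cosmetic difference is that the paper freezes to $u^\e_{j^\pm_\e}$ while you suggest the midpoint $\frac12(u^\e_{j^\pm_\e}+v^\e_{j^\pm_\e})$; both work once the joint selection is in place, and your observation that the long-range tail term $\e^{-2}m_{\lfloor\e^{-\alpha}\rfloor}$ from Lemma~\ref{boundarycond} disappears here (giving the cleaner remainder $r(\e)=2f(0)\e^{\alpha'}+C\e^{1-\alpha-\alpha'}$) matches the paper.
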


\begin{proof}
We choose $\lambda_\e$ and $M_\e$ as in the proof of Lemma \ref{boundarycond}, 
and divide $(0,\lambda_\e]$ and $[L-\lambda_\e,L)$ in $M_\e+1$ intervals, denoted by $I^k_\e$ and $J^k_\e$ respectively, as above. 
Then, there exist $k_\e$ and $h_\e$ in $\{1,\dots, M_\e\}$ such that 
\begin{equation}\label{stimaM3}
\frac{1}{2\e} \sum_{\e i\in I_\e^{k_\e}\cup J_\e^{h_\e}} \big(a(v^\e_i-v^\e_{i-1})^2+b(u^\e_i-v^\e_{i})^2 \big)
\leq \frac{S}{M_\e}.
\end{equation}
Setting $j^-_\e=\min\{j: \e j\in I_\e^{k_\e}\}$ and $j^+_\e=\max\{j: \e j\in J_\e^{h_\e}\}$, we define 
\begin{equation*}
\hat u^{\e}_i=\left\{ \begin{array}{ll}
u^\e_{j_{\e}^-} & \hbox{ if } i\leq  j_{\e}^- \\
u^\e_i &  \hbox{ if } j_{\e}^-< i<  j_{\e}^+ \\
u^\e_{j_{\e}^+} &  \hbox{ if } i\geq  j_{\e}^+
\end{array}
\right. 
\ \ 
\hbox{ and } \ \ \ \
\hat v^{\e}_i=\left\{ \begin{array}{ll}
u^\e_{j_{\e}^-} & \hbox{ if } i\leq j_{\e}^- \\
v^\e_i &  \hbox{ if } j_{\e}^-< i <  j_{\e}^+ \\
u^\e_{j_{\e}^+} &  \hbox{ if } i\geq j_{\e}^+,
\end{array}
\right. 
\end{equation*}
so that $\hat u^\e$ and $\hat v^\e$ converge to $u$ in $L^2$, 
and satisfy (i). Recalling \eqref{stimaM3}, we get 
in particular that 
\begin{eqnarray*}
\frac{a}{2\e} (\hat v^\e_{j_{\e}^-+1}-\hat v^\e_{j_{\e}^-})^2\leq 
\frac{a}{\e} (v^\e_{j_{\e}^-+1}-v^\e_{j_{\e}^-})^2+ \frac{a}{\e} (v^\e_{j_{\e}^-}-u^\e_{j_{\e}^-})^2 
\leq \frac{C}{M_\e},
\end{eqnarray*}
where $C$ denotes a positive constant depending only on $a,b$ and $S$. 
The same bound holds for $\frac{a}{2\e}(\hat v^\e_{j_{\e}^+}-\hat v^\e_{j_{\e}^+-1})^2$. 
Hence 
\begin{eqnarray*}
E_\e(\hat u_\e,\hat v_\e; [0,L])&\leq&
2\lambda_\e f(0)
+E_\e(u^\e,v^\e;(0,L)) \\
&&
+\frac{a}{2\e} (\hat v^\e_{j_{\e}^-+1}-\hat v^\e_{j_{\e}^-})^2
+
\frac{a}{2\e}(\hat v^\e_{j_{\e}^+}-\hat v^\e_{j_{\e}^+-1})^2
\\
&\leq&
2\lambda_\e f(0)
+E_\e(u^\e,v^\e;[0,L]) + \frac{2C}{M_\e},
\end{eqnarray*}
concluding the proof as above. 
\end{proof}

\begin{remark}\label{rem-bound}\rm In the hypotheses of Lemma \ref{boundarycond2}, 
if there exists $\alpha\in (0,1)$ such that 
$u^\e_i=\hat u^\e_0$ for $i\leq \e^{-\alpha}$ and $u^\e_i=\hat u^\e_{N_\e}$ for $i\geq N_\e-\e^{-\alpha}$ for some $\alpha>0$, 
then the function $\hat v^\e$ can be chosen such that 
it coincides with $u^\e$ for $i\leq \e^{-\alpha^{\prime\prime}}$ and for $i\geq N_\e-\e^{-\alpha^{\prime\prime}}$ with $\alpha^{\prime\prime}<\alpha$.   
\end{remark}

\section{Appendix: formulas for $P^{M,n}$ in the concentrated case} 
In this appendix we include some explicit computations of the functions $P^{M,n}$ defined in \eqref{pieMMeenne}, which are the energies of the locking states $n\over M$ in the concentrated case. The formulas of these functions have been used in Sections \ref{fracture1M} and \ref{doublewell1M} to highlight the structure of $Q_{\bf m}f(z)$ in the truncated-parabolic and double-well case, respectively. Here, we include the corresponding computations.

\paragraph{Truncated-parabolic case.}
Let $f$ be given by \eqref{frattura}.
In view of  \eqref{PMn},  
the domains of $P^{M,0}$ and $P^{M,M}$ are $\{z\leq 1\}$ and $\{z\geq 1\}$, respectively. We recall that here   
\begin{equation*}
P^{M,0}(z)=z^2+2(m_1+m_M M^2)z^2 
\ \ \hbox{\rm and }\ \ 
P^{M,M}(z)=1+2(m_1+m_M M^2)z^2. 
\end{equation*} 
For $n=1,\dots, M-1$, we can also write  
\begin{equation}\label{pmnformula}
P^{M,n}(z)=\begin{cases} \vspace{2mm}
\displaystyle \frac{2m_1+1}{1-\theta_n}\big(z^2-\theta_n(2z-1)\big)+2m_M M^2z^2
& 
\displaystyle\hbox{\rm if }\ z\leq 
T_n^{-}\\ \vspace{2mm}
\displaystyle \theta_n+\frac{2m_1(2m_1+1)}{2m_1+\theta_n}z^2
+2m_M M^2 z^2 
& \displaystyle\hbox{\rm if }\  T_n^{-}\leq z \leq T_n^{+}\\
\displaystyle  1+\frac{2m_1}{\theta_n}\big((z-1)^2+\theta_n(2z-1)\big)
+2m_M M^2 z^2
& \displaystyle\hbox{\rm if }\ z\geq T_n^{+}, 
\end{cases}
\end{equation} 
where 
$$T_n^{-}=\frac{2m_1+\theta_n}{2m_1+1} \ \ \hbox{\rm and }\ \  T_n^{+}=\frac{2m_1+\theta_n}{2m_1}.$$

\begin{figure}[h!]
\centerline{\includegraphics[width=.5\textwidth]{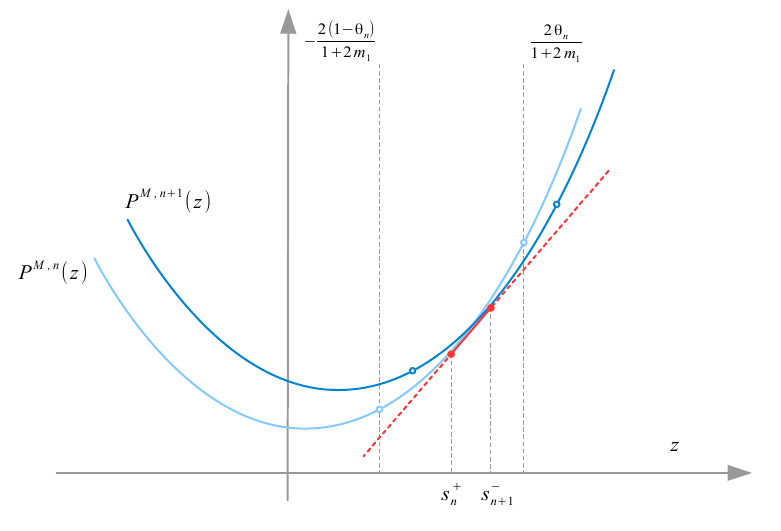}}
\caption{Envelope of two consecutive functions $P^{M,n}(z)$}
\label{figure_PMn}
\end{figure} 
Note that while the formula defining $P^{M,n}$ changes form at $z=T_n^{-}$  and $z=T_n^{+}$,   the computation of the common tangent points of $P^{M,n}$ and $P^{M,n+1}$ involves only the central formula in \eqref{pmnformula}. 
Consequently, the points $s_n^+$ and $s_n^-$ in Theorem \ref{structureqm} are  
\begin{equation}\label{tnsnapp}
\left. 
\begin{array}{ll}
&\displaystyle s_n^+=s_n^+(m_1,m_M)
=\frac{2m_1+\theta_n}{\sqrt{2m_1(2m_1+1)}}\ \sqrt{\frac{m_1(2m_1+1)+m_M M^2(2m_1+\theta_{n+1})}{m_1(2m_1+1)+m_M M^2(2m_1+\theta_{n})}} \vspace{3mm}\\
&\displaystyle s_n^-=s_n^-(m_1,m_M)
= \frac{2m_1+\theta_n}{\sqrt{2m_1(2m_1+1)}}\ \sqrt{\frac{m_1(2m_1+1)+m_M M^2(2m_1+\theta_{n-1})}{m_1(2m_1+1)+m_M M^2(2m_1+\theta_{n})}}. 
\end{array}
\right. 
\end{equation}
In Fig.~\ref{figure_PMn} we illustrate  the envelope of two consecutive functions $P^{M,n}(z)$, bridging energies of consecutive locking states with an affine function. 

Finally, since $s_n^+\geq T_n^-$ and $s_n^-\leq T_n^+$, we have the following formula 
\begin{equation}\label{qfM} 
Q_{\bf m}f(z)=\begin{cases}
\displaystyle z^2& \hbox{\rm if } \ z\leq s^+_0\\
\displaystyle r^{M,n}(z)-2(m_1+m_M M^2)z^2 &  \hbox{\rm if } \ s_n^+ \leq z \leq s_{n+1}^-
\\
\displaystyle\frac{2m_1(1-\theta_n)}{2m_1+\theta_n}z^2+\theta_n& \hbox{\rm if } \ 
s_n^- \leq z \leq s_{n}^+
\\
\displaystyle 1 & \hbox{\rm if } \ s_M^- \leq z,   
\end{cases}
\end{equation} 
where $r^{M,n}$ is the affine function \begin{eqnarray*}
r^{M,n}(z)=P^{M,n}(s_n^+)+\frac{2(z-s_n^+)}{M(s_{n+1}^--s_n^+)}.
\end{eqnarray*}

\paragraph{Bi-quadratic double-well case.}
Let $f$ be given by $f(z)=(1-|z|)^2$. By using \eqref{PMn}  
the domains of $P^{M,0}$ and $P^{M,M}$ are $\{z\leq 0\}$ and $\{z\geq 0\}$, respectively, 
where 
\begin{equation*}
P^{M,0}(z)=
(1+z)^2+
2(m_1+m_MM^2)z^2 \ \ \hbox{\rm and } \ \ 
P^{M,M}(z)=
(1-z)^2+
2(m_1+m_MM^2)z^2. 
\end{equation*} 
For $n=1,\dots, M-1$  
\begin{equation*}
P^{M,n}(z)=\begin{cases} \vspace{2mm}
\displaystyle \Big(\frac{1+2m_1}{1-\theta_n}+2m_M M^2\Big)z^2+2z+1 & 
\displaystyle\hbox{\rm if }\ z\leq T_n^-\\ 
\displaystyle (1+z)^2
+\displaystyle 2(m_1+m_M M^2)z^2-4\theta_n\Big(z+\frac{1-\theta_n}{1+2m_1}\Big)& \displaystyle\hbox{\rm if }\  T_n^-\leq z\leq T_n^+\\
\displaystyle \Big(\frac{1+2m_1}{\theta_n}+2m_M M^2\Big)z^2-2z+1
& \displaystyle\hbox{\rm if }\ z\geq T_n^+, 
\end{cases}
\end{equation*} 
where in this case the points $T_n^-$ and $T_n^+$ where the formula changes 
are given by 
$$T_n^-=-\frac{2(1-\theta_n)}{1+2m_1} \ \ \hbox{\rm and }\ \ T_n^+=\frac{2\theta_n}{1+2m_1}.$$
Consequently,  
\begin{equation}\label{tnsndw}
\left. \begin{array}{ll}
&\displaystyle s_n^+(m_1,m_M)=s_n^+=
\frac{2m_M M}{(1+2m_1)(1+2m_1+2m_M M^2)}+\frac{2\theta_n-1}{1+2m_1}\\ 
&\displaystyle s_n^-(m_1,m_M)=s_n^-=-\frac{2m_M M}{(1+2m_1)(1+2m_1+2m_M M^2)}+\frac{2\theta_n-1}{1+2m_1}.
\end{array}
\right. 
\end{equation}
Since $s_n^+\geq T_n^-$ and $s_n^-\leq T_n^+$, we obtain 
\begin{equation*} 
Q_{\bf m}f(z)=\begin{cases}
(1+z)^2& \hbox{\rm if } \ z\leq s_0^+\\
r^{M,n}(z)-2(m_1+m_M M^2) z^2 &  \hbox{\rm if } \ s_n^+ \leq z \leq s_{n+1}^-
\\
\displaystyle z^2+2(1-2\theta_n)z+1-\frac{4\theta_n(1-\theta_n)}{1+2m_1}& \hbox{\rm if } \ s_n^- \leq z \leq s_{n}^+
\\
 (1-z)^2 & \hbox{\rm if } \ s_M^- \leq z, 
\end{cases}
\end{equation*} 
where $r^{M,n}$ is the affine function 
$$
r^{M,n}(z)=P^{M,n}(s_n^+)+\frac{M(1+2(m_1+m_M M^2))}{2}\big(P^{M,n+1}(s_{n+1}^-)-P^{M,n}(s_n^+)\big)(z-s_n^+).
$$

\end{document}